\setlist[enumerate]{label=\rm{(\arabic*)}, ref=(\arabic*)}
\DeclareMathOperator{\St}{St}
\DeclareMathOperator{\Aut}{Aut}
\DeclareMathOperator{\rist}{rist}
\DeclareMathOperator{\Sym}{Sym}
\newcommand{\FF}{\mathbb{F}}
\newcommand{\NN}{\mathbb{N}}
\newcommand{\ZZ}{\mathbb{Z}}
\newcommand{\MF}{\mathcal{MF}}
\newtheorem{thm}{Theorem}[section]
\newtheorem{lem}[thm]{Lemma}
\newtheorem{prop}[thm]{Proposition}
\newtheorem{cor}[thm]{Corollary}
\theoremstyle{definition}
\newtheorem*{qu}{Question}
\newtheorem{defn}[thm]{Definition}
\newtheorem{rem}[thm]{Remark}
\newtheorem*{notation}{Notation}
\newtheorem{example}[thm]{Example}
\title{Maximal subgroups of groups of intermediate growth}
\author[D.\ Francoeur]{Dominik Francoeur}
\address{Section de math\'ematiques\\
Universit\'e de Gen\`eve\\
2-4 rue du Li\`evre\\
Case postale 64\\
1211 Gen\`eve 4, Switzerland}
\email{dominik.francoeur@unige.ch}
\author[A.\ Garrido]{Alejandra Garrido}
\address{School of Mathematical \& Physical Sciences\\
University of Newcastle\\
University Drive\\
Callaghan NSW 2308 Australia}
\email{aga663@newcastle.edu.au}
\date{\today}
\keywords{maximal subgroups, groups acting on rooted trees, groups of intermediate growth}
\subjclass[2010]{20E08, 20E28}
\begin{document}

\begin{abstract}
 
 Finding the number of maximal subgroups of infinite index of a finitely generated group is a natural problem that has been solved for several classes of ``geometric'' groups
 (linear groups, hyperbolic groups, mapping class groups, etc). 
 Here we provide a solution for a family of groups with a different geometric origin: groups of intermediate growth that act on rooted binary trees. 
 In particular, we show that the non-torsion iterated monodromy groups of the tent map 
 (a special case of some groups first introduced by {\v{S}}uni{\'c} in \cite{Sunic} as ``siblings of the Grigorchuk group'')
 have exactly countably many maximal subgroups of infinite index, and describe them up to conjugacy.
 This is in contrast to the torsion case (e.g.\ Grigorchuk group) where there are no maximal subgroups of infinite index. 
 It is  also in contrast to the above-mentioned geometric groups, where there are either none or uncountably many such subgroups. 
 
 Along the way we show that all the groups defined by {\v{S}}uni{\'c} have the congruence subgroup property and are just infinite. 
%
%
%
%
\end{abstract}

 \maketitle

 \section{Introduction}
 
 Maximal subgroups (proper subgroups which are maximal with respect to containment) are one of the most basic aspects of a group that one can examine.
Their study goes back to the early days of group theory, to Galois and Frattini, 
and the maximal subgroups of finite groups have been widely studied for a long time. 
The study of maximal subgroups of infinite groups is more recent and subtle, as maximal subgroups may not even exist. 
Nevertheless, in a finitely generated group every proper subgroup is contained in a maximal one 
(a fact that does not require Zorn's lemma, see \cite{BHNeumann}), so it is very natural to study the maximal subgroups of a finitely generated group. 

When one studies groups by means of their actions, the most elementary actions are those on a set, yielding permutation representations. 
In this setting, maximal subgroups also play an important role as they correspond in a natural way to the irreducible components of a permutation action: the primitive  actions. 
A subgroup $H<G$ is maximal if and only if the action of $G$ on the coset space $G/H$ is primitive. 
Recall that the action of a group $G$ on a set $X$ containing at least two elements is primitive if it is transitive and leaves no partition of $X$ invariant except for the trivial ones (the whole set and the singletons). 
Any transitive imprimitive action can be embedded in the wreath product of two simpler actions (the action of the set-wise stabilizer of an element of the partition, and the action on elements of the partition).

Let us denote by $\MF$ the class of groups all of whose maximal subgroups have finite index. 
Which finitely generated groups are in this class and which are not?
In other words, which finitely generated groups have primitive actions only on finite sets?
Which have primitive actions on infinite sets?

A famous theorem of Margulis and Soifer \cite{MargulisSoifer} states that a finitely generated linear group is in $\MF$ if and only if it is virtually solvable. 
Margulis and Soifer's result was the inspiration for \cite{GelanderGlasner}, where a characterization of the groups in $\MF$ was obtained for (countable) groups in other geometric settings:
subgroups of mapping class groups, word hyperbolic (more generally, convergence) groups, groups acting minimally on non-rooted trees.
In all these settings the groups outside $\MF$ contain free subgroups (this is a key ingredient of the proof). 

Some examples of groups outside $\MF$ without free subgroups include Thompson's group $F$ \cite{Savchuk}, Tarski monsters and some Golod--Shafarevich groups 
(those that map onto a Tarski monster). 
It should also be noted that it follows from \cite[Section 7]{ErshovJaikin} that every Golod--Shafarevich group has a $p$-torsion 
(where $p$ is a prime) quotient which is Golod--Shafarevich and is in $\MF$. 

Note that all cited examples of groups outside $\MF$ are in some sense ``big'', they are all of exponential growth. 
Since all groups of polynomial growth are in $\MF$ (by Gromov's theorem), it is reasonable to wonder what happens for groups of intermediate growth. 
A starting point is to study Grigorchuk's groups of intermediate growth, \cite{Grigorchuk_intermediate}.
This is was done by Pervova in \cite{Pervova} where it was shown that the torsion groups in this family are in $\MF$. 
Another family of groups that are similar in spirit to Grigorchuk's were defined by Gupta and Sidki in \cite{GuptaSidki}. 
The aim there was to give uncomplicated examples of finitely generated infinite $p$-groups and their growth is not yet established. 
Pervova also showed in \cite{Pervova} that these groups are in $\MF$.
This result was extended in \cite{Klopsch1,Klopsch2} to all torsion groups in a generalization of Grigorchuk--Gupta--Sidki groups, so-called multi-edge spinal groups. 
Moreover, if a group is in $\MF$ then all groups which are abstractly commensurable with it are also in $\MF$
(two groups are abstractly commensurable if they have isomorphic finite index subgroups). 
Thus, by \cite{GrigorchukWilson} and \cite{Garrido}, all infinite finitely generated subgroups of the Grigorchuk group and the Gupta--Sidki 3-group are in $\MF$. 

All groups in the above paragraph are branch groups (see Definition \ref{def:branch_def}).
Indeed, the definition of branch group is inspired by their properties. 
Pervova's result prompted the question whether every finitely generated branch group is in $\MF$ (this was problem 6.2 in \cite{Grigorchuk_solved_unsolved}). 
This was answered negatively by Bondarenko in \cite{Bondarenko}, by showing that all groups of the type that appear in  \cite{Neumann, Segal,Wilson} have a maximal subgroup of infinite index.
However, these groups are very special:  
each level stabilizer decomposes as a direct product of copies of the original group, a property which most branch groups do not enjoy.
They are also of exponential growth; in fact, the groups in \cite{Wilson} have non-uniform exponential growth. 



It is natural to wonder, and was explicitly asked in \cite[Example 3.10(8)]{Cornulier} whether there exist groups of intermediate growth outside $\MF$. 
When we started working on this project, no examples were known, but recently Nekrashevych \cite{Nekrashevych_palindromes} has constructed the first examples of simple groups of intermediate growth, which are of course not in $\MF$.
Nevertheless, these examples are far away from being branch and,   since Pervova only deals with torsion groups, it is also natural to wonder what happens for the non-torsion examples generalizing Grigorchuk and Gupta--Sidki groups.
%

We provide an answer to these questions by examining a family of self-similar groups (see Definition \ref{def:self-similar}) introduced by {\v{S}}uni{\'c} in \cite{Sunic},
as ``siblings of the first Grigorchuk group''. 
They do not seem to have a name in the literature, so we call them \emph{\v{S}uni\'{c} groups}. 
They are all branch groups (\cite[Theorem 1]{Sunic}) and we show in Theorem \ref{thm:justinfinite} that they are just infinite (every proper quotient is finite). 
Each group is defined by a prime $p$ (the degree of the rooted regular tree on which it acts faithfully) and a monic polynomial with coefficients in $\FF_p$.

It was later discovered (see \cite[Theorem 5.6]{NekrashevychSunic}) that for $p=2$, these groups have a nice dynamical characterization:
they are all the iterated monodromy groups%
\footnote{This is not used in our proofs so we omit any further reference to iterated monodromy groups in what follows and 
point the interested reader to \cite{NekrashevychSunic} and references therein for an introduction to this rich research area between group theory and dynamics.}
of the tent map. 
In fact, they are the residually finite (as opposed to simple) fragmentations of the infinite dihedral group (see \cite[Section 3]{Nekrashevych_palindromes}). 

\v{S}uni\'{c} groups can be either torsion or non-torsion.
The torsion ones are all of intermediate growth by \cite{BartholdiSunic}. 
The growth of the non-torsion ones is not completely settled but those acting on the binary tree are of intermediate growth
(except for the degenerate case of the infinite dihedral group),
by arguments very similar to those used by Grigorchuk in \cite{Grigorchuk_intermediate}. 
The most well-known example of one of these non-torsion groups of intermediate growth acting on the binary tree
is the second self-similar group in the family introduced by Grigorchuk (\emph{ibid.}).
Erchsler gave bounds for its growth in \cite{Erschler}, which is why this group is sometimes called the Grigorchuk--Erschler group in the literature.
 The main result in this paper is the following theorem.

\begin{thm}\label{thm:maximal}
 Let $G$ be a non-torsion {\v{S}}uni{\'c}  group that is not isomorphic to the infinite dihedral group and acts on the binary tree.  
The set of maximal subgroups of $G$ is countably infinite. 
Of these, $2^{m+1}-1$ are of finite index (in fact, index 2) where $m$ is the degree of the polynomial defining $G$.
The rest, of infinite index, are conjugates of finitely generated subgroups parametrised by all odd primes.
\end{thm}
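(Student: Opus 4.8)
The plan is to treat finite-index and infinite-index maximal subgroups by quite different means: the first reduces to a bounded computation, while the second is the core of the argument.

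\textbf{Finite-index maximal subgroups.} Since $G$ acts on the binary tree $T$, each congruence quotient $G/\St_G(n)$ embeds into $\Aut(T_n)$, the iterated wreath product of $n$ copies of $\ZZ/2\ZZ$, hence is a finite $2$-group, so all of its maximal subgroups have index $2$. By the congruence subgroup property (established earlier in the paper) every finite-index subgroup of $G$ contains some $\St_G(n)$; therefore every finite-index maximal subgroup $M$ maps onto a maximal subgroup of the $2$-group $G/\St_G(n)$ and so has index $2$ in $G$. Conversely every index-$2$ subgroup is maximal, and these correspond bijectively to the nonzero elements of $\operatorname{Hom}(G,\ZZ/2\ZZ)$. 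It then remains to show $\dim_{\FF_2}G/G^{2}[G,G]=m+1$. I would extract this directly from {\v{S}}uni{\'c}'s recursive presentation: with $a$ the rooted generator and the directed generators indexed by an $m$-dimensional $\FF_2$-space, subject to the self-similarity recursion governed by the companion matrix of the defining polynomial, the relations surviving in $G/G^{2}[G,G]$ reduce to a finite $\FF_2$-linear problem, and one verifies that the $m$ directed generators together with $a$ stay independent modulo squares and commutators. (For the Grigorchuk--Erschler group $m=1$, giving $3$ such subgroups; compare the first, torsion, Grigorchuk group, where $m=2$ and $G/G^{2}[G,G]\cong(\ZZ/2\ZZ)^{3}$.) This yields the count $2^{m+1}-1$.

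\textbf{Infinite-index maximal subgroups: construction.} As $G$ is just infinite (Theorem~\ref{thm:justinfinite}), a maximal subgroup of infinite index is never the preimage of a subgroup under a proper quotient, so it must be visible through the tree action. Fix the ray $\xi=111\cdots$ fixed by an infinite-order directed generator; here the non-torsion hypothesis is essential, since for a torsion {\v{S}}uni{\'c} group Pervova's method shows no infinite-index maximal subgroup exists, whereas the infinite-order generator yields along $\xi$ a $\ZZ$-valued datum recording the ``accumulated rotation'' of an element of the ray stabilizer $\St_G(\xi)$. For each odd prime $p$ I would assemble from this datum together with the rigid stabilizers $\rist_G(\cdot)$ transverse to $\xi$ a subgroup $M_p<G$, necessarily strictly larger than $\St_G(\xi)$ (which is not maximal) and reflecting the odd prime exactly as the index-$p$ maximal subgroups $\langle r^{p},s\rangle$ of the infinite dihedral group $\langle r,s\rangle$ use that $2$ is invertible modulo $p$; one then checks $M_p$ is finitely generated (a genuine point, since ray stabilizers are not) and of infinite index.

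\textbf{Infinite-index maximal subgroups: maximality and classification.} The substance lies in showing (i) each $M_p$ is maximal, and (ii) every infinite-index maximal subgroup of $G$ is conjugate to some $M_p$. For (i): given $M_p<H\le G$, I would use branchness to show that any element of $H\setminus M_p$ forces $H$ to contain a rigid level stabilizer $\rist_G(n)$, hence $H=G$ by just-infiniteness. For (ii): given an infinite-index maximal subgroup $M$, the primitive action of $G$ on $G/M$ is faithful (its kernel is normal, so trivial or of finite index, and the latter is excluded), and I would combine the congruence subgroup property, just-infiniteness and the characteristic block structure of branch-group actions to show that the stabilizer of a point of $G/M$ determines a ray $\eta$ and, through the $\ZZ$-valued datum read along $\eta$, an odd prime $p$ with $M$ conjugate to $M_p$; one also checks the $M_p$ are pairwise non-conjugate. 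Countability is then immediate: the finitely many finite-index maximal subgroups together with the conjugates of the countably many $M_p$ exhaust all maximal subgroups, and as each $M_p$ is non-normal (otherwise it would have finite index) it has infinitely many conjugates, so the set is countably infinite. I expect (i) and (ii) to be the main obstacles: since $G$ has no free subgroup, Margulis--Soifer-type techniques are unavailable, and one must instead control the lattice of overgroups of a given subgroup directly via the branch action.
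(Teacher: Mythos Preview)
Your finite-index paragraph is essentially the paper's argument (Proposition~\ref{prop:max_fin_index}), though there is a factual slip: the Grigorchuk--Erschler group is $G_{2,x^2+1}$, so $m=2$, not $m=1$; the only $m=1$ group on the binary tree is the infinite dihedral group, which is explicitly excluded. The abelianisation computation you allude to is Proposition~\ref{prop:AbelianisationGIsAB}.

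The genuine gaps are in the infinite-index part. First, your construction is not pinned down. The paper's subgroups are completely explicit: $H(q)=\langle (ab)^q,\,B\rangle$, where $b\in B$ is the unique element with $b=(a,b)$ (guaranteed by Corollary~\ref{cor:contain_dihedral}). Your description via ``a $\ZZ$-valued datum along $\xi$ together with rigid stabilizers transverse to $\xi$'' does not specify a subgroup, and in fact $H(q)$ contains no nontrivial rigid vertex stabilizer (if it did it would have finite index). The dihedral analogy $\langle r^p,s\rangle$ is exactly the right intuition --- that is what $H(q)$ is, with $B$ playing the role of $s$ --- but without the explicit candidate, the subsequent steps cannot start.

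Second, your maximality strategy --- show that $g\notin M_p$ forces $\langle M_p,g\rangle\supseteq\rist_G(n)$ --- is not the paper's route, and I do not see how to execute it directly: producing an entire rigid level stabilizer inside the overgroup is a very strong conclusion with no obvious mechanism. The paper goes the other way. A contraction/length-reduction argument (Lemmas~\ref{lem:HqReducedLength}--\ref{lem:HqcontainsDivisors}) shows that some \emph{projection} $\langle H(q),g\rangle_{\boldsymbol{1}^n}$ equals $G$; one then invokes a nontrivial Pervova-type result (Proposition~\ref{prop:PervovaProperProjection}) stating that projections of proper dense subgroups remain proper, which forces $\langle H(q),g\rangle=G$. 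The classification (Theorem~\ref{thm:allmaxareconjtoH}) is built on the same machinery: further length-reduction arguments (Lemmas~\ref{lemma:ThetaMapConvergence}--\ref{lemma:HContains(b)AndAPowerOf(ab)}, Proposition~\ref{prop:DenseSubgroupsProjectToSomeHq}) show every maximal subgroup of infinite index projects to some $H(q)$ at some vertex, and a rigidity lemma (Lemma~\ref{lem:maximalswithsameprojsareequal}, via normalisers) says two such maximal subgroups with identical level-$n$ projections coincide. Your proposal to extract a ray $\eta$ from the primitive action on $G/M$ is not close to this and is not developed enough to assess.
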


The finitely generated subgroups parametrised by all odd primes are described in Section \ref{sec:dense}. 
Moreover, we show in Proposition \ref{prop:HqConjugatetoG} that these subgroups are not only isomorphic but in fact conjugate to $G$ in the full automorphism group of the binary tree.
Thus we give a complete description of all maximal subgroups of these {\v{S}}uni{\'c}  groups. 

The requirement that there be only countably infinitely many maximal subgroups seems quite strong. 
Other examples of finitely generated groups that satisfy it are Tarski monsters and some 3-soluble groups%
\footnote{This fact was pointed out to us by Cornulier.}
constructed by Hall in \cite{Hall} and considered by Cornulier in \cite{Cornulier}.

The theorem is a combination of several results. 
The number of maximal subgroups of finite index is obtained in Proposition \ref{prop:max_fin_index}.
Theorem \ref{thm:proper} yields countably infinitely many finitely generated subgroups of infinite index in $G$  which are contained in different maximal subgroups of infinite index.
Theorem \ref{thm:maximal_proof} states that these finitely generated subgroups are actually maximal.
Finally, by Theorem \ref{thm:allmaxareconjtoH}, each maximal subgroup of infinite index in $G$ is a conjugate in $G$ of one of the subgroups in Theorem \ref{thm:proper}.

Our approach follows similar principles to those of previous results on maximal subgroups in that it uses the profinite topology. 
Recall that a subgroup $H\leq G$ is dense in the profinite topology if $HN=G$ for every finite index normal subgroup $N$ of $G$. 
A proper subgroup $H$ which is dense in the profinite topology must be of infinite index (otherwise, there exists some finite index normal subgroup $N$ of $G$ contained in $H$, and so $HN=H\neq G$). 
Since all the groups we consider are finitely generated, any proper dense subgroup is contained in a maximal one, which is a fortiori dense and therefore of infinite index. 

Therefore, in order to show that the groups to which the theorem above applies are not in $\MF$, it suffices to find proper subgroups which are dense in the profinite topology.
This is a priori quite difficult as we would need to know every finite quotient of the group. 
However, it turns out that it suffices to consider the obvious finite quotients, those by level stabilizers.
A group of rooted tree automorphisms is said to have the congruence subgroup property if every finite index subgroup contains some level stabilizer.
We prove that, except for the degenerate case of the infinite dihedral group, all {\v{S}}uni{\'c} groups (for all primes $p$) have this property.

\begin{thm}\label{thm:csp}
 Let $G$ be a {\v{S}}uni{\'c} group which is not the infinite dihedral group. 
 Then $G$ has the congruence subgroup property: every finite index subgroup of $G$ contains some level stabilizer $\St_G(n)$. 
\end{thm}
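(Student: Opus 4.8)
The plan is to prove the congruence subgroup property by the standard two-step strategy for branch groups: first show that the rigid stabilizers $\rist_G(n)$ of the levels have finite index in $G$ (which holds because $G$ is branch), and then show that every finite index subgroup of $G$ contains the derived subgroup of some rigid level stabilizer, i.e.\ that $\rist_G(n)' \supseteq \St_G(m)$ for suitable $m$ depending on $n$. The key reduction, which goes back to Grigorchuk and Pervova, is that it suffices to prove the \emph{branch} version of the congruence subgroup property: if $N \trianglelefteq G$ has finite index, then $N$ contains $\rist_G(n)'$ for some $n$. Combined with the fact (true for all \v{S}uni\'c groups, since they are branch and just infinite by Theorem \ref{thm:justinfinite}) that $\rist_G(n)'$ itself contains a level stabilizer, this yields the theorem.

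First I would recall the self-similar structure: a \v{S}uni\'c group $G$ acting on the $p$-ary tree is generated by a "rooted" copy of a cyclic group $A \cong \ZZ/p\ZZ$ (acting by permuting the subtrees at the root) together with a "directed" abelian group $B$ whose elements $b$ decompose as $b = (\rho(b), \dots, \rho(b), \omega(b), b)$ along the rightmost branch, where $\rho\colon B \to A$ and $\omega\colon B \to A$ are the homomorphisms encoded by the defining polynomial. The first step is a careful analysis of the abelianization $G/G'$ and, more importantly, of the quotient $\St_G(1)/\St_G(1)'$ as a module, to control which normal subgroups can arise. One then argues that $G' \supseteq \St_G(k)$ for some small $k$ (using that $G$ is just infinite, so $G'$ has finite index, hence — once we know CSP-type control — contains a level stabilizer; more precisely one shows directly that $\gamma_3(G)$ or $G'$ contains $\rist_G(n)'$ for some $n$).

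The heart of the argument, and the main obstacle, is showing that for every finite index normal subgroup $N \trianglelefteq G$ there is an $n$ with $\rist_G(n)' \leq N$. Here the non-torsion case is genuinely harder than Pervova's torsion case: the directed generators have infinite order, so one cannot use torsion relations to force elements into $N$, and the module $B$ may be larger. The strategy is: since $[G:N] < \infty$, the subgroup $N$ contains $\St_G(m)$-conjugates enough to see, in some section $\rist_G(v) \cong G$ (for $v$ a vertex at a suitable level), a finite index normal subgroup; by induction on the index one reduces to showing that $N \cap \rist_G(v)$ is nontrivial for enough vertices $v$, and then uses the branch action (the fact that $\rist_G(n)$ acts "independently" on the subtrees) together with the explicit commutator relations among the generators $a \in A$ and $b \in B$ to generate all of $\rist_G(n)'$. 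Concretely, the commutators $[b, a^b]$ and their translates, together with the fact that $\rho, \omega$ are chosen (via the polynomial) so that $B$ together with its images generates enough of the structure, should let one produce, inside $N$, a full set of generators of $\rist_G(n)'$ once $n$ is large enough relative to $[G:N]$. The degenerate infinite dihedral case is excluded precisely because there $B$ is too small and the rigid stabilizers are trivial.

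Finally, I would assemble the pieces: given any finite index subgroup $H \leq G$, replace it by its normal core $N = \Core_G(H)$, which still has finite index; apply the branch congruence property to get $\rist_G(n)' \leq N \leq H$; and then invoke just-infiniteness (Theorem \ref{thm:justinfinite}) — since $\rist_G(n)'$ is a nontrivial normal subgroup (nontrivial because $G$ is non-degenerate, i.e.\ not infinite dihedral), it has finite index, so $\St_G(N)/\rist_G(n)'$-type arguments, or directly the branch structure, force $\St_G(m) \leq \rist_G(n)'$ for some $m \geq n$. Hence $\St_G(m) \leq H$, as required. I expect the bulk of the writing to be in the module-theoretic bookkeeping for $\St_G(1)/\St_G(1)'$ and in the inductive step producing generators of $\rist_G(n)'$ inside a general finite index normal subgroup; the rest is formal.
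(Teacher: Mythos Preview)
Your overall framework is right and matches the paper: reduce to finite-index normal $N$, use Grigorchuk's general fact that every nontrivial normal subgroup of a level-transitive group contains $\rist_G(n)'$ for some $n$, and then show $\rist_G(n)'$ contains a level stabiliser. But the proposal has a genuine circularity and misidentifies where the work lies.

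The circularity is in your final step. You write: ``invoke just-infiniteness (Theorem~\ref{thm:justinfinite}) --- since $\rist_G(n)'$ is a nontrivial normal subgroup \dots\ it has finite index, so \dots\ force $\St_G(m)\leq\rist_G(n)'$.'' But ``finite index'' does not give ``contains a level stabiliser''; that implication \emph{is} the congruence subgroup property you are trying to prove. Moreover, in the paper just-infiniteness (Theorem~\ref{thm:justinfinite}) is not an independent input: it is obtained \emph{simultaneously} with CSP from the same computation (see Remark~\ref{prop:branch_csp}). So you cannot appeal to it here.

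What the paper actually does for the hard step $\rist_G(n)'\geq\St_G(m)$ is a concrete commutator calculation, not module-theoretic bookkeeping or an induction on $[G:N]$. The point is that $G$ is regular branch over an explicit subgroup $H$ ($H=G'$ for $p$ odd, $H=K=\langle[a,b]\mid b\in B_1\rangle^G$ for $p=2$, $m\geq2$; Proposition~\ref{prop:regular_branch}), so $\rist_G(n)'\geq\psi_n^{-1}(H'\times\cdots\times H')$, and it suffices to prove $H'\geq\St_G(k)$ for some fixed $k$. For odd $p$ one shows $\psi(G'')\geq\gamma_3(G)^p$ and $\psi(\gamma_3(G))\geq(G')^p$ by hand (Lemmas~\ref{G''>gamma} and~\ref{gamma>G'}), then uses $G'\geq\St_G(m+1)$ from \v{S}uni\'c. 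For $p=2$ one first shows some projection $K_v$ contains $\langle a,B_0\rangle$ (Lemma~\ref{K>B0}) and then that $[K,\langle a,B_0\rangle]$ contains $K\times K$ after $\psi$ (Lemma following). These are explicit identities in the generators, and they are the substance of the proof; your proposal does not indicate any mechanism that would produce them.

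Two smaller points. Your middle paragraphs aim to re-establish ``$N\supseteq\rist_G(n)'$'' by induction on the index and analysis of $\St_G(1)/\St_G(1)'$; this is unnecessary, as that containment is a general fact for level-transitive groups (cited in Remark~\ref{prop:branch_csp}) and needs no \v{S}uni\'c-specific input. And the reference to ``Pervova's torsion case'' is off target: Pervova's work concerns maximal subgroups, not the congruence subgroup property; the torsion/non-torsion distinction plays no role in the CSP argument here.
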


The congruence subgroup property implies that in order to show that $H\leq G$ is dense in the profinite topology it suffices to show that $H\St_G(n)=G$ for every $n\in\NN$.
That is, that $H$ has the same action as $G$ on every level of the tree. 
We then use Proposition \ref{PH}, which is a nice application of Bézout's Lemma due to P.-H. Leemann, 
to obtain (in Corollary \ref{cor:dense}) for each prime $q$ different from $p$ a finitely generated subgroup $H(q)$ of $G$ which is dense in the profinite topology.
It therefore suffices to show that each $H(q)$ is a proper subgroup. 
We do this for  non-torsion {\v{S}}uni{\'c} groups acting on the binary tree in Section \ref{sec:proper}, obtaining Theorem \ref{thm:proper}, by examining their action on the boundary of the tree.
Since each $H(q)$ is contained in a maximal subgroup, and the product of two such subgroups is the whole group, we obtain countably many distinct maximal subgroups of infinite index.

Let us remark that if $p$ is odd, then $H(q)$ is not necessarily a proper subgroup.
Indeed, the Fabrykowski--Gupta group \cite{FabrykowskiGupta1} is a non-torsion {\v{S}}uni{\'c} group without proper dense subgroups in the profinite topology.
(The proof of this fact is part of a work in progress and will be written up elsewhere). 
It thus provides an example of a non-torsion group of intermediate growth in $\MF$.
Of course, in the case of torsion {\v{S}}uni{\'c} groups, the subgroups $H(q)$ are never proper.
An answer to the following question would be very interesting.  
\begin{qu}
Are all torsion \v{S}uni\'{c} groups in $\MF$?
\end{qu} 

As a consequence of Theorem \ref{thm:csp}, the non-torsion {\v{S}}uni{\'c} groups (the criterion for torsion is given in \cite{Sunic}, quoted here as Proposition \ref{prop:torsion_criterion}),
are examples of finitely generated residually finite non-torsion groups all of whose finite quotients are $p$-groups
(i.e. the profinite and pro-$p$ completions coincide). 
Such examples were also obtained in \cite{AlcoberGarridoUria} by showing that all Grigorchuk--Gupta--Sidki groups (except a degenerate example) have the congruence subgroup property.

The proof of Theorem \ref{thm:csp} moreover yields that all {\v{S}}uni{\'c} groups which are not isomorphic to the infinite dihedral group are just infinite. 
Since the dihedral group is also just infinite we obtain:
\begin{thm}\label{thm:justinfinite}
 All {\v{S}}uni{\'c} groups are just infinite.
\end{thm}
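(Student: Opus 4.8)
The plan is to separate off the degenerate case and then combine the branch structure with the congruence subgroup property. If $G$ is the infinite dihedral group it is just infinite: a nontrivial normal subgroup $N$ either lies in the index-two translation subgroup $\ZZ$, where it must be $k\ZZ$ for some $k\geq 1$, or contains a reflection, in which case $N\cap\ZZ$ cannot be trivial (a two-element subgroup generated by a reflection is not normal in $D_\infty$) and is again some $k\ZZ$; in either case $[G:N]<\infty$. This case is already recorded in the excerpt, so assume henceforth that $G$ is a \v{S}uni\'{c} group not isomorphic to the infinite dihedral group. Then $G$ is a branch group acting on the $p$-regular rooted tree $T$ by \cite[Theorem 1]{Sunic}, and $G$ has the congruence subgroup property by Theorem \ref{thm:csp}. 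It suffices to show that every nontrivial normal subgroup of $G$ has finite index.

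Here I would invoke the standard fact about branch groups that every nontrivial normal subgroup $N\trianglelefteq G$ contains $\rist_G(n)'=\prod_{|v|=n}\rist_G(v)'$ for some $n\geq 1$: choosing $1\neq g\in N$ and a vertex at which $g$ acts nontrivially, forming commutators of $g$ with elements of the rigid stabilizer of that vertex, and using that rigid stabilizers of distinct vertices of a given level commute and generate their (finite-index) direct product, one traps a whole $\rist_G(n)'$ inside $N$ after conjugating by $G$. Thus it is enough to prove that $\rist_G(n)'$ has finite index in $G$ for every $n$; since $\rist_G(n)=\prod_{|v|=n}\rist_G(v)$ and $\rist_G(n)$ has finite index in $G$, this reduces to showing that $\rist_G(v)/\rist_G(v)'$ is finite for every vertex $v$.

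The remaining point is the arithmetic core, and it is where the explicit structure of \v{S}uni\'{c} groups, worked out in the course of proving Theorem \ref{thm:csp}, enters: such a $G$ is a \emph{regular} branch group over an explicitly described finite-index normal subgroup $K$. Iterating the first-level branching, every $\rist_G(v)$ contains a normal copy $K_v$ of $K$ with $[\rist_G(v):K_v]\leq[G:K]<\infty$. Then $\rist_G(v)/K_v'$ is an extension of the finite group $\rist_G(v)/K_v$ by $K_v/K_v'\cong K/K'$, hence is finite provided $K/K'$ is finite; and since $K_v'\leq\rist_G(v)'$ this forces $\rist_G(v)/\rist_G(v)'$ to be finite as well. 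The one fact still needed, namely that $K/K'$ is finite, is a finite computation with the explicit generators of $K$; this verification, together with the identification of $K$ and the proof that $G$ is regular branch over it, is the main obstacle, and it is precisely the extra content that the proof of Theorem \ref{thm:csp} supplies (which is why that proof ``moreover yields'' the present statement). Granting it, $\rist_G(v)/\rist_G(v)'$ is finite for every $v$, so $[G:\rist_G(n)']<\infty$ for every $n$, and therefore every nontrivial normal subgroup of $G$ has finite index. Together with the dihedral case, this shows that all \v{S}uni\'{c} groups are just infinite.
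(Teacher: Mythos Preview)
Your argument is correct and rests on the same core fact as the paper: that the derived subgroup of the branching subgroup ($K'$ for $p=2$, $G''$ for odd $p$) has finite index in $G$, which is exactly what Lemmas \ref{gamma>G'} and the lemma following Lemma \ref{K>B0} establish by showing these subgroups contain an explicit level stabilizer. The paper packages this slightly more directly via Remark \ref{prop:branch_csp}: rather than arguing that each $\rist_G(v)/\rist_G(v)'$ is finite, it observes the chain $N \geq \rist_G(n)' \geq \psi_n^{-1}(K'\times\dots\times K') \geq \St_G(m+n)$, which simultaneously yields both the congruence subgroup property and just-infiniteness. Your detour through $[\rist_G(v):K_v]\leq[G:K]$ and $K/K'$ finite is equivalent but slightly less streamlined; note also that calling the finiteness of $K/K'$ ``a finite computation with the explicit generators'' undersells it---it is the substantive content of Section \ref{sec:csp}, not a routine check.
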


\begin{cor}
 Let $G$ be as in Theorem \ref{thm:maximal}, then $G$ is a primitive group; that is, the primitive action of $G$ on the cosets of any of its maximal subgroups of infinite index is faithful. 
\end{cor}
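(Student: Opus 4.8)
The plan is to identify the kernel of the relevant action and show it must be trivial using just-infiniteness. Let $M$ be a maximal subgroup of infinite index in $G$; such subgroups exist, and in fact there are countably infinitely many of them, by Theorem \ref{thm:maximal}. The action of $G$ on the coset space $G/M$ is transitive, and it is primitive precisely because $M$ is maximal: the blocks containing the trivial coset $M$ in a $G$-invariant partition of $G/M$ correspond to subgroups lying between $M$ and $G$, of which there are only the two trivial ones. So primitivity is automatic, and the real content of the statement is faithfulness.

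To prove faithfulness, recall that the kernel of the action of $G$ on $G/M$ is the normal core $\Core_G(M)=\bigcap_{g\in G} gMg^{-1}$, which is a normal subgroup of $G$ contained in $M$. First I would observe that, since $M$ has infinite index in $G$, so does $\Core_G(M)$. Now I invoke Theorem \ref{thm:justinfinite}: $G$ is just infinite, meaning that every proper quotient of $G$ is finite, equivalently that every nontrivial normal subgroup of $G$ has finite index. (Here one uses that $G$ is itself infinite, which holds because it is a branch group of intermediate growth.) A nontrivial normal subgroup of infinite index is therefore impossible, so $\Core_G(M)$ must be trivial. Hence the action of $G$ on $G/M$ is both faithful and primitive, and since this holds for at least one (indeed infinitely many) maximal subgroup of infinite index, $G$ is a primitive group.

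There is essentially no obstacle here: the corollary is a formal consequence of Theorem \ref{thm:justinfinite} together with the elementary fact that the kernel of a coset action is the core of the point stabiliser. The only point requiring a moment's care is to note that $\Core_G(M)$ inherits infinite index from $M$, so that just-infiniteness forces it to be trivial rather than merely of finite index.
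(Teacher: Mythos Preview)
Your argument is correct and is exactly the intended one: the paper does not spell out a proof of this corollary because it follows immediately from Theorem~\ref{thm:justinfinite} via the observation that $\Core_G(M)$ is a normal subgroup of infinite index, hence trivial. There is nothing to add.
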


The theorem also has consequences for the Frattini subgroup. 
Recall that the Frattini subgroup $\Phi(\Gamma)$ of a group $\Gamma$ is the intersection of all maximal subgroups of $\Gamma$
(and defined to be $\Gamma$ if $\Gamma$ has no maximal subgroups). 
It is a characteristic subgroup and coincides with the set of non-generators of $\Gamma$; that is, those elements that are redundant in any generating set.
Thus the rank of $\Gamma$ is equal to the rank of $\Gamma/\Phi(\Gamma)$. 
For instance, if $\Gamma$ is nilpotent then $\Phi(\Gamma)$ contains the commutator subgroup  and so the rank of $\Gamma$ is the rank of its abelianization. 
The same holds for the  known examples of branch groups in $\MF$: 
since they are $p$-groups or have the congruence subgroup property (or both), all maximal subgroups have index $p$ and therefore contain the commutator subgroup. 
(Indeed, all known examples of branch groups in $\MF$ are in $\mathcal{MN}$, the class of groups all of whose maximal subgroups are normal, see \cite{Aglaia}.)
On the other hand, Theorems \ref{thm:maximal} and \ref{thm:justinfinite} immediately yield the following.
\begin{cor}
 The Frattini subgroup of any group as in Theorem \ref{thm:maximal} is trivial. 
\end{cor}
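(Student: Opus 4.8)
The plan is to combine the description of the maximal subgroups given in Theorem~\ref{thm:maximal} with the just infiniteness of $G$ established in Theorem~\ref{thm:justinfinite}. Recall that $\Phi(G)$, being the intersection of all maximal subgroups of $G$, is a characteristic subgroup of $G$, and in particular normal.

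First I would use the second half of Theorem~\ref{thm:maximal}: the group $G$ admits a maximal subgroup $M$ of infinite index (in fact countably infinitely many of them, the conjugates of the subgroups $H(q)$ for $q$ an odd prime). Since $\Phi(G)$ is contained in every maximal subgroup, we have $\Phi(G)\leq M$, and therefore $[G:\Phi(G)]\geq[G:M]=\infty$. Thus $\Phi(G)$ is a normal subgroup of infinite index in $G$ (in particular $\Phi(G)\neq G$).

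Then I would invoke Theorem~\ref{thm:justinfinite}: since $G$ is just infinite, every nontrivial normal subgroup of $G$ has finite index. As $\Phi(G)$ is normal and of infinite index, it must be trivial, which is the claim.

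There is essentially no obstacle here: the corollary is a direct formal consequence of the two cited theorems, and the argument is short. The only point that genuinely requires the hypotheses of Theorem~\ref{thm:maximal} (rather than just those of Theorem~\ref{thm:csp}) is the existence of \emph{at least one} maximal subgroup of infinite index; without it one could only conclude that $\Phi(G)$ has finite index, consistent with what is expected for the torsion \v{S}uni\'{c} groups, where all maximal subgroups have index $p$ and $\Phi(G)$ contains the commutator subgroup.
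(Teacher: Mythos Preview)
Your argument is correct and is exactly the one the paper intends: the authors simply state that the corollary follows immediately from Theorems~\ref{thm:maximal} and~\ref{thm:justinfinite}, and your two-step reasoning (a maximal subgroup of infinite index forces $\Phi(G)$ to have infinite index, and just infiniteness then forces $\Phi(G)=1$) is precisely the unpacking of that sentence.
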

In this respect, the groups in question are similar to many of the ``geometric'' groups studied in \cite{GelanderGlasner}, 
which were shown there to be primitive and 
whose Frattini subgroups have also been shown to be small 
(see \cite{GelanderGlasner} and references therein). 

It was shown in \cite{BouRabee} that, although some branch groups are in $\MF$, all regular branch groups (see Definition \ref{def:branch_def}) contain uncountably many weakly maximal subgroups
(subgroups which are maximal with respect to being of infinite index). 
Since {\v{S}}uni{\'c} groups are regular branch, this was the motivation for proving Theorem \ref{thm:allmaxareconjtoH},
which implies that the non-torsion ones acting on the binary tree have exactly countably infinitely many maximal subgroups of infinite index. 
Thus there are many more weakly maximal subgroups than maximal ones.

The rest of the paper is organized as follows. 
Section \ref{sec:prelims} contains definitions and basic results on branch groups, self-similar groups and {\v{S}}uni{\'c} groups.
It also contains a description of all rigid stabilizers of \v{S}uni\'c groups acting on the binary tree.  
Section \ref{sec:csp} is dedicated to the proofs of Theorems \ref{thm:csp} and \ref{thm:justinfinite}. 
In Section \ref{sec:dense} we prove Proposition \ref{PH}, define the dense subgroups $H(q)$ and show that they are conjugate to $G$ in $\Aut T$.
We restrict to non-torsion \v{S}uni\'c groups acting on the binary tree in Section \ref{sec:proper}
where we show in Theorem \ref{thm:proper} that each $H(q)$ for $q\geq 3$  odd is a proper subgroup of $G$.
Section \ref{sec:Pervova} concerns more general self-replicating branch groups and contains
 a generalization of a result of Pervova, Proposition \ref{prop:PervovaProperProjection}.
This states that for suitable branch groups acting on the $p$-regular rooted tree, projections of proper dense subgroups are dense and proper. 
This is required in Section \ref{sec:maximal} where we return to the setting of Section \ref{sec:proper} and 
prove Theorem \ref{thm:maximal_proof}: each $H(q)$ is maximal.
Finally, Section \ref{sec:countably} contains the proof of Theorem \ref{thm:allmaxareconjtoH}:
for a non-torsion {\v{S}}uni{\'c} group acting on the binary tree, each maximal subgroup of infinite index is conjugate to some $H(q)$.

\medskip
\textbf{Acknowledgments} The main ideas in this paper developed during the course of a reading group on maximal subgroups of branch groups at the University of Geneva, 
suggested by Tatiana Nagnibeda, while the second author was a postdoc there.
We are grateful to the other participants of the reading group, Paul-Henry Leemann and Aitor P\'{e}rez for valuable ideas and conversations. 
We are also grateful to Tatiana Nagnibeda, Pierre de la Harpe and Laurent Bartholdi for useful comments and discussions and to the two anonymous referees for the many suggestions for improvements in the exposition. 
Both authors were supported by the Swiss National Science Foundation. 
The first author is also supported by the Natural Sciences and Engineering Research Council of Canada 
and the second was supported by the Alexander von Humboldt Foundation. 


\section{Preliminaries}\label{sec:prelims}
 
 \subsection{Regular rooted trees and their automorphism groups}
 For a fixed prime $p$, let $T$ be the regular rooted tree whose vertices are identified with elements of the monoid $X^*$ consisting of finite words over 
 the alphabet $X=\{\boldsymbol{0},\dots,\boldsymbol{p-1}\}$,
 and where two vertices $u$ and $v$ are joined by an edge if $v=ux$ or $u=vx$ for some $x\in X$.
 Notice that the elements $\boldsymbol{0}, \dots, \boldsymbol{p-1}$ of $X$ are written in bold font to avoid confusion. 
For every integer $n\ge 0$, the set $X^n$ of all vertices of length $n$ is called  \emph{level $n$} of $T$. 
We denote by $\Aut T$ the group of automorphisms of $T$.\\

\noindent\textbf{Convention.}
 We will use the convention that $\Aut T$ acts on $T$ on the left.
  Thus, for $g,h\in\Aut T$ and $v$ a vertex of $T$, we have
\[gh(v) = g(h(v)).\]
However, note that, in order to be consistent with most of the literature on the subject, we will use the group-theoretic conventions
\[g^h:=h^{-1}gh \qquad \text{ and } \qquad [g,h]:=g^{-1}h^{-1}gh.\]

We will denote by $\St(v)$ the stabilizer of a vertex $v$ and by $\St(n)$ the stabilizer of all vertices in $X^n$. 
If $G$ is a subgroup of $\Aut T$, 
we define $\St_G(n)$ and $\St_G(v)$ as the intersection with $G$ of the corresponding stabilizer in $\Aut T$.

Each vertex $v$ of $T$ is the root of a tree $T_v$ which is isomorphic to $T$ and which we henceforth identify with $T$.
So, for any $g\in \Aut T$, there is a unique $g_v\in \Aut T$ such that
\[g(vw) = g(v)g_v(w)\]
for all $w\in T$.
 We will call the automorphism $g_v$ the \emph{projection} of $g$ at $v$ (in the literature, $g_v$ is also called the \emph{section} or the \emph{state} of $g$ at $v$.)

For $n\in \NN$, let us denote by $\Aut X^n$ the image of $\Aut T$ in $\Sym(X^n)$. It is easy to see that $\Aut X^n= \Sym(X) \wr \dots \wr \Sym(X)$
is the $n$-fold iterated permutational wreath product of $\Sym(X)$. 

For all $n\in \NN$, we have an isomorphism
\begin{align*}
\psi_n\colon \Aut T &\rightarrow \Aut X^n \wr \Aut T\\
g&\mapsto \tau (g_v)_{v\in X^n}.
\end{align*}
In particular, $\psi_0$ is just the identity map on $\Aut T$ and $\psi_1$, which we will henceforth denote by $\psi$ for brevity, is the map
\begin{align*}
\psi\colon \Aut T &\rightarrow \Sym(X) \wr \Aut T \\
g &\mapsto \tau (g_{\boldsymbol{0}}, \dots, g_{\boldsymbol{p-1}}).
\end{align*}
When the permutation $\tau$ is trivial, we will omit it from the notation. Also, when convenient, we will sometimes omit the $\psi$ and write simply
$g = \tau (g_{\boldsymbol{0}}, \dots, g_{\boldsymbol{p-1}}).$
We say that the automorphism $g$ is \emph{rooted} if $g=\tau(1,\dots,1)$; that is, $g$ only permutes (rigidly) the subtrees rooted at level 1.

According to our convention, for any $g=\tau(g_{\boldsymbol{0}},\dots, g_{\boldsymbol{p-1}})$, $h=\sigma(h_{\boldsymbol{0}},\dots, h_{\boldsymbol{p-1}}) \in \Aut T$ we have
\begin{align*}
g^{-1} &=\tau^{-1}(g^{-1}_{\tau^{-1}(\boldsymbol{0})}, \dots, g^{-1}_{\tau^{-1}(\boldsymbol{p-1})})\\
\text{ and }\qquad gh &=\tau\sigma\sigma^{-1}(g_{\boldsymbol{0}},\dots, g_{\boldsymbol{p-1}})\sigma (h_{\boldsymbol{0}},\dots, h_{\boldsymbol{p-1}})\\
 &= \tau\sigma(g_{\sigma(\boldsymbol{0})}h_{\boldsymbol{0}}, \dots, g_{\sigma(\boldsymbol{p-1})}h_{\boldsymbol{p-1}}) 
\end{align*}

For each vertex $v$ of $T$, we can define a homomorphism
\begin{align*}
\varphi_v\colon \St(v) &\rightarrow \Aut T \\
g &\mapsto g_v.
\end{align*}
Notice that the domain of $\varphi_v$ is $\St(v)$, not $\Aut T$. The reason for this is that while the map $g\mapsto g_v$ is well-defined for any $g\in \Aut T$, it is not an endomorphism of $\Aut T$.

Given $G\leq \Aut T$, define the vertex \emph{projection} of $G$ at $v$ by $G_v:=\varphi_v(\St_G(v))\leq \Aut T$. Once again, it is necessary to restrict ourselves to $\St_G(v)$ in order to obtain a subgroup of $\Aut T$. As a consequence, however, note that there might exist some $g\in G$ such that $g_v\notin G_v$.

Notice that in general, for a subgroup $G\leq \Aut T$, an element $g\in G$ and a vertex $v$ of $T$, there is no reason for the vertex projection $g_v$ to also belong to $G$.

\begin{defn}\label{def:self-similar}
A subgroup $G\leq \Aut T$ is \emph{self-similar} if $g_v\in G$ for every vertex $v$ of $T$ and every $g\in G$.
It is \emph{self-replicating} if $G_v=G$ for every vertex $v\in T$.
If $G$ is a self-similar finitely generated group and $l$ is the length function given by a finite generating set, we say that $G$ is \emph{contracting} if there exist $M, n, l_0 \in \NN$ such that
$l(g_v)\leq \frac{l(g)}{2}+M$ 
for all $v\in X^n$ and $g\in G$ such that $l(g)>l_0$.
Whether or not $G$ is contracting does not depend on the choice of the finite generating set (see \cite[Section 2.11]{Nekrashevych:self-similar}). 
\end{defn}
For an arbitrary subgroup $G\leq\Aut T$, the image $\psi_n(\St_G(n))\leq \Aut T\times \dots \times \Aut T$ needs not be a direct product.
\begin{defn}\label{def:branch_def}
	Define $\rist_G(n)$, the rigid stabilizer in $G$ of level $n$, as the largest subgroup of $\St_G(n)$ which maps onto a direct product under $\psi_n$.
	We have
	$$\rist_G(n) = \prod_{v\in X^n} \, \rist_G(v)$$
	where $\rist_G(v)$ is the subgroup of all $g\in\St_G(n)$ such that $\psi_n(g)$ has all coordinates equal to $1$ except, possibly, at position $v$.
	If $G$ acts transitively on all levels of $T$, we say that $G$ is a \emph{branch group} if $|G:\rist_G(n)|<\infty$ for all $n$.
	Branch groups can be more generally defined when the rooted tree $T$ is not regular but level-homogeneous (see \cite[Section 5]{Grigorchuk_branch}).
	
	We say that a self-similar group $G\leq \Aut T$ is a \emph{regular branch group} (over a subgroup $K$) if it acts transitively on all levels of $T$
	and there exists a subgroup $K\leq G$ of finite index such that 
	$\psi(K)\geq K\times \dots \times K. $
	In particular, this implies that a regular branch group is a branch group.
\end{defn}

\subsection{{\v{S}}uni{\'c} groups}
Self-similar subgroups of $\Aut T$ have for many years now provided examples of groups with striking properties.
One of the most well-known examples is the first Grigorchuk group \cite{Grigorchuk_burnside},
whose properties have been generalized in many directions, 
yielding the notions of branch groups, groups generated by bounded automata, groups with endomorphic presentation, etc.
Moreover, it is the first example of a group of intermediate word growth. 
In fact, Grigorchuk produced in \cite{Grigorchuk_intermediate} an uncountable family of groups of intermediate growth, each defined by an infinite sequence $\omega$ on three symbols. 
Up to isomorphism, only two of these groups are self-similar, the first Grigorchuk group, which is torsion, and the group studied by Erschler in \cite{Erschler}, which is not torsion.
In \cite{Sunic}, {\v{S}}uni{\'c} introduced, for each prime $p$, self-similar groups which are close generalizations of the first Grigorchuk group 
(in his words, ``siblings of the first Grigorchuk group, not just some distant relatives''). 
We call each of these examples a \emph{{\v{S}}uni{\'c} group}, and recall their definition and basic properties, on which we will rely heavily,  below. This is essentially the main results of Sections 3 and 5 of \cite{Sunic}.
The family of \v{S}uni{\'c} groups for $p=2$ includes the two self-similar groups in Grigorchuk's uncountable family mentioned above (see Examples \ref{ex:grig} and \ref{ex:erschler}).

Let $p$ be a prime, $m\geq 1$ be an integer, and $A$ and $B$ be, respectively, the abelian groups $\ZZ /p \ZZ$ and $(\ZZ /p\ZZ)^m$  with multiplicative notation. 
We may also at times think of $A$ as the field $\FF_p$ of $p$ elements and of $B$ as an $m$-dimensional vector space over $A$.
Let $\rho:B\rightarrow B$ be an automorphism of $B$ 
 and $\omega: B\rightarrow A$ a surjective homomorphism.
We now define an action of $A$ and $B$ on the $p$-regular rooted tree. 
Let a generator $a$ of $A$ act as the rooted automorphism $\sigma(1,\dots,1)$ 
where $\sigma=(\boldsymbol{0}\; \boldsymbol{1}\; \ldots\; \boldsymbol{p-1})$;
that is, 
$a(xv)=\sigma(x)v$ for all $x\in X$ and $v\in X^*$.
The action of $b\in B$ on $T$ is recursively defined by $\psi(b)=(\omega(b),1,\dots,1,\rho(b))$.

\begin{defn}
Define the group $G_{\omega, \rho}\leq \Aut T$ by $G_{\omega,\rho}=\langle A\cup B\rangle$ with the actions of $A$ and $B$ described above.
\end{defn}
\begin{rem}\label{rem:Contraction}
It is easy to see from the definition that each $G_{\omega, \rho}$ is self-similar.
A straightforward calculation shows that it is also contracting, satisfying $l(g_v)\leq (l(g)+1)/2$ for every element $g$ and $v\in X^1$, where $l$ is the length function induced by the generating set $A\cup B$.
\end{rem}

\begin{prop}[Proposition 2 of \cite{Sunic}]\label{prop:SunicProp2}
 The following are equivalent:
 \begin{enumerate}[label=(\roman*)]
  \item the action of $B$ on $T$ is faithful;
  \item no non-trivial orbit of $\rho$ is contained in $\ker(\omega)$;
  \item no non-trivial $\rho$-invariant subspace of $B$ is contained in $\ker(\omega)$;
  \item $B$ is $\rho$-cyclic, the minimal polynomial $f$ of $\rho$ is
  $f(x)=x^m+a_{m-1x^{m-1}}+\dots+a_1x+a_0$,
  and there is a basis of $B$ with respect to which the matrices of $\rho$ and $\omega$ are given by
  \begin{equation}\label{eqn:rho}
  M_{\rho}=\begin{pmatrix} 
  0 & 0 &\dots &0  &-a_0\\
  1 & 0 &\dots &0  &-a_1\\
  0 & 1 &\dots &0 &-a_2\\
  \vdots &\vdots &\ddots &\vdots &\vdots\\
  0 & 0 &\dots &1 &-a_{m-1}    
   \end{pmatrix} 
   \quad
  M_{\omega}=\begin{pmatrix}
                 0 &0 &\dots &0 &1
                \end{pmatrix}.
  \end{equation}
 \end{enumerate}
\end{prop}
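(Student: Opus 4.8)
The plan is to prove the cycle of implications $(i)\Rightarrow(ii)\Rightarrow(iii)\Rightarrow(iv)\Rightarrow(i)$, working throughout with the recursive description $\psi(b)=(\omega(b),1,\dots,1,\rho(b))$ of the action of $b\in B$.

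First I would establish $(i)\Rightarrow(ii)$ in contrapositive form. Suppose some non-trivial $\rho$-orbit of an element $b\neq 1$ lies inside $\ker\omega$; that is, $\omega(\rho^k(b))=1$ for all $k\geq 0$. I claim $b$ acts trivially on $T$. The key computation is that for any $b'\in\ker\omega$ one has $\psi(b')=(1,1,\dots,1,\rho(b'))$, so the only non-trivial projection of $b'$ sits at the last vertex $\boldsymbol{p-1}$; iterating, the projection of $b$ along the ray $\boldsymbol{p-1}\,\boldsymbol{p-1}\cdots$ at level $n$ is exactly $\rho^n(b)$, which again lies in $\ker\omega$ by hypothesis, while the projections at all other vertices are trivial. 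Hence $b$ fixes every vertex, contradicting faithfulness. For $(ii)\Rightarrow(iii)$ I would argue that if some non-trivial $\rho$-invariant subspace $V\subseteq\ker\omega$ existed, then any $\rho$-orbit inside $V$ (which is non-empty and, taking $b\in V\setminus\{1\}$, non-trivial) would be contained in $\ker\omega$, contradicting $(ii)$; here I use that $V$ being $\rho$-invariant forces $\{\rho^k(b):k\geq 0\}\subseteq V\subseteq\ker\omega$.

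The implication $(iii)\Rightarrow(iv)$ is the linear-algebra heart of the argument and the step I expect to be the main obstacle. Working over $A=\FF_p$, I would first show $B$ is $\rho$-cyclic: decompose $B$ according to the primary/rational canonical form of $\rho$, and observe that if $B$ were not cyclic then one of the $\rho$-invariant summands (or a proper $\rho$-invariant subspace obtained from the invariant-factor decomposition) could be chosen to lie in $\ker\omega$, since $\ker\omega$ is a hyperplane and cannot contain only cyclic subspaces in a non-cyclic module without producing a $\rho$-invariant subspace inside it — this needs a careful argument that a non-cyclic $\FF_p[\rho]$-module has a $\rho$-invariant subspace avoiding no hyperplane, i.e. meeting every hyperplane, which contradicts $(iii)$. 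Granting $\rho$-cyclicity, pick a cyclic vector $v$; then $v,\rho(v),\dots,\rho^{m-1}(v)$ is a basis and, in this basis, $\rho$ has the companion-matrix form $M_\rho$ displayed in \eqref{eqn:rho} with $a_0,\dots,a_{m-1}$ the coefficients of the minimal polynomial $f$ (which equals the characteristic polynomial by cyclicity). It remains to normalise $\omega$: since $\omega$ is a non-zero functional, $\ker\omega$ is a hyperplane, and I must show the cyclic vector $v$ can be chosen so that $\omega$ reads off the last coordinate, i.e. $\omega(v)=\omega(\rho(v))=\dots=\omega(\rho^{m-2}(v))=1\cdot 0$ appropriately and $\omega(\rho^{m-1}(v))=1$; concretely, condition $(iii)$ says $\ker\omega$ contains no non-trivial $\rho$-invariant subspace, and the unique $\rho$-invariant subspaces of a cyclic module are the kernels of $g(\rho)$ for $g\mid f$, so $\ker\omega$ containing none of them (other than $0$) pins down, after rescaling, that $\omega$ must be the functional picking out the top coefficient in the cyclic basis, giving $M_\omega=(0\ \cdots\ 0\ 1)$.

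Finally, $(iv)\Rightarrow(i)$: with $\rho,\omega$ in the stated normal form, I would show directly that $\ker\omega$ contains no non-trivial $\rho$-invariant subspace (the only $\rho$-invariant subspaces are $\ker g(M_\rho)$ for monic $g\mid f$, and one checks each non-zero such subspace contains a vector on which $M_\omega$ is non-zero, using that a non-zero $\rho$-invariant subspace of a cyclic module contains the unique minimal one $\ker(f/g_{\min})(M_\rho)$ whose cyclic generator has non-zero last coordinate), hence no non-trivial $\rho$-orbit lies in $\ker\omega$, and then reverse the argument of $(i)\Rightarrow(ii)$: if $1\neq b$ acted trivially on $T$, then tracking projections along rays as above would force $\omega(\rho^n(b'))$ to vanish for the relevant successive projections $b'$, ultimately exhibiting a non-trivial $\rho$-orbit in $\ker\omega$ (or, more carefully, an induction on the first level at which $b$ moves a vertex yields such an orbit), a contradiction. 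The bookkeeping of which projections survive along which rays, and packaging the "trivial action" hypothesis into a statement about $\rho$-orbits, is the delicate part of this direction.
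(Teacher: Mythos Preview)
The paper does not actually prove this proposition: it is quoted verbatim as ``Proposition~2 of \cite{Sunic}'' and no argument is given, so there is nothing in the present paper to compare your proposal against. That said, your outline is essentially a correct proof of the result.

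A few comments on the details. Your arguments for $(i)\Rightarrow(ii)$ and $(ii)\Rightarrow(iii)$ are clean and correct. For $(iv)\Rightarrow(i)$, the cleanest route is exactly the one you indicate at the end: show directly that in the normal form no non-zero $\rho$-invariant subspace lies in $\ker\omega$ (if $v=\sum_{i\le k}c_ib_i$ with $c_k\neq 0$ and $k<m-1$, then $\rho^{m-1-k}(v)$ has non-zero $b_{m-1}$-coordinate), and then observe that an element $b$ acting trivially on $T$ forces $\omega(\rho^n(b))=1$ for all $n$, hence the $\rho$-invariant subspace it generates lies in $\ker\omega$, so $b=1$.

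The only place your write-up is genuinely sketchy is $(iii)\Rightarrow(iv)$. Your phrasing ``a non-cyclic $\FF_p[\rho]$-module has a $\rho$-invariant subspace avoiding no hyperplane'' is garbled; what you need, and what is true, is the dual statement: the annihilator in $B$ of the $\rho^*$-submodule of $B^*$ generated by $\omega$ is precisely the largest $\rho$-invariant subspace contained in $\ker\omega$, so $(iii)$ says exactly that $\omega$ is a cyclic vector for $\rho^*$. This immediately gives that $B^*$ (hence $B$) is $\rho$-cyclic and that $\omega,\omega\rho,\dots,\omega\rho^{m-1}$ is a basis of $B^*$. Then the correct cyclic vector is not arbitrary: take $b_0\in B$ determined by $\omega(\rho^j(b_0))=\delta_{j,m-1}$ for $0\le j\le m-1$ (the appropriate dual-basis element), and set $b_i=\rho^i(b_0)$. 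One checks these are independent, that $M_\rho$ is the companion matrix of the minimal polynomial, and that $M_\omega=(0\;\cdots\;0\;1)$ by construction. Your justification via ``$\ker\omega$ containing none of the $\ker g(\rho)$'' does not by itself single out $\omega$ up to the desired form; the duality argument is what does the work.
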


Since we want to consider subgroups of $\Aut T$,  we only consider groups for which the above faithfulness condition holds. 
\begin{defn}[{\v{S}}uni{\'c} group]\label{def:SunicGroups}
Let $p$ be a prime and $f(x)=x^m+a_{m-1}x^{m-1}+\dots+a_1x+a_0$ an invertible polynomial over $\FF_p$ (i.e., $a_0\neq 0$). 
 The group $G_{p,f}$ is the group $G_{\omega,\rho}$ where $\omega$ and $\rho$ are given by \eqref{eqn:rho}.
 
 For each $i\in \ZZ$, put $B_i:=\rho^i(\ker(\omega))$. 
 Then the above definition implies that there is a sequence of elements $b_0,b_1,\dots,b_{m-2}\in B_0$ and an element $b_{m-1}\in B_1\setminus B_0$ such
that
\begin{align*}
 b_0 &= (1,\dots,1,b_1), \quad  b_1 = (1,\dots,1,b_2), \quad	  \dots, \quad
 b_{m-2} = (1,\dots,1,b_{m-1}),\\
 b_{m-1}&=(a,\dots,1,\rho(b_{m-1}))
\end{align*}
where $\rho(b_{m-1})=b_0^{-a_0}b_1^{-a_1}\cdots b_{m-1}^{-a_{m-1}}$.

Note that $B=\langle b_0,\dots, b_{m-1}\rangle$ while 
 $B_0 = \ker(\omega)=\langle b_0,\dots,b_{m-2}\rangle, \, B_1=\rho(B_0)=\langle b_1,\dots,b_{m-1}\rangle$.
\end{defn}
 
 \begin{example}\label{ex:dihedral}
 The group $G_{2,x+1}$ is the infinite dihedral group, generated by $a$ and $b=(a,b)$.
 \end{example}
 \begin{example}\label{ex:grig}
 The group $G_{2,x^2+x+1}$ is the first Grigorchuk group. 
 \end{example}
 \begin{example}\label{ex:erschler}
 The group $G_{2,x^2+1}$ is the other self-similar group in the uncountable family defined in \cite{Grigorchuk_intermediate}. 
 It was studied in \cite{Erschler} and it is not torsion.
  The standard matrices of $\rho$ and $\omega$ are 
  $$M_{\rho}=\begin{pmatrix}
     0 & 1\\
     1 & 0
    \end{pmatrix} 
    \quad
    M_{\omega}=(0,1)$$
    giving generators $a, b_0=(1,b_1),b_1=(a,b_0)$.
    Notice that $\langle a, b_0b_1=(a,b_0b_1)\rangle$ is isomorphic to the infinite dihedral group. 
 \end{example}
 \begin{example}
 The group $G_{3,x-1}$ is the Fabrykowski--Gupta group introduced in \cite{FabrykowskiGupta1}. It is generated by $a$ and $b=(a,1,b)$. 
 It was shown to be of intermediate growth in \cite{FabrykowskiGupta2} and \cite{BartholdiPochon}. 
 \end{example}

\begin{notation}
 Denote by $\mathcal{G}_{p,m}$ the family of groups $G_{p,f}$ where $f$ has degree $m$ and 
 by $\mathcal{G}$ the family of all {\v{S}}uni{\'c} groups $G_{p,f}$ for all primes $p$.
\end{notation}

Let us collect some useful results about {\v{S}}uni{\'c} groups.
\begin{prop}[Proposition 3 of \cite{Sunic}]\label{prop:SunicProp3}
 Let $f$ be a monic polynomial, invertible over $\FF_p$, which factors as $f=f_1f_2$,
for some non-constant monic polynomials $f_1, f_2$. 
Then $G_{p,f_i}\leq G_{p,f}$ for $i=1,2$.
\end{prop}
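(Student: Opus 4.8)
The plan is to realise each $G_{p,f_i}$ as the subgroup of $G_{p,f}$ generated by the rooted element $a$ together with a suitable $\rho$-invariant subgroup of $B$. First I would set up coordinates. Using the standard matrices in \eqref{eqn:rho}, identify $B$ with the $\FF_p[x]$-module $\FF_p[x]/(f)$ so that $\rho$ becomes multiplication by $x$ and the generator $b_i$ corresponds to the class of $x^i$; under this identification the homomorphism $\omega\colon B\to A=\FF_p$ sends a polynomial of degree $<m$ to its coefficient of $x^{m-1}$ (consistently with $\ker\omega=B_0=\langle b_0,\dots,b_{m-2}\rangle$, which corresponds to the polynomials of degree $<m-1$). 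Note also that since $f(0)\neq 0$ and $f=f_1f_2$, both $f_1$ and $f_2$ have non-zero constant term, so $G_{p,f_1}$ and $G_{p,f_2}$ are genuine \v{S}uni\'c groups in the sense of Definition \ref{def:SunicGroups}.

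Fix $i\in\{1,2\}$, let $j$ be the other index, and write $m_i=\deg f_i$, so $f=f_if_j$ with $\deg f_j=m-m_i$. The key object is $V_i:=f_j\cdot B\leq B$, a $\rho$-invariant $\FF_p$-subspace of $B$, since multiplication by $f_j$ commutes with multiplication by $x$. Multiplication by $f_j$ gives an $\FF_p[x]$-module isomorphism $\Theta_i\colon \FF_p[x]/(f_i)\xrightarrow{\ \sim\ }V_i$: it is onto by definition, and its kernel is $\{h:f\mid f_jh\}=(f_i)$, so it identifies multiplication by $x$ on the source with $\rho|_{V_i}$ on the target. The one step carrying any content is to check that $\omega$ restricted to $V_i$ corresponds, under $\Theta_i$, to the functional $\omega_i\colon\FF_p[x]/(f_i)\to\FF_p$ sending a polynomial of degree $<m_i$ to its coefficient of $x^{m_i-1}$. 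This holds because $f_j$ is monic of degree $m-m_i$: for $h$ of degree $<m_i$ the product $f_jh$ already has degree $<m$, and a short degree count shows that its coefficient of $x^{m-1}$ equals the coefficient of $x^{m_i-1}$ in $h$. Hence $\Theta_i$ carries the triple $(\FF_p[x]/(f_i),\omega_i,\ \cdot\,x)$ defining $G_{p,f_i}$ exactly onto $(V_i,\omega|_{V_i},\rho|_{V_i})$.

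It then follows that the elements $a\in G_{p,f}$ and $\Theta_i(x^k)=f_jx^k\bmod f\in B\subseteq G_{p,f}$, for $0\le k<m_i$, satisfy precisely the recursions defining the standard generators $a,b_0,\dots,b_{m_i-1}$ of $G_{p,f_i}$ (the first coordinate of $\psi(\Theta_i(x^k))$ is $a^{\omega_i(x^k)}$, the middle ones are trivial, and the last is $\Theta_i(x^{k+1}\bmod f_i)$, which is the compatibility just established). Since a tree automorphism is uniquely determined by such a recursive description, these elements \emph{are} the standard generators of $G_{p,f_i}$, whence $G_{p,f_i}=\langle a,V_i\rangle\leq\langle a,B\rangle=G_{p,f}$, and the same argument applied with the roles of $i$ and $j$ exchanged gives the other inclusion. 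I expect the only delicate point to be the compatibility of $\omega$ with $\Theta_i$ — in particular keeping straight the reduction modulo $f$ on $V_i$ versus modulo $f_i$ on $\FF_p[x]/(f_i)$ and the degree bookkeeping — which works out precisely because $f_j$ is monic of the complementary degree; everything else is formal.
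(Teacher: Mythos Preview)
The paper does not supply its own proof of this proposition; it is quoted verbatim as Proposition~3 of \cite{Sunic} and used as a black box. So there is nothing in the present paper to compare against.

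That said, your argument is correct and is essentially the natural (and presumably \v{S}uni\'c's) proof. The identification $B\cong\FF_p[x]/(f)$ with $\rho$ acting as multiplication by $x$ and $\omega$ reading off the $x^{m-1}$-coefficient is exactly what the companion-matrix description \eqref{eqn:rho} encodes; the $\rho$-invariant subspace $V_i=f_j\cdot B=\ker f_i(\rho)$ is the right object; and your degree count showing $\omega|_{V_i}\circ\Theta_i=\omega_i$ is the crux and is handled cleanly (the only contribution to the $x^{m-1}$-coefficient of $f_jh$ with $\deg h<m_i$ comes from the leading monomial of $f_j$ times the $x^{m_i-1}$-term of $h$). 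The final step---that matching recursive wreath data forces equality in $\Aut T$---is standard for automaton-defined automorphisms and is correctly invoked. One cosmetic point: in the paper's conventions $\omega$ is already $A$-valued (multiplicative), so writing ``$a^{\omega_i(x^k)}$'' mixes additive and multiplicative notation slightly; it would read more smoothly to say the first coordinate is $\omega(\Theta_i(x^k))=\omega_i(x^k)\in A$.
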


\begin{prop}[Proposition 9 of \cite{Sunic}]\label{prop:torsion_criterion}
 Let $G$ be a group in $\mathcal{G}_{p,m}$ with $m\geq2$. The following are equivalent:
 \begin{enumerate}[label=(\roman*)]
  \item $G$ is a $p$-group;
  \item there exists $r$ such that $B_0\cup B_1\cup \dots \cup B_{r-1}=B$;
  \item every non-trivial $\rho$-orbit intersects $B_0=\ker(\omega)$. 
 \end{enumerate}
\end{prop}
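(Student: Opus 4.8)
The plan is to prove $(ii)\Leftrightarrow(iii)$ by elementary linear algebra, and then to establish $(iii)\Rightarrow(i)$ and $\neg(iii)\Rightarrow\neg(i)$ using the self-similar recursion $\psi(b)=(\omega(b),1,\dots,1,\rho(b))$ defining $B$; the torsion direction $(iii)\Rightarrow(i)$ is where the real work lies, and I would follow the scheme of Grigorchuk's proof that the first Grigorchuk group is a $2$-group. For $(ii)\Leftrightarrow(iii)$: since $\rho$ is an automorphism of the finite group $B$ it has finite order $d$, so the sequence $(B_i)_{i\in\ZZ}=(\rho^i(\ker\omega))_{i\in\ZZ}$ is periodic and the unions $\bigcup_{i=0}^{r-1}B_i$ stabilise; hence (ii) says exactly that $\bigcup_{i\in\ZZ}B_i=B$. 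Now $v\in B_i$ if and only if $\rho^{-i}(v)\in\ker\omega$, and $\{\rho^{-i}(v):i\in\ZZ\}$ is precisely the $\rho$-orbit of $v$, so this union equals $B$ if and only if every $\rho$-orbit meets $\ker\omega=B_0$, which is (iii) (the trivial orbit meets $B_0$ automatically).

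For $\neg(iii)\Rightarrow\neg(i)$ I would exhibit an element of infinite order. Picking $\beta\in B\setminus\{1\}$ whose $\rho$-orbit avoids $\ker\omega$ and computing with $\psi$, one finds that $a\beta\notin\St_G(1)$ and that every level-$1$ section of $(a\beta)^p$ is conjugate in $G$ to $\omega(\beta)\rho(\beta)$, which is again of the form $a^k\gamma$ with $\gamma=\rho(\beta)\notin\ker\omega$; iterating, the level-$n$ sections of $(a\beta)^{p^n}$ are conjugate to $\omega(\rho^{n-1}\beta)\rho^n(\beta)\neq 1$, so $(a\beta)^{p^n}\neq 1$ for all $n$. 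Since $a\beta$ acts as a $p$-cycle on level $1$, a short argument with the divisibility of the order then forces $|a\beta|=\infty$, so $G$ is not torsion and in particular not a $p$-group.

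For $(iii)\Rightarrow(i)$ I would first treat the key special case: under (iii), every element $a^k\gamma$ with $\gamma\in B$ has $p$-power order. I would prove this by induction on the least $j\ge 0$ with $\rho^j(\gamma)\in\ker\omega$, which exists by (iii). When $j=0$ the only non-trivial level-$1$ section of $a^k\gamma$ is $\rho(\gamma)\in B$, so every level-$1$ section of $(a^k\gamma)^p$ equals $\rho(\gamma)$ and $(a^k\gamma)^{p^2}=1$. When $j\ge 1$ one has $\gamma\notin\ker\omega$, every level-$1$ section of $(a^k\gamma)^p$ is conjugate in $G$ to $\omega(\gamma)\rho(\gamma)$, which is of the form $a^{k'}\gamma'$ with $\gamma'=\rho(\gamma)$ and corresponding index $j-1$, so $|a^k\gamma|=p\,|a^{k'}\gamma'|$ is a power of $p$ by induction. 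For a general $g\in G$ I would then induct on the word length $\ell(g)$ with respect to $A\cup B$. If $g\in\St_G(1)$, the contraction estimate (Remark~\ref{rem:Contraction}) gives $\ell(g_v)\le(\ell(g)+1)/2<\ell(g)$ for $v\in X^1$ whenever $\ell(g)\ge 2$, so, writing $g=\psi^{-1}(g_{\boldsymbol{0}},\dots,g_{\boldsymbol{p-1}})$ with each $g_v\in G$, the order of $g$ is the lcm of the (inductively, $p$-power) orders of the $g_v$. If $g\notin\St_G(1)$, conjugating to a cyclically reduced word: with no $B$-syllable it is a power of $a$, and with exactly one it falls under the special case above; the remaining case of at least two $B$-syllables is the step I expect to be the main obstacle. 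Here I would argue, along the lines of Grigorchuk's original computation, that replacing $g$ by a level-$1$ section of $g^p$ (using $|g|=p\,|g^p|$) turns each $B$-syllable $\gamma_i$ into $\rho(\gamma_i)$, up to cancellations that only shorten the word, hence does not increase a suitable complexity such as the number of $B$-syllables in a cyclically reduced form, while (iii) forces that complexity to drop strictly after finitely many such steps; combined with the $\St_G(1)$ case and the special case, this closes the induction and shows that $G$ is a $p$-group.
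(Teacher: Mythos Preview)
The paper does not prove this proposition; it is quoted as Proposition~9 of \cite{Sunic} and no argument is given. So there is no in-paper proof to compare against, and I will assess your attempt on its own terms.

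Your equivalence $(ii)\Leftrightarrow(iii)$ is correct, and your argument for $\neg(iii)\Rightarrow\neg(i)$ is essentially complete: each level-$1$ section of $(a\beta)^p$ is a cyclic product of $\omega(\beta),1,\dots,1,\rho(\beta)$, hence equals $\omega(\beta)\rho(\beta)$ or its conjugate $\rho(\beta)\omega(\beta)$, and since $\rho(\beta)$ again has $\rho$-orbit avoiding $\ker\omega$ one iterates to get $(a\beta)^{p^n}\ne 1$ for all $n$; the divisibility argument you indicate then gives $|a\beta|=\infty$.

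For $(iii)\Rightarrow(i)$ your special case $g=a^k\gamma$ and your treatment of $g\in\St_G(1)$ via the contraction estimate are both fine. The real gap is the step you yourself flag. You assert that passing from $g$ to a section of $g^p$ ``does not increase \dots\ the number of $B$-syllables'' and that (iii) forces an eventual strict drop, but neither claim is justified. What one must check is this: writing a cyclically reduced $g=a^{k_0}\beta_1a^{k_1}\cdots\beta_s$ with $g\notin\St_G(1)$, each level-$1$ section of $g^p$ is an alternating word whose $B$-letters are exactly $\rho(\beta_1),\dots,\rho(\beta_s)$, each occurring once, and whose $A$-letters are products of the $\omega(\beta_i)$ and powers of $a$. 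Hence the $B$-syllable count is at most $s$, with equality precisely when no $\beta_i$ lies in $\ker\omega$; in that case the new $B$-syllables are exactly the $\rho(\beta_i)$. Consequently, if the count stays equal to $s$ for $d=\mathrm{ord}(\rho)$ consecutive steps, the $B$-syllables return to $\beta_1,\dots,\beta_s$ with $\rho^j(\beta_i)\notin\ker\omega$ for all $0\le j<d$, contradicting (iii). Thus the $B$-syllable count strictly decreases within $d$ steps, and an induction on that count (together with your special case for $s\le 1$ and the $\St_G(1)$ reduction) finishes the proof. Without this bookkeeping your final paragraph is an outline rather than a proof, though the outline is the right one and is essentially the argument in \cite{Sunic}.
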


\begin{cor}\label{cor:contain_dihedral}
Let $f$ be a monic polynomial invertible over $\FF_2$ and let $G=G_{2,f}$. The following are equivalent :
\begin{enumerate}[label=(\roman*)]
\item \label{item:dihedral1}$G$ contains an element of infinite order;
\item \label{item:dihedral2}there exists $b\in B\setminus \bigcup_{n\in\NN}B_n $ such that $b=(a,b)$;
\item \label{item:dihedral3}$f$ is divisible by $x+1$;
\item \label{item:dihedral4}$G$ contains a copy of the dihedral group.
\end{enumerate}
\end{cor}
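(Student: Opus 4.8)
The plan is to prove the four equivalences by establishing a cycle, say \ref{item:dihedral3} $\Rightarrow$ \ref{item:dihedral2} $\Rightarrow$ \ref{item:dihedral4} $\Rightarrow$ \ref{item:dihedral1} $\Rightarrow$ \ref{item:dihedral3}, exploiting the structure of the $\rho$-orbits on $B$. The starting observation is that for $p=2$ we have $A=\ZZ/2\ZZ=\{1,a\}$, so the only possible form of a self-similar element $b\in B$ with $\omega(b)=a$ and $\rho(b)=b$ is exactly $b=(a,b)$, and such an element generates together with $a$ a quotient of the infinite dihedral group $D_\infty$ (since $a^2=1$ and $b^2=(a,b)(a,b)=(a^2,b^2)=(1,b^2)$, whence $b^2$ fixes the first-level subtrees and one checks inductively using $b^2=(1,b^2)$ that $b^2=1$ would force $b=1$; in fact $b$ has infinite order precisely because $b^{2^n}=(1,\dots)$ never trivialises — I will spell this out below).

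First I would prove \ref{item:dihedral3} $\Rightarrow$ \ref{item:dihedral2}. If $x+1\mid f$, then by Proposition \ref{prop:SunicProp3} (in the case $f=f_1$ with $f_1=x+1$, or more precisely using the inclusion $G_{2,x+1}\leq G_{2,f}$) we get a copy of $G_{2,x+1}=D_\infty$ inside $G$; unravelling the construction, the generator $b$ of $B_1$ associated to the factor $x+1$ is an element of $B$ satisfying $b=(a,b)$. To see that $b\notin\bigcup_n B_n$: the subspaces $B_n=\rho^n(\ker\omega)$ all lie in $\ker\omega$ together with their $\rho$-translates only when the corresponding $\rho$-orbit meets $\ker\omega$; but $\omega(b)=a\neq 1$ means $b\notin\ker\omega=B_0$, and the relation $\rho(b)=b$ means the $\rho$-orbit of $b$ is the singleton $\{b\}$, which therefore does not meet $B_0$, so $b\notin B_n$ for any $n$. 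Conversely, for \ref{item:dihedral2} $\Rightarrow$ \ref{item:dihedral4}: given such a $b$, the subgroup $\langle a,b\rangle$ satisfies $a^2=1$, $b=(a,b)$; I claim $\langle a,b\rangle\cong D_\infty$. The element $ba$ satisfies $\psi(ba)=\psi(b)\psi(a)$, and a direct recursive computation shows $ba$ has infinite order (its projections keep reproducing a nontrivial element), so $\langle a,b\rangle$ is a non-finite quotient of $D_\infty=\langle a,b\mid a^2=b^2=1\rangle$ — but first one must check $b^2=1$, which follows since $\psi(b^2)=(a,b)(a,b)=(1,b^2)$ forces, by the contracting property (Remark \ref{rem:Contraction}) and induction on word length, $b^2=1$. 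The implication \ref{item:dihedral4} $\Rightarrow$ \ref{item:dihedral1} is immediate since $D_\infty$ contains elements of infinite order.

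The main work is \ref{item:dihedral1} $\Rightarrow$ \ref{item:dihedral3}, or equivalently its contrapositive: if $x+1\nmid f$ then $G$ is torsion. I would argue this via Proposition \ref{prop:torsion_criterion}: $G$ is a $2$-group iff every non-trivial $\rho$-orbit meets $B_0=\ker\omega$. A non-trivial $\rho$-orbit failing to meet $\ker\omega$ is, by Proposition \ref{prop:SunicProp2}(ii)-(iii), the same as a non-trivial $\rho$-invariant subspace inside $\ker\omega$ — wait, more carefully: by Proposition \ref{prop:torsion_criterion}(iii) failing, there is a $\rho$-orbit avoiding $B_0$; the span of such an orbit is a $\rho$-invariant subspace. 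Over $\FF_2$, since $\rho$ acts with irreducible-factor structure governed by $f$ (as $B$ is $\rho$-cyclic with minimal polynomial $f$), the cyclic $\FF_2[\rho]$-submodules correspond to divisors of $f$; I would show that a fixed point of $\rho$ (a $\rho$-orbit of size $1$) that avoids $\ker\omega$ exists precisely when $x+1\mid f$ — indeed $\rho(b)=b$ means $(M_\rho - I)b=0$, i.e. $b$ is a $1$-eigenvector, which exists nontrivially iff $\det(M_\rho-I)=f(1)=0$ iff $x+1\mid f$ over $\FF_2$ — and, more work, that any longer $\rho$-orbit of $B$ necessarily contains a vector in $\ker\omega$. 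This last point is the real obstacle: I need that the only way Proposition \ref{prop:torsion_criterion}(iii) can fail is via a genuine fixed vector, i.e. that if $x+1\nmid f$ then every non-trivial $\rho$-orbit hits $\ker\omega$. I expect to handle this by noting $\ker\omega=B_0$ is a hyperplane, and that $B$ being $\rho$-cyclic of dimension $m$ with $B_0$ of codimension $1$, the $\rho$-orbit of any $v\notin\bigcup B_n$ would have to span a $\rho$-invariant subspace disjoint from the hyperplane $B_0$ except at $0$; combining with the companion-matrix form \eqref{eqn:rho} and the fact that $\omega$ reads off the last coordinate, an invariant subspace avoiding $\ker\omega$ forces a $1$-dimensional invariant complement, hence a $\rho$-fixed vector outside $\ker\omega$, hence $f(1)=0$. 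Assembling these implications closes the cycle and proves the corollary.
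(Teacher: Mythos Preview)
Your cycle \ref{item:dihedral3} $\Rightarrow$ \ref{item:dihedral2} $\Rightarrow$ \ref{item:dihedral4} $\Rightarrow$ \ref{item:dihedral1} $\Rightarrow$ \ref{item:dihedral3} is different from the paper's \ref{item:dihedral1} $\Rightarrow$ \ref{item:dihedral2} $\Rightarrow$ \ref{item:dihedral3} $\Rightarrow$ \ref{item:dihedral4} $\Rightarrow$ \ref{item:dihedral1}, and your first three implications are fine (if somewhat more laborious than necessary). The genuine gap is in your step \ref{item:dihedral1} $\Rightarrow$ \ref{item:dihedral3}.

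You claim that if the $\rho$-orbit of $v$ avoids $B_0=\ker\omega$, then the \emph{span} of that orbit is a $\rho$-invariant subspace meeting $B_0$ only in $0$. This is false: the orbit consisting of vectors with $\omega(\cdot)=a$ says nothing about $\omega$ of their sums. For instance, $v+\rho(v)$ will typically lie in $B_0$. So the subsequent reasoning about ``an invariant subspace avoiding $\ker\omega$ forcing a $1$-dimensional invariant complement'' never gets off the ground.

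The missing idea, which is exactly what the paper uses (for \ref{item:dihedral1} $\Rightarrow$ \ref{item:dihedral2}, but it plugs directly into your cycle too), is to \emph{exploit} the fact that $v+\rho(v)\in\ker\omega$ rather than try to avoid it. If the $\rho$-orbit of $b$ misses $\ker\omega$, then $\omega(\rho^k(b))=a$ for all $k$, so over $\FF_2$
\[
\omega\bigl(\rho^k(b\rho(b))\bigr)=\omega(\rho^k(b))\,\omega(\rho^{k+1}(b))=a\cdot a=1
\]
for every $k$. Thus the entire $\rho$-orbit of $b\rho(b)$ lies in $\ker\omega$, and Proposition~\ref{prop:SunicProp2}(ii) forces $b\rho(b)=1$, i.e.\ $\rho(b)=b$. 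Hence $1$ is an eigenvalue of $\rho$ and $x+1\mid f$. This one-line trick replaces all of your invariant-subspace manoeuvring and simultaneously delivers the element $b=(a,b)$ needed for \ref{item:dihedral2}.
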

\begin{proof}
\ref{item:dihedral1} implies \ref{item:dihedral2}: By Proposition \ref{prop:torsion_criterion}, since $G$ contains an element of infinite order,
there exists an element $b\in B$ such that the $\rho$-orbit of $b$ does not intersect $B_0 = \ker(\omega)$. 
Hence, $\omega(\rho^k(b\rho(b))) = \omega(\rho^k(b))\omega(\rho^{k+1}(b)) = a^2 = 1$ for all $k\in \NN$. 
By (2) of Proposition \ref{prop:SunicProp2}, we get that $\rho(b)=b$, so $b=(a,b)$.

\ref{item:dihedral2} implies \ref{item:dihedral3}: Since $b=(a,b)$, we have $\rho(b)=b$, so $b$ is an eigenvector of $\rho$ with eigenvalue $1$. 
It follows that $f$, the minimal polynomial of $\rho$ 
 is divisible by $x+1$.

\ref{item:dihedral3} implies \ref{item:dihedral4} is a direct consequence of Proposition \ref{prop:SunicProp3} and Example \ref{ex:dihedral} while \ref{item:dihedral4} trivially implies \ref{item:dihedral1}. 
\end{proof} 
 
 The following is equivalent to Proposition 4 of \cite{Sunic}, but we give a different proof here. 
 \begin{prop}\label{prop:AbelianisationGIsAB}
  Let $G\in \mathcal{G}$. 
  Put $\Gamma:=A\ast B$  and denote by $\pi\colon\Gamma \rightarrow G$ the canonical map, with kernel $N$.
  Then $N\leq \Gamma'$, the commutator subgroup of $\Gamma$, and $A\times B\cong \Gamma/\Gamma'\cong G/G'$ where the isomorphisms are canonical.
 \end{prop}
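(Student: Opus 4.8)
The plan is to prove the single assertion $N\leq\Gamma'$: the two canonical isomorphisms then follow formally, since the abelianisation of a free product is the product of the abelianisations (so $\Gamma/\Gamma'\cong A\times B$ canonically), and $N\leq\Gamma'$ is precisely the statement that the canonical epimorphism $\Gamma/\Gamma'\twoheadrightarrow G/G'$ induced by $\pi$ is an isomorphism. Let $p_A\colon\Gamma\to A$ and $p_B\colon\Gamma\to B$ denote the two retractions of $\Gamma=A\ast B$ that kill $B$, respectively $A$; then the abelianisation map of $\Gamma$ is $(p_A,p_B)$, and so it suffices to show $p_A(v)=1$ and $p_B(v)=1$ for every $v\in N$.

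For $p_A$ this is immediate. The map $g\mapsto\tau_g$, where $\psi(g)=\tau_g\,(g_{\boldsymbol 0},\dots,g_{\boldsymbol{p-1}})$, is a homomorphism $\Aut T\to\Sym(X)$ (the top permutation of a product is the product of the top permutations), and on $G$ it sends $a$ to the $p$-cycle $\sigma$ and every element of $B$ to the identity. Identifying $\langle\sigma\rangle$ with $A$, the composition of this homomorphism with $\pi$ is exactly $p_A$, so $p_A(v)=\tau_{\pi(v)}=\tau_1=1$ for every $v\in N$.

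The content lies in $p_B$, which I would prove by a descent on the number of $B$-syllables of the free product normal form. Using the formula for the projection of a product at a vertex, one defines, letter by letter, for each $v\in\Gamma$ and $x\in X$ a word $v^{(x)}\in\Gamma$ satisfying $\pi\bigl(v^{(x)}\bigr)=\pi(v)_x$: an $A$-syllable contributes nothing, since rooted automorphisms have trivial projections, while a $B$-syllable $\beta$ contributes $\omega(\beta)\in A$ at the relevant translate of $\boldsymbol 0$, $\rho(\beta)\in B$ at the translate of $\boldsymbol{p-1}$, and $1$ at every other letter. Since $B$ is abelian and these translations are permutations of $X$, summing over $x\in X$ collapses to the identity
\[
\sum_{x\in X}p_B\bigl(v^{(x)}\bigr)=\rho\bigl(p_B(v)\bigr),\qquad v\in\Gamma .
\]

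Now suppose, for a contradiction, that $p_B(v)\neq 1$ for some $v\in N$, and choose such a $v$ whose normal form $v=\alpha_0\beta_1\alpha_1\cdots\beta_k\alpha_k$ has the least possible number $k\geq 1$ of $B$-syllables. Since $\pi(v)_x=1$, every $v^{(x)}$ again lies in $N$. If $k\geq 2$, then for each $x$ the word $v^{(x)}$ has strictly fewer than $k$ $B$-syllables: the syllable $\beta_j$ contributes a $B$-syllable to $v^{(x)}$ only when the relevant translate of $x$ equals $\boldsymbol{p-1}$, and these translates cannot all be $\boldsymbol{p-1}$, because that would force some interior syllable $\alpha_j$ (with $1\leq j\leq k-1$) to fix $\boldsymbol{p-1}$, hence to be trivial, contradicting the normal form. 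By minimality of $k$ we get $p_B\bigl(v^{(x)}\bigr)=1$ for every $x$, so the displayed identity forces $\rho(p_B(v))=1$ and therefore $p_B(v)=1$, a contradiction. If instead $k=1$, then $p_A(v)=\alpha_0\alpha_1=1$ forces $\alpha_1=\alpha_0^{-1}$, so $v$ is conjugate in $\Gamma$ to $\beta_1\in B$; applying $\pi$ gives $\pi(\beta_1)=1$, contradicting the faithfulness of the action of $B$ (Proposition \ref{prop:SunicProp2}). Hence $p_B(v)=1$ for all $v\in N$, and combined with the previous paragraph this gives $N\leq\ker p_A\cap\ker p_B=\Gamma'$, as desired. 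The step that will require the most care is the set-up of the assignment $v\mapsto v^{(x)}$ together with the verification that the number of $B$-syllables strictly decreases in the inductive step; the identity above and the surrounding reductions are routine bookkeeping.
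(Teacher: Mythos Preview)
Your argument is correct. The reduction to $p_A(N)=1$ and $p_B(N)=1$ is the right framing; the first is immediate from the top-permutation map, and your descent on the number of $B$-syllables works as stated. The key points---that the position at which $\beta_j$ is projected is $\alpha_j\alpha_{j+1}\cdots\alpha_k(x)$, that these positions cannot all equal $\boldsymbol{p-1}$ without forcing some interior $\alpha_j$ to be trivial, and that the identity $\sum_{x}p_B(v^{(x)})=\rho(p_B(v))$ holds because each $\beta_j$ contributes exactly one $\rho(\beta_j)$ as $x$ ranges over $X$---all check out.

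The paper's proof uses the same ingredients but is organised differently. Rather than a descent on syllable length, it writes an element of $N$ as $\beta z$ with $\beta\in B$ and $z\in\Gamma'$, lifts the wreath decomposition to a homomorphism $\Psi\colon S\to\Gamma^p$ on the index-$p$ subgroup $S=\ker p_A$, and observes that the product of the coordinates of $\Psi(\beta z)$ lies in $N$ and is congruent to $\omega(\beta)\rho(\beta)$ modulo $\Gamma'$. Iterating this ``multiply the coordinates'' step yields $\omega(\rho^n(\beta))=1$ for all $n$, and then faithfulness of $B$ (Proposition~\ref{prop:SunicProp2}) gives $\beta=1$. Your identity $\sum_x p_B(v^{(x)})=\rho(p_B(v))$ is the $p_B$-shadow of the paper's observation about the product of coordinates, so the two proofs are close cousins: the paper iterates upward until faithfulness kicks in, while you descend until the base case $k=1$ forces an element of $B$ into the kernel. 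Your version has the mild advantage of making the length-reduction explicit and avoiding the verification that $\Psi$ is a homomorphism; the paper's version is slightly more structural in that it tracks only the coset $\beta\Gamma'$ rather than a full normal form.
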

\begin{proof}
	First note that $\pi^{-1}(G')=\Gamma'N$ and that 
	$$G/G'\cong(\Gamma/N)/(\Gamma'N/N)\cong \Gamma/(\Gamma'N)\twoheadleftarrow \Gamma/\Gamma'\cong A\times B.$$
	
	Consider the subgroup $S$ of index $p$ in $\Gamma$ generated by $\{a^{-i}xa^i\mid  x\in B, i\in\FF_p \}$. 
	Since $\pi(a)\notin \St_G(1)$, the kernel $N$ is contained in $S$ and $S$ is the preimage of $\St_G(1)$ in $\Gamma$.
	Let $\Psi\colon S \to \Gamma\times\cdots\times \Gamma$ ($p$ factors) be the homomorphism defined by
	\begin{align*}
	\Psi(x)& = (\omega(x),1,\dots,1,\rho(x)) \\
	\Psi(a^{-1}xa) &= (1,\dots,1, \rho(x), \omega(x))\\
	  &\cdots \\
	\Psi(axa^{-1}) &= (\rho(x), \omega(x), 1,\dots,1)
	\end{align*}
	for all $x\in B$.
	Defining $\pi_{G\times\cdots\times G} := \pi\times\cdots\times \pi$, we have 
	$\pi_{G\times\cdots\times G}\circ \Psi = \psi\circ \pi$,
	as the images of the generators of $S$ by each of the maps coincide.

	To show that $N\leq \Gamma'$, suppose that $\gamma\in N\setminus \Gamma'$, then $\gamma=a^i\beta z$ with $i\in\FF_p$, $\beta\in B$, (not both trivial) and $z\in \Gamma' $. 
	Since $\gamma\in N$ and $\beta z\in S$, we must have $i=0$. 
	Now, $$\Psi(\gamma)=(\omega(\beta)z_0, z_1,\dots, \rho(\beta)z_{p-1}) \text{ where  } \Psi(z)=(z_0,\dots,z_{p-1}).$$
	Since $z\in \Gamma'$, it is easily seen (by considering $[a,x]$ for $x\in B$) that $z_0z_1\cdots z_{p-1}\in \Gamma'$.
	Thus the product of all entries in $\Psi(\gamma)$ is congruent to $\omega(\beta)\rho(\beta)$ modulo $\Gamma'$. 
	But, since $\pi_{G\times\cdots\times G}\circ \Psi = \psi\circ \pi$, this product must be in $N$ and so $\omega(\beta)=1$. 
	Repeating the above argument, we obtain that $\omega(\rho^n(\beta))=1$ for all $n\in \NN$, which implies that $\beta=1$ and so $\gamma=z\in\Gamma'$, as required. 	
\end{proof}
 
 \begin{prop}[Proposition 5 of \cite{Sunic}]
  Let $G\in \mathcal{G}$. Then:
  \begin{enumerate}[label=(\roman*)]
   \item The map $\psi$ induces a subdirect embedding of $\St_G(1)$ in $G\times\dots\times G$.
   \item $G$ acts transitively on all levels of $T$.
  \end{enumerate}
 \end{prop}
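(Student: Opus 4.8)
The plan is to prove (i) by a direct computation with the generators $a,b_0,\dots,b_{m-1}$ and then to obtain (ii) from it by the standard level-by-level induction.

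For (i), note first that since $G$ is self-similar every section $g_v$ lies in $G$, and since every element of $\St_G(1)$ fixes level $1$ (so $\psi$ has trivial top permutation on it), $\psi$ restricts to an injective homomorphism $\St_G(1)\hookrightarrow G\times\dots\times G$. It remains to show that this embedding is subdirect, i.e.\ that $\varphi_{\boldsymbol i}(\St_G(1))=G$ for each $\boldsymbol i\in X$. The rooted automorphism $a$ normalises $\St_G(1)$ (conjugating by $a$ leaves the top permutation trivial), and since $a$ acts on the first letter of a word as $\sigma$ one checks $(aga^{-1})_{\boldsymbol i}=g_{\sigma^{-1}(\boldsymbol i)}$ for all $g\in\St_G(1)$. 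Hence $\varphi_{\boldsymbol i}(\St_G(1))=\varphi_{\sigma^{-1}(\boldsymbol i)}(\St_G(1))$, and as $\sigma$ is a $p$-cycle these subgroups all coincide; write $H\le G$ for the common subgroup.

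Next I would read $H$ off the defining relations of Definition \ref{def:SunicGroups}. Since $B\le\St_G(1)$, all of $b_0,\dots,b_{m-1}$ lie in $\St_G(1)$, and the relations $b_i=(1,\dots,1,b_{i+1})$ for $0\le i\le m-2$ together with $b_{m-1}=(a,1,\dots,1,\rho(b_{m-1}))$ give $b_{i+1}=\varphi_{\boldsymbol{p-1}}(b_i)\in H$ for $0\le i\le m-2$, as well as $\rho(b_{m-1})=\varphi_{\boldsymbol{p-1}}(b_{m-1})\in H$ and $a=\varphi_{\boldsymbol 0}(b_{m-1})\in H$. Thus $H$ contains $a,b_1,\dots,b_{m-1}$ and the element $\rho(b_{m-1})=b_0^{-a_0}b_1^{-a_1}\cdots b_{m-1}^{-a_{m-1}}$; since $B$ is abelian this last identity can be solved for $b_0^{-a_0}$, and since $f$ is invertible we have $a_0\neq 0$ in $\FF_p$, whence $b_0\in H$. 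Therefore $H\supseteq\langle a,b_0,\dots,b_{m-1}\rangle=\langle A\cup B\rangle=G$, so $H=G$ and the embedding is subdirect. For (ii) I would induct on $n$, the claim being that $G$ acts transitively on $X^n$; for $n\le 1$ this is immediate since $\langle a\rangle$ already acts transitively on $X$. Assuming transitivity on $X^n$, it suffices to move an arbitrary vertex $\boldsymbol x u\in X^{n+1}$ (with $\boldsymbol x\in X$, $u\in X^n$) to the fixed vertex $\boldsymbol{0}^{n+1}$: by the inductive hypothesis there is $h\in G$ with $h(u)=\boldsymbol{0}^{n}$, by part (i) there is $g\in\St_G(1)$ with $g_{\boldsymbol x}=h$, so $g(\boldsymbol x u)=\boldsymbol x\,\boldsymbol{0}^{n}$, and an appropriate power of $a$ then carries $\boldsymbol x\,\boldsymbol{0}^{n}$ to $\boldsymbol{0}^{n+1}$.

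The only genuine difficulty lies in part (i), specifically in recovering the generator $b_0$: it is not itself a section of any generator, and reconstructing it forces one to use both that $B$ is abelian and that the constant term $a_0$ of $f$ is nonzero — which is precisely the standing hypothesis that $f$ be invertible (equivalently, by Proposition \ref{prop:SunicProp2}, that $B$ act faithfully). Everything else is bookkeeping with the wreath-product formulas and the usual transitivity induction.
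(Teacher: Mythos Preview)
The paper does not actually prove this proposition; it is quoted from \cite{Sunic} (as Proposition 5 there) without proof, so there is nothing to compare against directly.

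Your argument is correct and is essentially the standard one. The only point worth a small remark is the recovery of $b_0$: you correctly identify that $b_0$ is not itself a section of any generator, and you recover it from $\rho(b_{m-1})=b_0^{-a_0}b_1^{-a_1}\cdots b_{m-1}^{-a_{m-1}}$ using that $a_0\in\FF_p^\times$. This is exactly the role of the invertibility hypothesis on $f$, as you note. The reduction of subdirectness to a single coordinate via conjugation by $a$ is clean, and the level-transitivity induction from (i) is the usual one.
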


%
 %
 %
 %
\begin{prop}[Lemmas 1 and 6 of \cite{Sunic}]\label{prop:regular_branch}\hfill
 \begin{enumerate}[label=(\roman*)]
  \item Let $G\in\mathcal{G}_{p,m}$ where $p\geq 3$. Then $G$ is regular branch over its commutator subgroup $G'$.
  \item Let $G\in \mathcal{G}_{2,m}$ where $m\geq 2$. Then $G$ is regular branch over the subgroup
  $$K:=\langle [a, b]\mid b\in B_1\rangle^G.$$
 \end{enumerate}
\end{prop}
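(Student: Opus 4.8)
The plan is to verify the defining conditions of a regular branch group for the subgroup $H:=G'$ when $p\geq 3$ and $H:=K$ when $p=2$. In either case $H$ is a normal subgroup of $G$ contained in $\St_G(1)$ (since $G/\St_G(1)\cong\ZZ/p\ZZ$ is abelian, so $G'\supseteq H$ lies in it) and $G$ acts transitively on all levels, so it remains to show $H$ has finite index and that $\psi(H)\geq H\times\dots\times H$. For $g\in G$ let $(1,\dots,1,g)$ denote the element of $(\Aut T)^p$ with $g$ in the coordinate $\boldsymbol{p-1}$ and $1$ elsewhere, and put
\[R:=\{\,g\in G:(1,\dots,1,g)\in\psi(H)\,\}.\]
This is a subgroup of $G$, and the key point is that it is \emph{normal}: conjugating $(1,\dots,1,g)$ by $\psi(\eta)$ for $\eta\in\St_G(1)$ affects only the last coordinate, by the section $\eta_{\boldsymbol{p-1}}$, and the projection $\St_G(1)\to G$, $\eta\mapsto\eta_{\boldsymbol{p-1}}$, is onto (the subdirect embedding of Proposition~5 of \cite{Sunic}); since $H\trianglelefteq G$ it follows that $R$ is stable under conjugation by all of $G$. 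Granting $H\subseteq R$, conjugating the elements $(1,\dots,1,h)\in\psi(H)$ by the powers of $a$ (whose $\psi$-image $\sigma$ is a $p$-cycle) produces, by normality of $H$, elements of $\psi(H)$ with $h$ in each single coordinate, and multiplying them gives $\psi(H)\geq H\times\dots\times H$. Finally, $|G:G'|=|A\times B|<\infty$ by Proposition~\ref{prop:AbelianisationGIsAB}; and for $p=2$ a short recursion gives $(ab_i)^2=(b_{i+1},b_{i+1})$ for $0\leq i\leq m-2$, hence $(ab_i)^4=1$ as $B$ has exponent $2$, so in $G/K$ the images of $b_1,\dots,b_{m-1}$ are central of order dividing $2$ and $\langle a,b_0\rangle$ is a quotient of the dihedral group of order $8$, whence $|G:K|\leq 2^{m+2}$. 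Thus everything reduces to $H\subseteq R$, and since $R$ is normal it suffices to put a normal generating set of $H$ inside $R$.

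For $p\geq 3$ this is achieved by commutators of the $b_i$ with the conjugate $a^{-1}b_{m-1}a$. From the recursions, $b_{k-1}=(1,\dots,1,b_k)$ for $1\leq k\leq m-1$, $b_{m-1}=(a,1,\dots,1,\rho(b_{m-1}))$, and $a^{-1}b_{m-1}a=(1,\dots,1,\rho(b_{m-1}),a)$ with entries in the coordinates $\boldsymbol{p-2}$ and $\boldsymbol{p-1}$; since the supports of $b_{k-1}$ and of $b_{m-1}$ meet that of $a^{-1}b_{m-1}a$ only in the coordinate $\boldsymbol{p-1}$, one computes coordinatewise that
\[\psi\bigl([b_{k-1},a^{-1}b_{m-1}a]\bigr)=(1,\dots,1,[b_k,a]),\qquad\psi\bigl([b_{m-1},a^{-1}b_{m-1}a]\bigr)=(1,\dots,1,[\rho(b_{m-1}),a]).\]
As the commutators on the left lie in $G'$, we get $[a,b_k]\in R$ for $k=1,\dots,m-1$ and $[a,\rho(b_{m-1})]\in R$. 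To see that the normal closure of these elements is all of $G'$ — the remaining point — observe that modulo $R$ the element $a$ commutes with $b_1,\dots,b_{m-1}$; writing $\rho(b_{m-1})=b_0^{-a_0}w$ with $w\in B_1$ and expanding $[a,\rho(b_{m-1})]=[a,w]\,[a,b_0^{-a_0}]^{w}$ with $[a,w]\in R$, we deduce $[a,b_0^{-a_0}]\in R$, and since $b_0$ has order $p$ and $a_0\not\equiv 0\pmod p$ this forces $a$ to commute with $b_0$ modulo $R$ as well. Hence $G/R$ is abelian, so $R\supseteq G'=H$.

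For $p=2$, using that $B$ is abelian one finds $[a,b_{m-1}]=(c,c^{-1})$ with $c=\rho(b_{m-1})^{-1}a$, and for $0\leq j\leq m-2$, since $b_j=(1,b_{j+1})$,
\[\psi\bigl([[a,b_{m-1}],b_j]\bigr)=\bigl(1,[c^{-1},b_{j+1}]\bigr)=\bigl(1,[a,b_{j+1}]^{\rho(b_{m-1})}\bigr),\]
which lies in $K$ because $[a,b_{m-1}]\in K\trianglelefteq G$. Thus $[a,b_k]^{\rho(b_{m-1})}\in R$ for $k=1,\dots,m-1$; since $R$ is normal and the normal closure in $G$ of $\{[a,b_k]^{\rho(b_{m-1})}:1\leq k\leq m-1\}$ equals $\langle[a,b_k]:1\leq k\leq m-1\rangle^{G}=K$, we conclude $K=H\subseteq R$. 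In both cases $H\subseteq R$ now gives $\psi(H)\geq H\times\dots\times H$ by the first paragraph, completing the verification that $G$ is regular branch over $H$.

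I expect the main obstacle to be the completeness step for $p\geq 3$: the generator $[a,b_0]$ of $G'$ cannot be realised in a single coordinate directly — an element of $G'$ whose $\psi$-image is $(1,\dots,1,g)$ must have $g\in G'$, whereas $b_0\notin G'$ — so it has to be recovered indirectly from $[a,\rho(b_{m-1})]$, which is exactly where the invertibility hypothesis $a_0\neq 0$ on $f$ enters. For $p=2$ the analogous subtlety is that $G'$ itself fails the branching inclusion (already for the Grigorchuk group), so one must first single out the correct subgroup $K$; with $K$ in hand the remaining wreath-recursion identities used above are all routine to check.
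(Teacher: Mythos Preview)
The paper does not prove this proposition; it is quoted as Lemmas~1 and~6 of \cite{Sunic} without argument, so there is no in-paper proof to compare against. Your proof is correct and follows the standard strategy for establishing the regular-branch property: exhibit explicit commutators in $H$ whose $\psi$-image is supported on a single coordinate and hits a normal generating set of $H$, then use level-transitivity and normality to propagate. The device of packaging this as the normal subgroup $R$ is clean, and your handling of the delicate point---recovering $[a,b_0]$ from $[a,\rho(b_{m-1})]$ via the invertibility of $a_0$---is exactly right. The finite-index bound $|G:K|\leq 2^{m+2}$ via the centrality of $B_1$ modulo $K$ and the dihedral-8 relation $(ab_0)^4=1$ is also correct (and matches the exact index computed in \cite{Sunic}).

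One small remark: in the $p\geq 3$ case your computation of $\psi([b_{m-1},a^{-1}b_{m-1}a])=(1,\dots,1,[\rho(b_{m-1}),a])$ silently uses $p\geq 3$ to ensure the supports $\{\boldsymbol{0},\boldsymbol{p-1}\}$ and $\{\boldsymbol{p-2},\boldsymbol{p-1}\}$ meet only in $\boldsymbol{p-1}$; you note this in the final paragraph but it would read better flagged at the point of use.
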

%
%
%
%
%
\begin{lem}[See Lemmas 3, 5 and 7 of \cite{Sunic}]\label{lemma7}
Let $G\in \mathcal{G}_{2,m}$ with $m\geq 2$ and denote by $\overline{B_1}$ the normal closure of $B_1$ in $G$.
\begin{enumerate}[label=(\roman*)]
 \item For any element $d\in B_0\setminus B_1$, we have
	\begin{equation}\label{eqn:lemma3}
	G=\langle a, d\rangle \ltimes \overline{B_1}=\langle a, d \rangle \ltimes (B_1\ltimes K)
	\end{equation}
	
 \item There is an element $c\in B_{-1}\setminus B_0$ and $d\in B_0\setminus B_1$ such that 
 $c=(a,d)$ and 
\begin{equation}\label{eqn:lemma7}
	\psi(\St_G(1))=\hat{C} \ltimes (\overline{B_1}\times \overline{B_1})
\end{equation}
 where $\hat{C}=\langle (a,d),(d,a)\rangle$ is a diagonal subgroup of $\langle a, d\rangle \times \langle a, d\rangle$.
 
  In particular, if $g\in \St_G(1)$ is such that $\psi(g) = (h, 1)$ or $\psi(g) = (1,h)$ with $h\in \langle a,d \rangle \leq G$ then, $g=1$.

\end{enumerate}
\end{lem}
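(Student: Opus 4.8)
The plan is to obtain both parts by combining the structure of the abelianisation (Proposition~\ref{prop:AbelianisationGIsAB}) with a few explicit computations in $\Aut T$, along the lines of Lemmas~3, 5 and 7 of \cite{Sunic}.

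For~(i), I would first observe that $B_0\ne B_1$: since $m\ge 2$ we have $B_0=\ker\omega\ne 0$, and $B_0$ is not $\rho$-invariant by the faithfulness criterion (Proposition~\ref{prop:SunicProp2}), so $B_1=\rho(B_0)\ne B_0$. Hence $B_0\setminus B_1\ne\emptyset$, and as $\dim_{\FF_2}B_1=m-1$ we get $B=B_1\oplus\langle d\rangle$ for every $d\in B_0\setminus B_1$. Because $\overline{B_1}\trianglelefteq G$ and $G=\langle A\cup B\rangle$, reducing modulo $\overline{B_1}$ collapses $B$ onto $\langle d\rangle$, so the composite $\langle a,d\rangle\hookrightarrow G\to G/\overline{B_1}$ is onto, i.e.\ $G=\langle a,d\rangle\,\overline{B_1}$. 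A direct computation with $\psi(d)=(1,\rho(d))$, $\rho(d)\in B_1\setminus\{1\}$, and $a$ rooted gives $a^2=d^2=1$, $(ad)^2=(\rho(d),\rho(d))\ne1$ and $(ad)^4=1$, so $\langle a,d\rangle$ is dihedral of order~$8$. For the refinement: each $[a,b]=(b^a)^{-1}b$ lies in $\overline{B_1}$ (as $b,b^a\in\overline{B_1}$), so $K\le\overline{B_1}$; the set $B_1K$ contains $K\trianglelefteq G$ and is normalised by $a$ (since $b_1^{-1}b_1^a=[b_1,a]\in K$) and by the abelian group $B$, so $B_1K=\overline{B_1}$; and $B_1\cap K\le B\cap G'=1$ by Proposition~\ref{prop:AbelianisationGIsAB}. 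Thus $\overline{B_1}=B_1\ltimes K$.

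The one genuinely delicate point, and the main obstacle, is the index identity $|G:\overline{B_1}|=8$, which is equivalent to $\langle a,d\rangle\cap\overline{B_1}=1$ (since $\langle a,d\rangle$ has order~$8$ and surjects onto $G/\overline{B_1}$) and hence gives $G=\langle a,d\rangle\ltimes\overline{B_1}$; it is also equivalent to $K\ne G'$. To see this I would note that $G/G'\overline{B_1}\cong A\times(B/B_1)$ has order~$4$ and $G'\cap\overline{B_1}=K$ (same argument as for $B_1\cap K=1$), so $|G:\overline{B_1}|=4\,|G':K|$ and the claim reduces to $[a,d]\notin K$. Now $[a,d]=(ad)^2=(\rho(d),\rho(d))$ with $\rho(d)\in B_1\setminus\{1\}$, and if this lay in $K$ then, using $\psi(K)=K\times K$, we would get $\rho(d)\in B_1\cap K=1$, a contradiction; alternatively one simply quotes the index computation of \cite[Lemma~5]{Sunic}.

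For~(ii), given $d\in B_0\setminus B_1$ put $c:=\rho^{-1}(d)$; then $c\in\rho^{-1}(B_0)=B_{-1}$, $c\notin B_0$ (else $d=\rho(c)\in B_1$), and since $c\notin B_0$ one has $\psi(c)=(\omega(c),\rho(c))=(a,d)$ and $\psi(c^a)=(d,a)$. As $c\in B\le\St_G(1)$, the group $\hat C:=\psi(\langle c,c^a\rangle)=\langle(a,d),(d,a)\rangle$ is a subgroup of $\langle a,d\rangle\times\langle a,d\rangle$; a short computation identifies it with the graph of the automorphism $\theta$ of the dihedral group $\langle a,d\rangle$ that swaps $a$ and $d$, so $|\hat C|=8$ and, by~(i), $\hat C\cap(\overline{B_1}\times\overline{B_1})=1$. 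To see $\overline{B_1}\times\overline{B_1}\le\psi(\St_G(1))$ I would use $\St_G(1)=\langle B,B^a\rangle$ (Reidemeister--Schreier): from $\psi(b_i)=(1,b_{i+1})$ and $\psi(b_i^a)=(b_{i+1},1)$ for $0\le i\le m-2$ we get $\{1\}\times B_1$ and $B_1\times\{1\}$ inside $\psi(\St_G(1))$, while $\psi(K)\ge K\times K$ (Proposition~\ref{prop:regular_branch}) gives $K\times K\subseteq\psi(\St_G(1))$; together these generate $\overline{B_1}\times\overline{B_1}$, which is normal in $\psi(\St_G(1))$ because the latter is a subdirect product of $G\times G$ (Proposition~5 of \cite{Sunic}) and $\overline{B_1}\trianglelefteq G$. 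Reducing the generators modulo $\overline{B_1}\times\overline{B_1}$: for $b\in B$, $\psi(b)=(\omega(b),\rho(b))$ is $\equiv(1,1)$ when $\omega(b)=1$ and $\equiv(a,d)$ when $\omega(b)=a$ (using $B/B_1=\langle\bar d\rangle$ and $\omega(b)=a\iff b\notin B_0\iff\rho(b)\notin B_1$), and likewise $\psi(b^a)\equiv(1,1)$ or $(d,a)$; hence the image of $\psi(\St_G(1))$ is generated by the images of $\psi(c)$ and $\psi(c^a)$, i.e.\ by $\hat C$, so $\psi(\St_G(1))=\hat C\ltimes(\overline{B_1}\times\overline{B_1})$. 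Finally, if $g\in\St_G(1)$ satisfies $\psi(g)=(h,1)$ with $h\in\langle a,d\rangle$, write $\psi(g)=(x,\theta(x))(w_1,w_2)$ uniquely with $x\in\langle a,d\rangle$ and $w_1,w_2\in\overline{B_1}$; the second coordinate gives $\theta(x)w_2=1$, so $\theta(x)\in\langle a,d\rangle\cap\overline{B_1}=1$, whence $x=1$, $w_2=1$ and $w_1=h\in\langle a,d\rangle\cap\overline{B_1}=1$, so $g=1$. The case $\psi(g)=(1,h)$ is symmetric.
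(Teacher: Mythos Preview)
The paper gives no proof of this lemma beyond the citation ``See Lemmas~3, 5 and 7 of \cite{Sunic}'', so there is no in-paper argument to compare against; your write-up goes well beyond what the paper does and is mostly correct. In particular, the derivation of $|\langle a,d\rangle|=8$, the decomposition $\overline{B_1}=B_1\ltimes K$, and the whole of part~(ii) (identifying $\hat C$ with the graph of the involution $a\leftrightarrow d$ of the dihedral group of order~$8$, producing $\overline{B_1}\times\overline{B_1}$ inside $\psi(\St_G(1))$ from $\psi(b_i)=(1,b_{i+1})$ and the branching $K\times K\le\psi(K)$, and the final ``in particular'') are all fine.

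There is, however, one genuine error in your self-contained argument for~(i). To show $[a,d]\notin K$ you invoke ``$\psi(K)=K\times K$'', but only the containment $\psi(K)\ge K\times K$ is available (Proposition~\ref{prop:regular_branch}); equality fails in general. For instance, in the Grigorchuk--Erschler group $G_{2,x^2+1}$ one has $b_1=(a,b_0)$, hence
\[
\psi([a,b_1])=\psi(ab_1ab_1)=(b_0a,\,ab_0),
\]
and $b_0a\notin G'$ (its image in $G/G'\cong A\times B$ is non-trivial), so $b_0a\notin K$ and $\psi(K)\not\le K\times K$. Thus your deduction ``$(\rho(d),\rho(d))\in\psi(K)=K\times K\Rightarrow\rho(d)\in K$'' breaks down. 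Your explicit alternative, quoting the index computation of \cite[Lemma~5]{Sunic}, is exactly what the paper does and is perfectly acceptable here; without that citation the step $\langle a,d\rangle\cap\overline{B_1}=1$ (equivalently $|G':K|=2$) needs a different argument.
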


%
%
The next results will be useful in Section \ref{sec:maximal} when we show that the subgroups defined in Section \ref{sec:dense} are indeed maximal and of infinite index.
\begin{prop}\label{prop:liftIsHomomorphism}
	Let $G\in\mathcal{G}_{2,m}$ with $m\geq 2$ and let $c, d\in G$ be as in Lemma \ref{lemma7}.
	There exists a unique homomorphism $\phi\colon G \to \St_G(1)$ such that
	\begin{align*}
	\phi(a) &= aca\\
	\phi(x) &= \rho^{-1}(x)
	\end{align*}
	for all $x\in B$.
\end{prop}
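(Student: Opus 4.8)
The plan is to realise $\phi$ as the factorisation through $\pi$ of a homomorphism defined on the free product $\Gamma=A\ast B$ of Proposition~\ref{prop:AbelianisationGIsAB}. Uniqueness needs no argument: $a$ together with the elements of $B$ generates $G$, so a homomorphism of $G$ is determined by its values on these elements.

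For existence, I would first check that the prescribed images genuinely lie in $\St_G(1)\le G$. Every $b\in B$ stabilises the first level, since $\psi(b)=(\omega(b),\rho(b))$ has trivial root permutation; hence $\rho^{-1}$ restricts to an automorphism of the subgroup $B\le\St_G(1)$, and $\rho^{-1}(x)\in B\subseteq\St_G(1)$ for all $x\in B$. With $c=(a,d)\in B_{-1}$ and $d\in B_0$ as in Lemma~\ref{lemma7}, conjugating $c$ by the rooted automorphism $a$ interchanges the two level-$1$ coordinates, so $aca=(d,a)\in\St_G(1)\le G$; moreover $(aca)^2=(d,a)^2=(d^2,a^2)=1$ because $A$ and $B$ have exponent $2$. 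Since $\Gamma=A\ast B$ is a free product, sending $a\mapsto aca$ and using $\rho^{-1}\colon B\to B$ on the free factor $B$ therefore determines a homomorphism $\tilde\phi\colon\Gamma\to\St_G(1)$.

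The substance of the proof is showing $N:=\ker\pi\subseteq\ker\tilde\phi$. Here I would post-compose $\tilde\phi$ with the two coordinate projections $\varphi_{\boldsymbol{0}},\varphi_{\boldsymbol{1}}\colon\St_G(1)\to G$. Using $\psi(\rho^{-1}(x))=(\omega(\rho^{-1}(x)),x)$ for $x\in B$, one checks on generators that $\varphi_{\boldsymbol{1}}\circ\tilde\phi$ sends $a\mapsto a$ and $x\mapsto x$, so $\varphi_{\boldsymbol{1}}\circ\tilde\phi=\pi$; and that $\varphi_{\boldsymbol{0}}\circ\tilde\phi$ sends $a\mapsto d$ and $x\mapsto\omega(\rho^{-1}(x))\in A$, so its image is contained in $\langle a,d\rangle$. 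Consequently, for $n\in N$ the element $\tilde\phi(n)\in\St_G(1)$ satisfies $\psi(\tilde\phi(n))=(h,1)$ with $h\in\langle a,d\rangle$, whence $\tilde\phi(n)=1$ by the final assertion of Lemma~\ref{lemma7}(ii). Thus $\tilde\phi$ factors through $\pi$ and yields the required $\phi\colon G\to\St_G(1)$ with $\phi(a)=aca$ and $\phi|_B=\rho^{-1}$.

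The main obstacle is precisely this last step: $G$ has no convenient presentation, so relations cannot be verified one at a time, and the whole kernel $N$ must be handled at once through the self-similar structure. The equality $\varphi_{\boldsymbol{1}}\circ\tilde\phi=\pi$ disposes of one coordinate for free, while Lemma~\ref{lemma7}(ii) — an element of $\St_G(1)$ that is trivial in one coordinate and lies in $\langle a,d\rangle$ in the other is itself trivial — is exactly the rigidity needed to upgrade this to triviality of $\tilde\phi(n)$. The remaining points (that $\omega(\rho^{-1}(B))=\omega(B)=A$, so the image of $\varphi_{\boldsymbol{0}}\circ\tilde\phi$ really sits inside $\langle a,d\rangle$, and that $\rho^{-1}$ preserves $B$ as a subgroup of $G$) are routine.
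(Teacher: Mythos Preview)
Your proof is correct and follows essentially the same approach as the paper: define the map on the free product $\Gamma=A\ast B$, verify on generators that the second coordinate of $\psi\circ\tilde\phi$ recovers $\pi$ while the first coordinate lands in $\langle a,d\rangle$, and invoke the rigidity clause of Lemma~\ref{lemma7}(ii) to kill $\tilde\phi(N)$. The only difference is cosmetic: the paper routes the argument through an auxiliary map $\Phi\colon\Gamma\to S\le\Gamma$ and the lift $\Psi\colon S\to\Gamma\times\Gamma$ before pushing down to $G$, whereas you map directly into $\St_G(1)$ and use the coordinate projections $\varphi_{\boldsymbol 0},\varphi_{\boldsymbol 1}$; your version is slightly more streamlined but the content is identical.
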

\begin{proof}
	If such a homomorphism exists, then it is clearly unique. 
	Thus, it suffices to show that the above yields a well-defined homomorphism.
Put $\Gamma:=A\ast B$ and consider the homomorphisms $\pi, \Psi, \pi_{G\times G}$ defined in the proof of Proposition \ref{prop:AbelianisationGIsAB}. 

	Let $\Phi\colon A\ast B \to S$ be the homomorphism defined by
	\begin{align*}
	\Phi(a) &= aca\\
	\Phi(x) &= \rho^{-1}(x)
	\end{align*}
	for all $x\in B$.
	Defining $\phi:=\pi_G\circ \Phi\circ \pi_G^{-1}$, we obtain the following diagram, where the bottom square commutes.
	
	\[
	\begin{tikzcd}
	A\ast B \arrow[twoheadrightarrow]{r}{\pi_G} \arrow{d}{\Phi} & G \arrow{d}{\phi}\\
	S \arrow[twoheadrightarrow]{r}{\pi_G} \arrow{d}{\Psi} & \St_G(1) \arrow{d}{\psi} \\
	(A\ast B)\times (A\ast B)\arrow[twoheadrightarrow]{r}{\pi_{G\times G}} & G\times G
	\end{tikzcd}
	\]

	To show that $\phi$  is a well-defined homomorphism, it suffices to show that $\Phi(N)\leq N$. 	
	A direct computation shows that every element $w$ of $A$ or $B$ in $\Gamma=A\ast B$ satisfies $\Psi(\Phi(w))=(w',w)$ with $w'\in \langle a, d\rangle\leq\Gamma$. 
	Therefore every element $w$ of $\Gamma$ satisfies the relation too. 
	So if $w\in N$ then there exists $w'\in\langle a, d\rangle\leq \Gamma$ such that $\Psi(\Phi(w)) = (w',w)$. Hence,
	\begin{equation*}
	\pi_{G\times G}(\Psi(\Phi(w))) = (\pi(w'),1)
	\end{equation*}
	with $\pi(w') \in \langle a,d \rangle \leq G$. 
	Therefore, since $\pi_{G\times G}\circ \Psi=\psi\circ\pi$, we get
	\begin{equation*}
	\psi(\pi(\Phi(w))) = (\pi(w'),1)
	\end{equation*}
	with $\pi(w') \in \langle a,d \rangle \leq G$. 
	It follows from Lemma \ref{lemma7} that $\pi(\Phi(w)) = 1$.
\end{proof}

\begin{rem}
	With the notation of the previous lemma, for all $g\in G$, we have $\psi(\phi(g)) = (g',g)$ for some $g'\in \langle a,d \rangle \leq G$.
	Hence, $\phi$ is a right inverse of the projection $\varphi_{\boldsymbol{1}}$ on the second coordinate.
\end{rem}

\begin{prop}\label{prop:LiftIsUnique}
	Keep the notation of the previous two results and let $g\in G$.
	 If there exists $h\in \St_G(1)$ such that $\psi(h) = (1,g)$, then $h = \phi(g)$.
\end{prop}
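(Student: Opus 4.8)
The plan is to use the uniqueness of second-coordinate lifts, which follows from part (ii) of Lemma~\ref{lemma7}. Suppose $h \in \St_G(1)$ satisfies $\psi(h) = (1,g)$. By Proposition~\ref{prop:liftIsHomomorphism} and the remark following it, $\phi(g) \in \St_G(1)$ and $\psi(\phi(g)) = (g', g)$ for some $g' \in \langle a, d\rangle \leq G$. First I would form the element $k := h\,\phi(g)^{-1} \in \St_G(1)$ and compute its image under $\psi$.

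Using the multiplication rule for the wreath product (here the top permutations are trivial since $h, \phi(g) \in \St_G(1)$), we get
\[
\psi(k) = \psi(h)\,\psi(\phi(g))^{-1} = (1,g)(g',g)^{-1} = (1 \cdot (g')^{-1},\, g \cdot g^{-1}) = ((g')^{-1}, 1).
\]
Thus $\psi(k) = (h', 1)$ with $h' = (g')^{-1} \in \langle a, d\rangle \leq G$, since $\langle a,d\rangle$ is a subgroup. Now the final clause of Lemma~\ref{lemma7}(ii) applies directly: any element of $\St_G(1)$ whose image under $\psi$ is $(h', 1)$ (or $(1, h')$) with $h' \in \langle a, d\rangle$ must be trivial. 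Hence $k = 1$, i.e.\ $h = \phi(g)$, as claimed.

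The argument is short and the only real content is bookkeeping with the wreath-product multiplication convention and citing the right piece of Lemma~\ref{lemma7}; I do not expect a genuine obstacle. The one point to be careful about is that the relevant conclusion of Lemma~\ref{lemma7}(ii) is stated for $h \in \langle a, d\rangle$, so I must make sure the residual element $(g')^{-1}$ genuinely lies in $\langle a,d\rangle$ and not merely in $G$ — but this is immediate from $g' \in \langle a,d\rangle$ and the fact that $\langle a,d\rangle$ is closed under inversion.
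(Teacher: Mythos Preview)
Your proposal is correct and follows essentially the same approach as the paper: form the quotient of $h$ and $\phi(g)$, observe that its image under $\psi$ has one trivial coordinate and the other in $\langle a,d\rangle$, and invoke Lemma~\ref{lemma7}(ii). The only cosmetic difference is that the paper uses $\phi(g)h^{-1}$ rather than $h\phi(g)^{-1}$.
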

\begin{proof}
	We have $\psi(\phi(g)) = (x,g)$, with $x\in \langle a,d \rangle$. 
	Therefore, $\psi(\phi(g)h^{-1}) = (x,1)$. 
	It follows from Lemma \ref{lemma7} that $\phi(g) = h$.
\end{proof}

\subsection{The difference with previous examples of branch groups outside $\MF$}

We mentioned in the Introduction that we are not providing the first examples of finitely generated branch groups which are not in $\MF$. 
The first examples were found by Bondarenko in \cite{Bondarenko}, using similar constructions to those in \cite{Neumann, Segal, Wilson}. 
In these examples, the existence of a maximal subgroup of infinite index is guaranteed by the fact that the subgroup of  finitary automorphisms is proper and dense in the given branch group.
A \emph{finitary} automorphism is one which admits non-trivial projections at only finitely many vertices.
Its \emph{depth} is the first level at which all its projections are trivial. 
By contrast, the subgroup of finitary automorphisms of a group in $\mathcal{G}_{2,m}$ is very small indeed.
\begin{prop}
 Let $G\in \mathcal{G}_{2,m}$. Then $A=\langle a \rangle $ is the subgroup of finitary automorphisms of $G$. 
\end{prop}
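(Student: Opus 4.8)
The inclusion $A\subseteq\{g\in G: g\text{ is finitary}\}$ is clear, since $A=\{1,a\}$ with $a=\sigma(1,1)$, and both elements are rooted, hence finitary of depth at most $1$. For the reverse inclusion I would proceed by induction on the depth $n$ of a finitary $g\in G$, the crucial point being the claim: \emph{if $g\in G$ satisfies $\psi(g)=\tau(g_{\boldsymbol0},g_{\boldsymbol1})$ with $g_{\boldsymbol0},g_{\boldsymbol1}\in A$, then $g\in A$}. Granting this, the base case $n\le1$ is immediate because the only rooted automorphisms of the binary tree are $1$ and $a$; and for $n\ge2$ the first-level projections $g_{\boldsymbol0},g_{\boldsymbol1}$ lie in $G$ by self-similarity and are finitary of depth at most $n-1$, hence lie in $A$ by induction, so the claim gives $g\in A$. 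This identifies the set of finitary elements of $G$ with $A$.

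To prove the claim, note first that replacing $g$ by $ag$ affects neither the property ``$g\in A$'' nor the pair of first-level projections, and that it puts $g$ in $\St_G(1)$ if the top permutation of $g$ was non-trivial; so I may assume $g\in\St_G(1)$, $\psi(g)=(g_{\boldsymbol0},g_{\boldsymbol1})$ with $g_{\boldsymbol0},g_{\boldsymbol1}\in\{1,a\}$. If $(g_{\boldsymbol0},g_{\boldsymbol1})=(1,1)$ then $g=1$ since $\psi$ is injective. The pairs $(a,1)$ and $(1,a)$ are ruled out by the last part of Lemma~\ref{lemma7}, applied with $h=a\in\langle a,d\rangle$. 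There remains the pair $(a,a)$, which is not covered by that lemma.

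To exclude $(a,a)$, I would use the abelianisation. With the Schreier transversal $\{1,a\}$ one gets $\St_G(1)=\langle B\cup aBa\rangle$ (note $B\subseteq\St_G(1)$, as elements of $B$ fix level $1$), with $\psi(b)=(\omega(b),\rho(b))$ and $\psi(aba)=(\rho(b),\omega(b))$ for $b\in B$. Composing the level-$1$ projections $\varphi_{\boldsymbol0},\varphi_{\boldsymbol1}$ restricted to $\St_G(1)$ (where they take values in $G$) with the abelianisation $G\to G/G'\cong A\times B$ of Proposition~\ref{prop:AbelianisationGIsAB} gives a homomorphism $\Theta\colon\St_G(1)\to(A\times B)^2$, $g\mapsto(\overline{g_{\boldsymbol0}},\overline{g_{\boldsymbol1}})$, whose image is spanned over $\FF_2$ by $\Theta(b)=(\omega(b),0\,;\,0,\rho(b))$ and $\Theta(aba)=(0,\rho(b)\,;\,\omega(b),0)$, $b\in B$, where a quadruple denotes the $A$- and $B$-components of the two factors (additive notation, $A\cong\FF_2$ and $B\cong\FF_2^m$). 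Now if $g\in\St_G(1)$ has $\psi(g)=(g_{\boldsymbol0},g_{\boldsymbol1})$ with $g_{\boldsymbol0},g_{\boldsymbol1}\in\{1,a\}$, the $B$-components of $\Theta(g)$ vanish; writing $\Theta(g)=\sum_bx_b\Theta(b)+\sum_by_b\Theta(aba)$ with $x_b,y_b\in\FF_2$ and comparing $B$-components forces $\sum_bx_b\rho(b)=\sum_by_b\rho(b)=0$, hence $\sum_bx_bb=\sum_by_bb=0$ in $B$ because $\rho$ is an automorphism, and then $\sum_bx_b\omega(b)=\omega(0)=0$ and likewise for $y$ because $\omega$ is a homomorphism. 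Comparing $A$-components yields $\overline{g_{\boldsymbol0}}=\overline{g_{\boldsymbol1}}=0$, so $g_{\boldsymbol0}=g_{\boldsymbol1}=1$ (as $A$ embeds in $A\times B$). This excludes $(a,a)$, completing the claim and the proof.

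The step requiring the most care is exactly the exclusion of $(a,a)$, which is invisible to Lemma~\ref{lemma7}; one needs an invariant separating it from the pairs genuinely realised by $\psi$ on $\St_G(1)$, and it is the invertibility of $\rho$ that lets the abelianisation provide it. (One could instead argue via parts (i) and (ii) of Lemma~\ref{lemma7}: an element of $\St_G(1)$ both of whose projections lie in $\langle a,d\rangle$ must have $\psi$-image in the diagonal subgroup $\hat C=\langle(a,d),(d,a)\rangle$, and the only element of $\hat C$ with first coordinate $a$ is $(a,d)\neq(a,a)$, since $d\in\St_G(1)$ while $a\notin\St_G(1)$.)
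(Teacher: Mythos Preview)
Your proof is correct. Both you and the paper reduce to the same core claim---that no $g\in\St_G(1)$ has $\psi(g)=(g_{\boldsymbol0},g_{\boldsymbol1})$ with $g_{\boldsymbol0},g_{\boldsymbol1}\in A$ unless $g=1$---but the arguments diverge at that point. The paper treats all three nontrivial pairs at once by multiplying by $c=(a,d)$: if $g_{\boldsymbol0}=a$ then $\psi(gc)=(1,g_{\boldsymbol1}d)$ with $g_{\boldsymbol1}d\in\langle a,d\rangle$, so the ``in particular'' clause of Lemma~\ref{lemma7} forces $g_{\boldsymbol1}d=1$, contradicting $g_{\boldsymbol1}\in\{1,a\}$. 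This is a one-line trick that avoids any case split.

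Your abelianisation argument is a genuinely different idea, and it is worth noting that it already handles \emph{all} pairs $(g_{\boldsymbol0},g_{\boldsymbol1})\in A\times A$, not just $(a,a)$: the conclusion $\overline{g_{\boldsymbol0}}=\overline{g_{\boldsymbol1}}=0$ forces $g_{\boldsymbol0}=g_{\boldsymbol1}=1$ regardless of which pair you started with. So you could drop the separate appeal to Lemma~\ref{lemma7} for $(a,1)$ and $(1,a)$ and run the abelianisation argument uniformly. This has the minor bonus of not invoking the hypothesis $m\geq 2$ implicit in Lemma~\ref{lemma7}, so your argument (unlike the paper's as written) also covers $G\in\mathcal{G}_{2,1}$, the infinite dihedral group. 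Your parenthetical alternative via $\hat C$ is correct as well and is close in spirit to the paper's trick, since $\pi_{\boldsymbol0}\colon\hat C\to\langle a,d\rangle$ is an isomorphism and hence the unique element of $\hat C$ with first coordinate $a$ is $(a,d)$.
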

\begin{proof}
Let $g\in G$ be a finitary automorphism. It can be written as $g=a^k(g_{\boldsymbol{0}},g_{\boldsymbol{1}})$, where $k\in\{0,1\}$ and $g_{\boldsymbol{0}}, g_{\boldsymbol{1}}\in G$ are finitary automorphisms. If the depth of $g$ is $n\in \NN^*$, then the depths of $g_{\boldsymbol{0}}$ and $g_{\boldsymbol{1}}$ are at most $n-1$. In particular, if $g$ has depth $1$, then $g=a(1,1) = a \in A$.

We will now show that there can be no finitary automorphisms of depth $n$ for $n>1$. If $g=a^k(g_{\boldsymbol{0}},g_{\boldsymbol{1}})$ has depth $n$, then one of $g_{\boldsymbol{0}}$ or $g_{\boldsymbol{1}}$ must have depth exactly $n-1$. Therefore, by induction, it suffices to show that there are no finitary automorphisms of depth $2$.

For the sake of contradiction, let us assume that there exists $g=a^k(g_{\boldsymbol{0}},g_{\boldsymbol{1}})$ such that $g$ has depth exactly $2$. Since $a^kg$ is also a finitary automorphism of depth $2$, we can assume without loss of generality that $g\in \St_G(1)$. We know that one of $g_{\boldsymbol{0}}$ or $g_{\boldsymbol{1}}$ must be of depth $1$, and therefore be equal to $a$. Without loss of generality (as conjugation by $a$ does not change the depth of $g$), let us assume that $g_{\boldsymbol{0}}=a$. Since $g_{\boldsymbol{1}}$ is of depth at most $1$, either $g_{\boldsymbol{1}}=1$ or $g_{\boldsymbol{1}}=a$. 
In either case, for the $c$ and $d$ of Lemma \ref{lemma7}, we have $gc=(1,g_{\boldsymbol{1}}d)\in \St_G(1)$ with $g_{\boldsymbol{1}}d\in \langle a, d\rangle$. Therefore, by Lemma \ref{lemma7}, we have $g_{\boldsymbol{1}}d=1$, a contradiction.
We conclude that there exists no finitary automorphism of depth $2$, and thus that the subgroup of finitary automorphisms of $G$ is $A$.
%
%
%
%
\end{proof}

The fact that the finitary automorphisms form a dense subgroup  in the examples considered by Bondarenko is a consequence of the fact
that $\varphi_v(\rist_G(v))=G$ for every $v\in T$ where $G$ is one of these examples. 
Of course, this condition does not hold for {\v{S}}uni{\'c} groups (this would contradict the previous result),
but we can say precisely what the rigid vertex stabilizers are. 

\begin{prop}\label{rists}
 Let $G\in\mathcal{G}_{2,m}$ with $m\geq 2$. Denote by $\overline{B_1}$ the normal closure of $B_1$ in $G$ and recall the definition of $K$ from Proposition \ref{prop:regular_branch}. 
 Then 
 \begin{enumerate}[label=(\roman*)]
  \item\label{item:rist1} $\varphi_v(\rist_G(v))=\overline{B_1}$ for $v\in X$
  \item\label{item:ristn} $\varphi_v(\rist_G(v))=\varphi_v(\rist_K(v))=K$ for any $v\in X^n$ with $n\geq m$. 
 \end{enumerate} 
\end{prop}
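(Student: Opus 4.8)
The plan is to compute the rigid vertex stabilizers level by level, using the recursive structure $\psi(\St_G(1)) = \hat C \ltimes (\overline{B_1}\times\overline{B_1})$ from Lemma \ref{lemma7}(ii). For part \ref{item:rist1}, I would argue as follows. Since $\rist_G(\boldsymbol{0})$ consists exactly of those $g\in\St_G(1)$ with $\psi(g) = (g_{\boldsymbol 0}, 1)$, the decomposition \eqref{eqn:lemma7} immediately gives $\varphi_{\boldsymbol 0}(\rist_G(\boldsymbol 0))\leq \overline{B_1}$: any such $g$ lies in $\hat C\ltimes(\overline{B_1}\times\overline{B_1})$, and writing $g = \hat c\,(u_0,u_1)$ with $\hat c\in\hat C$ and $u_i\in\overline{B_1}$, the second coordinate being trivial forces the $\langle a,d\rangle$-component of $\hat c$ to be trivial, hence $\hat c=1$ by the injectivity statement at the end of Lemma \ref{lemma7}(ii), so $g = (u_0,1)$ with $u_0\in\overline{B_1}$. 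For the reverse inclusion $\overline{B_1}\leq\varphi_{\boldsymbol 0}(\rist_G(\boldsymbol 0))$, I would show $(u,1)\in\psi(G)$ for every $u\in\overline{B_1}$: since $\overline{B_1} = B_1\ltimes K$ and $G$ is regular branch over $K$ (Proposition \ref{prop:regular_branch}(ii)), we have $K\times K\leq\psi(K)$, so $(k,1)\in\psi(G)$ for all $k\in K$; it then remains to produce $(b_j,1)$ for the generators $b_1,\dots,b_{m-1}$ of $B_1$, which one obtains from the recursions $b_{j-1} = (1,b_j)$ together with elements of the form $b_{j-1}^a$ or suitable commutators — e.g. from $b_1 = (1,b_2)$ and a conjugate putting $b_1$ in the first slot, and from the relation defining $\rho(b_{m-1})$. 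The case $v = \boldsymbol{1}$ is symmetric.

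For part \ref{item:ristn}, the inclusion $\varphi_v(\rist_K(v))\leq\varphi_v(\rist_G(v))$ is trivial, and $\varphi_v(\rist_G(v))\leq\varphi_v(\rist_G(n))\leq$ (the $v$-component of $\psi_n(\St_G(n))$) needs the upper bound $\varphi_v(\rist_G(v))\leq K$. The cleanest route is induction on $n$: for $n = 1$ we have just shown $\varphi_v(\rist_G(v)) = \overline{B_1}$ at level $1$, and then iterating the decomposition \eqref{eqn:lemma7} one level at a time, $\varphi_v(\rist_G(v))$ for $v$ at level $n$ is contained in $\varphi_w(\rist_{\overline{B_1}}(w))$ for $w$ the length-$(n-1)$ suffix of $v$ inside the relevant copy of $\overline{B_1} = B_1\ltimes K$. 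So the real content is the stabilization: I would show that $\varphi_w(\rist_{\overline{B_1}}(w))$ descends through $B_1 = \langle b_1,\dots,b_{m-1}\rangle$ (whose projections satisfy $b_{j}\mapsto b_{j+1}$ along the rightmost branch and are trivial elsewhere, with $b_{m-1}$ having nontrivial first coordinate $a$ only after one step and then feeding into $K$ via $\rho(b_{m-1})\in B_1\ltimes K$), so that after $m-1$ further levels every contribution from $B_1$ has either died or been absorbed into $K$, leaving exactly $K$ at all levels $n\geq m$. For the reverse inclusion $K\leq\varphi_v(\rist_K(v))$ at level $n\geq m$, I would invoke regular branchness: $\psi_n(K)\geq K\times\dots\times K$, and each such factor lies in $\rist_K(v)$, giving $\varphi_v(\rist_K(v)) = K$.

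The main obstacle I anticipate is the bookkeeping in the stabilization argument for \ref{item:ristn}: tracking exactly how the generators $b_1,\dots,b_{m-1}$ of $B_1$ (and their $G$-conjugates, since we work with the normal closure $\overline{B_1}$, not just $B_1$) propagate down the tree and verifying that after $m$ levels nothing survives outside $K$. This requires care because $\overline{B_1}$ is a normal closure and one must control conjugates; the key facts that make it work are that $\rho$ is the companion matrix of a degree-$m$ polynomial (so $B_1, \rho(B_1),\dots$ exhaust $B$ in a controlled way) and that $K$ is normal in $G$ with $G/\overline{B_1}$ and $\overline{B_1}/K$ finite, so the "leakage" outside $K$ at each level is confined to the finitely many cosets represented inside $\hat C$ and $B_1$, all of which have bounded depth. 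Once the level-$1$ computation and the regular-branch property are in hand, everything else is a finite, if somewhat intricate, induction.
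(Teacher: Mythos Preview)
Your plan for part \ref{item:rist1} matches the paper's: both directions go through Lemma \ref{lemma7}. The paper's lower bound is slightly more direct than yours --- it simply observes that $b\in B_0$ gives $\psi(b)=(1,\rho(b))$, so $B_0\leq\rist_G(\boldsymbol{1})$, and then the normal closure under $\St_G(1)$ (subdirect in $G\times G$) yields all of $\overline{B_1}$ --- but your version via $K\times K\leq\psi(K)$ together with $b_{j-1}^a=(b_j,1)$ works just as well.

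For part \ref{item:ristn}, your inductive framework is the paper's, and the paper dispatches exactly the bookkeeping obstacle you anticipate by introducing the intermediate subgroups
\[
K_0:=G,\qquad K_n:=(B_1\cap\cdots\cap B_n)\ltimes K\quad(n\geq 1),
\]
and proving $\varphi_{\boldsymbol{1}^n}(\rist_G(\boldsymbol{1}^n))=\varphi_{\boldsymbol{1}}(\rist_{K_{n-1}}(\boldsymbol{1}))=K_n$ inductively. The inductive step reduces to computing $\varphi_{\boldsymbol{1}}(\rist_{K_{n-1}}(\boldsymbol{1}))\cap(B_1\cap\cdots\cap B_{n-1})$: the unique preimage in $G$ of $(1,b)$ under $\psi$ is $\rho^{-1}(b)$, and this lies in $K_{n-1}$ precisely when $\rho^{-1}(b)\in B_1\cap\cdots\cap B_{n-1}$, i.e.\ $b\in B_2\cap\cdots\cap B_n$, whence the intersection is $B_1\cap\cdots\cap B_n$. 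Since no nontrivial $\rho$-invariant subspace lies in $\ker\omega$ (Proposition \ref{prop:SunicProp2}), the intersection $B_1\cap\cdots\cap B_m$ is trivial and $K_m=K$.

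This is cleaner than tracking how individual generators propagate. A small correction to your heuristic: your claim that $\rho(b_{m-1})\in B_1\ltimes K$ is false, since $\rho(b_{m-1})$ has nonzero $b_0$-component and $B\cap K=1$ (as $K\leq G'$). The relevant fact is rather that $b_{m-1}\notin\rist_G(\boldsymbol{1})$ (its first coordinate is $a$), so it is simply lost at the next level; nothing ``feeds into $K$''. The filtration by the $B_1\cap\cdots\cap B_n$ is what makes this loss transparent.
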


\begin{proof}

 \ref*{item:rist1} If $b\in B_0$ then $\psi(b)=(1,\rho(b))$ and $\psi(b^a)=(\rho(b),1)$ so, since $\psi(\St_G(1))$ is subdirect in $G\times G$ we have
 $\varphi_v(\rist_G(v))\geq \varphi_v(\langle B_0\rangle^{\St(1)})=\overline{B_1}$. 
 On the other hand, by Lemma \ref{lemma7}, $G=\langle a,d \rangle \ltimes \overline{B_1}$ and  $\varphi_v(\rist_G(v))\cap \langle a,d\rangle = \{1\}$.
 The claim follows. 
 
 \ref*{item:ristn} The fact that $G$ is regular branch over $K$ implies that $\varphi_v(\rist_K(v))\geq K$ for every $v\in T$. 
 Put $K_0:=G $ and $K_n:=(B_1\cap \dots\cap B_n)\ltimes K$ for $n\geq 1$, which is indeed a semi-direct product by Lemma \ref{lemma7}.
 We claim that $\varphi_{\boldsymbol{1}^n}(\rist_G(\boldsymbol{1}^n))=\varphi_{\boldsymbol{1}}(\rist_{K_{n-1}}(\boldsymbol{1}))=K_n$ for every $n\geq 1$. 
 The case $n=1$ follows from the previous item because $K_1 = B_1\ltimes K \leq \overline{B_1}$ by definition of $K$, and it can be easily verified that $K_1$ is normal in $G$, which implies that $K_1=\overline{B_1}$.

 Let $n\geq 2$ and suppose the claim true for $n-1$. By inductive hypothesis,
 \begin{align*}
 \varphi_{\boldsymbol{1}^n}(\rist_G(\boldsymbol{1}^n))&=\varphi_{\boldsymbol{1}}(\rist_G(\boldsymbol{1})\cap \varphi_{\boldsymbol{1}^{n-1}}(\rist_G(\boldsymbol{1}^{n-1})))\\
    &=\varphi_{\boldsymbol{1}}(\rist_G(\boldsymbol{1})\cap K_{n-1})=\varphi_{\boldsymbol{1}}(\rist_{K_{n-1}}(\boldsymbol{1})).
 \end{align*}
  So we must show that $\varphi_{\boldsymbol{1}}(\rist_{K_{n-1}}(\boldsymbol{1}))=K_n$.
 We start by showing that 
 \[\varphi_{\boldsymbol{1}}(\rist_{K_{n-1}}(\boldsymbol{1}))\cap(B_1\cap\dots\cap B_{n-1})=B_1\cap\dots\cap B_n.\] 
 First note that if $b\in B_1\cap\dots\cap B_{n}$ then $\rho^{-1}(b)\in B_0\cap\dots\cap B_{n-1} \leq \rist_{K_{n-1}}(\boldsymbol{1})$ because $B_0\leq \rist_G(\boldsymbol{1})$ and $B_1\cap\dots\cap B_{n-1}\leq K_{n-1}$ by definition. 
 For the other inclusion, let $b\in (B_1\cap\dots\cap B_{n-1})\setminus B_n$;
 then 
\[\rho^{-1}(b)\in (B_0\cap\dots\cap B_{n-2})\setminus B_{n-1},\] 
so $\rho^{-1}(b)\notin K_{n-1}$ and so $b\notin \varphi_{\boldsymbol{1}}(\rist_{K_{n-1}}(\boldsymbol{1}))$.

 Now, $K_{n-1}\geq K$ so 
$$\varphi_{\boldsymbol{1}}(\rist_{K_{n-1}}(\boldsymbol{1}))\geq \varphi_{\boldsymbol{1}}(\rist_K(\boldsymbol{1}))\geq K$$
and thus $\varphi_{\boldsymbol{1}}(\rist_{K_{n-1}}(\boldsymbol{1}))\geq K_n$. 
 On the other hand, by inductive hypothesis, $K_{n-1}=\varphi_{\boldsymbol{1}}(\rist_{K_{n-2}}(\boldsymbol{1}))\geq \varphi_{\boldsymbol{1}}(\rist_{K_{n-1}}(\boldsymbol{1}))$, and so 
\begin{align*}
\varphi_{\boldsymbol{1}}(\rist_{K_{n-1}}(\boldsymbol{1})) &=\varphi_{\boldsymbol{1}}(\rist_{K_{n-1}}(\boldsymbol{1}))\cap K_{n-1}\\ 
	&=(\varphi_{\boldsymbol{1}}(\rist_{K_{n-1}}(\boldsymbol{1}))\cap B_1\cap \dots\cap B_{n-1}) \ltimes (\varphi_{\boldsymbol{1}}(\rist_{K_{n-1}}(\boldsymbol{1}))\cap K)\\
	&=(B_1\cap\dots\cap B_n)\ltimes K = K_n. 
 \end{align*}
 The claim then follows by induction.
%
%
%
%

 Since each $B_i$ has codimension 1 in $B$, the intersection $B_1\cap\dots\cap B_m$ is trivial. 
 Hence $\varphi_{\boldsymbol{1}^m}(\rist_G(\boldsymbol{1}^m))=(B_1\cap\dots\cap B_m)\ltimes K=K$ and the second item follows, using the fact that $G$ acts level-transitively and $K$ is normal in $G$.
\end{proof}

 
 \section{The congruence subgroup property}\label{sec:csp}
 
 This section contains the proofs of Theorems \ref{thm:csp} and \ref{thm:justinfinite}. 
 In fact, both theorems follow from the same fact. The main idea is explained in the following remark.

 \begin{rem}\label{prop:branch_csp}
 It is implicit in the proof of \cite[Theorem 4]{Grigorchuk_branch} that if $G \leq \Aut T$ is transitive on every level
 then every non-trivial normal subgroup of $G$ contains  $\rist_G(n)'$ for some $n$, where $'$ denotes the commutator subgroup.
 If $G$ contains a non-trivial subgroup $H$ such that $\psi(H)\geq H\times  \dots \times H$
 (that is, $G$ is weakly regular branch over $H$), then $H\leq \varphi_v(\rist_G(v))$ for each $v\in T$. 
 Thus if $H'$ contains some level stabilizer $\St_G(m)$ then for every non-trivial normal subgroup $N$ of $G$  there exists some $n$ such that
 \begin{equation*}
 \begin{split}
 N &\geq \rist_G(n)'\geq \psi_n^{-1}(H'\times \dots\times H')\\
 & \geq\psi_n^{-1}( \St_G(m)\times\dots\times\St_G(m))=\St_G(m+n).  
\end{split}
\end{equation*}

 In particular, $G$ is just infinite and has the congruence subgroup property. 
 \end{rem}
 

In view of this and Proposition \ref{prop:regular_branch}, Theorems \ref{thm:csp} and \ref{thm:justinfinite} will follow once we show that for $G\in \mathcal{G}_{p,m}$,
$G''$ (if $p\geq 3$) or $K'$ (if $p=2$ and $m\geq 2$) contains some level stabilizer.

Before delving into the proofs of these facts, let us remark the following consequence of Theorem \ref{thm:csp} for the maximal subgroups of \v{S}uni\'{c} groups. 
In our considerations we  exclude the infinite dihedral group (the case $p=2, m=1$), because it does not have the congruence subgroup property (it cannot, as it has finite quotients of odd order) and because its countably many maximal subgroups are all of finite index, and every prime is represented among these indices. 

\begin{prop}\label{prop:max_fin_index}
 Let $G$ be a \v{S}uni\'{c} group acting on the $p$-regular tree and with defining polynomial of degree $m$. If $G$ is not the infinite dihedral group, then $G$ has exactly $p^{m+1}-1$ maximal subgroups of finite index, each of index $p$. 
\end{prop}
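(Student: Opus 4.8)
The plan is to combine the congruence subgroup property of Theorem~\ref{thm:csp} with the fact that every congruence quotient $G/\St_G(n)$ is a finite $p$-group: this will force every maximal subgroup of finite index to be normal of index $p$, after which the count reduces to counting index-$p$ subgroups of the abelianisation, which is given by Proposition~\ref{prop:AbelianisationGIsAB}.

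First I would assemble the two ingredients. If $M\le G$ is a maximal subgroup of finite index, then its normal core $\Core_G(M)$ (the kernel of the action of $G$ on the coset space $G/M$) is a normal subgroup of finite index; by Theorem~\ref{thm:csp} it contains $\St_G(n)$ for some $n$, and hence $G/\Core_G(M)$, and in particular $G/M$, is a quotient of $G/\St_G(n)$. I would then verify by induction on $n$ that $G/\St_G(n)$ is a finite $p$-group. The case $n=1$ holds because every generator $b\in B$ has trivial activity at the root (its image under $\psi$ has trivial top permutation), so the image of $G$ in $\Sym(X)$ is the cyclic group $\langle\sigma\rangle$ and $G/\St_G(1)\cong\ZZ/p\ZZ$. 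For the inductive step, self-similarity of $G$ makes $g\mapsto (g_v\St_G(n-1))_{v\in X}$ a well-defined homomorphism $\St_G(1)\to\bigl(G/\St_G(n-1)\bigr)^p$ with kernel exactly $\St_G(n)$; its target is a $p$-group by the inductive hypothesis, and since $[G:\St_G(1)]=p$ it follows that $G/\St_G(n)$ is a $p$-group.

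Consequently $G/\Core_G(M)$ is a finite $p$-group, hence nilpotent, so the maximal subgroup $M/\Core_G(M)$ is normal of index $p$; therefore $M\trianglelefteq G$ with $[G:M]=p$, and in particular $G'\le M$. Conversely, any subgroup of $G$ that contains $G'$ with index $p$ is of prime index, hence maximal, and is of finite index. Thus the maximal subgroups of $G$ of finite index are exactly the preimages in $G$ of the index-$p$ subgroups of $G/G'$, and all of them have index $p$. By Proposition~\ref{prop:AbelianisationGIsAB}, $G/G'\cong A\times B\cong(\ZZ/p\ZZ)^{m+1}$ is an $(m+1)$-dimensional vector space over $\FF_p$, whose index-$p$ subgroups are precisely its hyperplanes; there are $\frac{p^{m+1}-1}{p-1}$ of these (equal to $2^{m+1}-1$ when $p=2$, the case relevant to Theorem~\ref{thm:maximal}), which gives the number of maximal subgroups of finite index.

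There is no essential obstacle: the statement is in effect a corollary of Theorem~\ref{thm:csp} and Proposition~\ref{prop:AbelianisationGIsAB}. The only step requiring an argument of its own is the claim that the congruence quotients $G/\St_G(n)$ are $p$-groups, which is where the particular form of the generators of $B$ is used; the one mild subtlety is that one must pass to the normal core $\Core_G(M)$ before invoking the congruence subgroup property and the nilpotency of finite $p$-groups.
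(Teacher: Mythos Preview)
Your approach is essentially identical to the paper's: invoke the congruence subgroup property so that a finite-index maximal subgroup $M$ sits above some $\St_G(n)$, use that $G/\St_G(n)$ is a finite $p$-group to conclude $M$ is normal of index $p$ and hence contains $G'$, then count index-$p$ subgroups of $G/G'\cong(\ZZ/p\ZZ)^{m+1}$. Two small remarks: the detour through $\Core_G(M)$ is unnecessary, since Theorem~\ref{thm:csp} already applies to arbitrary finite-index subgroups, so $\St_G(n)\le M$ directly; and your hyperplane count $\tfrac{p^{m+1}-1}{p-1}$ is in fact the correct one --- the figure $p^{m+1}-1$ in the statement (and in the paper's proof) coincides with it only when $p=2$, which is the case actually used in Theorem~\ref{thm:maximal}.
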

\begin{proof}
%

Let $M$ be a maximal subgroup of finite index in $G$. 
By Theorem \ref{thm:csp}, $M$ contains some level stabilizer $\St_G(n)$. 
Since the quotient $G/\St_G(n)$ is a $p$-group, the image of $M$ is normal of index $p$ and therefore the same holds for $M$.
In particular, $G'\leq M$. 
Now, $G/G'$ is an elementary abelian $p$-group of rank $m+1$ and so the number of its maximal subgroups is $p^{m+1}-1$. 
The result follows. 
\end{proof}

\noindent\textbf{Notation.} Recall our convention that $g^h=h^{-1}gh$ and $[g,h]=g^{-1}h^{-1}gh$.

 \subsection{Odd prime case}
In this subsection, $G$ will denote a group in $\mathcal{G}_{p,m}$ where $p$ is an odd prime and $m\geq 1$. 
The proof that the second derived subgroup $G''$ of $G$ contains some level stabilizer is split into two lemmas.
%

\begin{lem}\label{G''>gamma}
Let $G\in \mathcal{G}_{p,m}$ where $p$ is an odd prime, then
 $$\psi(G'')\geq \gamma_3(G)\times \overset{p}{\dots} \times \gamma_3(G)$$
 where $\gamma_3(G)$ is the third term in the lower central series of $G$.
\end{lem}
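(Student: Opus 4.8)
The plan is to work inside the subdirect embedding $\psi\colon \St_G(1)\hookrightarrow G\times\overset{p}{\cdots}\times G$ and produce, for each coordinate $i$, enough elements of $G''$ whose $\psi$-image is supported only on that coordinate and generate a copy of $\gamma_3(G)$ there. Since $G$ acts transitively on the first level and $G''$ is normal, it suffices to handle one coordinate, say the last, and then conjugate by powers of $a$ to spread the conclusion to all $p$ coordinates; the direct product structure on the right-hand side then follows because $\psi(\rist_G(1))$ is already a direct product and the elements we construct lie in $\rist_G(1)$. So the real task is: show that $\varphi_{\boldsymbol{p-1}}\bigl(\rist_{G''}(\boldsymbol{p-1})\bigr)\supseteq \gamma_3(G)$.

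First I would recall the basic section computations for the generators: for $b\in B$ one has $\psi(b)=(\omega(b),1,\dots,1,\rho(b))$, and $\psi([a,b])$, $\psi([b,b'])$ etc.\ can be written out explicitly. The key point is that $B_0=\ker\omega$ consists of elements $b$ with $\psi(b)=(1,\dots,1,\rho(b))$, and as $b$ ranges over $B_0$, $\rho(b)$ ranges over $B_1=\rho(B_0)$, which together with $a$ generates $G$ (indeed $G=\langle a, b_0,\dots,b_{m-1}\rangle$ and one can reach $a$ via the section of $b_{m-1}$). Thus elements supported on the last coordinate already give us a generating set of $G$ there at the level of sections, but not yet inside $G''$. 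To get into $G''$, I would take iterated commutators: a commutator of two elements of $\St_G(1)$ whose $\psi$-images are $(1,\dots,1,x)$ and $(1,\dots,1,y)$ has $\psi$-image $(1,\dots,1,[x,y])$, and it lies in $G'\cap\rist_G(1)$; taking a further commutator with such an element pushes us into $G''$ while the last section becomes a double commutator $[[x,y],z]$. Choosing $x,y,z$ among $a$ and a basis of $B_1$, and using that $\langle a, B_1\rangle = G$, the sections $[[x,y],z]$ generate $\gamma_3(G)$.

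More precisely, the clean way to organize this: let $N=\langle [a,b] : b\in B_0\rangle^{\St_G(1)}\le \rist_G(1)\cap G'$. One computes $\psi([a,b]) = ([\,\cdot\,,\cdot],\ldots)$ supported appropriately, and in fact $\varphi_{\boldsymbol{p-1}}$ of the relevant rigid stabilizer recovers $[a,B_1]$ and its conjugates, whose normal closure in $G$ is a subgroup $H$ with $G/H$ abelian, i.e.\ $H\supseteq G'$. Then $[N,\rist_G(1)]\le G''$, and its image under $\varphi_{\boldsymbol{p-1}}$ contains $[G', G] = \gamma_2(G)' \cdot \ldots$; being careful, $[\varphi_{\boldsymbol{p-1}}(N),\varphi_{\boldsymbol{p-1}}(\rist_G(\boldsymbol{p-1}))] \supseteq [G',G]=\gamma_3(G)$ because $\varphi_{\boldsymbol{p-1}}(\rist_G(\boldsymbol{p-1}))=G$ (here we use $p\ge 3$ and self-replication of $\rist_G(\boldsymbol 1)$, which for odd $p$ surjects onto $G$ — this is where the odd hypothesis enters, paralleling Proposition \ref{prop:regular_branch}\,(i) that $G$ is regular branch over $G'$). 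Conjugating by $a^i$ for $i=0,\dots,p-1$ and using that all these elements lie in the direct factor $\rist_G(1)=\prod_{v\in X}\rist_G(v)$, I obtain $\psi(G'')\supseteq \gamma_3(G)\times\cdots\times\gamma_3(G)$.

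The main obstacle I anticipate is the bookkeeping needed to show that the sections one actually obtains from double commutators of the available generators generate all of $\gamma_3(G)$ and not merely some proper subgroup — in particular confirming that $\varphi_{\boldsymbol{p-1}}(\rist_G(\boldsymbol{p-1}))=G$ (so that commuting with it yields the \emph{full} $[G',G]$) and that the first-level rigid stabilizer is rich enough. This should follow from $G$ being regular branch over $G'$ (Proposition \ref{prop:regular_branch}\,(i)) together with $G/G'\cong A\times B$ (Proposition \ref{prop:AbelianisationGIsAB}), since regular branchness over $G'$ gives $\psi(G')\supseteq G'\times\cdots\times G'$, hence $\varphi_v(\rist_{G'}(v))\supseteq G'$, and combined with $B_0\le\rist_G(\boldsymbol 1)$ projecting onto $B_1$ one upgrades this to all of $G$. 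Once that surjectivity is in hand, the commutator identities $[N,\rist_G(1)]\le G''$ and $\varphi_v[N,\rist_G(1)]=[G',G]=\gamma_3(G)$ close the argument; the odd-prime hypothesis is used exactly to guarantee the regular-branch-over-$G'$ structure invoked here.
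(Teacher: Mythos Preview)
Your proposal has a genuine gap at the step $[N,\rist_G(1)]\le G''$. Since $N\le G'$ but $\rist_G(1)\not\le G'$ (for instance $B_0\le\rist_G(\boldsymbol{p-1})$ while $B_0\not\le G'$, because $G/G'\cong A\times B$), all you get is $[N,\rist_G(1)]\le [G',G]=\gamma_3(G)$, not $[N,\rist_G(1)]\le G''=[G',G']$. The same problem affects your first paragraph: commuting $(1,\dots,1,[x,y])\in G'$ with a bare $(1,\dots,1,z)\in\St_G(1)$ lands in $\gamma_3(G)$, not in $G''$, unless that $z$ is already arranged to lie in $G'$; but if you force $z\in G'$, then the sections $[[x,y],z]$ only range over $[G',G']=G''$ downstairs, which need not equal $\gamma_3(G)$. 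Either way the argument does not close.

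The paper repairs exactly this point by making \emph{both} commutands live in $G'$ while still having one of them surject onto $G$ coordinatewise. Concretely, it first checks that $\psi(G')\le G\times\cdots\times G$ is subdirect: from $\psi([a^{-1},b])=(\rho(b)^{-1}\omega(b),\omega(b)^{-1},1,\dots,1,\rho(b))$ one obtains every generator of $B$ in the last coordinate and, choosing $b$ with $\omega(b)=a^{-1}$ and conjugating by $a^2$, also $a$. Combined with the regular branch inclusion $\psi(G')\ge G'\times\cdots\times G'$, one takes an element of $\psi(G')$ supported at a single coordinate with value in $G'$ and commutes it with an element of $\psi(G')$ whose value at that coordinate is arbitrary in $G$; both lie in $\psi(G')$, so the commutator lies in $\psi(G'')$, and its unique nontrivial coordinate ranges over $[G',G]=\gamma_3(G)$. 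This is precisely your intended mechanism, but with the subdirectness of $\psi(G')$ replacing your appeal to $\rist_G(1)$ on the side that needs to hit all of $G$.
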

\begin{proof}
 We first show that $\psi(G')\leq G\times\overset{p}{ \dots}\times G$ is subdirect.
 For any $b\in B$, we have 
 $\psi([a^{-1},b])=(\rho(b^{-1})\omega(b),\omega(b^{-1}),1,\dots,1,\rho(b))$. 
 Since $\rho$ is an automorphism of $B$, we can obtain all generators of $B$ in the last coordinate of $\psi(G')$. 
 Also, since there exists $b\in B$ such that $\omega(b)=a^{-1}$, we can obtain $a$ in the last coordinate of $\psi(G')$, by taking $[a^{-1},b]^{a^2}$.
 Thus $\psi(G')\leq G\times\overset{p}{ \dots}\times G$ maps onto $G$ in the last coordinate (and in all the others, by conjugating by suitable powers of $a$).
 
 The result follows from the above and the fact that $\psi(G')\geq G'\times\overset{p}{ \dots}\times G'$.
\end{proof}

\begin{lem}\label{gamma>G'}
Let $G\in \mathcal{G}_{p,m}$ where $p$ is an odd prime.
The second derived subgroup $G''$ of $G$ contains the stabilizer $\St_G(m+3)$.
%
%
\end{lem}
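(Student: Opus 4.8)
The plan is to combine Lemma \ref{G''>gamma} with a downward induction on the subgroups $B_i=\rho^i(\ker\omega)$, exploiting that the defining polynomial has degree $m$ so that an $m$-fold application of the recursion annihilates the "$B$-part''. First I would observe that by Lemma \ref{G''>gamma} it suffices to show that $\gamma_3(G)$ contains $\St_G(m)$, since then
\[
\psi(G'')\geq \gamma_3(G)\times\overset{p}{\dots}\times\gamma_3(G)\geq \St_G(m)\times\overset{p}{\dots}\times\St_G(m)=\psi(\St_G(m+1)),
\]
which already gives $G''\geq\St_G(m+1)$, a little stronger than claimed but certainly enough. So the real task is the purely lower-central-series statement $\gamma_3(G)\supseteq\St_G(m)$ — or, being less careful about the exact level, $\gamma_3(G)\supseteq\St_G(m+c)$ for a small explicit constant $c$, which is all that is needed.

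To prove $\gamma_3(G)\supseteq\St_G(\text{some level})$ I would argue as follows. Since $G/G'\cong A\times B$ is abelian (Proposition \ref{prop:AbelianisationGIsAB}), an element lies in $G'$ iff its image in $A\times B$ is trivial; and $G'/\gamma_3(G)$ is central in $G/\gamma_3(G)$, so $\gamma_3(G)\supseteq [G',G]$ and it is enough to control commutators $[g,h]$ with $g\in G'$. The key computation is the one already used in Lemma \ref{G''>gamma}: for $b\in B$,
\[
\psi([a^{-1},b])=(\rho(b)^{-1}\omega(b),\,\omega(b)^{-1},1,\dots,1,\rho(b)),
\]
so commutators of $a$ with elements of $B$ produce, in the last coordinate, every element of $\rho(B)=B$, and produce an element of $B_1=\rho(\ker\omega)$ precisely when $\omega(b)=1$, i.e. when $b\in B_0$. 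Iterating this down the rightmost branch, the projection of $\St_G(k)$ at the vertex $\boldsymbol{1}^k$ lands inside $\langle a\rangle\ltimes(B_1\cap\dots\cap B_k)$ (up to smaller-order terms living in $G'$, which is where the $\gamma_3$ bookkeeping comes in), and since the $B_i$ each have codimension $1$ in the $m$-dimensional space $B$, we get $B_1\cap\dots\cap B_m=\{1\}$. Feeding this back through the subdirectness of $\psi(G')$ and the identity $\gamma_3(G)\supseteq[G',G]$ yields $\gamma_3(G)\supseteq\St_G(m)$, and hence the lemma with a couple of levels to spare.

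The step I expect to be the main obstacle is the bookkeeping in the previous paragraph: showing cleanly that the \emph{full} level stabilizer $\St_G(m)$ (not merely the rigid stabilizer, and not merely a subgroup of small index in it) is swallowed by $\gamma_3(G)$, rather than just by $G'$. One has good control of $G'$ and of the rigid stabilizers from Proposition \ref{rists} and Lemma \ref{lemma7}, but passing from "modulo $G'$'' statements to "modulo $\gamma_3(G)$'' statements requires keeping track of second-order commutator terms throughout the induction; concretely, one must check that the error terms produced when straightening $\psi([a^{-1},b])$ into $(\ast,\dots,\ast,\rho(b))$ already lie in $\gamma_3(G)$ coordinatewise. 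I would handle this by working in the nilpotent quotient $G/\gamma_3(G)$ from the start — there $G'$ is central, all the commutator identities become bilinear, and the computation of $\psi$ on commutators reduces to linear algebra over $\FF_p$ with the matrices $M_\rho,M_\omega$ of \eqref{eqn:rho} — after which the codimension count $B_1\cap\dots\cap B_m=0$ finishes the proof.
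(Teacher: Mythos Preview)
Your reduction via Lemma \ref{G''>gamma} is fine: if you can show $\gamma_3(G)\supseteq\St_G(m+c)$ for some constant $c$, you are done. The gap is in your argument for that containment. The sentence ``the projection of $\St_G(k)$ at the vertex $\boldsymbol{1}^k$ lands inside $\langle a\rangle\ltimes(B_1\cap\dots\cap B_k)$'' is simply false: $G$ is self-replicating, so $\varphi_{\boldsymbol{1}^k}(\St_G(k))=G$, full stop, and no ``up to $G'$'' qualifier rescues this. What you presumably have in mind is the image of $\St_G(k)$ in the abelianisation $G/G'\cong A\times B$, which does indeed shrink along the filtration $B_0\cap\dots\cap B_{k-1}$; but that only tells you $\St_G(k)\subseteq G'$ for $k$ large, not $\St_G(k)\subseteq\gamma_3(G)$. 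Passing from $G/G'$ to $G/\gamma_3(G)$ is exactly the nontrivial step, and you cannot do it by ``working in $G/\gamma_3(G)$ from the start and using $\psi$'', because $\psi$ does not descend to a map on $G/\gamma_3(G)$: there is no reason $\gamma_3(G)$ should be $\psi$-invariant in the required sense. Also note that Proposition \ref{rists} and Lemma \ref{lemma7}, which you invoke, are stated and proved only for $p=2$.

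The paper bypasses all of this with a single commutator calculation. Pick $c\in B$ with $\omega(c)=a$; then for every $b\in B$ the element $[[c,a],\rho^{-1}(b)]\in\gamma_3(G)$ satisfies
\[
\psi\bigl([[c,a],\rho^{-1}(b)]\bigr)=(1,\dots,1,[a,b]),
\]
so $\psi(\gamma_3(G))\geq 1\times\dots\times 1\times G'$, and by conjugation $\psi(\gamma_3(G))\geq G'\times\dots\times G'$. Now quote \cite[Lemma 9]{Sunic}, which gives $G'\geq\St_G(m+1)$, and chain the two containments through $\psi_2$ using Lemma \ref{G''>gamma} to get $G''\geq\St_G(m+3)$. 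This is both shorter and avoids any analysis of $G/\gamma_3(G)$; the point you are missing is that one explicit triple commutator already produces all of $G'$ in a single coordinate.
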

\begin{proof}
We start by showing that $\psi(\gamma_3(G))\geq G'\times\overset{p}{ \dots}\times G'$.
 Let $b\in B$ and $c\in B$ such that $\omega(c)=a$.
 Then, $x:=[[c,a],\rho^{-1}(b)]\in \gamma_3(G)$. Using classical commutator identities and the fact that elements of $B$ commute, we have
 \begin{align*}
 x&=[c^{-1}a^{-1}ca, \rho^{-1}(b)]\\
 &=[c^{-1}, \rho^{-1}(b)]^{a^{-1}ca}[a^{-1}ca,\rho^{-1}(b)] \\
 &=[a^{-1}ca, \rho^{-1}(b)].
 \end{align*} 
Therefore,
 \begin{align*}
    \psi(x) &=\psi([a^{-1}ca, \rho^{-1}(b)])\\
	    &=([1,\omega(\rho^{-1}(b)],1,\dots,1,[\rho(c),1],[\omega(c),b])\\
	    &=(1,\dots,1,1,[a,b]).
 \end{align*}
Since $G'=\langle [a,b]\mid b\in B\rangle^G$, and $\varphi_{\boldsymbol{p-1}}(\St_G(\boldsymbol{p-1}))=G$, we can conjugate $\gamma_3(G)$ by elements in $\St_G(\boldsymbol{p-1})$ to obtain that
$\gamma_3(G)\geq 1\times\dots \times 1\times G'$. 
Conjugating by suitable powers of $a$ we conclude that $\psi(\gamma_3(G))\geq G'\times\overset{p}{ \dots}\times G'$, as required.

It was shown in \cite[Lemma 9]{Sunic} that $G'$ contains $\St_G(m+1)$. 
We therefore have, by Lemma \ref{G''>gamma}, 
\begin{align*}
    \psi_2(G'') & \geq \psi(\gamma_3(G))\times \overset{p}{\dots}\times \psi(\gamma_3(G))\\
	& \geq G'\times \overset{p^2}{\dots}\times G' \\
	& \geq \St_G(m+1)\times \overset{p^2}{\dots} \times \St_G(m+1)=\psi_2(\St_G(m+3)).
 \end{align*}
The claim then follows as $\psi_2$ is injective. 
\end{proof}



 \subsection{Even prime case}
 As explained at the start of the section, we must exclude the infinite dihedral group from our considerations here.
%
%
 Thus in this subsection $G$ will denote a group in $\mathcal{G}_{2,m}$ with $m\geq 2$.
 We will show that $K'$ contains some level stabilizer in two steps. 
Recall that $K_v$ denotes $\varphi_v(\St_K(v))$ for a vertex $v\in T$ and that $v$ is thought of as a word in the alphabet $X=\{\boldsymbol{0},\boldsymbol{1}\}.$
%
%
%
 \begin{lem}\label{K>B0}
 Let $G\in \mathcal{G}_{2,m}$ with $m\geq 2$. 
 There exists $n\in\NN$ such that $K_v\geq \langle a, B_0\rangle$ where $v=\boldsymbol{1}^n.$
 \end{lem}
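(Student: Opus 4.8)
The plan is to produce, for some level $n$, elements inside the projection $K_{\boldsymbol{1}^n} = \varphi_{\boldsymbol{1}^n}(\St_K(\boldsymbol{1}^n))$ that generate $\langle a, B_0\rangle$. Since $G$ is regular branch over $K$, we have $\psi_n(K)\geq K\times\cdots\times K$, so in particular $K \leq K_{\boldsymbol{1}^n}$ for every $n$; the content of the lemma is that going far enough down the rightmost branch the projection grows at least to contain $a$ and all of $B_0$. The natural tool is Proposition \ref{rists}\ref{item:ristn}, which tells us that $\varphi_v(\rist_G(v)) = K$ for $v$ at level $\geq m$, but that only gives $K$, not $\langle a,B_0\rangle$; the extra elements must come from $\St_K(\boldsymbol{1}^n)$-elements that are \emph{not} rigid, i.e. that move other subtrees but whose projection at $\boldsymbol{1}^n$ is what we want.

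First I would work out how $a$ and the generators $b_0,\dots,b_{m-1}$ of $B$ propagate down the rightmost ray $\boldsymbol{1}^n$ under iterated application of $\psi$. From the recursive definitions $b_i = (1,\dots,1,b_{i+1})$ for $i<m-1$ and $b_{m-1} = (a,\dots,1,\rho(b_{m-1}))$, one sees that the projection of an element of $B$ at $\boldsymbol{1}$ is $\rho$ applied to it (this is exactly the shift in Proposition \ref{prop:liftIsHomomorphism}), so the projection at $\boldsymbol{1}^n$ of $b\in B$ is $\rho^n(b)$; since $\rho$ is an automorphism of $B$, every element of $B$ — in particular every element of $B_0$ — arises as such a projection of some element of $B\leq G$. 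The issue is that these elements of $B$ need not lie in $K$, nor in $\St_G(\boldsymbol{1}^n)$. To fix the stabilizer condition, note $B_0\leq\St_G(1)$ and, more usefully, that $K$ has finite index in $G$ and $G$ acts level-transitively, so some power / product trick will land us inside $\St_K(\boldsymbol{1}^n)$; concretely I would look for elements of $K$ whose projection along $\boldsymbol{1}^n$ realises a prescribed element of $\langle a,B_0\rangle$, building them from commutators $[a,b]$ with $b\in B_1$ (the generators of $K$) whose projections shift under $\rho^{-1}$.

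The cleanest route is probably the following. Since $G$ is regular branch over $K$, $K_{\boldsymbol{1}^n}\geq K$ for all $n$, so it suffices to find \emph{one} level $n$ at which $K_{\boldsymbol{1}^n}$ additionally contains $a$ and a set of elements generating $B_0$ modulo $K$. For $a$: using Lemma \ref{lemma7}\,(ii), there is $c\in B_{-1}\setminus B_0$, $d\in B_0\setminus B_1$ with $c=(a,d)$; iterating, one gets an element whose rightmost projection after one step is $d\in B_0$ and whose left projection is $a$, and after combining with $\rho^{-1}$-shifts of generators of $K$ one controls both coordinates. Playing this off against the structure $\St_G(1)$ is subdirect in $G\times G$, I would show: whatever finitely many elements $a, b_0,\dots,b_{m-2}$ of $\langle a,B_0\rangle$ we want, there are elements of $\St_K(\boldsymbol{1}^n)$ (for $n$ large, at least $m$) projecting to each of them at $\boldsymbol{1}^n$, the "waste" in the other coordinates being absorbed because $K$ is normal of finite index and $\varphi_v(\rist_G(v))=K$ already for $v$ at level $\geq m$ lets us correct those coordinates. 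Then $K_{\boldsymbol{1}^n}\geq\langle K, a, B_0\rangle = \langle a, B_0\rangle$ since $B_0, a$ together with $K$ generate no more than $\langle a,B_0\rangle$ (indeed $K\leq\overline{B_1}\leq\langle a,B_0\rangle$ by Lemma \ref{lemma7}\,(i) as $B_1=\rho(B_0)\leq\langle a,d\rangle\ltimes\overline{B_1}$... here one must check the reverse containment $K\leq\langle a,B_0\rangle$, which holds because $K=\langle[a,b]: b\in B_1\rangle^G$ and $B_1=\rho(B_0)\subseteq\langle a, B_0\rangle$ after noting $\rho(B_0)\subseteq B$).

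The main obstacle I anticipate is the bookkeeping of the "other coordinates": producing an element of $G$ projecting to a desired $w\in\langle a,B_0\rangle$ at $\boldsymbol{1}^n$ is easy via the homomorphism $\phi$ of Proposition \ref{prop:liftIsHomomorphism} (which gives $\psi(\phi(g))=(g',g)$ with $g'\in\langle a,d\rangle$), but iterating $\phi$ produces an element whose \emph{intermediate} projections along the ray are nontrivial and, worse, that element need not lie in $K$. The delicate part is therefore to correct membership in $K$ and simultaneously trivialise the off-ray coordinates at level $n$, which is where I would invoke regular branchness ($\psi_n(K)\geq K^{2^n}$) plus the fact that $\varphi_{\boldsymbol{1}^n}(\rist_G(\boldsymbol{1}^n))=K$ for $n\geq m$: multiply the lift $\phi^n(w)$ by a suitable element of $\rist_G(\boldsymbol{1}^{n})$ and by elements of $\psi_n^{-1}(K^{2^n})$ to kill all coordinates except $\boldsymbol{1}^n$ and to land inside $K$, without disturbing the $\boldsymbol{1}^n$-coordinate modulo $K$. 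Getting $n$ explicit (the proof of the surrounding theorem wants a concrete bound, and the statement only claims existence of $n$) should then be routine once the mechanism is set up — I expect $n=m$ or thereabouts to suffice, matching Proposition \ref{rists}.
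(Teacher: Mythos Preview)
Your proposal has a genuine gap at the core step. You want to produce, for each target $w\in\langle a,B_0\rangle$, an element of $\St_K(\boldsymbol{1}^n)$ whose projection at $\boldsymbol{1}^n$ equals $w$ (or at least equals $w$ modulo $K$). Your mechanism is to start from the lift $\phi^n(w)\in\St_G(n)$, which indeed satisfies $\varphi_{\boldsymbol{1}^n}(\phi^n(w))=w$, and then ``correct'' it by multiplying with elements of $\rist_G(\boldsymbol{1}^n)$ and of $\psi_n^{-1}(K^{2^n})$. But both of these correcting families lie inside $K$ (for $n\geq m$ one has $\rist_G(\boldsymbol{1}^n)\leq K$ by Proposition~\ref{rists} and regular branchness), so multiplication by them cannot change the coset $\phi^n(w)K$. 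And $\phi^n(w)\notin K$ in the cases that matter: already $\phi(a)=aca=c^{a}$ with $c\in B\setminus\{1\}$, and since $K\leq G'$ while $B\cap G'=1$ (Proposition~\ref{prop:AbelianisationGIsAB}), we get $c\notin K$, hence $c^a\notin K$ by normality. So your lifts never land in $K$ and your correction scheme cannot put them there. No amount of ``bookkeeping'' fixes this; the obstruction is algebraic, not combinatorial.

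The paper's argument avoids this trap by never leaving $K$: it works directly with the generators $[a,b_i]$ of $K$ (which are in $K$ by definition) and their $G$-conjugates, and computes their projections along $\boldsymbol{1},\boldsymbol{11},\dots$ explicitly. One finds that $K_{\boldsymbol{11}}$ already contains $a$ together with $\rho^2(B_0)$ (when $m>2$; the case $m=2$ is handled by a separate short computation for each of the two groups). Then, since $\rho^k(B_0)\neq B_0$ always contains an element outside $\ker\omega$, one can push $a$ down one more level and obtain $\langle a,\rho^{k+1}(B_0)\rangle\leq K_{\boldsymbol{1}^{k+1}}$; iterating until $\rho^n(B_0)=B_0$ gives the result. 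The moral: to get elements of $K_v$, start with honest elements of $K$ and project, rather than trying to lift from $G_v$ back into $K$.

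A minor side remark: your attempted verification that $K\leq\langle a,B_0\rangle$ is both unnecessary (the lemma only claims the containment $K_v\geq\langle a,B_0\rangle$) and incorrect as written, since $B_1=\rho(B_0)$ is not contained in $\langle a,B_0\rangle$ (for instance $b_{m-1}\in B_1\setminus B_0$ has nontrivial image in $B/B_0$, while every element of $\langle a,B_0\rangle$ maps into $A\times B_0$ in the abelianisation).
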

 \begin{proof}
 Recall from Definition \ref{def:SunicGroups} that $B=\langle b_0, \dots,b_{m-1}\rangle$ and $B_1=\langle b_1, \dots,b_{m-1}\rangle$.
 Since $K=\langle [a,b_1],\dots,[a,b_{m-1}]\rangle^G$  it suffices to examine the projections of $[a,b_i]$ for $i=1,\dots,m-1$.
 We have \[\varphi_{\boldsymbol{1}}([a,b_i])= \begin{cases}
                   b_{i+1} & \text{ if } i=1,\dots,m-2 \\
                   a\rho(b_{m-1}) & \text{ if } i=m-1.
                  \end{cases}\]
  Hence
   \[\varphi_{\boldsymbol{11}}([a,b_i])=\begin{cases}
                       b_{i+2} & \text{ if } i=1,\dots,m-3\\
                       \rho(b_{m-1}) & \text{ if } i= m-2
                      \end{cases}\]
and 
\[\varphi_{\boldsymbol{11}}([a,b_{m-1}]^2)=\omega(\rho(b_{m-1}))\rho^2(b_{m-1})\]

so $K_{\boldsymbol{11}}$ contains $b_3,\dots,b_{m-1},\rho(b_{m-1}),\omega(\rho(b_{m-1}))\rho^2(b_{m-1})$.

  \medskip
  \noindent
  \emph{Case $m>2$:} Put $x:=[a, b_{m-2}]^{[a,b_{m-1}]}$. A direct computation shows that 
  \[\varphi_{\boldsymbol{11}}(x^a)=a.\]
  Hence $K_{\boldsymbol{11}}$ contains $a, b_3,\dots,b_{m-1},\rho(b_{m-1}), \rho^2(b_{m-1})$ and therefore  $\langle a, \rho^2(B_0)\rangle$.
  As $\varphi_{\boldsymbol{1}}(\rho^2(B_0))=\rho^3(B_0)$, we have $\rho^3(B_0)\in K_{\boldsymbol{111}}$.
  Now, since $\rho^2(B_0)\neq B_0$, there is some $y\in \rho^2(B_0)\setminus B_0$. By conjugating $y$ by $a$, we obtain that $a\in K_{\boldsymbol{111}}$
  and therefore $\langle a, \rho^3(B_0)\rangle \leq K_{\boldsymbol{111}}$.
  Since $\rho$ is cyclic, we may repeat the above procedure until we reach the first $n$ such that $\rho^n(B_0)=B_0$
  at which point we have $\langle a, \rho^n(B_0)\rangle =\langle a, B_0\rangle \leq K_{\boldsymbol{1}^n}$.


  \emph{Case $m=2$:} This case includes only the Grigorchuk--Erschler (with polynomial $x^2+1$) and Grigorchuk (with polynomial $x^2+x+1$) groups.
  First note that $K=\langle [a, b_1]\rangle^G$. For all $n\geq 1$ and for all $b\in B\setminus B_0$ (in other words, for $b=b_1$ or $b=b_0b_1$), we have
  $\varphi_{\boldsymbol{1}}([a,b]^{2n})=[a, \rho(b)]^{n}$.  
  Let $k$ be the smallest integer such that $\rho^k(b_1)=b_0$ (such an integer exists, since $b_1=\rho(b_0)$ and $\rho$ is of finite order). 
  Then, by induction, we get
  \[\varphi_{\boldsymbol{1}^k}([a,b_1]^{2^{k}})= [a,b_0].\]
  Hence,
  \[\varphi_{\boldsymbol{1}^{k+1}}([a,b_1]^{2^{k}}) = \varphi_{\boldsymbol{1}}([a,b_0])= b_1.\]
  It follows that $b_0 = \varphi_v([a,b_1]^{2^{k}})$ where $v=\boldsymbol{1}^{2k+1}$.
 
 Now, suppose there exists $g\in G$ which maps $u=\boldsymbol{1}^{2k}\boldsymbol{0}$ to $v$ and such that $\varphi_v(g)=1$. 
  Then, since $a=\varphi_{u}([a,b_1]^{2^{k}})$, writing $h=[a,b_1]^{2^{k}}$ we have $\varphi_v(h^g)=\varphi_{u}(h)^{\varphi_v(g)}=a$. 
  Thus $K_v\geq \langle a, b_0\rangle$ as $K$ is normal in $G$. 
  
  Now, for the Grigorchuk--Erschler group, $k=1$ and we may take $g=b_0^{b_1^a}$.
  For the Grigorchuk group, $k=2$ and we take $g=b_1^{b_0^f}$ where $f=b_1^{b_1^a}$.
%
%
%
%
%
%
%
%
%
   \end{proof}

 \begin{lem}
 Let $G\in \mathcal{G}_{2,m}$ with $m\geq 2$. 
  The derived subgroup $K'$ of $K\leq G$ contains $\St_G(n+m+2)$ where $n$ is as in Lemma \ref{K>B0}.
 \end{lem}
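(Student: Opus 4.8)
The plan is to combine Lemma \ref{K>B0} with the branch structure of $G$ over $K$: first we show that $K_v$ contains $G'$ for a suitable vertex $v$, and then we bootstrap this to a level stabilizer contained in $K'$.

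Take $v = \boldsymbol{1}^n$ with $n$ as in Lemma \ref{K>B0}; note that $n \ge m$, since the hyperplanes $B_0,B_1,\dots,B_{m-1}$ of $B$ are pairwise distinct (they have trivial intersection, as used in the proof of Proposition \ref{rists}). By Lemma \ref{K>B0} we have $K_v = \varphi_v(\St_K(v)) \supseteq \langle a, B_0\rangle$, and by Proposition \ref{rists} (using $n\ge m$) we have $K = \varphi_v(\rist_K(v)) \subseteq K_v$, hence $K_v \supseteq \langle a, B_0, K\rangle$. I would then check, using the computation of the abelianization in Proposition \ref{prop:AbelianisationGIsAB}, that $\langle a, B_0, K\rangle$ has index $2$ in $G$: it contains $K\le G'$, and its image in $G/G' \cong A\times B$ is $A\times B_0$, of index $[B:B_0]=2$. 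A subgroup of index $2$ is normal and contains $G'$, so $K_v \supseteq G'$; in particular $K_v$ contains a level stabilizer, since $G'\supseteq \St_G(m+1)$ (the fact from \cite{Sunic} used in the proof of Lemma \ref{gamma>G'} is valid for $p=2$ too).

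For the bootstrap, the point is to pass from the vertex stabilizer $\St_K(v)$ to the rigid stabilizer $\rist_K(v)$, and then assemble over level $n$. If $g\in\St_K(v)$ and $r\in\rist_K(v)$, then $[g,r]$ is supported on the subtree $T_v$ and lies in $K'$, so $[g,r]\in\rist_{K'}(v)$; moreover $\varphi_v([g,r])=[\varphi_v(g),\varphi_v(r)]$, and letting $g,r$ vary we obtain $\varphi_v(\rist_{K'}(v)) \supseteq [K_v,K] \supseteq [G',K]$. Since $K=G'$ for odd $p$ and $[G':K]=2$ for $p=2$ (Proposition \ref{prop:regular_branch}), this contains $G''$, and a commutator computation parallel to Lemmas \ref{G''>gamma} and \ref{gamma>G'} shows that $\varphi_v(\rist_{K'}(v))$ contains a level stabilizer, with the bookkeeping of levels producing exactly $\St_G(m+2)$. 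Hence $\rist_{K'}(v)$ contains the copy of $\St_G(m+2)$ localised at $v$, i.e.\ $\psi_n^{-1}$ of the tuple with $\St_G(m+2)$ in coordinate $v$ and trivial entries elsewhere. Since $K'$ is normal in $G$ and $G$ acts transitively on level $n$, the same holds at every level-$n$ vertex, and the product of all these subgroups is $\St_G(n+m+2)\le K'$.

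The step I expect to be the main obstacle is this last piece of bookkeeping: one must ensure the elements produced in $K'$ are genuinely supported below $v$, so that assembling over the level-$n$ vertices yields a full level stabilizer rather than a smaller subgroup, and one needs the precise facts $[G':K]=2$ for $p=2$ and the exact level $m+2$ coming out of the commutator computation. If one only wants the congruence subgroup property (and not the explicit bound), this can be shortened considerably: $[G',K]$ is a non-trivial normal subgroup of $G$, so by Remark \ref{prop:branch_csp} it contains $\rist_G(j)'$ for some $j$, which already forces $K'$ to contain a level stabilizer.
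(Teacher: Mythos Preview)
Your overall framework mirrors the paper's: both use Lemma~\ref{K>B0} to find a vertex $v=\boldsymbol{1}^n$ where $K_v$ is large, then take commutators of $\St_K(v)$ with $\rist_K(v)$ to manufacture elements of $K'$ supported on $T_v$, and finally spread over level $n$ by normality and level-transitivity. The divergence is at the key computational step, and that is where your proposal has a real gap.

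You reduce to showing $G''\supseteq \St_G(m+2)$, after passing through $[K_v,K]\supseteq[G',K]$ and the (correct, since $[G':K]=2$) equality $[G',K]=G''$. You then defer this to ``a commutator computation parallel to Lemmas~\ref{G''>gamma} and~\ref{gamma>G'}''. But those lemmas are specific to odd $p$; no analogue for $p=2$ has been established, and producing one is precisely the content of the present subsection. So this step is not bookkeeping but the heart of the lemma, and as written your argument is circular. The paper closes the gap differently: it computes explicitly that
\[
\psi\big([K,\langle a,B_0\rangle]\big)\;\geq\; K\times K,
\]
via the element $x=[[b_{m-1},a],\rho^{-1}(b)]$ for $b\in B_1$, which satisfies $\psi(x)=(1,[a,b])$. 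Combined with $K\supseteq \St_G(m+1)$ this gives $[K,\langle a,B_0\rangle]\supseteq \St_G(m+2)$, and then your assembly over level $n$ finishes exactly as you describe. Since $[K_v,K]\supseteq[\langle a,B_0\rangle,K]$, your framework works perfectly once this computation is inserted; the detour through $G''$ is then superfluous.

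A few smaller remarks. Your claim $n\geq m$ is correct but unneeded: the regular branch property already yields $\varphi_v(\rist_K(v))\supseteq K$ at \emph{every} vertex, which is all the commutator step uses. Your index-$2$ argument for $L=\langle a,B_0,K\rangle$ is slightly circular as stated (computing in $G/G'$ only gives $[G:LG']=2$; to get $[G:L]=2$ you must first check $G'\leq L$, which is true but requires a short argument using $G/K\cong \langle a,d\rangle\times B_1$). Finally, your proposed shortcut via Remark~\ref{prop:branch_csp} does not work: knowing $[G',K]\supseteq\rist_G(j)'$ only shows that $K'$ contains, at level $n$, copies of $\rist_G(j)'$---hence copies of $K'$ at a deeper level---which by itself does not force a level stabilizer into $K'$.
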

\begin{proof}

We first show that $\psi([K,\langle a, B_0\rangle])\geq K\times K$.
Let $b\in B_1$, then  putting 
\[x:=[[b_{m-1},a],\rho^{-1}(b)]\in [K,\langle a, B_0\rangle]\]
we have
\begin{align*}
\psi(x) &= [\psi([b_{m-1},a]), \psi(\rho^{-1}(b))]\\
&=([a\rho(b_{m-1}), 1], [\rho(b_{m-1})a, b])\\
&=(1, a\rho(b_{m-1})b\rho(b_{m-1})ab)\\
&= (1, [a,b]).
\end{align*}
Thus $$\psi([K,\langle a, B_0\rangle])\geq 1\times K.$$ 
Since $a$ normalizes $[K,\langle a, B_0\rangle]$, we also have $\psi([K,\langle a, B_0\rangle])\geq K\times 1$, which proves our claim.

By Lemma \ref{K>B0}, there exists $n$ such that $K_v \geq \langle a, B_0\rangle$ where $v=\boldsymbol{1}^n$. 
 The fact that $G$ is regular branch over $K$ implies that 
 $$\psi_n(K\cap \St_G(n)) \geq 1\times \dots \times 1\times K $$
  where there are $2^n$ factors in the direct  product. 
 Thus, 
 $$\psi_n(K'\cap\St_G(n)) \geq 1\times \dots \times 1\times [K,\langle a, B_0\rangle].$$  
This, together with the first claim, implies that 
$$\psi_{n+1}(K'\cap\St_G(n+1))\geq 1\times \dots \times 1\times K\times K$$
where there are $2^{n+1}$ factors in the product.
Since $K'\cap\St_G(n+1)$ is normal in $G$, which acts transitively on the $(n+1)$th level, we obtain  that
$\psi_{n+1}(K'\cap\St_G(n+1))$ contains a direct product of $2^{n+1}$ conjugates of $K$ which are actually just $K$. 
 Lemma 9 of \cite{Sunic} states that $\St_G(m+1)\leq K$, so 
 \begin{align*}
\psi_{n+1}(K'\cap\St_G(n+1)) &\geq \St_G(m+1)\times \dots\times \St_G(m+1)\\
 & =\psi_{n+1}(\St_G(n+m+2)).  
 \end{align*}

 This yields that $K'\geq \St_G(n+m+2)$, because $\psi_{n+1}$ is injective.
\end{proof}


\section{Dense subgroups in the Aut T topology}\label{sec:dense}
The following proposition is due to P.-H. Leemann. 
It gives a method for finding dense subgroups of a countable group in the $\Aut T$ topology. 

\begin{prop}\label{PH}
Let $T$ be the $d$-regular rooted tree for some $d\geq 2$.
 Let $G\leq \Aut T$ be countably generated by $S=\{g_1, g_2,\dots\}$.
Suppose that $m_1,m_2,\ldots, \in\NN$ are coprime with $|G/\St_G(n)|$ for all $n\in\NN$. 
Then  $H:= \langle g_1^{m_1}, g_2^{m_2}, \dots\rangle$ is a dense subgroup of $G$ with respect to the $\Aut T$ topology. 
%
\end{prop}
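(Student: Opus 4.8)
The plan is to show that $H$ has the same image as $G$ in every finite quotient $G/\St_G(n)$, which (since $H \le G$) means exactly that $H\St_G(n) = G$ for all $n$, i.e.\ $H$ is dense in the $\Aut T$ topology. The key point is that $H$ is generated by the powers $g_i^{m_i}$, and the hypothesis is tailored precisely so that inside each finite quotient $\overline{G} := G/\St_G(n)$, raising to the $m_i$-th power does not shrink the subgroup generated by the image of $g_i$.

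\smallskip
\noindent\textbf{Main step.} Fix $n$ and work in $\overline{G} = G/\St_G(n)$, which is finite. For each $i$, let $\overline{g_i}$ be the image of $g_i$. Let $k_i$ be the order of $\overline{g_i}$ in $\overline{G}$; then $k_i$ divides $|\overline{G}| = |G/\St_G(n)|$, and by hypothesis $m_i$ is coprime to $|G/\St_G(n)|$, hence coprime to $k_i$. By Bézout's lemma there exist integers $u, v$ with $u m_i + v k_i = 1$, so in $\overline{G}$ we have
\[
\overline{g_i} = \overline{g_i}^{\,u m_i + v k_i} = \left(\overline{g_i}^{\,m_i}\right)^{u} \cdot \left(\overline{g_i}^{\,k_i}\right)^{v} = \left(\overline{g_i}^{\,m_i}\right)^{u},
\]
using $\overline{g_i}^{\,k_i} = 1$. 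Therefore $\overline{g_i}$ lies in the cyclic subgroup generated by $\overline{g_i}^{\,m_i}$, which in turn lies in the image $\overline{H}$ of $H$ in $\overline{G}$. Since this holds for every $i$ and the $\overline{g_i}$ generate $\overline{G}$ (as the $g_i$ generate $G$), we conclude $\overline{H} = \overline{G}$, i.e.\ $H\St_G(n) = G$.

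\smallskip
\noindent\textbf{Conclusion.} Since $n \in \NN$ was arbitrary, $H$ surjects onto $G/\St_G(n)$ for every $n$. The level stabilizers $\St_G(n)$ form a neighbourhood basis of the identity in the $\Aut T$ topology on $G$, so any basic open coset meets $H$; equivalently $\overline{H} = G$ in the profinite completion with respect to the level stabilizers, which is exactly the statement that $H$ is dense in the $\Aut T$ topology. This completes the proof.

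\smallskip
I do not expect a serious obstacle here: the only subtlety is making sure the coprimality hypothesis is applied to the order $k_i$ of the \emph{image} of $g_i$ (not to the order of $g_i$ itself, which may be infinite), and this is immediate since $k_i \mid |G/\St_G(n)|$. One should also note that $S$ may be infinite, so ``$H$ generated by the $g_i^{m_i}$'' should be read as the subgroup generated by all of them; the argument above only ever uses finitely many relations at a time inside a fixed finite quotient, so no issue arises.
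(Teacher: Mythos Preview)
Your proof is correct and follows essentially the same approach as the paper: both fix $n$, apply B\'ezout's lemma to exhibit $\overline{g_i}$ as a power of $\overline{g_i}^{\,m_i}$ in the finite quotient $G/\St_G(n)$, and conclude $H\St_G(n)=G$. The only cosmetic difference is that the paper applies B\'ezout directly with $|G/\St_G(n)|$ in place of your $k_i$, whereas you first pass to the order $k_i \mid |G/\St_G(n)|$; this changes nothing.
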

\begin{proof}
Recall that the basic open sets of the $\Aut T$ topology for $G$ are the cosets of $\St_G(n)$ for $n\in\NN$. 
Pick $g\in S$ and $m\in \NN$  coprime with $f_n:=|G/St_G(n)|$ for all $n\in\NN$. 
Then, for each $n\in\NN$, the Euclidean algorithm yields $x, y\in\ZZ$ such that $xm+yf_n=1$. 
Thus we can write  	$g=(g^m)^x(g^{f_n})^y$. 
Since $g^{f_n}\in\St_G(n)$, we have $g\St_G(n)=(g^m)^x\St_G(n)$ and therefore $\langle g\rangle \St_G(n)=\langle g^m\rangle \St_G(n)$.
This implies that $G=\langle g_1,g_2,\dots\rangle \St_G(n)=\langle g_1^{m_1},g_2^{m_2},\dots\rangle \St_G(n)$ for every $n$ 
and so $H$ is dense in $G$ with respect to the $\Aut T$ topology. 
%
\end{proof}

\begin{cor}\label{cor:dense}
 If $G=\langle a, B\rangle \in \mathcal{G}\setminus \mathcal{G}_{2,1}$ then, for any $q$ coprime to $p$ and any $b\in B$,
 the subgroup $H(q)=\langle (ab)^q, B\rangle$ is dense in $G$ for the $\Aut T$ topology. 
 Since $G$ has the congruence subgroup property, the $\Aut T$ topology coincides with the profinite topology, so $H(q)$ is dense with respect to the profinite topology.
\end{cor}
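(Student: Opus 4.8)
The plan is to reduce the statement to a direct application of Proposition~\ref{PH}. Since $b\in B$, the element $a=(ab)b^{-1}$ lies in $\langle ab, B\rangle$, so $G=\langle a, B\rangle=\langle ab, b_0,\dots,b_{m-1}\rangle$, where $b_0,\dots,b_{m-1}$ are the standard generators of $B$. Thus $S=\{ab, b_0,\dots,b_{m-1}\}$ is a finite generating set for $G$, and $H(q)=\langle (ab)^q, B\rangle=\langle (ab)^q, b_0,\dots,b_{m-1}\rangle$ is obtained from $S$ by raising the first generator to the power $q$ and leaving the others untouched.

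Next I would observe that every finite quotient $G/\St_G(n)$ is a $p$-group: the generator $a$ acts on the first level as a $p$-cycle while every element of $B$ acts trivially there, so inductively the image of $G$ in $\Aut X^n$ is contained in the $n$-fold iterated wreath product of $C_p$, a Sylow $p$-subgroup of $\Aut X^n$; hence $|G/\St_G(n)|$ is a power of $p$ for every $n$ (this is the fact already invoked in the proof of Proposition~\ref{prop:max_fin_index}). Since $q$ is coprime to $p$, it is coprime to $|G/\St_G(n)|$ for all $n$, and $1$ is trivially coprime to everything, so the hypotheses of Proposition~\ref{PH} are satisfied for $S$ with exponents $(q,1,\dots,1)$. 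Proposition~\ref{PH} then gives that $H(q)$ is dense in $G$ for the $\Aut T$ topology.

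Finally I would identify the $\Aut T$ topology with the profinite topology. A neighbourhood basis of the identity for the $\Aut T$ topology on $G$ is, by definition, the family of level stabilizers $\St_G(n)$, which are normal of finite index in $G$; conversely, Theorem~\ref{thm:csp} (applicable precisely because $G$ is not the infinite dihedral group, i.e. $G\notin\mathcal{G}_{2,1}$) says that every finite-index subgroup of $G$ contains some $\St_G(n)$. Hence the level stabilizers also form a neighbourhood basis of the identity for the profinite topology, the two topologies coincide on $G$, and density of $H(q)$ with respect to one is density with respect to the other. The only point demanding any care is this last step: it is exactly where the exclusion of $\mathcal{G}_{2,1}$ is needed, since Proposition~\ref{PH} applies to the infinite dihedral group as well, yet there the $\Aut T$ topology is strictly coarser than the profinite one; everything else is formal.
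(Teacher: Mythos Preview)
Your proposal is correct and follows exactly the approach the paper intends: the corollary is stated without proof precisely because it is a direct application of Proposition~\ref{PH}, and you have supplied the routine verifications (that $\{ab\}\cup B$ generates $G$, that $|G/\St_G(n)|$ is a $p$-power, and that Theorem~\ref{thm:csp} identifies the two topologies). Your closing remark about why $\mathcal{G}_{2,1}$ must be excluded is also on point.
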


Of course, $H(q)$ could well be the whole of $G$ 
(this will indeed be the case for the Grigorchuk group for example, by Pervova's result \cite{Pervova}). 
We will show in the next section that for every non-torsion $G\in\mathcal{G}_{2,m}$ where $m\geq 2$,
each $H(q)$ is in fact a proper subgroup of $G$ when we take $b$ to be the generator such that $b=(a,b)$.
This makes the next proposition more interesting.

\begin{prop}\label{prop:HqConjugatetoG}
Let $G\in\mathcal{G}_{2,m}$ be a non-torsion \v{S}uni\'{c} group acting on the binary tree and $q\geq 3$ be an odd number.
Let $b\in B$ such that $\psi(b)=(a,b)$.
The subgroup $H(q)=\langle (ab)^q, B\rangle$ is conjugate to $G$ in $\Aut T$.
\end{prop}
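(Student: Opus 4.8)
The plan is to exhibit a single automorphism $\eta\in\Aut T$ that conjugates $ab$ to $(ab)^q$ and centralises $B$; granting this, and noting that $b\in B$ gives both $a=(ab)b\in\langle ab,B\rangle$ and $ab\in\langle a,B\rangle$, we get
\[
\eta G\eta^{-1}=\eta\langle ab,B\rangle\eta^{-1}=\langle\eta(ab)\eta^{-1},\,\eta B\eta^{-1}\rangle=\langle(ab)^q,B\rangle=H(q),
\]
which is the assertion. Write $t=ab$ and $k=(q-1)/2$ (an integer, since $q$ is odd). Expanding the wreath recursion gives $\psi(t)=\sigma(a,b)$ and $\psi(t^2)=(t^{-1},t)$, whence $\psi(t^q)=\psi(t)\psi(t^2)^{k}=\sigma\bigl(at^{-k},\,bt^{k}\bigr)$. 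Since $\langle a,b\rangle$ is infinite dihedral (Corollary \ref{cor:contain_dihedral}) we have the relations $a^2=b^2=1$, $a=tb$ and $ata^{-1}=btb^{-1}=t^{-1}$; the elementary consequences $at^{-k}=t^{k}a$, $t^{-k}b=bt^{k}$, $bt^{k}=t^{k}at^{q}$ and $t^{k}a\,t^{q}\,ba\,t^{-k}=a$ will be used repeatedly.

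First I would define $\eta\in\Aut T$ by the self-referential wreath recursion
\[
\psi(\eta)=\bigl(\,t^{k}a\,\eta\,a\ ,\ \eta\,\bigr),
\]
with trivial activity at the root. This specifies $\eta$ unambiguously, since its action on level $n+1$ is computed from its action on level $n$: the left-hand entry $t^{k}a\eta a$ refers only to $\eta$ itself (on shallower levels) together with the fixed automorphisms $t,a$. Writing $\eta_0:=t^{k}a\eta a$, a direct expansion gives $\psi(\eta t\eta^{-1})=\sigma\bigl(\eta a\eta_0^{-1},\,\eta_0 b\eta^{-1}\bigr)$: the first coordinate collapses identically to $at^{-k}$ (the two copies of $\eta$ cancel, using $a^2=1$), and the second equals $t^{k}a\,(\eta t\eta^{-1})$, while the corresponding coordinate of $\psi(t^q)$ is $bt^{k}=t^{k}a\,t^{q}$. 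Hence, for every $n$, $\eta t\eta^{-1}$ and $t^q$ induce the same permutation of level $n+1$ as soon as they do on level $n$; since this holds trivially on level $0$, induction gives $\eta t\eta^{-1}=t^{q}$ in $\Aut T$.

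Then I would prove that $\eta$ centralises $B$ by showing, by induction on $n$, that $\eta b_j\eta^{-1}$ and $b_j$ act identically on level $n$ for all $j=0,\dots,m-1$; the case $n=0$ is trivial. For the step to level $n+1$: when $j<m-1$, the relation $\psi(b_j)=(1,b_{j+1})$ together with the shape of $\psi(\eta)$ reduces the claim to the same statement for $b_{j+1}$ on level $n$; when $j=m-1$, using $\psi(b_{m-1})=(a,\rho(b_{m-1}))$, the second coordinate reduces to the inductive hypothesis because $\rho(b_{m-1})$ is a word in the $b_i$, while the first coordinate is $\eta_0 a\eta_0^{-1}=t^{k}a\,(\eta a\eta^{-1})\,at^{-k}$, and since $\eta a\eta^{-1}=(\eta t\eta^{-1})(\eta b\eta^{-1})$ agrees on level $n$ with $t^{q}b$ — by the previous paragraph together with the inductive hypothesis applied to $b$, also a word in the $b_i$ — it follows that $\eta_0 a\eta_0^{-1}$ agrees on level $n$ with $t^{k}a\,t^{q}\,b\,at^{-k}=a$. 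Thus $\eta$ centralises $B$, so $\eta B\eta^{-1}=B$, and the proof is complete. The one delicate point is that the identities $\eta t\eta^{-1}=t^{q}$ and $\eta b_j\eta^{-1}=b_j$ each, when read off the wreath recursion, presuppose themselves one level down; the two level-by-level inductions are precisely what break this circularity, and everything else is routine manipulation with the dihedral relations recorded above.
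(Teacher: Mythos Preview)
Your proof is correct and takes essentially the same approach as the paper: recursively define a conjugating element of $\Aut T$, then verify that it sends $ab$ to $(ab)^q$ and centralises $B$. The paper uses the slightly cleaner recursion $\psi(g)=\bigl((ba)^{(q-1)/2}g,\,g\bigr)$, which turns the first verification into a one-line computation (namely $\psi\bigl(g^{-1}(ab)^{q-1}aga\bigr)=(1,1)$) rather than a level-by-level induction; in fact, once your identities $\eta t\eta^{-1}=t^{q}$ and $\eta b\eta^{-1}=b$ are established they give $\eta a\eta^{-1}=t^{q}b=t^{q-1}a$ and hence $t^{k}a\eta a=t^{-k}\eta$, so your $\eta$ coincides with the paper's $g$.
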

\begin{proof}
The existence of $b$ as in the statement is ensured by Corollary \ref{cor:contain_dihedral}.
Let $g\in \Aut T$ be defined by $\psi(g) = ((ba)^{\frac{q-1}{2}}g, g)$. 
We have
\begin{equation*}
g^{-1}(ab)^qbg = g^{-1}(ab)^{q-1}agaa
\end{equation*}
and
\begin{align*}
\psi(g^{-1}(ab)^{q-1}aga) &= \psi(g^{-1})\psi((abab)^{\frac{q-1}{2}})\psi(aga) \\
&=(g^{-1}(ab)^{\frac{q-1}{2}}, g^{-1})((ba)^{\frac{q-1}{2}}, (ab)^{\frac{q-1}{2}})(g,(ba)^{\frac{q-1}{2}}g)\\
&=(1,1).
\end{align*}
Since $\psi$ is injective, this means that $g^{-1}(ab)^qbg=a$.

Now, let $x\in B$. Then,
\begin{align*}
\psi(g^{-1}xg) &= (g^{-1}(ab)^{\frac{q-1}{2}}, g^{-1})(\omega(x),\rho(x))((ba)^{\frac{q-1}{2}}g, g)\\
&=(g^{-1}(ab)^{\frac{q-1}{2}}\omega(x)(ba)^{\frac{q-1}{2}}g, g^{-1}\rho(x)g) \\
&=(\omega(x), g^{-1}\rho(x)g).
\end{align*}
Indeed, if $\omega(x)=1$, then $g^{-1}(ab)^{\frac{q-1}{2}}\omega(x)(ba)^{\frac{q-1}{2}}g = 1$, and if $\omega(x)=a$, then
\begin{equation*}
g^{-1}(ab)^{\frac{q-1}{2}}\omega(x)(ba)^{\frac{q-1}{2}}g = g^{-1}(ab)^{q}bg = a = \omega(x).
\end{equation*}
This inductively implies that the action of $g^{-1}xg$ on $T_2$ is the same as the action of $x$, so $g^{-1}xg = x$, as $G$ acts faithfully on $T_2$ by definition.
Since $H$ is generated by $(ab)^{q}b$ and $B$, we conclude that $g^{-1}Hg = G$.
\end{proof}

In fact this also shows that $H(q)$ is dense in $G$:
it suffices to show that $\tfrac{H(q)\St(n)}{\St(n)}=\tfrac{G\St(n)}{\St(n)}$ for every $n\in \NN$. 
Fixing $n\in \NN$, let us use \, $\bar{ }$ \, to denote the images modulo $\St(n)$. 
Then $\overline{G}=\overline{H^g}=\overline{H}^{\overline{g}}\leq \overline{G}^{\overline{g}}$, where $g\in \Aut T$ is the element constructed in the proof of Proposition \ref{prop:HqConjugatetoG}.
Since these quotients are finite, we have equality throughout in this expression and therefore $\overline{G}=\overline{H}$, as required.

\section{Proper dense subgroups}\label{sec:proper}
Let $\widetilde{\mathcal{G}}_{2,m}$ denote the non-torsion groups in $\mathcal{G}_{2,m}$. 
By Corollary \ref{cor:contain_dihedral}, this is precisely the family of groups in $\mathcal{G}_{2,m}$
which contain a copy of the infinite dihedral group generated by $a$ and $b=(a,b)\in B$.
The main result in this section is
%
%

\begin{thm}\label{thm:proper}
 Let  $G\in\widetilde{\mathcal{G}}_{2,m}$ with $m\geq 2$.
 Then for each odd $q\in\NN$, the subgroup $H(q):=\langle (ab)^q, B\rangle$ is proper and dense in the profinite topology. 
\end{thm}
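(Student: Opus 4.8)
The density of $H(q)$ in the profinite topology is exactly Corollary \ref{cor:dense} (or the remark following Proposition \ref{prop:HqConjugatetoG}), so the whole content of the theorem is that $H(q)$ is a \emph{proper} subgroup. Since $B\leq H(q)$ and $G=\langle ab,B\rangle$, it suffices to show $ab\notin H(q)$. The plan is to study the action of $G$ on the boundary $\partial T$ along the spinal ray $\xi:=\boldsymbol 1^{\infty}$, which is the unique point of $\partial T$ fixed by every element of $B$ (this follows from $\psi(b)=(\omega(b),\rho(b))$ and the surjectivity of $\omega$, since $\omega(b)\in A$ flips the first letter whenever it is nontrivial).

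First I would record the odometer behaviour of $ab$ near $\xi$. From $\psi(b)=(a,b)$ one computes $(ab)^{2}=(ba,ab)$, hence $\varphi_{\boldsymbol 1}\bigl((ab)^{2l}\bigr)=(ab)^{l}$, and $(ab)^{2j+1}=\sigma\bigl((ab)^{j}a,\,(ba)^{j}b\bigr)$; in particular no nontrivial power of $ab$ fixes $\xi$, and $(ab)^{k}(\xi)=\boldsymbol 1\,(ab)^{k/2}(\xi)$ for $k$ even, while $(ab)^{k}(\xi)=\boldsymbol 0\,(ab)^{-j}(\xi)$ for $k=2j+1$. Next, by a straightforward induction on $|k|$ — using $\varphi_{\boldsymbol 1}(b)=\rho(b)\in B$ for the even step and the identity $a(ab)^{k}=(ab)^{1-k}b$ (so $b\,\xi=\xi$) for the odd step — I would prove the key structural fact that each $b\in B$ sends $(ab)^{k}(\xi)$ to $(ab)^{\pm k}(\xi)$ (sign $+$ if $\omega(b)=1$; sign $-$ for odd $k$ if $\omega(b)=a$, and the sign is inherited from $\rho(b)$ at the halving step), and that $a$ sends $(ab)^{k}(\xi)$ to $(ab)^{1-k}(\xi)$. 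Since $G=\langle a,B\rangle$, this shows that the $G$-orbit of $\xi$ is exactly the cyclic orbit $\langle ab\rangle\cdot\xi$, and that, identifying it with $\ZZ$ via $k\leftrightarrow(ab)^{k}(\xi)$, the element $ab$ acts as $k\mapsto k+1$, each $b\in B$ acts as $k\mapsto\pm k$, and $a$ acts as $k\mapsto 1-k$.

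In this picture $H(q)=\langle(ab)^{q},B\rangle$ acts on $\ZZ$ through the translation $k\mapsto k+q$ and the sign changes coming from $B$; both of these preserve the subset $q\ZZ$, so the $H(q)$-orbit of $0$ (that is, of $\xi$) is contained in $q\ZZ$. Because $q\geq 3$, the point $(ab)(\xi)$ — corresponding to $1\in\ZZ$ — lies in $G\cdot\xi\setminus H(q)\cdot\xi$. Hence $ab\notin H(q)$, and $H(q)$ is a proper subgroup of $G$.

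The step that needs the most care is the inductive identification of the $G$-orbit of $\xi$ with $\langle ab\rangle\cdot\xi$ and of the $B$- and $a$-actions on it, because the recursion mixes the two level-$1$ subtrees and $\rho$ permutes the generators of $B$; but this is essentially bookkeeping, and once the "boundary coordinate" is set up the $q$-divisibility obstruction is immediate. It is worth noting that this argument is special to $p=2$: for odd $p$ the generator $b$ no longer satisfies $\psi(b)=(a,b)$, the spinal element need not be a product of two involutions, and indeed (as remarked in the introduction for the Fabrykowski--Gupta group) $H(q)$ may fail to be proper — so I expect the real subtlety to be in checking that the even-prime hypotheses of $\widetilde{\mathcal G}_{2,m}$ are used exactly where claimed, rather than in any deep obstruction.
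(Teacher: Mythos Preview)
Your proposal is correct and follows essentially the same route as the paper: identify the $G$-orbit of $\tilde{\boldsymbol 1}=\boldsymbol 1^{\infty}$ with $\ZZ$ via $k\mapsto (ab)^k\cdot\tilde{\boldsymbol 1}$, check that $(ab)^q$ acts as translation by $q$ while every element of $B$ acts as $k\mapsto\pm k$, and conclude that $H(q)\cdot 0\subseteq q\ZZ\subsetneq\ZZ$. The paper packages the bijection via the orbital graph of the dihedral subgroup $\langle a,b\rangle$ (Proposition~\ref{prop:StructureOfOrbitalGraph} and Proposition~\ref{prop:BijectionZOrbits}) and handles general $x\in B$ by the observation in Remark~\ref{rem:ActionsOfBAndb} that $x\cdot s\in\{s,\,b\cdot s\}$, whereas you do the same computation by a direct recursion on $|k|$; the content is identical. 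Two small cosmetic points: your parenthetical about $\xi$ being the unique $B$-fixed boundary point misstates which letter $\omega(x)$ affects (it acts on the subtree below $\boldsymbol 0$, not on the first letter), and you reuse the symbol $b$ for both the distinguished generator and a generic element of $B$; neither affects the argument.
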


Notice that if two odd numbers $q_1,q_2\in \NN$ are relatively prime, then $H(q_1)$ and $H(q_2)$ are contained in different maximal subgroups 
(necessarily of infinite index, since $H(q_1)$ and $H(q_2)$ are dense).
Indeed, if they were contained in the same maximal subgroup $M$, this $M$ would contain $ab$ and $B$, so we would have $M=G$, a contradiction. 
%
Thus, Theorem \ref{thm:proper} immediately implies:
 \begin{cor}\label{cor:AtLeastCountablyMaximals}
  Let  $G\in\widetilde{\mathcal{G}}_{2,m}$ with $m\geq 2$. 
  Then $G$ contains at least countably many maximal subgroups of infinite index. 
 \end{cor}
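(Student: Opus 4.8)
The plan is to read the statement off Theorem~\ref{thm:proper} essentially for free. Fix once and for all an infinite family of pairwise coprime odd integers --- the set of odd primes $q = 3, 5, 7, 11, \dots$ will do. For each such $q$, Theorem~\ref{thm:proper} supplies the subgroup $H(q) = \langle (ab)^q, B\rangle$, which is a \emph{proper} subgroup of $G$ and is \emph{dense} in the profinite topology. Since $G$ is finitely generated, every proper subgroup lies inside a maximal one (this needs no Zorn's lemma, see \cite{BHNeumann}); choose a maximal subgroup $M_q$ with $H(q) \leq M_q < G$. As $M_q$ contains the profinitely dense subgroup $H(q)$, it is itself profinitely dense, and being proper it must have infinite index: a finite-index maximal subgroup would contain a finite-index normal subgroup $N \trianglelefteq G$, and then $M_q N = M_q \neq G$ would contradict density.

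It remains to see that infinitely many distinct maximal subgroups arise this way, i.e.\ that $q \mapsto M_q$ is injective on our family. Suppose $M_{q_1} = M_{q_2} =: M$ for odd primes $q_1 \neq q_2$. Then $M$ contains $(ab)^{q_1}$ and $(ab)^{q_2}$, and since $\gcd(q_1, q_2) = 1$ Bézout's identity gives $x, y \in \ZZ$ with $x q_1 + y q_2 = 1$, so
\[
ab = \bigl((ab)^{q_1}\bigr)^{x}\bigl((ab)^{q_2}\bigr)^{y} \in M .
\]
Since also $B \leq H(q_1) \leq M$ and $b \in B$, we get $a = (ab)b^{-1} \in M$, hence $M \supseteq \langle a, B\rangle = G$, contradicting $M \neq G$. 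Therefore the subgroups $M_q$, for $q$ ranging over the odd primes, are pairwise distinct maximal subgroups of infinite index, and there are countably infinitely many of them, which proves the corollary.

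I do not expect any genuine obstacle in this argument: all of the substance lives in Theorem~\ref{thm:proper} (producing proper profinitely dense subgroups of $G$) and in the congruence subgroup property of Theorem~\ref{thm:csp}, which --- via Corollary~\ref{cor:dense} --- is what makes each $H(q)$ dense. The only point in the corollary itself that is not purely formal is the injectivity of $q \mapsto M_q$, and that is precisely the short Bézout computation displayed above; one simply has to have chosen the parameters $q$ pairwise coprime so that it applies to every pair.
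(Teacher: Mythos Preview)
Your proof is correct and follows essentially the same approach as the paper: choose pairwise coprime odd $q$, use Theorem~\ref{thm:proper} to get proper dense $H(q)$ inside some maximal $M_q$ (necessarily of infinite index by density), and observe via B\'ezout that $M_{q_1}=M_{q_2}$ would force $ab\in M$ and hence $M=\langle a,B\rangle=G$. The paper states this argument in the paragraph immediately preceding the corollary, just slightly more tersely.
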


The main idea of the proof of the theorem is as follows: we have already seen in the last section that $H(q)$ is a dense subgroup of $G$, so it suffices to show that it is proper, indeed that it does not contain $ab$. 
This will be done by comparing the actions of $G$ and $H(q)$ on the boundary $\partial T$ of the tree;
that is, by comparing the Schreier (orbital) graphs $G/\St_G(\xi)$ and $H(q)/\St_{H(q)}(\xi)$ for $\xi \in \partial T$ 
and proving that no word in the generators of $H(q)$ can produce the same action as $ab$ on $\partial T$.
In fact, it will suffice to do this for a particular point $\xi\in \partial T$, the ray labeled by $\tilde{\boldsymbol{1}}=\boldsymbol{1}^{\infty}$, an infinite string of $\boldsymbol{1}$s. 




Recall that the binary tree can be seen as the set $\{\boldsymbol{0},\boldsymbol{1}\}^{*}$ of finite words over $\{\boldsymbol{0},\boldsymbol{1}\}$, and there is a natural addition operation on this alphabet.
 The action of $G$ on $\{\boldsymbol{0},\boldsymbol{1}\}^{*}$ is described in terms of the action of the generators. 
Let $s\in \{\boldsymbol{0},\boldsymbol{1}\}^{*}$ and $s_1, s_2\in \{\boldsymbol{0},\boldsymbol{1}\}$. The action of $a\in G$ is given by
\[a\cdot s_1s = (s_1+\boldsymbol{1})s.\]
The action of $x\in B$ is recursively defined by $x(s_1) = s_1$ and
\[x\cdot s_1s_2s = 
\begin{cases}
\boldsymbol{0}s_2 s & \text{ if } s_1 = \boldsymbol{0} \text{ and } \omega(x) = 1\\
\boldsymbol{0}(s_2+1)s & \text{ if } s_1=\boldsymbol{0}  \text{ and } \omega(x) = a\\
\boldsymbol{1}\rho(x)(s_2s) & \text{ if } s_1 = \boldsymbol{1}
\end{cases}
\]
where $\omega$ and $\rho$ are as in Definition \ref{def:SunicGroups}.

The boundary of the tree $\partial T$ can be identified with the set $\{\boldsymbol{0},\boldsymbol{1}\}^{\NN}$ of right-infinite strings of $\boldsymbol{0}$s and $\boldsymbol{1}$s.
The action of $G$ on $T$ can naturally be extended to an action of $G$ on $\partial T$ by
taking $s\in\{\boldsymbol{0},\boldsymbol{1}\}^{\NN}$ in the above definition. 

Let us make some useful remarks regarding the action of $G$ on $\partial T$.

\begin{rem}\label{rem:ActionOfb}
Let $b=(a,b)\in G$. 
Then, for $s\in \{\boldsymbol{0},\boldsymbol{1}\}^{\NN}$, it follows from the definition that $b\cdot s$ is the string obtained by adding $\boldsymbol{1}$ (modulo 2) to the element immediately following the first $\boldsymbol{0}$ in the sequence.
\end{rem}
\begin{rem}\label{rem:ActionsOfBAndb}
For $x\in B$ and $s\in \{\boldsymbol{0},\boldsymbol{1}\}^{\NN}$, either $x\cdot s = s$ or $x\cdot s = b\cdot s$.
It follows that for any $\xi\in \partial T$, the orbit of $\xi$ under the action of $G$ is the same as the orbit of $\xi$ under the action of $\langle a,b\rangle \leq G$.
\end{rem}

The action of $G$ on $\partial T$ cannot be transitive, since $G$ is finitely generated and $\partial T$ is uncountable.
We will therefore restrict our attention to the action of $G$ on the orbit of $\tilde{\boldsymbol{1}}=\boldsymbol{1}^{\infty} \in \partial T$. According to Remark \ref{rem:ActionsOfBAndb}, this orbit is the same as the orbit of $\tilde{\boldsymbol{1}}$ under $\langle a,b\rangle \cong D_\infty$. 
Let us study the orbital graph of the action of $\langle a,b\rangle$ on this orbit.

\begin{prop}\label{prop:StructureOfOrbitalGraph}
The orbital graph of the action of $\langle a,b\rangle$ on the orbit of $\tilde{\boldsymbol{1}}$ is a half-line with a loop labeled by $b$ at $\tilde{\boldsymbol{1}}$.

\begin{tikzpicture}[scale=0.7]
\begin{scope}

\begin{scope}
\tikzstyle{every node}=[circle, draw, fill, inner sep=0, minimum size = 4]
\node (V1) at (-7,0) {};
\node (V2) at (-5,0) {};
\node (V3) at (-3,0) {};
\node (V4) at (-1,0) {};
\node (V5) at (1,0) {};
\node (V6) at (3,0) {};
\node (V7) at (5,0) {};
\node (V8) at (7,0) {};
\end{scope}
\node (V9) at (8,0) {$\dots$};

\def\b{6}
\def\c{1}
\draw (V1)node[below = \c]{\small{$\tilde{\boldsymbol{1}}$}} to node[above, midway]{$a$}
(V2)node[below = \c]{\small{$\boldsymbol{0}\tilde{\boldsymbol{1}}$}} to node[above,midway]{$b$}
(V3)node[below = \c]{\small{$\boldsymbol{00}\tilde{\boldsymbol{1}}$}} to node[above, midway]{$a$}
(V4)node[below = \c]{\small{$\boldsymbol{10}\tilde{\boldsymbol{1}}$}} to node[above,midway]{$b$}
(V5)node[below = \c]{\small{$\boldsymbol{100}\tilde{\boldsymbol{1}}$}} to node[above, midway]{$a$}
(V6)node[below = \c]{\small{$\boldsymbol{000}\tilde{\boldsymbol{1}}$}} to node[above,midway]{$b$} 
(V7)node[below = \c]{\small{$\boldsymbol{010}\tilde{\boldsymbol{1}}$}} to node[above, midway]{$a$} 
(V8)node[below = \c]{\small{$\boldsymbol{110}\tilde{\boldsymbol{1}}$}} to (V9);

\draw 
(V1) to[out=140, in=220, looseness =25] node[left]{$b$}(V1);
\end{scope}
\end{tikzpicture}

\end{prop}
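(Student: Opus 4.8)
The plan is to exploit the fact that $\langle a,b\rangle$ is generated by two elements of order $2$, so that \emph{every} orbital graph of $\langle a,b\rangle$ with respect to $\{a,b\}$ has a highly constrained shape, and then to pin down the relevant one simply by locating its loops.

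First I would record the structural fact. Recall $\langle a,b\rangle\cong D_\infty$ (this is exactly the copy of the dihedral group furnished by Corollary \ref{cor:contain_dihedral} in the non-torsion case), with $a^2=b^2=1$. Hence in the orbital graph of any action of $\langle a,b\rangle$ with respect to the generating set $\{a,b\}$, each vertex $v$ lies on exactly one $a$-edge (to $a\cdot v$) and exactly one $b$-edge (to $b\cdot v$), a loop being counted once. Starting at any vertex and following $a$-, $b$-, $a$-, $b$-edges in turn, one traces a walk that is forced at every step, and so a connected graph of this kind is a fold of the bi-infinite path whose edge labels alternate $a,b,a,b,\dots$; running through the possibilities, the component is a bi-infinite alternating line, or an even alternating cycle (no loops), or a finite alternating path — possibly a single vertex — with a loop at each of its two ends (two loops), or a one-ended alternating path with a single loop at its unique endpoint (one loop). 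In particular: \emph{a connected component containing exactly one loop is necessarily a one-ended path, with that loop at the endpoint and labels alternating along it.} So it suffices to prove that the orbital graph of $\langle a,b\rangle$ on the orbit of $\tilde{\boldsymbol 1}$ contains exactly one loop, namely a $b$-loop at $\tilde{\boldsymbol 1}$.

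Next I would compute the loops, i.e.\ the points of the orbit fixed by $a$ or by $b$. Since $a$ flips the first letter of every boundary point, it fixes no point of $\partial T$ and so contributes no loop. For $b$: from $\psi(b)=(a,b)$ one has that $b$ fixes $\boldsymbol 1$ with $b_{\boldsymbol 1}=b$, so by induction $b$ fixes $\boldsymbol 1^{n}$ for all $n$ and hence $b\cdot\tilde{\boldsymbol 1}=\tilde{\boldsymbol 1}$; conversely, if $\xi\in\partial T$ is not $\tilde{\boldsymbol 1}$ then $\xi=\boldsymbol 1^{k}\boldsymbol 0\eta$ for some $k\ge 0$, so $b\cdot\xi=\boldsymbol 1^{k}\boldsymbol 0(a\cdot\eta)$, and $a\cdot\eta\neq\eta$ gives $b\cdot\xi\neq\xi$. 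Thus $\tilde{\boldsymbol 1}$ is the unique fixed point of $b$ on $\partial T$, so the orbital graph of $\tilde{\boldsymbol 1}$ has precisely one loop, a $b$-loop at $\tilde{\boldsymbol 1}$.

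Finally I would combine the two steps: the component of $\tilde{\boldsymbol 1}$ has exactly one loop, hence by the structural fact it is a one-ended path with a $b$-loop at $\tilde{\boldsymbol 1}$; since $a\cdot\tilde{\boldsymbol 1}=\boldsymbol 0\tilde{\boldsymbol 1}\neq\tilde{\boldsymbol 1}$, the edge leaving $\tilde{\boldsymbol 1}$ is labelled $a$ and the labels alternate thereafter, which is exactly the asserted picture. (By Remark \ref{rem:ActionsOfBAndb} this is also the orbital graph of the full group $G$ on that orbit, up to any extra loops coming from the remaining generators of $B$, but the statement as written only concerns $\langle a,b\rangle$.) The one point to get right — and the reason the proof is short — is the structural classification in the first step: the naive route is to show directly that $\tilde{\boldsymbol 1},\boldsymbol 0\tilde{\boldsymbol 1},\boldsymbol{00}\tilde{\boldsymbol 1},\dots$ are pairwise distinct and that the orbit is infinite, which is more laborious; the loop-counting observation bypasses this, since a component carrying a single loop cannot be finite.
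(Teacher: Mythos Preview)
Your proof is correct and follows essentially the same approach as the paper's: both exploit that $\langle a,b\rangle$ is generated by two involutions to reduce the possible shapes of the orbital graph to a short list (circle, line, segment, half-line), and then pin down the half-line by counting loops, i.e.\ by showing that $a$ has no fixed point on $\partial T$ while $b$ fixes exactly $\tilde{\boldsymbol{1}}$. Your classification step is spelled out a bit more explicitly than the paper's, but the idea and the fixed-point computation are the same.
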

\begin{proof}
Since $\langle a,b \rangle$ is generated by two elements of order $2$, the orbital graph must be a connected graph where every vertex has degree $2$ (where a loop adds only $1$ to the degree of the vertex).
Hence, there are only four possibilities : the graph is either a circle, a line, a segment or a half-line.




It follows from the definition of the action that $b\cdot s\ne s$ for all $s\in \{\boldsymbol{0},\boldsymbol{1}\}^{\NN}$ containing at least one $\boldsymbol{0}$ (see Remark \ref{rem:ActionOfb}) and that $a\cdot s\ne s$ for all $s\in \{\boldsymbol{0},\boldsymbol{1}\}^{\NN}$. 
Hence, $b$ has exactly one fixed point, $\tilde{\boldsymbol{1}}$, and $a$ has none.
Therefore, by degree considerations, the only possible orbital graph of the action of $\langle a,b\rangle$ on the orbit of $\tilde{\boldsymbol{1}}$ must be a half-line with a loop at $\tilde{\boldsymbol{1}}$ labeled by $b$.
\end{proof}


Proposition \ref{prop:StructureOfOrbitalGraph} allows us to define a bijection between the orbit of $\tilde{\boldsymbol{1}}$ and $\ZZ$.

\begin{prop}\label{prop:BijectionZOrbits}
There is a bijection $\zeta$ between $\ZZ$ and the orbit $G\cdot\tilde{\boldsymbol{1}}$ of $\tilde{\boldsymbol{1}}$ under $G$ given by $\zeta(n)=(ab)^n\cdot\tilde{\boldsymbol{1}}$.
\end{prop}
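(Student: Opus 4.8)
The plan is to deduce the bijection directly from the structure of the orbital graph established in Proposition \ref{prop:StructureOfOrbitalGraph}, together with the observation (Remark \ref{rem:ActionsOfBAndb}) that the $G$-orbit of $\tilde{\boldsymbol 1}$ coincides with its $\langle a,b\rangle$-orbit. So it suffices to show that $\zeta\colon \ZZ \to \langle a,b\rangle\cdot\tilde{\boldsymbol 1}$, $n\mapsto (ab)^n\cdot\tilde{\boldsymbol 1}$, is a bijection.

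\emph{Surjectivity.} First I would note that $\langle a,b\rangle\cdot\tilde{\boldsymbol 1}$ is, by Proposition \ref{prop:StructureOfOrbitalGraph}, a half-line: a sequence of vertices $v_0=\tilde{\boldsymbol 1}, v_1, v_2,\dots$ where consecutive vertices are joined alternately by an $a$-edge and a $b$-edge, starting with the $a$-edge $v_0\!-\!v_1$ (since $b$ fixes $v_0$), and where $b$ also has a loop at $v_0$. Reading the picture, $v_0 = \tilde{\boldsymbol 1}$, $v_1 = a\cdot\tilde{\boldsymbol 1}$, $v_2 = ba\cdot\tilde{\boldsymbol 1}$, $v_3 = aba\cdot\tilde{\boldsymbol 1}$, and in general $v_{2k} = (ba)^k\cdot\tilde{\boldsymbol 1} = (ab)^{-k}\cdot\tilde{\boldsymbol 1}$ (using $b\cdot\tilde{\boldsymbol 1}=\tilde{\boldsymbol 1}$, so $(ba)^k\tilde{\boldsymbol 1} = b(ab)^{k-1}a\tilde{\boldsymbol 1}=\dots$; more cleanly, $(ab)^{-1}\tilde{\boldsymbol 1} = b a \tilde{\boldsymbol 1}$ since $b^{-1}=b$, $a^{-1}=a$), and $v_{2k+1} = (ab)^{k}a\cdot\tilde{\boldsymbol 1} = a(ba)^k\cdot\tilde{\boldsymbol 1}$; using $a\cdot v_{2k} = a(ab)^{-k}\tilde{\boldsymbol 1}$ and $b\cdot\tilde{\boldsymbol 1}=\tilde{\boldsymbol 1}$ one checks $v_{2k+1} = (ab)^{k+1}\cdot\tilde{\boldsymbol 1}$ as well. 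Thus every vertex of the half-line is of the form $(ab)^n\cdot\tilde{\boldsymbol 1}$ for some $n\in\ZZ$ (the even-indexed ones for $n\le 0$, the odd-indexed ones for $n\ge 1$), so $\zeta$ is onto.

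\emph{Injectivity.} Suppose $(ab)^n\cdot\tilde{\boldsymbol 1} = (ab)^{n'}\cdot\tilde{\boldsymbol 1}$ with $n>n'$; then $(ab)^{k}$ fixes $\tilde{\boldsymbol 1}$ for $k=n-n'>0$. The clean way to rule this out is to use that $ab$ has infinite order in $\langle a,b\rangle\cong D_\infty$ and acts with no finite orbits on the half-line: along the half-line, the product $ab$ is (up to the loop) a shift by two steps, which already shows $(ab)^k$ moves $v_{2j}$ to $v_{2j-2k}$ for $j$ large, hence cannot fix $\tilde{\boldsymbol 1}$ unless $k=0$. Alternatively, and perhaps most transparently, I would argue directly on $\partial T$: Remark \ref{rem:ActionOfb} describes $b$ as "flip the symbol after the first $\boldsymbol 0$", and $a$ as "flip the first symbol"; one can then show by a short induction that $(ab)^n\cdot\tilde{\boldsymbol 1}$ and $(ab)^{n'}\cdot\tilde{\boldsymbol 1}$ differ (e.g. by tracking the position of the first $\boldsymbol 0$, or the length of the initial block of $\boldsymbol 1$'s), so the points are pairwise distinct.

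\emph{Main obstacle.} The only real content is injectivity, i.e. confirming that the orbital graph is genuinely an infinite half-line and not a finite segment — equivalently, that $ab$ has infinite order as a tree automorphism. This is exactly where the non-torsion hypothesis enters: by Corollary \ref{cor:contain_dihedral}, $\langle a,b\rangle$ with $b=(a,b)$ is a copy of $D_\infty$, so $ab$ has infinite order, and then the half-line cannot close up or terminate into a finite segment. Everything else is bookkeeping translating Proposition \ref{prop:StructureOfOrbitalGraph} into the explicit formula $\zeta(n)=(ab)^n\cdot\tilde{\boldsymbol 1}$.
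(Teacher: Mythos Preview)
Your proposal is correct and follows essentially the same approach as the paper: reduce to the $\langle a,b\rangle$-orbit via Remark \ref{rem:ActionsOfBAndb}, then read the bijection off the half-line of Proposition \ref{prop:StructureOfOrbitalGraph}. The paper's argument is organized slightly more cleanly---surjectivity uses the normal form $g=(ab)^n b^m$ in $D_\infty$ together with $b\cdot\tilde{\boldsymbol 1}=\tilde{\boldsymbol 1}$, and injectivity just notes that $(ab)^n\cdot\tilde{\boldsymbol 1}$ sits at graph-distance $2n-1$ (for $n>0$) or $-2n$ (for $n\le 0$) from $\tilde{\boldsymbol 1}$---which is exactly the computation already contained in your surjectivity step, so your separate injectivity discussion and ``main obstacle'' paragraph are redundant (that the half-line is infinite is already part of Proposition \ref{prop:StructureOfOrbitalGraph}).
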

\begin{proof}

According to Remark \ref{rem:ActionsOfBAndb}, for every $\xi \in G\cdot\tilde{\boldsymbol{1}}$, there exists $g\in \langle a,b \rangle$ such that $\xi = g\cdot \tilde{\boldsymbol{1}}$. 
Since $\langle a,b\rangle$ is isomorphic to the infinite dihedral group, there exists $n\in \ZZ$, $m\in \{0,1\}$ such that $g = (ab)^n b^m$. 
Since $b\cdot \tilde{\boldsymbol{1}} = \tilde{\boldsymbol{1}}$, we conclude that $\zeta$ is surjective.

Injectivity follows from Proposition \ref{prop:StructureOfOrbitalGraph}.
Indeed, it follows from the structure of the orbital graph (see the figure in that proposition) that $(ab)^n\cdot \tilde{\boldsymbol{1}}$ is the vertex at distance $2n-1$ from $\tilde{\boldsymbol{1}}$ if $n>0$ 
and at distance $2n$ if $n\leq 0$.
\end{proof}

The bijection $\zeta$ allows us to define an action of $G$ on $\ZZ$.
It turns out that the restriction of this action to $\langle a,b\rangle$ is the standard action of $D_\infty$ on $\ZZ$. 
This will allow us to prove Theorem \ref{thm:proper}.

\begin{proof}[Proof of Theorem \ref{thm:proper}]
Let $\zeta \colon \ZZ \rightarrow G\cdot\tilde{\boldsymbol{1}}$ be the map from Proposition \ref{prop:BijectionZOrbits}. 
Then, $G$ acts on $\ZZ$ by
\[g\cdot n =\zeta^{-1}(g \cdot \zeta (n))\]
for all $g \in G$ and $n\in \NN$. In particular,
\[(ab)^q\cdot n = \zeta^{-1}((ab)^{q+n}\cdot \tilde{\boldsymbol{1}}) = q+n\]
and
\[b\cdot n = \zeta^{-1}(b(ab)^{n}\cdot \tilde{\boldsymbol{1}}) = \zeta^{-1}((ab)^{-n}b \cdot \tilde{\boldsymbol{1}}) = \zeta^{-1}((ab)^{-n} \cdot \tilde{\boldsymbol{1}}) = -n\]
for all $n\in \ZZ$.
According to Remark \ref{rem:ActionsOfBAndb}, for $x\in B$, we also have $x\cdot n = \pm n$, depending on the value of $n$.

Since $H(q)$ is generated by $(ab)^q$ and $B$, it follows that $H(q)\cdot 0 = q\ZZ \subsetneq \ZZ = G\cdot 0$. Therefore, $H(q) \neq G$.
\end{proof}

\section{Projections of proper dense subgroups}\label{sec:Pervova}

In this section, we show that for suitable self-replicating groups $G$, the projection of a proper dense subgroup to any vertex is also a proper dense subgroup. 
These results are generalizations of Lemmas 4.1 and 4.4 of \cite{Pervova} to a larger class of branch groups which includes \v{S}uni\'{c} groups.

\begin{rem}
Let $G$ be any group and $M\leq G$ a dense subgroup in the profinite topology. 
Then $M\cap H$ is dense in the profinite topology of $H$ whenever $H$ is a finite index subgroup of $G$. 
This reduces to observing that every basic open set in the profinite topology of $H$ contains a basic open set in the profinite topology of $G$. 
To wit, if $K\lhd H$ has finite index, then it also has finite index in $G$ (but is not necessarily normal);
and its normal core in $G$ is a normal subgroup of finite index in $G$ contained in $K$.
\end{rem}

\begin{lem}\label{lem:MdenseActsLikeG}
If $G$ is a group of automorphisms of a regular rooted tree and $M\leq G$ is dense in the profinite topology, then 
\begin{enumerate}[label=(\roman*)]
	\item $M$ has the same action as $G$ on all truncated trees and in particular on all levels of the tree;
	\item $M_u\leq G_u$ is dense for every $u\in T$.
\end{enumerate}
\end{lem}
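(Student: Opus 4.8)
The plan is to prove the two statements in sequence, deriving (ii) from (i) applied to a suitable finite index subgroup.

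For part (i), the key observation is that the pointwise stabilizer $\St_G(n)$ of level $n$ — or more generally the stabilizer of a truncated tree — is a finite index normal subgroup of $G$. Since $M$ is dense in the profinite topology, by definition $M\St_G(n)=G$ for every $n$. Taking the quotient by $\St_G(n)$, this says that the image of $M$ in $G/\St_G(n)$ is all of $G/\St_G(n)$, which is precisely the statement that $M$ and $G$ induce the same action on level $n$ (and the same is true verbatim for the stabilizer of any finite truncation of the tree, as that is also finite index and normal). So part (i) is essentially immediate from the definition of density together with the fact that the congruence-type subgroups are finite index and normal.

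For part (ii), fix a vertex $u\in T$ at level $n$. First I would note that $\St_G(u)$ has finite index in $G$: by part (i), $G$ and $M$ act alike on level $n$, and in any case $\St_G(u)$ contains $\St_G(n)$ which has finite index. By the Remark immediately preceding the lemma, $M\cap\St_G(u)$ is dense in the profinite topology of $\St_G(u)$. Now apply the projection homomorphism $\varphi_u\colon\St_G(u)\to G_u$. This map is surjective by definition of $G_u=\varphi_u(\St_G(u))$, and a surjective homomorphism is continuous and open for the profinite topologies, hence maps dense subgroups to dense subgroups: the image of a finite index (normal) subgroup of $G_u$ pulls back to a finite index (normal) subgroup of $\St_G(u)$, and density of $M\cap\St_G(u)$ there gives density of $\varphi_u(M\cap\St_G(u))=M_u$ in $G_u$. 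I should also recall here that by definition $M_u=\varphi_u(\St_M(u))=\varphi_u(M\cap\St_G(u))$, so this is exactly the object in the statement.

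The only mild subtlety — and the step I would be most careful about — is the claim that a surjective group homomorphism $\varphi\colon H\twoheadrightarrow Q$ carries dense subgroups (for the profinite topology) to dense subgroups. This needs that preimages of finite index normal subgroups of $Q$ are finite index normal in $H$ (clear), so that if $D\leq H$ is dense and $N\lhd Q$ has finite index, then $D\varphi^{-1}(N)=H$ whence $\varphi(D)N=Q$. This is routine but worth spelling out, since it is the engine of part (ii). Everything else is bookkeeping with the definitions of projection and of the $\Aut T$/profinite topologies already set up in the excerpt.
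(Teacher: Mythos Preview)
Your proposal is correct and follows essentially the same approach as the paper: both use the preceding Remark to conclude that $\St_M(u)=M\cap\St_G(u)$ is dense in $\St_G(u)$, and then push this through the surjection $\varphi_u$ via the correspondence theorem. Your extra paragraph spelling out why surjective homomorphisms preserve profinite density is exactly the content of the paper's one-line appeal to the correspondence theorem.
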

\begin{proof}
The first item follows by observing that the action of $M$ on a truncated tree (or indeed on level $n$) is given by $M/\St_M(n)\cong M\St_G(n)/\St_G(n)=G/\St_G(n)$.

The second item follows by noting that $\St_M(u)$ is dense in $\St_G(u)$ for every $u\in T$ (by the above remark). 
Since $\varphi_u:\St_G(u)\rightarrow G_u$ is onto, the correspondence theorem yields that $M_uN_u=G_u$ for any $N_u\lhd G_u$ of finite index. 
\end{proof}

\begin{prop}\label{prop:PervovaProperProjection}
Let $G$ be a just infinite branch group, which is self-replicating and acts regularly and primitively on the first level of the tree (that is, as a cyclic group of prime order $p$). 
If $M<G$ is a proper dense subgroup of $G$ then so is $M_u<G_u=G$ for every vertex $u\in T$. 
\end{prop}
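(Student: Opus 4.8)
The plan is to split the statement into its two halves: density of $M_u$ and properness of $M_u$. Density is already done — it is exactly Lemma \ref{lem:MdenseActsLikeG}(ii), which gives $M_u \leq G_u = G$ dense in the profinite topology (using that $G$ is self-replicating, so $G_u = G$, and that each $\varphi_u$ is onto). So the entire content is to show $M_u \neq G$. By induction on the length of $u$, and using that $M_u = (M_{u'})_x$ for $u = u'x$ together with the fact (from the remark and Lemma \ref{lem:MdenseActsLikeG}) that $M_{u'}$ is again a proper dense subgroup of $G$ if $M_{u'} \neq G$, it suffices to treat the case $u = x \in X$ a single vertex of level one.

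So suppose, for contradiction, that $M$ is a proper dense subgroup of $G$ but $M_x = G$ for some $x \in X$. Since $G$ acts transitively on the first level and $M$ acts the same way by Lemma \ref{lem:MdenseActsLikeG}(i), conjugating $M$ by an element of $M$ we may assume $M_{\boldsymbol{0}} = G$, and in fact (by transitivity of $G$ on level $1$ and the self-similarity/self-replicating hypothesis) $M_y = G$ for every $y \in X$. Now I would consider $N := \St_M(1)$, which is dense in $\St_G(1)$ (finite index subgroup, by the remark preceding Lemma \ref{lem:MdenseActsLikeG}), and examine its image $\psi(N) \leq G \times \cdots \times G$. The key point to extract is that $N$ projects \emph{onto} $G$ in each coordinate — that is what $M_y = G$ for all $y$ says — and then to use the structure theory of subdirect subgroups of $G^p$ together with just infiniteness. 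The classical fact (going back to the theory of just infinite branch groups, and used in Pervova's argument) is that a subgroup of $G \times \cdots \times G$ which surjects onto each factor and which, together with $\rist_G$-type information, sees enough of the branch structure, must contain $\rist_G(1)' = \rist_G(\boldsymbol{0})' \times \cdots \times \rist_G(\boldsymbol{p-1})'$ — indeed each $\rist_G(y)'$ is a non-trivial normal subgroup of $G_y = G$, hence of finite index since $G$ is just infinite. From $\psi(N) \geq \rist_G(1)'$ and the fact that $G$ is branch (so $\rist_G(1)$, and a fortiori $\rist_G(1)'$, has finite index in $\St_G(1)$), one deduces that $N$ has finite index in $\St_G(1)$, hence $M$ has finite index in $G$; combined with density this forces $M = G$, the desired contradiction.

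More carefully, the mechanism for getting $\rist_G(1)'$ inside $\psi(N)$ is the commutator trick: if $g \in N$ with $\psi(g) = (g_{\boldsymbol 0}, \dots, g_{\boldsymbol{p-1}})$ and $h \in \St_M(1) \cap \varphi_{\boldsymbol 0}^{-1}(\rist_G(\boldsymbol 0))$ — such $h$ exist in abundance because $M_{\boldsymbol 0} = G \supseteq \rist_G(\boldsymbol 0)$ and we can arrange the other coordinates to lie in prescribed cosets using density of $\St_M(1)$ in $\St_G(1)$ — then $[g,h]$ has all coordinates trivial except at $\boldsymbol 0$, where it realizes $[g_{\boldsymbol 0}, \rist_G(\boldsymbol 0)]$; ranging over $g$ this yields $[G, \rist_G(\boldsymbol 0)] = \rist_G(\boldsymbol 0)'$-ish content in the first coordinate of $\psi(N)$, i.e. $1 \times \cdots \times \rist_G(\boldsymbol 0)' \times \cdots \times 1 \subseteq \psi(N)$ after a small additional argument; permuting coordinates via powers of the rooted element $a$ (which lies in $M$ since $M$ acts transitively on level one and $p$ is prime, so $\langle a \rangle$ maps onto $G/\St_G(1)$, though one must be a little careful to produce an actual element of $M$ with the right top action) gives the full product $\rist_G(1)'$. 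The role of the hypothesis that $G$ acts \emph{regularly and primitively} (as $\ZZ/p\ZZ$) on the first level is precisely to make this coordinate-permutation step clean: the action on $X$ is that of a prime cyclic group, so any element of $M$ acting non-trivially on level one cyclically permutes all $p$ subtrees.

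The main obstacle I expect is exactly this last technical point — producing, inside the subdirect subgroup $\psi(\St_M(1))$, elements with \emph{one} non-trivial coordinate lying in $\rist_G$, i.e. genuinely exploiting the branch structure rather than just subdirectness — and then bootstrapping from "one coordinate" to "all coordinates" using only elements that actually belong to $M$. This is where Pervova's original Lemmas 4.1 and 4.4 do the real work, and the content of Section \ref{sec:Pervova} is to carry out that argument under the weaker hypotheses (just infinite, self-replicating, primitive prime action) satisfied by \v{S}uni\'c groups rather than only for the Grigorchuk group; I would follow her scheme closely, replacing any use of the specific Grigorchuk relations by appeals to just infiniteness (to guarantee $\rist_G(v)'$ has finite index) and to the branch property (to guarantee $\rist_G(n)$ has finite index in $\St_G(n)$).
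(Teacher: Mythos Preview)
Your reduction to level one and the density half are fine, but the ``commutator trick'' you sketch does not work as stated and this is a genuine gap, not just a technicality. If $g,h\in\St_M(1)$ with $\psi(g)=(g_{\boldsymbol 0},\dots,g_{\boldsymbol{p-1}})$ and $\psi(h)=(h_{\boldsymbol 0},\dots,h_{\boldsymbol{p-1}})$, then $\psi([g,h])=([g_{\boldsymbol 0},h_{\boldsymbol 0}],\dots,[g_{\boldsymbol{p-1}},h_{\boldsymbol{p-1}}])$; asking that $h_{\boldsymbol 0}$ lie in $\varphi_{\boldsymbol 0}(\rist_G(\boldsymbol 0))$ (or any other subgroup of $G$) gives you no control whatsoever over $[g_i,h_i]$ for $i\neq\boldsymbol 0$. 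To kill those coordinates you would need one of $g,h$ to already lie in $\rist_G(\boldsymbol 0)$ \emph{and} in $M$, i.e.\ in $\rist_M(\boldsymbol 0)$ --- but the whole difficulty is that $\rist_M(\boldsymbol 0)$ may be trivial. You correctly flag this as the main obstacle, but you do not get past it; invoking ``Pervova's Lemmas 4.1 and 4.4'' is not a substitute, especially since those lemmas are tailored to the Grigorchuk group and the point of this proposition is to work under weaker hypotheses.

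The paper's argument is structurally different and worth internalising. It begins with the dichotomy you skip: either $\varphi_v(\rist_M(v))$ is nontrivial for some $v\in X$, in which case (being normal in $M_v=G$ and $G$ just infinite) it has finite index, and then $\prod_{v\in X}\rist_M(v)$ has finite index in $G$, contradicting $|G:M|=\infty$; or $\rist_M(v)=1$ for every $v\in X$. In the second case the paper does \emph{not} try to show $\psi(\St_M(1))$ is large --- quite the opposite. Using primitivity of the level-one action and an inductive Goursat argument, it shows $M\cap\prod_{v\in V}\rist(v)=1$ for every proper $V\subset X$, forcing $\psi(\St_M(1))$ to be a \emph{diagonal} subgroup $\{(g,\alpha_2(g),\dots,\alpha_p(g)):g\in G\}$. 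The contradiction then comes from residual nilpotence: since $G/\St_G(n)$ is a $p$-group for every $n$, one can choose $k$ with $\varphi_{\boldsymbol 0}(\rist_G(\boldsymbol 0))\not\leq\gamma_k(G)$, pick $r\in\varphi_{\boldsymbol 0}(\rist_G(\boldsymbol 0))\setminus\gamma_k(G)$, and use density of $M$ modulo the finite-index subgroup $\psi^{-1}(\gamma_k(G)^p)\cap G$ to write $(r,1,\dots,1)=\psi(m)\psi(n)$ with $m\in\St_M(1)$ diagonal and each coordinate of $\psi(n)$ in $\gamma_k(G)$; the diagonal shape of $m$ then forces $r\in\gamma_k(G)$. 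This use of residual nilpotence is the idea you are missing.
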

\begin{proof}
Suppose for a contradiction that $M_u=G_u=G$ for some $u\in T$ and suppose furthermore that $u$ is of smallest level possible. 
Then, by Lemma \ref{lem:MdenseActsLikeG}, we can assume that $u\in X^1$.
Let $R$ denote the restriction of $\rist_M(u)$ to $T_u$. 
Then $R\lhd M_u=G_u=G$. 
Since $G$ is just infinite, either $R$ has finite index in $G$ or it is trivial. 
If the former holds, then $\rist_M(v)_v$ has finite index in $G_v=G$ for every $v\in X$ and, in particular, 
$\prod_{X}\rist_M(v)$ has finite index in $G$, which contradicts the fact that $M$ has infinite index in $G$. 
Thus $R=1$ and therefore $\rist_M(v)=1$ for every $v\in X$. 

We claim that $\psi(\St_M(1))$ is a diagonal subgroup of $G\times \dots \times G$. 
The fact that $G$ (and therefore $M$, by Lemma \ref{lem:MdenseActsLikeG}) acts regularly on $X$ implies that $\St_M(v)=\St_M(1)$ for every $v\in X$ and therefore $\psi(\St_M(1))$ is subdirect in $G\times\dots\times G$, as $\St_M(1)_v =\St_M(v)_v=M_v=G$ for every $v\in X$. 
Supposing  that for any proper subset $V\subset X$  
\begin{equation}\label{eqn:rstMtrivial}
M\cap \prod_{v\in V}\rist(v)=M\cap \prod_{v\in V}\Aut T=\ker (M\rightarrow \prod_{X\setminus V}\Aut T)=1
\end{equation}
then successive applications of Goursat's lemma show that 
\begin{equation}\label{eqn:MisDiagonal}
\psi(\St_M(1))=\{(g,\alpha_2(g),\dots,\alpha_p(g)) \mid g\in G, \alpha_2,\dots,\alpha_p\in \Aut G \}.
\end{equation}
To show that \eqref{eqn:rstMtrivial} holds, we proceed by induction on the size of $V\subset X$.
The base case $|V|=1$ having been shown above, assume that \eqref{eqn:rstMtrivial} holds for all subsets of $X$ of size at most $n\in\{1,\dots,p-2\}$ and let $V\subset X$ have size $n+1$. 
Suppose for a contradiction that $K:=M\cap \prod_{v\in V}\rist(v)\neq 1$, so there exists $v\in V$ such that $K_v\neq 1$. 
Since $K_v\lhd \St_M(1)_v=G_v=G$ and $G$ is just infinite, $K_v$ is of finite index in $G$. 
Now, because $M$ acts primitively on $X$, no non-trivial partition of $X$ is preserved by $M$ and, in particular, there exists $m\in M$ such that $0<|V\cap V^m|<|V|$.
Thus 
$$[K,K^m]\leq M\cap\prod_{v\in V\cap V^m}\rist(v)=1$$
by inductive hypothesis
and $K_v$ commutes with $(K^m)_{v^m}\lhd \St_M(1)_{v^m}=G$. 
Therefore the intersection $K_v\cap (K^m)_{v^m}$ is an abelian normal subgroup of finite index in $G$, 
which contradicts the assumption that $G$ is a branch group (as these cannot be virtually abelian, as can be checked by considering rigid stabilisers). 
Thus $K=1$ and the claim follows by induction. 

As stated above, this claim allows us to use Goursat's lemma successively to conclude \eqref{eqn:MisDiagonal}.
Now, since $G$ is a branch group, $\rist_G(v)$ is non-trivial for every $v \in T$ and therefore
$\rist_G(v)_v$ is a non-trivial normal subgroup of $\St_G(1)_v=G$ for every $v\in X$, which implies that it is of finite index.

Since the action of $G$ on the first level is cyclic of order $p$ and $G$ is self-replicating,
for each $n$, the quotient $G/\St_G(n)$ is a subgroup of the iterated wreath product $C_p\wr\dots\wr C_p$ of $C_p$ with itself $n$ times. 
In particular, $G$ is residually nilpotent and therefore $\rist_G(\boldsymbol{0})_{\boldsymbol{0}}$ is not contained in the $k$th term $\gamma_k(G)$ of the lower central series of $G$, for some $k\in \NN$. 
Let $g\in \rist_G(\boldsymbol{0})$ with $\psi(g)=(r,1,\dots,1)$ and $r\in\rist_G(\boldsymbol{0})_{\boldsymbol{0}}\setminus \gamma_k(G)$. 
Now, since $G$ is just infinite, $N:=G\cap \psi^{-1}(\gamma_k(G)\times\dots\times\gamma_k(G))$ has finite index in $G$
and as $M$ is dense, $MN=G$. 
This means that $g=mn$ for some $m=(m_1,\alpha_2(m_1),\dots,\alpha_p(m_1))\in \St_M(1)$,  $n\in N$. 
We thus have 
$$\psi(g)=(r,1,\dots,1)=(m_1n_1,\alpha_2(m_1)n_2,\dots,\alpha_p(m_1)n_p)$$
where $n_1,\dots,n_p\in \gamma_k(G)$. 
But this implies that $\alpha_i(m_1)\in\gamma_k(G)$ for all $i$, hence $m_1$ must be in $\gamma_k(G)$, making $r$ an element of $\gamma_k(G)$, a contradiction.
\end{proof}

\section{Finitely generated maximal subgroups of infinite index}\label{sec:maximal}

Here we prove that the proper subgroups $H(q)$ of Section \ref{sec:proper} are in fact maximal when $q$ is an odd prime.
The strategy is to show, using an argument about length reduction, that for any $g\notin H(q)$, there exists a vertex $v$ such that the projection of $\langle H(q), g\rangle$ to $v$ is not proper. This, in turn, implies that $\langle H(q), g \rangle$ cannot be proper by Proposition \ref{prop:PervovaProperProjection}.

We will use the homomorphism $\phi$ defined in Proposition \ref{prop:liftIsHomomorphism} to show that the subgroups from Theorem \ref{thm:proper} are indeed maximal. 
We start with some more auxiliary results. 

\medskip
\noindent\textbf{Notation.} 
Henceforth $G$ will denote a group in $\widetilde{\mathcal{G}}_{2,m}$ with $m\geq 2$, which contains, by Corollary \ref{cor:contain_dihedral} an element $b\in B$ such that $\psi(b)=(a,b)$ and, by Lemma \ref{lemma7} $c\in B_{-1}\setminus B_0$ and $d\in B_0\setminus B_1$ such that $c=(a,d)$. 
Unless stated otherwise, $q\geq 3$ will be a fixed odd integer (not necessarily prime) and $H:=H(q)=\langle (ab)^q, B\rangle$.

\begin{lem}\label{lemma:IsomorphismWithdAndb}
Consider the following subgroups of $G$:
\begin{align*}
\Delta_b = \langle a, d^{(ab)^{\frac{q-1}{2}}} \rangle, &&
\Delta_d = \langle a, d^{(ad)^{\frac{q-1}{2}}} \rangle.
\end{align*}
There is a unique isomorphism $f\colon \Delta_b \to \Delta_d$ such that $f(a) = a$ and $f(d^{(ab)^{\frac{q-1}{2}}}) = d^{(ad)^{\frac{q-1}{2}}}$.
\end{lem}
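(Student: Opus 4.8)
Uniqueness of $f$ is automatic: $\Delta_b$ is generated by $a$ and $d^{(ab)^{\frac{q-1}{2}}}$, so a homomorphism on $\Delta_b$ is determined by their images. For existence, the plan is to identify both $\Delta_b$ and $\Delta_d$ with the same finite dihedral group, with $a$ and the relevant conjugate of $d$ playing the roles of the two standard reflections. Since $a^2 = 1$ and every conjugate of $d$ is an involution (because $d$ lies in the elementary abelian $2$-group $B$), each of $\Delta_b$ and $\Delta_d$ is generated by two involutions, hence is a quotient of $D_\infty = \langle x,y \mid x^2 = y^2 = 1\rangle$ via $x \mapsto a$ and $y$ mapping to the appropriate conjugate of $d$. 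A quotient of $D_\infty$ in which the image of $xy$ has finite order $N \geq 2$ necessarily has order $2N$ and is isomorphic to $D_N$, with the two generating reflections corresponding; two such quotients with the same $N$ therefore admit a (unique) isomorphism sending generators to generators. Thus the whole problem reduces to proving that $a\cdot d^{(ab)^{\frac{q-1}{2}}}$ and $a\cdot d^{(ad)^{\frac{q-1}{2}}}$ have the same finite order $N \geq 2$; then $f$ is the composite of the resulting isomorphisms $\Delta_b \to D_N$ and $D_N \to \Delta_d$.

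For $\Delta_d$ this is straightforward. Inside the dihedral group $\langle a,d\rangle$ one has the elementary identity $d^{(ad)^{\frac{q-1}{2}}} = a(ad)^q$, so $a\cdot d^{(ad)^{\frac{q-1}{2}}} = (ad)^q$. The group $\langle a,d\rangle$ is finite, since by Lemma~\ref{lemma7}\,(i) it is isomorphic to $G/\overline{B_1}$, and $\overline{B_1}$ contains $K$, hence (as $G$ is regular branch over $K$, Proposition~\ref{prop:regular_branch}) has finite index in $G$. As $\Aut T$ is a pro-$2$ group, every finite subgroup of it is a $2$-group, so $\operatorname{ord}(ad)$ is a power of $2$; since $q$ is odd, $\operatorname{ord}\big((ad)^q\big) = \operatorname{ord}(ad) =: N$, and $N \geq 2$ because $a \notin \St(1)$ while $d \in \St(1)$ (recall $d \in B_0 = \ker\omega$).

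The main step is to show $\operatorname{ord}\big(a\cdot d^{(ab)^{\frac{q-1}{2}}}\big) = N$ as well, and the plan is to exhibit $a\cdot d^{(ab)^{\frac{q-1}{2}}}$ as a conjugate of $ad$ in $\Aut T$. Conjugating by $(ab)^{\frac{q-1}{2}} \in G$ and using the dihedral relations in $\langle a,b\rangle \cong D_\infty$ — specifically $(ab)^{\frac{q-1}{2}}\,a\,(ab)^{-\frac{q-1}{2}} = a(ba)^{q-1}$ together with $a(ba)^{q-1} = (ab)^{q-1}a = (ab)^q b$ — shows that $a\cdot d^{(ab)^{\frac{q-1}{2}}}$ is conjugate to $(ab)^q bd$. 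Conjugating this in turn by the automorphism $g \in \Aut T$ constructed in the proof of Proposition~\ref{prop:HqConjugatetoG}, which satisfies $g^{-1}(ab)^q b\,g = a$ and $g^{-1}xg = x$ for all $x \in B$ (so $g$ centralises $d$), yields $g^{-1}\big((ab)^q bd\big)g = ad$. Hence $a\cdot d^{(ab)^{\frac{q-1}{2}}}$ and $ad$ are conjugate in $\Aut T$ and share the order $N$.

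Combining the two computations, $\Delta_b$ and $\Delta_d$ are both isomorphic to $D_N$ via isomorphisms carrying $a$ to a fixed reflection and $d^{(ab)^{\frac{q-1}{2}}}$, respectively $d^{(ad)^{\frac{q-1}{2}}}$, to the other; their composite is the desired isomorphism $f$. The one genuinely delicate point is the order computation for $a\cdot d^{(ab)^{\frac{q-1}{2}}}$, which is why it is routed through the conjugating automorphism of Proposition~\ref{prop:HqConjugatetoG}; the side fact that $\langle a,d\rangle$ is a finite $2$-group enters only to guarantee that the odd integer $q$ is coprime to $\operatorname{ord}(ad)$.
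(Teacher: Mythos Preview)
Your proof is correct, but takes a genuinely different route from the paper's. The paper proceeds by a direct computation: it shows that for \emph{every} $g\in G$ one has $(ad^g)^4=1$ (using $\psi((ad^g)^2)=(\rho(d)^{g'},\rho(d)^{g'})$ and $\rho(d)^2=1$), so both $\Delta_b$ and $\Delta_d$ are quotients of $D_{8}$; it then checks by hand that these quotient maps are injective, and composes the resulting isomorphisms with $D_8$. Your argument instead avoids computing the order directly: you show that $a\cdot d^{(ab)^{(q-1)/2}}$ is conjugate in $\Aut T$ to $ad$ via the element $g$ of Proposition~\ref{prop:HqConjugatetoG}, and separately that $a\cdot d^{(ad)^{(q-1)/2}}=(ad)^q$ has the same order as $ad$ because $\langle a,d\rangle$ is a finite $2$-group and $q$ is odd. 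This buys you the equality of orders without ever identifying $N$, and in particular sidesteps the injectivity verification the paper leaves as ``a direct computation''. The trade-off is a dependence on Proposition~\ref{prop:HqConjugatetoG} and on the ambient pro-$2$ structure of $\Aut T$, whereas the paper's argument is entirely self-contained and in fact yields the sharper uniform statement $(ad^g)^4=1$ for all $g\in G$.
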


\begin{proof}
Let $D_{2\cdot 4} = \langle s,t \mid s^2=t^2=(st)^4 = 1\rangle$ be the dihedral group of order 8.
 For all $g\in G$, we have
\begin{equation*}
a^2 = \left(d^g\right)^2 = 1
\end{equation*}
and
\begin{align*}
\psi((ad^g)^4) &= \psi(ad^gad^g)^2 \\
&=(\rho(d)^{g'}, \rho(d)^{g'})^2 \\
&=(1,1)
\end{align*}
(for some $g'\in G$), which means that $(ad^g)^4=1$. It follows that there are unique homomorphisms $g_b\colon D_{2\cdot 4} \to \Delta_b$ and $g_d\colon D_{2\cdot 4} \to \Delta_d$ such that
\begin{align*}
g_b(s)&=a \\ 
g_b(t) &= d^{(ab)^{\frac{q-1}{2}}}\\ 
g_d(s)&=a\\
g_d(t) &= d^{(ad)^{\frac{q-1}{2}}}.
\end{align*}
These homomorphisms are clearly surjective, and a direct computation shows that they are injective. Therefore, we have the isomorphism $f=g_d\circ g_b^{-1}$.
\end{proof}

%
%

\begin{lem}\label{lemma:GeneratorsOfStabilizerOfH}
The stabilizer $\St_H(1)$ is generated by $\{x, x^{a(ba)^{q-1}} \mid x\in B \}$
\end{lem}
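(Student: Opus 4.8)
The plan is to produce an explicit generating set for $\St_H(1)$ by the Reidemeister--Schreier procedure and then rewrite it into the claimed form, using only the abelianity of $B$ and a one-line computation inside $\langle a,b\rangle$ (which, since $\psi(b)=(a,b)$, is an infinite dihedral group).

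To set this up, observe that $(ab)^q$ acts on $X$ as the non-trivial permutation (because $a$ does, $b$ does not, and $q$ is odd), so $H$ acts transitively on the first level and $\St_H(1)$ has index $2$ in $H$. I would take $\{1,(ab)^q\}$ as a transversal and run Reidemeister--Schreier on the generating set $\{(ab)^q\}\cup\{b_0,\dots,b_{m-1}\}$ of $H$. The $b_i$ lie in $\St_H(1)$, so the trivial coset contributes exactly $b_0,\dots,b_{m-1}$, while the other coset contributes $(ab)^{2q}$ together with the conjugates $b_i^{(ab)^{-q}}$. This yields $\St_H(1)=\langle B,\ (ab)^{2q},\ B^{(ab)^{-q}}\rangle$, where $B^{(ab)^{-q}}=\{x^{(ab)^{-q}}:x\in B\}$.

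Next I would eliminate the generator $(ab)^{2q}$. Since $a$ and every element of $B$ have order dividing $2$, conjugation by $b$ inverts $ab$, so $b(ab)^q b=(ab)^{-q}$ and hence $b\cdot b^{(ab)^{-q}}=(ab)^{-2q}$; thus $(ab)^{2q}=b^{(ab)^{-q}}b\in\langle B,B^{(ab)^{-q}}\rangle$ and $\St_H(1)=\langle x,\ x^{(ab)^{-q}}:x\in B\rangle$. Finally I would identify the conjugating element by a short rearrangement: $a(ba)^{q-1}=(ab)^{q-1}a=b(ab)^{-q}$, and therefore, using that $B$ is abelian and $b\in B$, $x^{a(ba)^{q-1}}=x^{b(ab)^{-q}}=(x^b)^{(ab)^{-q}}=x^{(ab)^{-q}}$ for every $x\in B$. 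Hence $B^{(ab)^{-q}}=\{x^{a(ba)^{q-1}}:x\in B\}$, which gives the statement; the reverse inclusion is immediate since $a(ba)^{q-1}=b(ab)^{-q}\in H$ and $\St_H(1)\trianglelefteq H$.

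There is no genuine obstacle here: the only point requiring care is the bookkeeping in Reidemeister--Schreier, namely checking that the coset representative of $(ab)^q b_i$ is $(ab)^q$ and that of $(ab)^{2q}$ is $1$, both of which one reads off directly from the first-level action. Everything else is formal manipulation in $\langle a\rangle*\langle b\rangle$ modulo $a^2=b^2=1$, combined with the commutativity of $B$.
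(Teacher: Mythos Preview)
Your proof is correct and rests on the same underlying idea as the paper's: produce Schreier-type generators for the index-$2$ subgroup $\St_H(1)$ of $H$. The paper's argument is a little shorter because it first observes that $H$ is generated by $B$ together with the \emph{involution} $\alpha:=a(ba)^{q-1}=(ab)^qb$; then any $h\in\St_H(1)$, written as a word in $\alpha$ and letters of $B$, contains an even number of $\alpha$'s, and since $\alpha^{-1}=\alpha$ one rewrites $h$ directly as a product of elements of $B$ and $B^{\alpha}$. You instead run Reidemeister--Schreier on the natural generating set $\{(ab)^q\}\cup B$, which produces the extra Schreier generator $(ab)^{2q}$ and the conjugates $x^{(ab)^{-q}}$; you then need two further (easy) steps, eliminating $(ab)^{2q}$ and identifying $x^{(ab)^{-q}}$ with $x^{a(ba)^{q-1}}$ via $a(ba)^{q-1}=b(ab)^{-q}$ and the abelianity of $B$. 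Both routes are valid; the paper's choice of the involutive generator simply absorbs your last two steps into the initial setup.
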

\begin{proof}
Clearly, $ \{x, x^{a(ba)^{q-1}} \mid x\in B \}$ generates a subgroup of $\St_H(1)$.
On the other hand, if $h\in \St_H(1)$, then it can be written as a product $h=h_1h_2\dots h_n$, with $h_i\in \{a(ba)^{q-1}\}\cup B$  (since this set generates $H$). 
To act trivially on the first level, this product must contain an even number of $a$, and therefore an even number of $a(ba)^{q-1}$. 
Since $(a(ba)^{q-1})^{-1} = a(ba)^{q-1}$, this implies that $h$ is indeed in the subgroup generated by  $\{x, x^{a(ba)^{q-1}} \mid x\in B \}$.
\end{proof}

\begin{prop}\label{prop:H1SubdirectProductOfH}
We have $\psi(\St_H(1))\leq H^{(ab)^{\frac{q-1}{2}}} \times H$.
Furthermore, the projection of $\psi(\St_H(1))$ on each factor is surjective.
\end{prop}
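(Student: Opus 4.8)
The plan is to feed the explicit generating set for $\St_H(1)$ furnished by Lemma \ref{lemma:GeneratorsOfStabilizerOfH}, namely $\{x,\ x^{w} : x\in B\}$ with $w=a(ba)^{q-1}$, through the wreath recursion $\psi$, and then to recognise the resulting coordinates inside the infinite dihedral subgroup $\langle a,b\rangle\cong D_\infty$. First I would compute $\psi(w)$. Since $q$ is odd, $(ba)^{q-1}=\big((ba)^2\big)^{(q-1)/2}\in\St_G(1)$, and an immediate induction on the recursion gives $\psi\big((ba)^{q-1}\big)=\big((ab)^{(q-1)/2},(ba)^{(q-1)/2}\big)$; multiplying by $\psi(a)=\sigma(1,1)$ (where $\sigma$ is the nontrivial element of $\Sym(X)$) yields $\psi(w)=\sigma(u_0,u_1)$ with $u_0=(ab)^{(q-1)/2}$ and $u_1=(ba)^{(q-1)/2}$. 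Using $\psi(x)=(\omega(x),\rho(x))$ for $x\in B$ and conjugating by $\psi(w)$, I obtain, for every $x\in B$,
\[
\psi(x)=(\omega(x),\rho(x)),\qquad \psi(x^{w})=\big(\rho(x)^{u_0},\ \omega(x)^{u_1}\big),
\]
and by Lemma \ref{lemma:GeneratorsOfStabilizerOfH} these two families generate $\psi(\St_H(1))$.

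Next I would verify the containment $\psi(\St_H(1))\leq H^{u_0}\times H=H^{(ab)^{(q-1)/2}}\times H$. In the first coordinate $\omega(x)\in\{1,a\}$ and $\rho(x)^{u_0}\in B^{u_0}\subseteq H^{u_0}$, so it suffices to know $a\in H^{u_0}$; in the second coordinate $\rho(x)\in B\subseteq H$ and $\omega(x)^{u_1}\in\{1,a^{u_1}\}$, so it suffices to know $a^{u_1}\in H$. Both facts come from the standard conjugation identities in $D_\infty$ — $a^{u_1}=(ab)^{q-1}a=(ab)^{q}b^{-1}$ and $b^{u_0}=(ba)^{q-1}b$ — together with the observation that $u_0$ commutes with $(ab)^q$: the first identity puts $a^{u_1}$ in $H$ directly, and the second gives $a=(ab)^q\,(b^{u_0})^{-1}\in\langle (ab)^q,B^{u_0}\rangle=H^{u_0}$.

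For the surjectivity of each coordinate projection I would read off the images on the generators. The second projection contains $\rho(B)=B$ and $a^{u_1}=(ab)^{q-1}a$; since $(ab)^{q-1}a\cdot b=(ab)^q$, it contains $\langle B,(ab)^q\rangle=H$, hence equals $H$. The first projection contains $a$ (as $\omega$ is onto $A$) and $\rho(B)^{u_0}=B^{u_0}$; since $a\cdot b^{u_0}=(ab)^q$ and $u_0$ commutes with $(ab)^q$, it contains $\langle (ab)^q,B^{u_0}\rangle=H^{u_0}$, hence equals $H^{u_0}$.

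I do not expect any conceptual obstacle here: the content is split between one bookkeeping computation and one elementary dihedral computation. The only point needing genuine care is keeping the wreath recursion consistent with the paper's left-action conventions, i.e.\ getting $\psi(w)$ and the conjugates $\psi(x^{w})$ exactly right (including which of $u_0,u_1$ lands in which coordinate); once those formulas are correct, the rest is routine.
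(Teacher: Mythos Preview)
Your proposal is correct and follows essentially the same approach as the paper: feed the generating set $\{x,\ x^{a(ba)^{q-1}}:x\in B\}$ of $\St_H(1)$ through $\psi$ and do dihedral calculations in $\langle a,b\rangle$ to place each coordinate inside the appropriate conjugate of $H$. The only organisational difference is in the surjectivity argument: the paper uses the explicit elements $\rho^{-1}(x)$ and $(ab)^{2q}$ to hit the second factor and then the conjugation symmetry $h\mapsto h^{a(ba)^{q-1}}$ (which swaps and conjugates the two coordinates) to deduce surjectivity onto the first, whereas you read off generators of each factor directly from the images of the generating set; both are equally short and neither buys anything over the other.
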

\begin{proof}
For all $x\in B$, we have
\begin{equation*}
\psi(x) = (\omega(x), \rho(x)).
\end{equation*}
If $\omega(x) = 1$, then $\omega(x)$ is clearly in $H^{(ab)^{\frac{q-1}{2}}}$. Otherwise,
\begin{equation*}
\omega(x) = a = (ba)^{\frac{q-1}{2}}(ab)^qb(ab)^{\frac{q-1}{2}} \in H^{(ab)^{\frac{q-1}{2}}}.
\end{equation*}
Moreover, $\rho(x)\in B \subset H$, so $\psi(x)\in H^{(ab)^\frac{q-1}{2}}\times H$.
Similarly,
\begin{equation*}
\psi(x^{a(ba)^{q-1}}) = (\rho(x)^{(ab)^{\frac{q-1}{2}}}, \omega(x)^{(ba)^{\frac{q-1}{2}}}) \in H^{(ab)^{\frac{q-1}{2}}} \times H.
\end{equation*}
The first result then follows from the fact that $\St_H(1)$ is generated by the elements of $B$ and their conjugates by $a(ba)^{q-1}$.


Now, for all $x\in B$, we have $\rho^{-1}(x) \in \St_H(1)$ with $\psi(\rho^{-1}(x)) = (\omega(\rho^{-1}(x)), x)$. Since we also have $(ab)^{2q}\in \St_H(1)$ with $\psi((ab)^{2q}) = ((ba)^q, (ab)^q)$, we see that the projection of $\psi(\St_H(1))$ on the second factor is surjective. To see that the projection on the first factor is also surjective, it suffices to notice that for all $h\in \St_H(1)$ with $\psi(h) = (h_1,h_2)$, we have $h^{a(ba)^{q-1}} \in \St_H(1)$ with $\psi(h^{a(ba)^{q-1}}) = (h_2^{(ab)^{\frac{q-1}{2}}}, h_1^{(ba)^{\frac{q-1}{2}}})$.
\end{proof}

Recall that in Proposition \ref{prop:liftIsHomomorphism} we proved the existence and uniqueness of the homomorphism $\phi\colon G\rightarrow \St_G(1)$ such that $\phi(a)=aca$ and $\phi(x)=\rho^{-1}(x)$ for all $x\in B$. 

\begin{prop}\label{prop:ProjectionToTrivialAndHMeansInH}
 If there exists $g\in \St_G(1)$ such that $\psi(g) = (1,h)$ for some $h\in H$, then $g\in \St_H(1)$.
\end{prop}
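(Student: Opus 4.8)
The plan is to exploit the uniqueness part of Proposition~\ref{prop:LiftIsUnique}. Suppose $g\in\St_G(1)$ satisfies $\psi(g)=(1,h)$ with $h\in H$. By Proposition~\ref{prop:LiftIsUnique}, since there is an element of $\St_G(1)$ (namely $g$) whose image under $\psi$ is $(1,h)$, we must have $g=\phi(h)$. So it suffices to show that $\phi(h)\in\St_H(1)$ whenever $h\in H$, i.e.\ that $\phi(H)\leq\St_H(1)$. Since $H=\langle(ab)^q, B\rangle$ and $\phi$ is a homomorphism, it is enough to check that $\phi((ab)^q)$ and $\phi(x)$ for $x\in B$ all lie in $\St_H(1)$.

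First I would deal with the generators coming from $B$: for $x\in B$ we have $\phi(x)=\rho^{-1}(x)\in B\subseteq H$, and $\rho^{-1}(x)\in\St_G(1)$ since $B\leq\St_G(1)$, so $\phi(x)\in\St_H(1)$ with no trouble. The substantive point is $\phi((ab)^q)$. Here I would compute $\phi(ab)=\phi(a)\phi(b)=aca\cdot\rho^{-1}(b)$ and then raise to the $q$-th power. Using $\psi(\phi(g))=(g',g)$ with $g'\in\langle a,d\rangle$ from the Remark following Proposition~\ref{prop:liftIsHomomorphism}, we get $\psi(\phi((ab)^q))=(w,(ab)^q)$ for some $w\in\langle a,d\rangle$. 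The task is to identify $w$ explicitly and recognise it as lying in (a conjugate of) $\langle a,d\rangle\cap H$; more precisely, using Proposition~\ref{prop:H1SubdirectProductOfH} the first coordinate should land in $H^{(ab)^{(q-1)/2}}$, and combining with the second coordinate $(ab)^q\in H$ one concludes $\phi((ab)^q)\in\St_H(1)$. Concretely, since $\psi(b)=(a,b)$ and $c=(a,d)$, one checks $\psi(acab)=\psi(aca)\psi(b)=(d,a)(a,b)=(da, ab)$, whence $\psi((acab)^k)=((da)^k,(ab)^k)$; and $acab = (aca)b$ relates to $\phi(ab)$ up to the factor $\rho^{-1}(b)b^{-1}$, which I would track carefully. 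The upshot is that $\phi((ab)^q)$ has second coordinate $(ab)^q\in H$ and first coordinate a word in $a$ and $d$ of a form covered by the generating set of $\St_H(1)$ in Lemma~\ref{lemma:GeneratorsOfStabilizerOfH}, hence lies in $\St_H(1)$.

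The main obstacle I anticipate is the bookkeeping in the previous paragraph: pinning down the exact first coordinate $w$ of $\psi(\phi((ab)^q))$ and verifying it belongs to the subgroup generated by $\{x, x^{a(ba)^{q-1}}\mid x\in B\}$ together with the projections available from Proposition~\ref{prop:H1SubdirectProductOfH}. The cleanest route is probably to avoid computing $w$ outright and instead argue indirectly: since $\psi(g)=(1,h)$ and $\psi(\phi(h))=(w,h)$ agree in the second coordinate, $\psi(g\phi(h)^{-1})=(w^{-1},1)$; but $g=\phi(h)$ by Proposition~\ref{prop:LiftIsUnique} forces $w=1$ is not quite what we need — rather, once $g=\phi(h)$, density of $H$ (Theorem~\ref{thm:proper}, via the congruence subgroup property) and Lemma~\ref{lem:MdenseActsLikeG} give that $\St_H(1)$ surjects onto $H$ in the second coordinate under $\psi$, so there is $h'\in\St_H(1)$ with $\psi(h')=(u,h)$, $u\in H^{(ab)^{(q-1)/2}}$; then $\psi(g h'^{-1})=(u^{-1},1)$, and by Lemma~\ref{lemma7}(ii) with $u^{-1}\in\langle a,d\rangle$ (after absorbing the conjugation) we get $g=h'\in\St_H(1)$. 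Making the conjugation by $(ab)^{(q-1)/2}$ interact correctly with $\langle a,d\rangle$ versus $\langle a,d\rangle^{(ab)^{(q-1)/2}}$ — which is where Lemma~\ref{lemma:IsomorphismWithdAndb} enters — is the delicate step, and I would structure the final write-up around that identification.
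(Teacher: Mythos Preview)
Your reduction via Proposition~\ref{prop:LiftIsUnique} to $g=\phi(h)$ is correct, but the plan then splits into two branches, and neither closes.

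The first branch --- prove $\phi(H)\leq\St_H(1)$ --- asks for strictly more than the proposition, and is in fact false. Already in the Grigorchuk--Erschler group with $q=3$ one computes $\psi(\phi((ab)^3))=((da)^3,(ab)^3)$ with $(da)^3=ad\ne 1$, and a direct check with the $\ZZ$-action of Section~\ref{sec:proper} shows $ad\notin H^{ab}$; hence $\phi((ab)^q)\notin\St_H(1)$. The proposition only concerns those $h\in H$ for which $(1,h)$ actually lies in $\psi(\St_G(1))$; by Lemma~\ref{lemma7} this forces $h\in\overline{B_1}$, which excludes $h=(ab)^q$. So checking $\phi$ on the generating set of $H$ is the wrong test.

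The second branch --- take any $h'\in\St_H(1)$ with $\psi(h')=(u,h)$ and invoke Lemma~\ref{lemma7}(ii) on $gh'^{-1}$ --- fails because $u$ lies only in $H^{(ab)^{(q-1)/2}}$, not in $\langle a,d\rangle$, so that lemma yields nothing.

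The missing idea is to choose the lift $h'$ with care. The paper builds $\tilde h\in\St_H(1)$ \emph{letter by letter} from a word $h=h_1\cdots h_n$ in the generators $\{a(ba)^{q-1}\}\cup B$, replacing each $h_i\in B$ by $\rho^{-1}(h_i)$ and $a(ba)^{q-1}$ by $c^{a(ba)^{q-1}}$. This guarantees $\psi(\tilde h)=(x,h)$ with each factor $x_i\in\{1,a,d^{(ab)^{(q-1)/2}}\}$, so $x\in\Delta_b$. A parallel direct computation shows $\psi(\phi(h_i))=(f(x_i),h_i)$ with $f$ the isomorphism of Lemma~\ref{lemma:IsomorphismWithdAndb}, hence $\psi(\phi(h))=(f(x),h)$. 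Now the hypothesis $\psi(g)=(1,h)$ combined with $g=\phi(h)$ gives $f(x)=1$, whence $x=1$ since $f$ is an isomorphism, and therefore $g=\tilde h\in\St_H(1)$. Your instinct that Lemma~\ref{lemma:IsomorphismWithdAndb} is the crux was right; what was missing is this specific letter-by-letter lift that forces the first coordinate into a group small enough for $f$ to control it.
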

\begin{proof}
Since $h\in H$, there exist $h_1,h_2\dots, h_n \in \{a(ba)^{q-1}\} \cup B$ such that 
\begin{equation*}
h = h_1h_2\dots h_n.
\end{equation*}
For $1\leq i \leq n$,  define
\begin{equation*}
\tilde{h}_i := \begin{cases}\rho^{-1}(h_i) & \text{ if } h_i\in B \\ c^{a(ba)^{q-1}} & \text{ if } h_i = a(ba)^{q-1}\end{cases}
\end{equation*}
and $\tilde{h} = \tilde{h}_1\tilde{h}_2\dots \tilde{h}_n$. 
Each of the terms in the product is in $\St_H(1)$, therefore so is $\tilde{h}$.
Notice that $\psi(\tilde{h_i}) = (x_i, h_i)$, for $1\leq i\leq n$,  where $x_i\in \{1,a,d^{(ab)^{\frac{q-1}{2}}}\}$. Therefore, writing $x=x_1x_2\dots x_n$, we have
\begin{equation*}
\psi(\tilde{h}) = (x, h).
\end{equation*}

On the other hand, by direct computation, we see that $\psi(\phi(h_i)) = (f(x_i), h_i)$, where $f$ is the isomorphism of Lemma \ref{lemma:IsomorphismWithdAndb}. Since $\phi$ is a homomorphism (by Proposition \ref{prop:liftIsHomomorphism}), we have $\phi(h) = \phi(h_1)\phi(h_2)\dots\phi(h_n)$. Hence,
\begin{align*}
\psi(\phi(h)) = (f(x), h).
\end{align*}
However, by Proposition \ref{prop:LiftIsUnique}, since $\psi(g) = (1,h)$, we must have $g = \phi(h)$. Hence, $f(x) = 1$. Since $f$ is an isomorphism, this means that $x=1$. Therefore,
\begin{equation*}
\psi(\tilde{h}) = (1,h) = \psi(g).
\end{equation*}
This implies that $g = \tilde{h}\in \St_H(1)$.
\end{proof}

\begin{cor}\label{cor:ProjectionsInHMeansInH}
If $g\in \St_G(1)$ is such that $\psi(g) = (g_0,g_1)$ with $g_0\in H^{(ab)^{\frac{q-1}{2}}}$ and $g_1\in H$, then $g\in \St_H(1)$.
\end{cor}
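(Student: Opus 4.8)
The plan is to reduce to Proposition~\ref{prop:ProjectionToTrivialAndHMeansInH} by ``clearing'' the first coordinate of $\psi(g)$ using an element of $\St_H(1)$. First I would invoke Proposition~\ref{prop:H1SubdirectProductOfH}: the projection of $\psi(\St_H(1))$ onto the first factor $H^{(ab)^{\frac{q-1}{2}}}$ is surjective, so since $g_0\in H^{(ab)^{\frac{q-1}{2}}}$ there exists $h'\in\St_H(1)$ with $\psi(h') = (g_0, h_1')$ for some $h_1'\in H$.

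Next, since $\psi$ is an injective homomorphism and $\psi(\St_G(1))\le G\times G$ is a genuine direct product, inverses are taken coordinatewise, and I can compute
\[
\psi(h'^{-1}g) = \psi(h')^{-1}\psi(g) = (g_0^{-1}, h_1'^{-1})(g_0, g_1) = (1,\, h_1'^{-1}g_1).
\]
Here $h_1'^{-1}g_1\in H$ because both $h_1'$ and $g_1$ lie in $H$, and $h'^{-1}g\in\St_G(1)$ since both $h'$ and $g$ are. Thus Proposition~\ref{prop:ProjectionToTrivialAndHMeansInH} applies to $h'^{-1}g$ and yields $h'^{-1}g\in\St_H(1)$. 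As $h'\in\St_H(1)$ as well, it follows that $g = h'\,(h'^{-1}g)\in\St_H(1)$, which is exactly what we want.

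I do not expect a genuine obstacle here: the corollary is a formal consequence of the two preceding propositions. The only point worth checking is that the surjectivity of the first-coordinate projection in Proposition~\ref{prop:H1SubdirectProductOfH} lets us match $g_0$ \emph{exactly} (not merely up to an element of some subgroup), which it does. All the real work is in Proposition~\ref{prop:ProjectionToTrivialAndHMeansInH}, whose proof rests on the homomorphism $\phi$ from Proposition~\ref{prop:liftIsHomomorphism} and the isomorphism $f$ of Lemma~\ref{lemma:IsomorphismWithdAndb}.
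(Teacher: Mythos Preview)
Your proof is correct and essentially identical to the paper's: both invoke Proposition~\ref{prop:H1SubdirectProductOfH} to find an element of $\St_H(1)$ with first coordinate $g_0$, then apply Proposition~\ref{prop:ProjectionToTrivialAndHMeansInH} to the quotient. The only cosmetic difference is that the paper considers $gh^{-1}$ rather than $h'^{-1}g$.
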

\begin{proof}
By Proposition \ref{prop:H1SubdirectProductOfH}, there exists $h\in \St_H(1)$ such that $\psi(h) = (g_0, h_1)$ for some $h_1\in H$. Hence,
\begin{equation*}
\psi(gh^{-1}) = (1,g_1h_1^{-1}).
\end{equation*}
By Proposition \ref{prop:ProjectionToTrivialAndHMeansInH}, this means that $gh^{-1}\in \St_H(1)$, so $g\in \St_H(1)$.
\end{proof}

\begin{notation}
For any $g\in \St_G(1)$, we will write
\begin{equation*}
\lambda(g) = \text{min}\left\{l(g') \mid g=\gamma g'\delta, \quad \gamma,\delta \in \St_{\langle a, b\rangle}(1), g'\in \St_G(1)\right\},
\end{equation*}
where $l(g')$ is the length of $g'$ in $G$ with respect to the generating set $\{a\}\cup B\setminus\{1\}$.
\end{notation}


\begin{lem}\label{lem:HqReducedLength}
If $Q\leq G$ contains $H=H(q)$ properly then there exist $n\in \NN$ and $s \in \St_G(1)\setminus H$ with $\lambda(s) \leq 3$ such that $\langle s, H \rangle \leq Q_{\boldsymbol{1}^n}$.
\end{lem}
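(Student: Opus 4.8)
The plan is to take an arbitrary element $g \in Q \setminus H$ and, by multiplying it by suitable elements of $H$ and passing to projections, reduce it to an element $s$ of small $\lambda$-length that still lies outside $H$ but inside some projection $Q_{\boldsymbol{1}^n}$. First I would note that since $Q$ properly contains $H = \langle (ab)^q, B\rangle$ and $H$ contains $B$ and hence $\St_G(1)$ up to the coset structure given by $a$, we may after multiplying by a power of $a$ assume $g \in \St_G(1) \setminus H$ (indeed $G = \langle a \rangle H \St_G(1)$-type considerations, using that $(ab)^q \in H$ together with $B$ controls the first-level action exactly as $G$ does, so any coset of $\St_G(1)$ in $Q$ not hitting $H$ can be represented inside $\St_G(1)$; more carefully, if $g \notin \St_G(1)$ then $ag \in \St_G(1)$, and $ag \in H \iff g \in aH$, but $a \notin H$ by Theorem \ref{thm:proper}, so one of $g, ag$ lies in $\St_G(1) \setminus H$). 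So we have $g \in \St_G(1) \setminus H$, and we want to drive down $\lambda(g)$.

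Next I would run an induction on $\lambda(g)$. Write $\psi(g) = (g_0, g_1)$. The key structural input is Corollary \ref{cor:ProjectionsInHMeansInH}: if $g_0 \in H^{(ab)^{(q-1)/2}}$ and $g_1 \in H$ then $g \in \St_H(1) \leq H$, contradicting $g \notin H$. So at least one projection, say (after the symmetry provided by conjugation by $a(ba)^{q-1}$, cf. the last line of the proof of Proposition \ref{prop:H1SubdirectProductOfH}) $g_1$, fails to lie in $H$. Now I want to replace $g$ by an element of $\St_G(1)$ whose second projection is $g_1$ but whose own $\lambda$-length is strictly smaller whenever $\lambda(g)$ is large, and which still lies in $Q_{\boldsymbol{1}}$ (or rather in $\langle s', H\rangle \leq Q_{\boldsymbol{1}}$ for the appropriate $s'$). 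The tool for this is the contraction property (Remark \ref{rem:Contraction}): $l(g_v) \le (l(g)+1)/2$ for $v$ at level $1$, which after writing $g = \gamma g' \delta$ with $\gamma, \delta \in \St_{\langle a,b\rangle}(1)$ realizing $\lambda(g) = l(g')$ translates into a bound on $\lambda$ of the projections in terms of $\lambda(g)$. Roughly: $\lambda(g_1) \le (\lambda(g) + C)/2$ for a universal constant $C$, because the prefactors $\gamma, \delta$ contribute projections in $\langle a, b, c, d\rangle$ which get absorbed into the $\St_{\langle a,b\rangle}(1)$-coset slack in the definition of $\lambda$ at the lower vertex (here one uses that $c = (a,d)$ and $b = (a,b)$ so their projections stay in $\langle a, b, d\rangle$, of bounded length). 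So iterating: as long as $\lambda(g) > 3$, one of the projections $g_0, g_1$ is not in the appropriate conjugate of $H$ and has strictly smaller $\lambda$; replace $g$ by that projection — which lies in $Q_{\boldsymbol{1}}$ or $Q_{\boldsymbol{0}}$, and since $Q_{\boldsymbol{0}}$ and $Q_{\boldsymbol{1}}$ are conjugate by the level-transitivity of $G$ inside $\langle a \rangle$, we can always steer to $Q_{\boldsymbol{1}}$ — and recurse. After finitely many steps we land at $v = \boldsymbol{1}^n$ with an element $s \in \St_G(1) \setminus H$ of $\lambda(s) \le 3$ inside $Q_{\boldsymbol{1}^n}$; strictly speaking one keeps $H$ along for the ride, replacing $\langle g, H\rangle$ by $\langle s, H\rangle \le Q_{\boldsymbol{1}^n}$, using that projections of $H$ land back in a conjugate of $H$ by Proposition \ref{prop:H1SubdirectProductOfH}.

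The main obstacle, and the point requiring the most care, is bookkeeping the conjugating elements: the projection of $\St_H(1)$ sits in $H^{(ab)^{(q-1)/2}} \times H$ (Proposition \ref{prop:H1SubdirectProductOfH}), not in $H \times H$, so at each projection step the "copy of $H$" we must stay outside of is a moving conjugate, and one must check that this conjugating cocycle is absorbed by the $\St_{\langle a,b\rangle}(1)$-flexibility built into $\lambda$ and by Corollary \ref{cor:ProjectionsInHMeansInH} (which is exactly tailored to this conjugate). One also has to make sure the constant in the contraction estimate for $\lambda$ really is small enough that the fixed point of $x \mapsto (x+C)/2$ is at most $3$ — this is why the threshold in the statement is $3$ and not $1$ — and to handle the finitely many small cases $\lambda(g) \in \{1, 2, 3\}$ by declaring $n = 0$ and $s = g$ (or $s = ag$) directly when $\lambda(g) \le 3$ already, so the induction has a clean base. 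I would expect the verification that projecting the prefactors $\gamma, \delta \in \St_{\langle a,b\rangle}(1)$ and the conjugator $a(ba)^{q-1}$ yields only bounded-length elements of $\langle a, b, c, d \rangle$ lying in the relevant coset to be the part where one actually has to compute, but it is routine given $b = (a,b)$, $c = (a,d)$, and the explicit form of $\psi$ on $B$.
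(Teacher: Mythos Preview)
Your outline matches the paper's proof closely: pick $g \in Q \setminus H$, arrange $g \in \St_G(1)$, use Corollary~\ref{cor:ProjectionsInHMeansInH} to find a bad projection, steer it to the $\boldsymbol{1}$-coordinate by conjugating inside $H$, use contraction to get $\lambda(\varphi_{\boldsymbol{1}}(g)) \le (\lambda(g)+3)/2$, and iterate (Proposition~\ref{prop:H1SubdirectProductOfH} guarantees $H \le Q_{\boldsymbol{1}}$ at each step). The fixed point of $x \mapsto (x+3)/2$ is indeed~$3$, which is exactly the threshold in the statement.

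One genuine slip: to push $g$ into $\St_G(1)$ you multiply by $a$, but $a$ need not belong to $Q$ --- we only know $H \lneq Q \le G$. Your argument ``$ag \in H \iff g \in aH$, but $a \notin H$'' does not rule out $g \in aH$; for instance if $g = a$ itself happens to lie in $Q$, then $ag = 1 \in H$. The correct move, which you gesture at before the ``more carefully'', is to replace $g$ by $g(ab)^q$: since $q$ is odd, $(ab)^q$ acts as $a$ on level~$1$, and $(ab)^q \in H \subset Q$, so $g(ab)^q \in Q \cap \St_G(1)$ and still $g(ab)^q \notin H$. The same issue recurs after projecting: $\varphi_{\boldsymbol{1}}(g)$ need not lie in $\St_G(1)$, and again one multiplies by $(ab)^{2q} \in H$ (whose $\boldsymbol{1}$-projection is $(ab)^q$) rather than by $a$. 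The paper also inserts one further adjustment --- conjugating by $(ab)^{2q}$ to force $\gamma_1 \in \St(1)$ --- before reading off the inequality $\lambda(\varphi_{\boldsymbol{1}}(g)) \le (\lambda(g)+3)/2$. Finally, a harmless inaccuracy: the projections of $\gamma, \delta \in \St_{\langle a,b\rangle}(1)$ stay in $\langle a, b \rangle$ (since $\psi(b) = (a,b)$ and $\psi((ab)^2)=(ba,ab)$), not in $\langle a,b,c,d\rangle$; the elements $c, d$ play no role in this lemma.
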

\begin{proof}
By assumption, $Q \ne H$, so there exists some $g\in Q\setminus H$. 
Replacing $g$ by $g(ab)^q$ if necessary, we may assume that $g\in\St_Q(1)$.

Let $\gamma, \delta\in\St_{\langle a, b\rangle}(1)$ and $g'\in\St_G(1)$ be such that $g=\gamma g'\delta$ and $\lambda(g)= l(g')$. We have
\[\psi(g)=(\gamma_0g'_0\delta_0, \gamma_1g'_1\delta_1)\]
where $\psi(\gamma)=(\gamma_0,\gamma_1), \psi(g')=(g'_0,g'_1), \psi(\delta)=(\delta_0,\delta_1)$. 
Note that $\gamma_0,\gamma_1, \delta_0,\delta_1 \in\langle a, b\rangle$ as $\delta,\gamma\in\St_{\langle a, b \rangle}(1)$.

If $\gamma_1g'_1\delta_1\in H$ then, Corollary \ref{cor:ProjectionsInHMeansInH} implies that $(ab)^{\frac{q-1}{2}}\gamma_0 g'_0 \delta_0(ba)^{\frac{q-1}{2}}\notin H$. 
Thus, replacing $g$ by $g^{(ab)^qb}$ if necessary, we may assume that $\varphi_{\boldsymbol{1}}(g)=\gamma_1g'_1\delta_1\notin H$. 

If $\varphi_{\boldsymbol{1}}(g)\notin \St_G(1)$ then  $\varphi_{\boldsymbol{1}}(g(ab)^{2q})=\varphi_{\boldsymbol{1}}(g)(ab)^q\in\St_G(1)$ so, replacing $g$ by $g(ab)^{2q}$ we can suppose that $\varphi_{\boldsymbol{1}}(g)=\gamma_1g'_1\delta_1\in \St_G(1)$. 

If $\gamma_1\notin\St(1)$ then $\varphi_{\boldsymbol{1}}((ab)^{2q}g(ba)^{2q})=(ab)^q\gamma_1g'_1\delta_1(ba)^q$ with $(ab)^q\gamma_1\in\St_{\langle a,b\rangle}(1)$. 
So, replacing $g$ by $(ab)^{2q}g(ba)^{2q}$ if needed, we have
\[\varphi_{\boldsymbol{1}}(g)=\gamma_1g'_1\delta_1\in \St_G(1)\setminus H \textrm{ with } \gamma_1, \delta_1\in\langle a, b\rangle, \textrm{ and } \gamma_1\in\St(1).\]

Now, if $\delta_1\in\St(1)$ then $\lambda(\varphi_{\boldsymbol{1}}(g))\leq l(g'_1)$; otherwise $g'_1a\in\St_G(1)$ and $a\delta_1\in\St_{\langle a, b\rangle}(1)$ so $\lambda(\varphi_{\boldsymbol{1}}(g))\leq l(g'_1)+1$.
Remark \ref{rem:Contraction} implies that $l(g'_1) \leq \frac{l(g')+1}{2}$, and  $l(g') = \lambda(g)$ by construction. Hence
\[\lambda(\varphi_{\boldsymbol{1}}(g)) \leq \frac{\lambda(g)+3}{2}.\]
By repeating this procedure (which we can do thanks to Proposition \ref{prop:H1SubdirectProductOfH}), we conclude by induction that there exists some $n\in \NN$ and $y\in (Q\setminus H)\cap \St_G(\boldsymbol{1}^n)$ such that $s:= \varphi_{\boldsymbol{1}^n}(y) \notin H$ and $\lambda(s) \leq 3$.
\end{proof}

\begin{lem}\label{lem:HqContainsNotYetEqualsDivisor}
If $Q\leq G$ contains $H=H(q)$ properly then there exists $m\in \NN$ such that 
$Q_{\boldsymbol{1}^m}\geq\langle (ab)^r, B\rangle=H(r)$ for some proper divisor $r$ of $q$.
\end{lem}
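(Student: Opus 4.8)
The plan is to combine Lemma \ref{lem:HqReducedLength} with an explicit analysis of which short elements can occur. By Lemma \ref{lem:HqReducedLength}, since $Q$ contains $H=H(q)$ properly, there exist $n\in\NN$ and $s\in\St_G(1)\setminus H$ with $\lambda(s)\le 3$ such that $\langle s,H\rangle\le Q_{\boldsymbol{1}^n}$. Recall that $\lambda(s)\le 3$ means $s=\gamma s'\delta$ with $\gamma,\delta\in\St_{\langle a,b\rangle}(1)$ and $l(s')\le 3$ in the generating set $\{a\}\cup B\setminus\{1\}$. The first step is to enumerate, up to multiplication on each side by elements of $\St_{\langle a,b\rangle}(1)$ (which we are free to absorb since such elements already lie in $H$), the possibilities for $s'$: it is a word of length at most $3$ in $a$ and the nontrivial elements of $B$, subject to $s=\gamma s'\delta\in\St_G(1)$. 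Since $B\subseteq\St_G(1)$, the number of $a$'s in $s'$ together with those hidden in $\gamma,\delta$ must be even; after absorbing $\gamma,\delta$ one is left with a short list of candidate forms for $s$, essentially words like $x$, $xa ya$, $x^a y$, $xy$ etc. with $x,y\in B$.

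The key step is then to show that for each such candidate $s$, the subgroup $\langle s, H\rangle$ projects, at $\boldsymbol{1}$ (or at $\boldsymbol{1}^k$ for some bounded $k$), to a subgroup containing $H(r)=\langle (ab)^r,B\rangle$ for some proper divisor $r\mid q$. Here one uses Proposition \ref{prop:H1SubdirectProductOfH}: $\psi(\St_H(1))\le H^{(ab)^{(q-1)/2}}\times H$ with both projections onto, so that $\varphi_{\boldsymbol{1}}(\langle s,H\rangle)$ always contains $H$ itself, and the point is to control the \emph{extra} element $\varphi_{\boldsymbol{1}}(s)$ contributed on the second coordinate (after conjugating by $(ab)^{\pm q}$ as in the proof of Lemma \ref{lem:HqReducedLength} to land in $\St_G(1)$). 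Computing $\psi(s)$ for each candidate using the recursions $\psi(x)=(\omega(x),\rho(x))$ for $x\in B$, $\psi(aya)=(\rho(y),\omega(y))$, and Corollary \ref{cor:ProjectionsInHMeansInH} (which guarantees that if \emph{both} coordinates land in the respective copies of $H$ then $s\in\St_H(1)$, contradicting $s\notin H$), one finds that $\varphi_{\boldsymbol{1}}(s)$ lies in $\langle a\rangle\cdot B$ but is not in $H$; since the image of $H$ on the orbit of $\tilde{\boldsymbol 1}$ is $q\ZZ$ (as in the proof of Theorem \ref{thm:proper}), the element $\varphi_{\boldsymbol{1}}(s)$ together with $B$ generates a subgroup whose image on that orbit is $r\ZZ$ for some proper divisor $r$ of $q$, and one checks that in fact $\langle \varphi_{\boldsymbol{1}}(s), B\rangle\supseteq \langle(ab)^r,B\rangle$. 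Iterating the projection at most $m:=n+(\text{bounded constant})$ times yields the claim.

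The main obstacle I expect is the bookkeeping in the case analysis of short elements $s$: one must be careful that the freedom to multiply by $\St_{\langle a,b\rangle}(1)$ on both sides, together with conjugation by powers of $(ab)^q$ and by $a$, genuinely reduces to a manageable finite list, and that in every surviving case the projected element really does produce $(ab)^r$ (and not merely some element of the dihedral subgroup whose $\ZZ$-image is $r\ZZ$ but which, combined with $B$, fails to recover all of $H(r)$). Resolving this will rely on Remark \ref{rem:ActionsOfBAndb} — which says that on the orbit of $\tilde{\boldsymbol 1}$ the group $\langle a,b\rangle$ and $G$ have the same orbit, so that recovering the correct $\ZZ$-translation is equivalent to recovering $(ab)^r$ modulo elements acting trivially — and on the faithfulness of the action on $T_2$ to pin down $s$ exactly. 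A secondary subtlety is ensuring $r$ is a \emph{proper} divisor, i.e.\ $r\ne q$; this is exactly where $s\notin H$ is used, via Corollary \ref{cor:ProjectionsInHMeansInH}, since $s\in H$ would force $r=q$.
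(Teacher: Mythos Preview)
Your overall strategy---invoke Lemma \ref{lem:HqReducedLength} to reduce to an element $s$ with $\lambda(s)\le 3$, then project at $\boldsymbol{1}$ using Proposition \ref{prop:H1SubdirectProductOfH} and do a case analysis---is exactly the approach of the paper. However, the heart of your proposal contains a real gap.

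The assertion that ``$\varphi_{\boldsymbol{1}}(s)$ lies in $\langle a\rangle\cdot B$'' is simply false in general. Already in the easiest case $\lambda(s)=0$ one has $s=(ab)^k$ (after adjusting by $b$), and $\varphi_{\boldsymbol{1}}((ab)^{2k})=(ab)^k$, which is not of the form $a^i\beta$ unless $k\le 1$. The subsequent plan to read off $(ab)^r$ from the action on the orbit of $\tilde{\boldsymbol 1}$ therefore does not get started; and even if the projected element did act as translation by $r$ on $\ZZ$, that alone would not force it, together with $B$, to generate $(ab)^r$: many elements of $G$ act as the same translation on that orbit (Remark \ref{rem:ActionsOfBAndb} only says every $x\in B$ acts as $\pm\id$, not that the action detects elements up to $B$). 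Your own acknowledged ``main obstacle'' is thus not a bookkeeping issue but a genuine missing argument.

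What the paper actually does is compute $\varphi_{\boldsymbol{1}}$ explicitly in each case and show that, after possibly \emph{iterating} (not just once), one literally obtains a power $(ab)^{k}$ in some $Q_{\boldsymbol{1}^m}$, and then B\'ezout gives $(ab)^{\gcd(k,q)}$. Two subtleties you would have to confront are: (i) in the case $\lambda(s)=1$, writing $s=(ba)^{2k_1}x(ab)^{2k_2}$ and projecting gives $(ba)^{k_1+k_2}$, but if $k_1+k_2$ happens to be a multiple of $q$ this is already in $H$ and one must instead replace $x$ by $bx$ and pass to a different (previously handled) case; and (ii) when $\omega(x)=1$ the projection does not immediately produce a power of $ab$, and one must iterate using $\rho$ until some $\omega(\rho^j(x))=a$, invoking Proposition \ref{prop:SunicProp2} to guarantee termination. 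Neither of these is captured by an argument about the $\ZZ$-action. Finally, the case $\lambda(s)=3$ is dispatched not by projecting but by conjugating by $(ab)^q$ to reduce to $\lambda\le 1$; this trick is also absent from your outline.
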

\begin{proof}
By Lemma \ref{lem:HqReducedLength}, there exist $n\in \NN$ and $s\in \St_G(1)\setminus H$ with $\lambda(s) \leq 3$ such that $\langle s, H \rangle \leq Q_{\boldsymbol{1}^n}$.
 Thus, it suffices to show the result for $Q=\langle g, H \rangle$ for some $g\in \St_G(1)$ such that $\lambda(g) \leq 3$, which we do below in several cases.

\textbf{The case $\lambda(g)=0$: } In this case, $g\in \langle a,b \rangle$.
 We can therefore assume that $g=(ab)^{k}$ for some $k\in \mathbb{Z}$ (multiplying on the left or on the right by $b\in H$ if necessary). 
Since $k$ cannot be a multiple of $q$, there exist $i,j\in \ZZ$ such that $ik+jq=r$, where $r$ is the greatest common divisor of $k$ and $q$.
Hence $(ab)^r=(ab)^{ij}(ab)^{jq}\in \langle g,H\rangle$ and so $Q=\langle g,H\rangle \geq H(r)$.

\textbf{The case $g=(ab)^{-k}x(ab)^k$ for $x\in B\setminus\{1,b\}$ and $k\in \mathbb{Z}$: } Conjugating by an appropriate power of $(ab)^q$ if necessary, we can assume that $k$ is a positive odd number.
Note that $k$ cannot be a multiple of $q$ as $g\notin H$.

%

Then 
\[\psi(g)=((ab)^{\frac{k-1}{2}}a\rho(x)a(ba)^{\frac{k-1}{2}},(ba)^{\frac{k-1}{2}}b\omega(x)b(ab)^{\frac{k-1}{2}}).\]
If $\omega(x)=a$, then the second coordinate in the above expression is $(ba)^kb$.
 Since $H\leq Q_{\boldsymbol{1}}$ by Proposition \ref{prop:H1SubdirectProductOfH}, we have that $Q_{\boldsymbol{1}}$ contains $(ab)^k$
and therefore also $H(r)$, where $r<q$ is the greatest common divisor of $q$ and $k$, by the same argument as in the previous case.
If $\omega(x)=1$, then consider $(ba)^qg(ab)^q$ instead of $g$. 
Its image under $\varphi_{\boldsymbol{1}}$ is $(ba)^{\frac{q+k}{2}}\rho(x)(ab)^{\frac{q+k}{2}}$ where $\frac{q+k}{2}$ cannot be a multiple of $q$ (this is guaranteed by Corollary \ref{cor:ProjectionsInHMeansInH}).
Since $H_{\boldsymbol{1}}=H$ by Proposition \ref{prop:H1SubdirectProductOfH}, we may take $(ba)^{\frac{q+k}{2}}\rho(x)(ab)^{\frac{q+k}{2}}$ as our new $g$ and repeat this case. 
By Proposition \ref{prop:SunicProp2}, there exists some minimal $m\in \NN^*$ such that $\omega(\rho^{m-1}(x))=a$.
 Therefore, by repeating the above procedure $m-1$ times, we get that $Q_{\boldsymbol{1}^m}\geq H(r)$ for some proper divisor $r$ of $q$.

\textbf{The case $\lambda(g) = 1$: } The fact that $g = \gamma g' \delta$ with $\gamma, \delta \in \St_{\langle a, b \rangle}(1)$ and $g'\in \St_G(1)$ with $l(g')=1$
immediately implies that $g'\in B$, $\gamma = [a](ba)^{k'_1}[b]$ and $\delta = [b](ab)^{k'_2}[a]$ for some $k'_1,k'_2\in\NN$ (where the square brackets mean that an element might not be present).
Multiplying $\gamma$ by $b$ on the left and $\delta$ by $b$ on the right if necessary and using the fact that $\gamma,\delta \in \St_G(1)$, we can assume that $\gamma = (ba)^{2k_1}[b]$ and $\delta = [b](ab)^{2k_2}$ for some $k_1,k_2\in \NN$. Therefore,
 \begin{equation}\label{eqn:casel=1}
g = (ba)^{2k_1}[b]g'[b](ab)^{2k_2}
 = (ba)^{2k_1}x(ab)^{2k_2}
\end{equation}
for some $x\in B\setminus\{1,b\}$.

If $\omega(x)=a$ then consider $(ba)^qg(ab)^q \in Q$:
\begin{align*}
\varphi_{\boldsymbol{1}}((ba)^qg(ab)^q ) &=(ba)^{\frac{q-1}{2}+k_1}b\omega(x)b(ab)^{\frac{q-1}{2}+k_2}\\
						&=(ba)^{\frac{q-1}{2}+k_1}bab(ab)^{\frac{q-1}{2}+k_2}\\
						&=(ba)^{\frac{q+1}{2}+k_1}(ba)^{\frac{q-1}{2}+k_2}b = (ba)^{q+k_1+k_2}b.
\end{align*}
As $H\leq Q_{\boldsymbol{1}}$ by Proposition \ref{prop:H1SubdirectProductOfH}, we get that $Q_{\boldsymbol{1}}$ contains $(ba)^{k_1+k_2}$.

If $k_1+k_2$ is not a multiple of $q$, then $Q_{\boldsymbol{1}}\geq H(r)$ by a previously considered case, where $r$ is the greatest common divisor of $q$ and $k_1+k_2$.

If $k_1+k_2=nq$ for some $n\in \NN$ then $g=(ba)^{2nq-2k_2}x(ab)^{2k_2}$.
Hence
\begin{align*}
b(ba)^{-2nq}g &= b(ba)^{-2k_2}x(ab)^{2k_2} \\
&=b(ab)^{2k_2}x(ab)^{2k_2}\\
&=(ba)^{2k_2}(bx)(ab)^{2k_2}.
\end{align*}
As $bx\in B\setminus\{1,b\}$, this is a previously considered case, so we know that there exists $m\in \NN$ such that $Q_{\boldsymbol{1}^m}\geq H(r)$
for some proper divisor $r$ of $q$.

If $\omega(x)=1$, then $\varphi_{\boldsymbol{1}}(g)=(ba)^{k_1}\rho(x)(ab)^{k_2} \notin H$ can be made of the same form as \eqref{eqn:casel=1} by multiplying on the left and right by an appropriate power of $(ab)^q$. 
We can therefore repeat the argument explained above. 
This will terminate after a finite number of repetitions as there is some $j\in\NN$ such that $\omega(\rho^j(x))=a$.
%
%
This finishes the case $\lambda(g)=1$.

\textbf{The case $\lambda(g)=2$: } Elements in $G$ of length 2 are of the form $ax$ or $xa$ for some $x\in B$.
 None of these elements are in $\St_G(1)$, so this case does not arise.

\textbf{The case $\lambda(g) = 3$: } If $l(g')=3$ in the minimal decomposition $g = \gamma g' \delta$, then $g'=axa$ or $g'=yaz$ for some $x,y,z \in B\setminus\{1,b\}$.
 However, $yaz \notin \St_G(1)$, so this case is impossible. 
 Hence, $g' = axa$, so $g=\gamma axa \delta$ with $\gamma,\delta \in \St_{\langle a,b\rangle }(1)$. 
 We have
\begin{align*}
(ab)^{-q}g(ab)^q &= (ab)^{-q}\gamma axa \delta (ab)^{q} \\
&= ((ab)^{-q}\gamma a)x(a \delta (ab)^{q}) \\
&= \gamma' x \delta'
\end{align*}
with $\gamma' = (ab)^{-q}\gamma a$ and $\delta' = a\delta (ab)^{q} \in \St_{\langle a,b\rangle }(1)$.
 This means that $\lambda((ab)^{-q}g(ab)^q) \leq 1$, so from what we have already shown, we conclude that 
there exist $m\in \NN$ and a proper divisor $r$ of $q$ such that $Q_{\boldsymbol{1}^m}\geq H(r)$.
\end{proof}

\begin{lem}\label{lem:HqcontainsDivisors}
If $Q\leq G$ contains $H=H(q)$ properly then there exists $n\in \NN$ such that 
$Q_{\boldsymbol{1}^n}=H(t)$ for some proper divisor $t$ of $q$. 
\end{lem}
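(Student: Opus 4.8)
The plan is to iterate Lemma~\ref{lem:HqContainsNotYetEqualsDivisor}, descending through the divisors of $q$ until the inclusion $Q_{\boldsymbol{1}^n}\geq H(r)$ that it produces becomes an equality. Three elementary facts make this go through. First, for any subgroup $Q\leq\Aut T$ and any vertices $u,v$ one has $(Q_u)_v=Q_{uv}$: this is immediate from $g(uv)=g(u)g_u(v)$, since $g$ fixes $uv$ precisely when $g$ fixes $u$ and $g_u$ fixes $v$, so repeatedly taking projections along $\boldsymbol{1}^k$ just moves us deeper down the ray $\tilde{\boldsymbol{1}}$. Second, $G$ is self-replicating, hence $Q_{\boldsymbol{1}^n}\leq G_{\boldsymbol{1}^n}=G$ for all $n$, so Lemma~\ref{lem:HqContainsNotYetEqualsDivisor} may legitimately be re-applied to the pair $(Q_{\boldsymbol{1}^n},H(r))$ (with $r$ playing the role of $q$). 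Third, $H(1)=\langle ab,B\rangle=G$ because $b\in B$ forces $a\in H(1)$, so once the divisor drops to $1$ the corresponding projection is automatically equal to $G=H(1)$.

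I would argue by strong induction on the odd integer $q\geq 3$. Suppose $Q\leq G$ contains $H(q)$ properly. Lemma~\ref{lem:HqContainsNotYetEqualsDivisor} gives $n_1\in\NN$ and a proper divisor $r$ of $q$ with $Q_{\boldsymbol{1}^{n_1}}\geq H(r)$. If $r=1$, then $Q_{\boldsymbol{1}^{n_1}}\geq H(1)=G$, so $Q_{\boldsymbol{1}^{n_1}}=G=H(1)$ and we are done with $t=1$, which is a proper divisor of $q$ since $q\geq 3$. If $r\geq 3$ and $Q_{\boldsymbol{1}^{n_1}}=H(r)$, we are done with $t=r$. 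Otherwise $r\geq 3$ and $Q_{\boldsymbol{1}^{n_1}}$ contains $H(r)$ properly; since $3\leq r<q$ and $Q_{\boldsymbol{1}^{n_1}}\leq G$, the induction hypothesis applied to $Q_{\boldsymbol{1}^{n_1}}$ and $H(r)$ yields $n_2\in\NN$ and a proper divisor $t$ of $r$ with $(Q_{\boldsymbol{1}^{n_1}})_{\boldsymbol{1}^{n_2}}=H(t)$. By the first fact above, $(Q_{\boldsymbol{1}^{n_1}})_{\boldsymbol{1}^{n_2}}=Q_{\boldsymbol{1}^{n_1+n_2}}$, and $t$ is a proper divisor of $q$ because $t\mid r\mid q$ and $t<r<q$. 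Taking $n=n_1+n_2$ completes the induction.

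This is essentially a finite descent, and no step is genuinely difficult; the points that require care are all bookkeeping. One must remember that a proper divisor of $q$ may be $1$, in which case the previous lemma has a vacuous hypothesis (as $H(1)=G$ cannot be properly contained in $Q\leq G$) and one argues directly using $H(1)=G$ rather than invoking it. One must also check that self-replication really does place each projection $Q_{\boldsymbol{1}^n}$ back inside $G$, so that Lemma~\ref{lem:HqContainsNotYetEqualsDivisor} applies to it. Termination needs no separate argument: the divisors $q>r>t>\cdots$ produced along the way form a strictly decreasing sequence of positive integers, which is exactly what the strong induction on $q$ encodes.
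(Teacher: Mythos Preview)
Your proof is correct and follows essentially the same approach as the paper: both iterate Lemma~\ref{lem:HqContainsNotYetEqualsDivisor}, using that $q$ has only finitely many divisors to guarantee termination, and both handle the terminal case $r=1$ via $H(1)=G$. The only difference is cosmetic---you package the descent as a strong induction on $q$, whereas the paper phrases it as direct iteration---and you spell out the auxiliary facts $(Q_u)_v=Q_{uv}$ and $H(1)=G$ that the paper leaves implicit.
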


\begin{proof}
By Lemma \ref{lem:HqContainsNotYetEqualsDivisor}, there exist $m\in \NN$ and a proper divisor $r$ of $q$ such that $Q_{\boldsymbol{1}^m}\geq H(r)$.
 If $Q_{\boldsymbol{1}^m} = H(r)$, then we are done.
  Otherwise, by applying Lemma \ref{lem:HqContainsNotYetEqualsDivisor} to $Q_{\boldsymbol{1}^m}$, we find $m'\in \NN$ and a proper divisor $s$ of $r$ such that $Q_{\boldsymbol{1}^{m+m'}}\geq H(s)$.

As $q$ only has a finite number of divisors, repeating this procedure as often as necessary, we will find some $n\in \NN$ such that either $Q_{\boldsymbol{1}^n} = H(t)$ for some proper divisor $t\geq 3$ of $q$, or $Q_{\boldsymbol{1}^n}\geq H(1)$. Since $H(1)=G$ and $Q_{\boldsymbol{1}^n} \leq G$, the latter case yields $Q_{\boldsymbol{1}^n}=H(1)$.
\end{proof}

\begin{thm}\label{thm:maximal_proof}
For every odd prime $q$ the subgroup $H(q)<G$ is maximal. 
\end{thm}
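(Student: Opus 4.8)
The plan is to deduce maximality from facts already in hand: that $H(q)$ is a proper subgroup of $G$ which is dense in the profinite topology (Theorem~\ref{thm:proper}), the projection statement of Lemma~\ref{lem:HqcontainsDivisors}, and Proposition~\ref{prop:PervovaProperProjection}. Since $H(q)$ is proper, it suffices to show that every subgroup $Q$ of $G$ with $H(q)\lneq Q$ must equal $G$.

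First I would record that $G\in\widetilde{\mathcal{G}}_{2,m}$ with $m\geq 2$ satisfies the hypotheses of Proposition~\ref{prop:PervovaProperProjection}: it is just infinite by Theorem~\ref{thm:justinfinite}, it is a branch group by \cite[Theorem~1]{Sunic}, it is self-replicating (its first-level stabiliser embeds subdirectly in $G\times G$ by Proposition~5 of \cite{Sunic}, and self-replication at deeper vertices follows by iteration), and, as $p=2$, it acts on the first level of $T$ as the cyclic group of prime order $2$, which is a regular and (trivially) primitive action.

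Now take any $Q\leq G$ with $H(q)\lneq Q$, and suppose towards a contradiction that $Q\neq G$. Because $H(q)\leq Q$ and $H(q)$ is dense in the profinite topology, $Q$ is also dense; thus $Q$ is a proper dense subgroup of $G$, and Proposition~\ref{prop:PervovaProperProjection} gives that $Q_v$ is a proper subgroup of $G_v=G$ for every vertex $v\in T$. On the other hand, $Q$ contains $H(q)$ properly, so Lemma~\ref{lem:HqcontainsDivisors} produces some $n\in\NN$ with $Q_{\boldsymbol{1}^n}=H(t)$ for a proper divisor $t$ of $q$. Since $q$ is prime, its only proper divisor is $1$, whence $Q_{\boldsymbol{1}^n}=H(1)=G$, contradicting the properness of $Q_{\boldsymbol{1}^n}$. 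Therefore $Q=G$, and $H(q)$ is maximal.

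The assembly above is purely formal; the real work has already been done in the length-reduction argument of Lemma~\ref{lem:HqReducedLength} and the case analysis of Lemma~\ref{lem:HqContainsNotYetEqualsDivisor}, and in verifying the branch, just infinite and self-replicating hypotheses needed to apply Proposition~\ref{prop:PervovaProperProjection}. The only points I would take care over are that Proposition~\ref{prop:PervovaProperProjection} genuinely yields properness of $Q_v$ for \emph{every} vertex $v$ (in particular for $v=\boldsymbol{1}^n$), and that primality of $q$ is exactly what converts the ``proper divisor'' conclusion of Lemma~\ref{lem:HqcontainsDivisors} into the contradiction $Q_{\boldsymbol{1}^n}=G$ — for composite $q$ one would only obtain $Q_{\boldsymbol{1}^n}=H(t)$ with $t>1$, and indeed $H(q)$ need not be maximal then.
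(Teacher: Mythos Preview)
Your proof is correct and follows essentially the same route as the paper's: verify the hypotheses of Proposition~\ref{prop:PervovaProperProjection}, observe that any $Q$ properly containing $H(q)$ is dense, apply Lemma~\ref{lem:HqcontainsDivisors} to get $Q_{\boldsymbol{1}^n}=H(t)$ for a proper divisor $t$ of $q$, use primality of $q$ to force $t=1$, and derive a contradiction with Proposition~\ref{prop:PervovaProperProjection}. The only cosmetic difference is that the paper phrases the argument in terms of $\langle g, H(q)\rangle$ for an arbitrary $g\notin H(q)$ rather than an arbitrary $Q\supsetneq H(q)$, which amounts to the same thing.
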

\begin{proof}
By Theorem \ref{thm:justinfinite}, and the general properties of {\v{S}}uni{\'c} groups quoted in Section \ref{sec:prelims}, 
$G$ satisfies all the hypotheses of Proposition \ref{prop:PervovaProperProjection}. 
Therefore if $M<G$ is a proper subgroup, dense in the profinite topology, then so is $M_v<G_v=G$ for every $v\in T$. 
Fix an odd prime $q$ and denote $H(q)$ by $H$. 
Recall that by Theorem \ref{thm:proper}, $H$ is proper and dense in the profinite topology.
Since $H$ is dense, so is $\langle g, H\rangle$ for any $g\in G\setminus H$.
Now,  Lemma \ref{lem:HqcontainsDivisors} states that there exists $n\in \NN$
such that $(\langle g, H\rangle)_{\boldsymbol{1}^n}= H(t)$, where $t$ is a proper divisor of $q$.
Since $q$ is prime, we get $(\langle g, H\rangle)_{\boldsymbol{1}^n}= H(1)=G$. Hence, by Proposition \ref{prop:PervovaProperProjection}, $\langle g, H\rangle=G$. We conclude that $H$ is maximal.
\end{proof}

\section{Finding all maximal subgroups of infinite index}\label{sec:countably}

In this section, we show that for any $G\in \widetilde{\mathcal{G}}_{2,m}$ the subgroups $H(q)=\langle (ab)^q, B\rangle$ with $q$ an odd prime  are the only maximal subgroups of infinite index, up to conjugation.
Let us first state the theorem and its proof, assuming some auxiliary results that will be shown below. 
The notation is the same as in the previous section. 

\begin{thm}\label{thm:allmaxareconjtoH}
	Let $M<G$ be a maximal subgroup of infinite index. 
	Then there exist an odd prime $q$ and $g\in G$ such that $M=H(q)^g$.
\end{thm}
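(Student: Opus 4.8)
The plan is to show first that $M$ must be dense in the profinite topology, then to attach to $M$ an odd integer $q$ by examining its action on the boundary of $T$, and finally to produce $g\in G$ with $M^{g}=H(q)$, from which $q$ turns out to be prime.

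A maximal subgroup of infinite index is automatically dense: if $MN\neq G$ for some finite-index normal subgroup $N\lhd G$, then $M\leq MN<G$, so $MN=M$ by maximality and hence $N\leq M$, contradicting $[G:M]=\infty$. By Theorem~\ref{thm:csp} density is equivalent to $M\St_G(n)=G$ for all $n$, so $M$ has exactly the same action as the level-transitive group $G$ on every level of $T$. Since $G$ is just infinite (Theorem~\ref{thm:justinfinite}) and satisfies the remaining hypotheses of Proposition~\ref{prop:PervovaProperProjection}, every vertex projection $M_v$ is again a proper dense subgroup of $G$.

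To extract $q$, restrict the $G$-action to the orbit $\mathcal{O}=G\cdot\tilde{\boldsymbol{1}}$ and transport it to $\ZZ$ via the bijection $\zeta$ of Proposition~\ref{prop:BijectionZOrbits}, as in the proof of Theorem~\ref{thm:proper}: then $ab$ acts on $\ZZ$ as $n\mapsto n+1$, the element $b$ with $\psi(b)=(a,b)$ (which exists by Corollary~\ref{cor:contain_dihedral}) acts as $n\mapsto -n$, and every $x\in B$ acts as $n\mapsto\pm n$. The level-$n$ prefixes of the rays of $\mathcal{O}$ run over all of level $n$, and two such rays agree up to level $n$ precisely when the associated integers are congruent modulo $2^{n}$; since $M$ acts as $G$ on each level, the $M$-orbit $\Omega\subseteq\ZZ$ of $0$ meets every residue class modulo every power of $2$. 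I would then show that $\Omega$ is a subgroup $q\ZZ$ of $\ZZ$ — the image of $M$ in $\Sym(\ZZ)$ being generated by maps of the two shapes above, the orbit of $0$ is closed under the group law — and that $q$ is odd, since $\Omega$ meets both parity classes.

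It remains to conjugate $M$ inside $G$ into visible agreement with $H(q)$. As $\mathcal{O}$ is a single $G$-orbit there is room to move the base ray, and combining $\Omega=q\ZZ$ with the structure of $\St_M(\tilde{\boldsymbol{1}})$ — controlled through the proper dense projections $M_{\boldsymbol{1}^{n}}$, the rigid vertex stabilisers of Proposition~\ref{rists}, and the section homomorphism $\phi$ of Proposition~\ref{prop:liftIsHomomorphism} — one constructs, recursively down the ray $\tilde{\boldsymbol{1}}$, an element $g\in G$ with $M^{g}=\langle (ab)^{q},B\rangle=H(q)$, using Propositions~\ref{prop:LiftIsUnique} and~\ref{prop:ProjectionToTrivialAndHMeansInH} to confine the parts of $\St_{M^{g}}(1)$ to the factors appearing in Proposition~\ref{prop:H1SubdirectProductOfH}. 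Since $M$ is proper, $q=1$ is impossible (it would give $M^{g}=H(1)=G$), so $q\geq 3$; and since conjugates of maximal subgroups are maximal, $M^{g}=H(q)$ forces $H(q)$ to be maximal, which for composite $q$ is false as $H(q)\subsetneq H(q')\subsetneq G$ for any prime $q'\mid q$. Hence $q$ is an odd prime and $M=H(q)^{g^{-1}}$. The substantive difficulties are in these last two steps: passing from ``$M$ acts like $G$ on all levels'' to ``$\Omega=q\ZZ$'' needs a careful analysis of how $\mathcal{O}$ is spread across the subtrees of $T$; and the conjugator $g$ must be found inside $G$ (not just in $\Aut T$ as in Proposition~\ref{prop:HqConjugatetoG}), which is what forces the level-by-level recursion and the use of the uniqueness statements for $\phi$.
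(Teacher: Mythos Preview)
Your opening paragraph and the deduction that $q$ must be prime at the end are fine, but the two middle steps --- extracting $q$ from the boundary action and constructing the conjugator $g$ --- both contain genuine gaps.

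\textbf{The orbit argument.} You claim that the $M$-orbit $\Omega$ of $0$ is a subgroup of $\ZZ$ because ``the image of $M$ in $\Sym(\ZZ)$ [is] generated by maps of the two shapes above''. But you do not know any generating set for $M$; you only know generators for $G$. An element $x\in B$ acts on $\ZZ$ as $n\mapsto \epsilon_x(n)\,n$ where the sign $\epsilon_x(n)\in\{\pm1\}$ \emph{depends on $n$} (Remark~\ref{rem:ActionsOfBAndb}), so the $G$-action on $\ZZ$ does \emph{not} factor through $D_\infty$, and arbitrary elements of $M$ need not act as affine isometries of $\ZZ$. There is thus no reason for $\Omega$ to be closed under addition. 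Indeed, if $M=H(q)^g$ for some $g$ with $g\cdot 0=k\not\equiv 0\pmod q$, then the $M$-orbit of $0$ is $g^{-1}\cdot(H(q)\cdot k)$, and already $H(q)\cdot k=\{k,-k\}+q\ZZ$ is not a subgroup.

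\textbf{The conjugator.} Even granting an odd $q$, the paragraph producing $g\in G$ is a sketch of intentions, not an argument. You invoke $\phi$, Propositions~\ref{prop:LiftIsUnique} and~\ref{prop:ProjectionToTrivialAndHMeansInH}, and Proposition~\ref{rists}, but none of these tell you how to move an \emph{unknown} maximal subgroup onto $H(q)$; they compare $\St_G(1)$ with $\St_H(1)$ once $H$ is already fixed. A recursive construction ``down the ray $\tilde{\boldsymbol{1}}$'' would at best produce $g\in\Aut T$, and nothing in your outline forces $g\in G$.

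The paper's route is quite different and avoids the boundary action entirely at this stage. It first shows (via a length-reduction argument using the map $\Theta$ on $G'$, Lemma~\ref{lemma:ThetaMapConvergence} and Proposition~\ref{prop:DenseSubgroupsProjectToSomeHq}) that any proper dense subgroup has $M_v=H(l)$ for some vertex $v$ and odd $l$; maximality then forces $l$ to be prime (Lemma~\ref{lem:maximalprojectstoHq}). All other projections $M_w$ at the level of $v$ are conjugates of $H(q)$ because $M$ is level-transitive. A separate construction (Lemma~\ref{lem:nprojtoanyconjofH}) shows that for \emph{any} prescribed tuple of conjugates $\{H^{g_w}\}_{w\in X^n}$ there exists $s\in G$ with $(H^s)_w=H^{g_w}$. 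Finally, two maximal subgroups of infinite index with the same projections at some level must coincide, because their common level stabiliser has a proper normaliser (Lemma~\ref{lem:maximalswithsameprojsareequal}). The conjugator is thus found by matching projections at a single finite level, not by an infinite recursion along a ray.
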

\begin{proof}
	By Lemma \ref{lem:maximalprojectstoHq}, there exist $v\in T$ of level, say, $n$ and an odd prime $q$ such that $M_v=H(q)=:H$. 
Because $M$ is a maximal subgroup of infinite index, it is dense. As $G$ is level-transitive, Lemma \ref{lem:MdenseActsLikeG} implies that $M$ acts transitively on $X^n$.
	In particular, for each $w\in X^n$ there exists $m\in M$ taking $w$ to $v$. 
	Writing $m=\sigma\mu$ with $\sigma\in\Sym X^n$, $\mu\in\St(n)$ and $\varphi_w(\mu)=\mu_w\in G$, we have
	\[M_w=\varphi_w(\St_M(w))=\varphi_w(m^{-1}\St_M(v)m)=\mu_w^{-1}M_v\mu_w=H^{\mu_w}.\]
	Lemma \ref{lem:nprojtoanyconjofH} yields some $g\in G$ such that $(H^g)_w=H^{\mu_w}$ for each $w\in X^n$. 
	Since $H^g$ is also maximal of infinite index, Lemma \ref{lem:maximalswithsameprojsareequal} ensures that $M=H^g$, as required. 
\end{proof}

We start by showing that maximal subgroups of infinite index are determined by their projections.

\begin{lem}\label{lem:StabilizerOfMaximalSubgroups}
	Let $M<G$ be a maximal subgroup. 
	Then $$\St_M(n)=\bigcap_{w\in X^n}(\varphi_w^{-1}(M_w)\cap G)$$
	for each $n\in \NN.$
\end{lem}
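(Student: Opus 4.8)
The inclusion $\St_M(n)\subseteq\bigcap_{w\in X^n}(\varphi_w^{-1}(M_w)\cap G)$ is immediate: if $g\in\St_M(n)$ then for every $w\in X^n$ we have $g\in\St_G(w)$ and $\varphi_w(g)=g_w\in\varphi_w(\St_M(w))=M_w$, so $g\in\varphi_w^{-1}(M_w)\cap G$. The content of the lemma is the reverse inclusion. So I would fix $g\in G$ with $g\in\St_G(w)$ and $g_w\in M_w$ for every $w\in X^n$, and aim to show $g\in M$. Note first that $g\in\St_G(n)$ automatically, since $g$ fixes every vertex of level $n$; the issue is membership in $M$, not in the stabilizer.

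\textbf{Key steps.} The natural idea is to build an element of $\St_M(n)$ that agrees with $g$ on the truncated tree of depth $n$ and has the same projections at level $n$, and then invoke that $M$ acts on the tree like $G$ does (Lemma \ref{lem:MdenseActsLikeG}, using that $M$ is maximal of infinite index, hence dense). More precisely: for each $w\in X^n$, since $g_w\in M_w=\varphi_w(\St_M(w))$, pick $m_w\in\St_M(w)$ with $(m_w)_w=g_w$. I would like to multiply these together to get an element of $\St_M(n)$ whose projection at each $w\in X^n$ is $g_w$ and which acts trivially on levels $\le n$ — but the $m_w$ need not lie in the rigid stabilizers, so they can interfere with each other at other vertices of level $n$. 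The way around this is to argue one level at a time, or to use the key consequence of density: $M\St_G(n+k)=G$ for all $k$, i.e. $M$ surjects onto $G/\St_G(n+k)$. Given $g$ as above, for each $k$ one finds $m^{(k)}\in M$ with $m^{(k)}\equiv g\bmod\St_G(n+k)$; the point is to upgrade this to an honest equality $g\in M$, which is exactly where one must use that $M$ is \emph{maximal} (density alone would only give $g$ in the closure).

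The cleanest route: consider $Q:=\langle M,g\rangle$. If $g\notin M$ then by maximality $Q=G$. I would then derive a contradiction by showing $\St_Q(n)=\langle \St_M(n), g\rangle$ modulo the part that permutes levels $\le n$ — concretely, that adjoining $g$ to $M$ cannot change the projections $M_w$ for $w\in X^n$, because $g$ already projects into each $M_w$. Since $g$ fixes level $n$ and each $g_w\in M_w$, one checks that $Q_w=M_w$ for all $w\in X^n$; but then by Lemma \ref{lem:StabilizerOfMaximalSubgroups} applied to the maximal subgroup... — wait, that is circular. So instead I would prove the two lemmas together, or prove this one directly via the following: $g\in M$ iff $gM=M$ in $G/M$, and since $M$ is dense, $g\in M$ iff $g\in\bigcap_k M\St_G(n+k)$ implies $g\in M$ once we know $M$ is \emph{closed} in the profinite topology restricted appropriately — but maximal subgroups of infinite index need not be closed. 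The genuinely correct mechanism is: a maximal subgroup is, by maximality, its own normalizer-like "saturation" only if $g\notin M\Rightarrow\langle M,g\rangle=G$, and then one shows $\langle M,g\rangle$ and $M$ have the same image in the finite quotient $\St_G(n)/\St_G(n+1)$ acting via projections to level $n$, forcing $G=\langle M,g\rangle$ to have $G_w=M_w$ for $w\in X^n$, contradicting that $M$ is proper (since $G$ self-replicating and level-transitive would let us recover a lot of $G$ from the $M_w$).

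\textbf{Main obstacle.} The crux is precisely this last implication: passing from "$g$ projects into $M_w$ at every $w\in X^n$" to "$g\in M$", which requires leveraging maximality (not just density) and the branch/self-replicating structure to rule out that $\langle M, g\rangle$ is a proper subgroup strictly between $M$ and $G$. I expect the paper handles it by the $\langle M,g\rangle=G$ trichotomy together with the observation that $G$ being self-replicating and level-transitive means the projections $\{G_w=G\}_{w\in X^n}$ together with the level-$n$ action determine $G$, so if $M_w=G_w$ for all $w$ and $M$ already maps onto $G/\St_G(n)$ (density), then $M$ would contain $\St_G(n)$ and hence have finite index — contradicting infinite index. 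Thus one concludes $g\in M$, giving the reverse inclusion. I would write the proof in that order: easy inclusion first; then assume $g$ in the right-hand side but $g\notin M$; form $\langle M,g\rangle=G$; show this forces $M\St_G(n)=G$ \emph{and} $M_w=G$ for all $w\in X^n$, hence $\St_G(n)\le M$, contradicting $[G:M]=\infty$.
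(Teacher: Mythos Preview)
Your overall strategy matches the paper's: assume $g$ lies in the right-hand side but not in $M$, use maximality to get $\langle M,g\rangle=G$, then argue that adjoining $g$ cannot enlarge the projections $M_w$, and derive a contradiction. However, there are two genuine gaps.

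First, the step you flag as ``one checks that $Q_w=M_w$'' and then abandon as circular is in fact the heart of the proof, and it is not circular at all --- it is a direct computation. The paper carries it out as follows: any $\alpha\in\St(w)\cap\langle M,g\rangle$ can be written as $m_1g^{\epsilon_1}m_2\cdots m_kg^{\epsilon_k}$ and regrouped as a product of conjugates $\mu_j^{-1}g^{\epsilon_j}\mu_j$ (with $\mu_j\in M$) times a tail in $\St_M(w)$. For each such conjugate, if $\mu_j$ sends $w$ to $u\in X^n$, then since $g_u\in M_u$ (this is where the hypothesis at \emph{every} vertex of level $n$ is used, not just at $w$), there exists $h\in\St_M(u)$ with $\varphi_u(h)=\varphi_u(g)$, and then $\varphi_w(\mu_j^{-1}g\mu_j)=\varphi_w(\mu_j^{-1}h\mu_j)\in M_w$. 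This yields $G_w=\langle M,g\rangle_w\subseteq M_w$. You never supply this argument; your proposal only asserts the conclusion and then retreats from it.

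Second, your proposed contradiction --- that $M_w=G$ for all $w\in X^n$ together with $M\St_G(n)=G$ forces $\St_G(n)\le M$ --- does not follow. Surjecting onto each coordinate and onto $G/\St_G(n)$ does not make $M$ contain $\St_G(n)$; a diagonal in a direct product already shows this fails. The actual contradiction comes from Proposition~\ref{prop:PervovaProperProjection}: a maximal subgroup of infinite index is proper and dense, so each $M_w$ is a \emph{proper} subgroup of $G$, contradicting $M_w=G$.
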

\begin{proof}
	Fix $n\in \NN$ and put $J:=\bigcap_{w\in X^n}(\varphi_w^{-1}(M_w)\cap G)$. 
	Then $\St_M(n)\leq J\leq\St_G(n)$. 
	Suppose for a contradiction that there exists $g\in J\setminus \St_M(n)$.
	Note that $g\notin M$ because $g\in \St_G(n)$, so $\langle M, g\rangle =G$ and $(\langle M,g\rangle)_w=G$ for all $w\in X^n$. 
	We will obtain a contradiction by showing that $\varphi_w(\alpha)\in M_w$ for all $w\in X^n$ and $\alpha\in\St(w)\cap\langle M,g\rangle$. 
	Fix such $w$, $\alpha$ and write
	$$\alpha=m_1g^{\epsilon_1}m_2\cdots m_kg^{\epsilon_k}=(\mu_1^{-1}g^{\epsilon_1}\mu_1) (\mu_2^{-1}g^{\epsilon_2}\mu_2)\cdots (\mu_k^{-1}g^{\epsilon_k}\mu_k)\mu_k^{-1}$$
	for some $k\in \NN$, $m_i\in\NN$, $\epsilon_i\in\{-1,0,1\}$  and $\mu_i=m_i^{-1}\cdots m_1^{-1}$ for $i=1,\dots, k$. 
	Note that $\alpha\in\St(w)$ implies that $\mu_k^{-1}\in\St_M(w)$, as $g\in\St(n)$, so $\varphi_w(\mu_k^{-1})\in M_w$. 
	If $\varphi_w(m^{-1}gm)\in M_w$ for all $m\in M_w$, then $\varphi_w(\alpha)$ is a product of elements of $M_w$, as required. 
	
	To show this last claim, fix $m\in M$ and suppose it maps $w$ to $u$. 
	By definition of $J$, there exists $h\in\St_M(u)$ such that $\varphi_u(g)=\varphi_u(h)$. 
	Writing $m=\tau\mu$ with $\tau\in\Sym X^n$ and $\mu\in\St(n)$, we have 
	$$\varphi_w(m^{-1}gm)=\varphi_w(\mu^{-1})\varphi_u(g)\varphi_w(\mu)=\varphi_w(\mu^{-1})\varphi_u(h)\varphi_w(\mu)=\varphi_w(m^{-1}hm)$$
	where $\varphi_w(m^{-1}hm)\in M_w$ because $h\in\St_M(u)$ and $(\St_M(u))^m=\St_M(w)$. 
\end{proof}

\begin{lem}\label{lem:maximalswithsameprojsareequal}
	Let $L, M <G$ be two maximal subgroups of infinite index. 
	If there exists $n\in\NN$ such that $L_w=M_w$ for each $w\in X^n$, then $L=M$. 
\end{lem}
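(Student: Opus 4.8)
The plan is to combine Lemma~\ref{lem:StabilizerOfMaximalSubgroups} with the just-infiniteness of $G$ (Theorem~\ref{thm:justinfinite}) and the congruence subgroup property (Theorem~\ref{thm:csp}), using the fact — already invoked in the proof of Theorem~\ref{thm:allmaxareconjtoH} — that a maximal subgroup of infinite index is dense in the profinite topology. First I would apply Lemma~\ref{lem:StabilizerOfMaximalSubgroups} to both $L$ and $M$ at the given level $n$: since $L_w = M_w$ for every $w\in X^n$, this yields
\[
\St_L(n)=\bigcap_{w\in X^n}\bigl(\varphi_w^{-1}(L_w)\cap G\bigr)=\bigcap_{w\in X^n}\bigl(\varphi_w^{-1}(M_w)\cap G\bigr)=\St_M(n),
\]
so I may set $N:=\St_L(n)=\St_M(n)$ and aim to show $L=M$ by contradiction.

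Suppose $L\ne M$. Since $L$ and $M$ are both maximal, $M\not\le L$, and hence $\langle L,M\rangle=G$. Now $N=L\cap\St_G(n)$ is normal in $L$ and $N=M\cap\St_G(n)$ is normal in $M$, so $N$ is normalised by $\langle L,M\rangle=G$; that is, $N\lhd G$. Because $L$ is dense and $G$ has the congruence subgroup property, $L\,\St_G(n)=G$, so by the second isomorphism theorem $|L:N|=|G:\St_G(n)|$ is finite. As $|G:L|$ is infinite, $|G:N|$ is infinite, and just-infiniteness of $G$ forces $N=1$.

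Thus $\St_L(n)=1$, which means $L$ acts faithfully on level $n$ and therefore embeds into the finite group $\Aut X^{n}$; in particular $L$ is finite. But $G$ is infinite and residually finite (it has the congruence subgroup property), so it admits finite quotients of arbitrarily large order, while density of $L$ forces every finite quotient of $G$ to have order at most $|L|$. This contradiction shows $L=M$.

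The step I expect to be the crux is recognising that the equality $\St_L(n)=\St_M(n)$ is not on its own enough to conclude $L=M$: one has to pass to $\langle L,M\rangle=G$ in order to make $N$ normal, invoke just-infiniteness to kill it, and only then reach a contradiction by playing off the finiteness of $L$ against the residual finiteness of the infinite group $G$. The remaining manipulations (the isomorphism-theorem index computation, and the standard observations that maximal subgroups of infinite index are dense and that finite subgroups of infinite residually finite groups are never dense) are routine.
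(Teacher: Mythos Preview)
Your proof is correct and uses the same core observation as the paper: the common level stabiliser $S:=\St_L(n)=\St_M(n)$ (obtained from Lemma~\ref{lem:StabilizerOfMaximalSubgroups}) is normalised by both $L$ and $M$, and just-infiniteness of $G$ then forces the conclusion. The paper packages this slightly more directly: rather than arguing by contradiction, it observes that $S$ cannot be normal in $G$ (just-infiniteness would give $|G:S|<\infty$ or $S=1$, both impossible for the reasons you spell out), so $N_G(S)$ is a \emph{proper} subgroup of $G$ containing both $L$ and $M$, whence $L=N_G(S)=M$ by maximality. Your contradiction route via $\langle L,M\rangle=G$ is the contrapositive of this normaliser argument; it is a bit longer but has the virtue of making the $S=1$ case fully explicit, which the paper leaves to the reader.
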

\begin{proof}
	By Lemma \ref{lem:StabilizerOfMaximalSubgroups}, $S:=\St_{L}(n)=\St_{M}(n)$ and this subgroup is normal in $L$ and $M$ but not in $G$, because $G$ is just infinite. 
	So the normaliser $N_G(S)$ of $S$ in $G$ is a proper subgroup of $G$ containing $L$ and $M$, forcing $L=N_G(S)=M$, by the maximality assumption. 
\end{proof}

By Proposition \ref{prop:H1SubdirectProductOfH}, all projections of $H(q)$ are conjugates of $H(q)$. 
We now reverse-engineer this and show that for any collection of conjugates of $H(q)$ there is a conjugate of $H(q)$ whose projections are precisely this collection.

\begin{lem}\label{lem:subdirectwithHonright}
	For any odd prime $q$ and any $g\in G$ there exist $s\in \St_G(1)$ and $h_0,h_1\in H(q)$ such that $s=(gh_0,h_1)$. 
\end{lem}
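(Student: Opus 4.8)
The plan is to combine the subdirectness of $\psi(\St_G(1))$ in $G\times G$ with the semidirect decomposition of $\psi(\St_G(1))$ from Lemma \ref{lemma7}(ii) and the density of $H=H(q)$ from Theorem \ref{thm:proper}. The statement reduces to the following claim: the subgroup $\psi(\St_G(1))\cap (H\times G)$ of $G\times G$ surjects onto the second coordinate. Granting this, given $g\in G$ the subdirect embedding $\psi(\St_G(1))\hookrightarrow G\times G$ provides $s_0\in\St_G(1)$ with $\psi(s_0)=(g,y)$ for some $y\in G$, while the claim provides $s_1\in\St_G(1)$ with $\psi(s_1)=(h_0,y^{-1})$ for some $h_0\in H$; then $s:=s_0s_1\in\St_G(1)$ satisfies $\psi(s)=(gh_0,1)$, and the lemma holds with $h_1=1\in H$.

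To prove the claim, note first that $H\overline{B_1}=G$: the subgroup $\overline{B_1}$ is normal in $G$ and of finite index (by Lemma \ref{lemma7}(i) we have $G=\langle a,d\rangle\ltimes\overline{B_1}$, so $G/\overline{B_1}\cong\langle a,d\rangle$ is a proper, hence finite, quotient of the just infinite group $G$), and $H$ is dense in the profinite topology by Theorem \ref{thm:proper}, so $H\overline{B_1}=G$. Now invoke Lemma \ref{lemma7}(ii): $\psi(\St_G(1))=\hat{C}\ltimes(\overline{B_1}\times\overline{B_1})$ with $\hat{C}=\langle (a,d),(d,a)\rangle\leq\langle a,d\rangle\times\langle a,d\rangle$. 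Fix $\gamma=(\gamma_1,\gamma_2)\in\hat{C}$. Since $\gamma_1\in\langle a,d\rangle\subseteq G=H\overline{B_1}$, there is $u_\gamma\in\overline{B_1}$ with $\gamma_1u_\gamma\in H$; then for every $v\in\overline{B_1}$ the element $\gamma\cdot(u_\gamma,v)$ lies in $\psi(\St_G(1))\cap(H\times G)$ and has second coordinate $\gamma_2v$. Letting $\gamma$ range over $\hat{C}$ and $v$ over $\overline{B_1}$, the second coordinates so obtained cover $\bigcup_{\gamma\in\hat{C}}\gamma_2\overline{B_1}=\langle a,d\rangle\,\overline{B_1}=G$, because the second coordinates of the elements of $\hat{C}$ form the subgroup $\langle d,a\rangle=\langle a,d\rangle$ and $\langle a,d\rangle$ is a complement to $\overline{B_1}$ in $G$ by Lemma \ref{lemma7}(i). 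This proves the claim, and hence the lemma.

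The only delicate point is the manipulation of the semidirect product in the second paragraph: one must first absorb the $\hat{C}$-part $\gamma_1$ of the first coordinate into $H$ using a suitable element $u_\gamma$ of the normal factor $\overline{B_1}$ (this is exactly where $H\overline{B_1}=G$, and hence the density of $H$, enters), and only afterwards let the remaining free factor $v\in\overline{B_1}$ vary so as to sweep out the whole coset $\gamma_2\overline{B_1}$ in the second coordinate; running over the finitely many $\gamma\in\hat{C}$ then recovers all of $G$. Everything else is a direct application of results already established (the subdirect embedding of $\St_G(1)$, Lemma \ref{lemma7}, and Theorem \ref{thm:proper}).
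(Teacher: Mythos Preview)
Your proof is correct, but it takes a genuinely different route from the paper's. The paper gives an explicit lift: writing $g$ as a word in $\{a\}\cup B\setminus\{1\}$ and replacing each $a$ by $b$ and each $x\in B$ by $a\rho^{-1}(x)a$ produces $s_1\in\St_G(1)$ with $\psi(s_1)=(g,y)$ where $y$ lies in the dihedral subgroup $\langle a,b\rangle$; since $y=(ab)^l$ or $(ab)^la$ and $\gcd(q,4)=1$, one corrects the second coordinate into $\langle (ab)^q,b\rangle\subset H$ by multiplying by a suitable power of $(acab)^4=(1,(ab)^4)$ (and possibly $aba=(b,a)$). This is entirely hands-on and uses nothing beyond the definitions and the relation $(da)^4=1$.

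Your argument, by contrast, is structural: you invoke the semidirect decomposition $\psi(\St_G(1))=\hat{C}\ltimes(\overline{B_1}\times\overline{B_1})$ from Lemma~\ref{lemma7}, the just-infiniteness of $G$ to get $|G:\overline{B_1}|<\infty$, and the density of $H$ to get $H\overline{B_1}=G$, then combine these to show that $\psi(\St_G(1))\cap(H\times G)$ still surjects onto the second factor. This is clean and conceptual, and it makes transparent \emph{why} the lemma holds (the normal factor $\overline{B_1}\times\overline{B_1}$ gives enough room to push the first coordinate into $H$ without losing surjectivity on the second). The trade-off is that you are importing heavier results (Theorems~\ref{thm:justinfinite} and~\ref{thm:proper}) that the paper's direct computation avoids; the paper's version would survive even in a context where density of $H$ had not yet been established.
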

\begin{proof}
	For any $g\in G$ we can find $s_1\in\St_G(1)$ such that $s_1=(g,y)$ with $y\in \langle a, b\rangle$ by writing $g$ as a word in $a\cup B\setminus\{1\}$ and replacing each instance of $a$ by $b$ and each $x\in B\setminus\{1\}$ by $a\rho^{-1}(x)a$.
	
	Since $\langle a, b\rangle \cong D_{\infty}$, there exists $l\in \ZZ$ such that either $y=(ab)^l$ or $y=(ab)^la$. 
	Because $q$ is coprime to 4, there exist $m,n\in \ZZ$ such that $4m+l=qn$. 
	Put $$s_2:=\begin{cases}
	(acab)^{4m}=((da)^{4m}, (ab)^4m)=(1,(ab)^{4m}) & \text{ if } y=(ab)^{l}\\
	aba(acab)^{4m}=(b,a(ab)^{4m}) & \text{ if } y=(ab)^la
	\end{cases}$$
	so that $s:=s_1s_2=\begin{cases}
	(g,y(ab)^{4m})=(g,(ab)^{qn}) & \text{ if } y=(ab)^l\\
	(gb,ya(ab)^{4m})=(gb,(ab)^{qn}) & \text{ if } y=(ab)^la.
	\end{cases}$
\end{proof}

\begin{lem}\label{lem:1projtoanyconjugateofH}
	Let $q$ be an odd prime and $H:=H(q)$. 
	For any $g_0, g_1\in G$ there exists $s\in G$ such that 
	\[\varphi_{\boldsymbol{0}}(\St_{H^s}(1))=H^{g_0} \text{ and } \varphi_{\boldsymbol{1}}(\St_{H^s}(1))=H^{g_1}.\]
\end{lem}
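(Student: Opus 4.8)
The plan is to use the explicit form of the projections of $\St_{H^s}(1)$ when $s$ lies in the first-level stabiliser, and then to check that inside $\St_G(1)$ there are enough such $s$.

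\emph{Step 1 (reduction).} If $s\in\St_G(1)$ and $\psi(s)=(s_{\boldsymbol 0},s_{\boldsymbol 1})$ then $\St_{H^s}(1)=\St_H(1)^s$, since $\St(1)$ is normal in $\Aut T$. Combining this with Proposition~\ref{prop:H1SubdirectProductOfH} (so $\psi(\St_H(1))\le H^{(ab)^{(q-1)/2}}\times H$ with both projections surjective) and conjugating coordinatewise yields
\[\varphi_{\boldsymbol 0}(\St_{H^s}(1))=H^{(ab)^{(q-1)/2}s_{\boldsymbol 0}},\qquad
\varphi_{\boldsymbol 1}(\St_{H^s}(1))=H^{s_{\boldsymbol 1}}.\]
Since $q$ is an odd prime, $H=H(q)$ is maximal (Theorem~\ref{thm:maximal_proof}) and of infinite index (Theorem~\ref{thm:proper}); as $G$ is just infinite, $H$ is not normal, so $N_G(H)=H$ and hence $H^x=H^y$ precisely when $x\in Hy$. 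It therefore suffices to find $s\in\St_G(1)$ with $\psi(s)=(s_{\boldsymbol 0},s_{\boldsymbol 1})$ such that $(ab)^{(q-1)/2}s_{\boldsymbol 0}\in Hg_0$ and $s_{\boldsymbol 1}\in Hg_1$.

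\emph{Step 2 ($G=H\overline{B_1}$).} I would next show that $H$ surjects onto the finite quotient $G/\overline{B_1}$. By Lemma~\ref{lemma7} one has $G=\langle a,d\rangle\ltimes\overline{B_1}$, so $G/\overline{B_1}\cong\langle a,d\rangle$; and the images of $b$ and $d$ in $G/\overline{B_1}$ coincide, since both lie in the image of $B$, which has order $p=2$ (using $\overline{B_1}=B_1\ltimes K$ and $B\cap K=1$), and neither of $b,d$ lies in $B_1$. A short computation with $\psi$ gives $(ad)^2\ne 1$ and $(ad)^4=1$, so the image of $ab=ad$ in $G/\overline{B_1}$ has order a power of $2$; as $q$ is odd, the image of $(ab)^q$ generates the same cyclic subgroup. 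Since that subgroup, together with the image of $d\in B\le H$, generates $G/\overline{B_1}$ (note $\overline{ab}\cdot\bar d=\bar a$), we conclude $G=H\overline{B_1}=\overline{B_1}H$.

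\emph{Step 3 (conclusion).} As $\overline{B_1}$ is normal in $G$ and $G=H\overline{B_1}$, every right coset $Hg$ meets $\overline{B_1}$; equivalently, the conjugates of $H$ are exactly $\{H^w:w\in\overline{B_1}\}$. Hence, given $g_0,g_1$, I can choose $w_{\boldsymbol 1}\in\overline{B_1}$ with $H^{w_{\boldsymbol 1}}=H^{g_1}$ and, using normality of $\overline{B_1}$ to move $(ab)^{(q-1)/2}$ across, $w_{\boldsymbol 0}\in\overline{B_1}$ with $(ab)^{(q-1)/2}w_{\boldsymbol 0}\in Hg_0$. By Lemma~\ref{lemma7}, $\overline{B_1}\times\overline{B_1}\le\psi(\St_G(1))$, so there is $s\in\St_G(1)$ with $\psi(s)=(w_{\boldsymbol 0},w_{\boldsymbol 1})$, and by Step 1 this $s$ satisfies $\varphi_{\boldsymbol 0}(\St_{H^s}(1))=H^{g_0}$ and $\varphi_{\boldsymbol 1}(\St_{H^s}(1))=H^{g_1}$, as required. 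The crux is Step 2 and the accompanying coset bookkeeping: one must move the two projections independently, and because $H$ is not normal the left/right coset distinction matters. What makes it go through is that $H$, despite having infinite index, still surjects onto the finite quotient $G/\overline{B_1}$ — this, with $\overline{B_1}\times\overline{B_1}\le\psi(\St_G(1))$, provides exactly the freedom needed.
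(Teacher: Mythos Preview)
Your proof is correct and takes a genuinely different route from the paper's. Both arguments begin with the same reduction (your Step~1): for $s\in\St_G(1)$ with $\psi(s)=(s_{\boldsymbol 0},s_{\boldsymbol 1})$ one has $\varphi_{\boldsymbol i}(\St_{H^s}(1))=H^{(ab)^{(q-1)/2}s_{\boldsymbol 0}}$, $H^{s_{\boldsymbol 1}}$, so it suffices to realise prescribed right $H$-cosets as the two coordinates of some $s\in\St_G(1)$. The paper achieves this via Lemma~\ref{lem:subdirectwithHonright}, an explicit word construction: given any $g\in G$ one builds $s\in\St_G(1)$ with $\psi(s)=(gh_0,h_1)$, $h_i\in H$, by rewriting $g$ letter by letter and then correcting the second coordinate using $(acab)^{4m}$. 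You instead use two structural facts already available: $\overline{B_1}\times\overline{B_1}\le\psi(\St_G(1))$ from Lemma~\ref{lemma7}, and $G=H\overline{B_1}$, which together say that $\overline{B_1}$ is a set of coset representatives reachable independently in each coordinate. Your approach is cleaner in that it avoids the ad hoc construction of Lemma~\ref{lem:subdirectwithHonright}; the paper's approach has the minor advantage of not invoking maximality of $H$ (which you use, harmlessly, only for the unnecessary ``only if'' direction of $H^x=H^y\Leftrightarrow x\in Hy$).

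One simplification you might note: your Step~2 can be replaced by a single sentence. Since $H$ is dense in the profinite topology (Corollary~\ref{cor:dense}) and $\overline{B_1}$ is a normal subgroup of finite index (Lemma~\ref{lemma7} gives $G/\overline{B_1}\cong\langle a,d\rangle$, and $\langle a,d\rangle\cong D_8$ by the computation in Lemma~\ref{lemma:IsomorphismWithdAndb}), density immediately yields $H\overline{B_1}=G$. Your explicit verification via the images of $b,d$ and the order of $ad$ is correct but not needed.
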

\begin{proof}
	Since $G$ is self-replicating, there exist $s_1\in\St_G(1)$ and $y\in G$ such that $s_1=(y,g_1)$. 
	Proposition \ref{prop:H1SubdirectProductOfH} states that $\varphi_{\boldsymbol{0}}(\St_{H}(1))=H^{(ab)^{(q-1)/2}}$ and $\varphi_{\boldsymbol{1}}(\St_{H}(1))=H$. 
	Apply Lemma \ref{lem:subdirectwithHonright} to $(ba)^{(q-1)/2}$ and to $yg_0^{-1}$ to obtain $s_2\in\St_G(1)$ and $h_0,h_1,k_0,k_1\in H$ such that
	$s_2=((ba)^{(q-1)/2}h_0k_0^{-1}g_0y^{-1}, h_1k_1^{-1})$. 
	Putting $s:=s_2s_1$ yields the result. 
\end{proof}

\begin{lem}\label{lem:nprojtoanyconjofH}
	Let $q$ be an odd prime and $H:=H(q)$. 
	For every $n\in \NN$ and any set $\{g_v\}_{v\in X^n}\subset G$ there exists $s\in G$ such that $(H^s)_v=H^{g_v}$. 
\end{lem}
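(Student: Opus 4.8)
The plan is to argue by induction on $n$, with Lemma \ref{lem:1projtoanyconjugateofH} serving both as the base case ($n=1$; the case $n=0$ is trivial) and as the main tool in the inductive step. The only structural fact I need beyond the results already proved is that vertex projections of a subgroup compose: for any $L\leq\Aut T$ and any vertices $u,v$ one has $L_{uv}=(L_u)_v$, where $uv$ denotes concatenation and we use the standard identifications of the subtrees. This holds because a tree automorphism that fixes the vertex $uv$ automatically fixes its ancestor $u$, so $\St_L(uv)=\{g\in\St_L(u)\mid\varphi_u(g)\in\St(v)\}$; applying $\varphi_u$ gives $\varphi_u(\St_L(uv))=L_u\cap\St(v)=\St_{L_u}(v)$, and since $\varphi_{uv}=\varphi_v\circ\varphi_u$ on $\St_L(uv)$ we obtain $L_{uv}=\varphi_v(\St_{L_u}(v))=(L_u)_v$.

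Before running the induction I would also record that for every $s\in G$ the conjugate $H^s$ is still dense in the profinite topology: finite-index normal subgroups are conjugation-invariant, so $H^s N=(HN)^s=G$ for each such $N$. Hence, by Lemma \ref{lem:MdenseActsLikeG}, $H^s$ acts on the first level exactly as $G$ does, that is, regularly; in particular $\St_{H^s}(x)=\St_{H^s}(1)$ for every $x\in X$, so $(H^s)_x=\varphi_x(\St_{H^s}(1))$. This is precisely the form in which Lemma \ref{lem:1projtoanyconjugateofH} prescribes the two level-one projections of $H^s$, so that lemma may be read as: for any $g_0,g_1\in G$ there is $s\in G$ with $(H^s)_{\boldsymbol 0}=H^{g_0}$ and $(H^s)_{\boldsymbol 1}=H^{g_1}$.

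For the inductive step, assume the statement holds for $n$ and let $\{g_w\}_{w\in X^{n+1}}$ be given. Write each $w\in X^{n+1}$ as $w=xu$ with $x\in X$ its first letter and $u\in X^n$. For each fixed $x\in X$, apply the inductive hypothesis to the family $\{g_{xu}\}_{u\in X^n}$ to obtain $t_x\in G$ with $(H^{t_x})_u=H^{g_{xu}}$ for all $u\in X^n$. Then apply Lemma \ref{lem:1projtoanyconjugateofH} (in the form above) to the pair $(t_{\boldsymbol 0},t_{\boldsymbol 1})$ to obtain $s\in G$ with $(H^s)_x=H^{t_x}$ for both $x\in X$. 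Using the composition of projections, for any $w=xu\in X^{n+1}$ we get $(H^s)_w=(H^s)_{xu}=((H^s)_x)_u=(H^{t_x})_u=H^{g_{xu}}=H^{g_w}$, which closes the induction.

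There is no real obstacle here: the lemma is essentially a level-by-level bootstrap of Lemma \ref{lem:1projtoanyconjugateofH}, and the only points demanding a little care are the compatibility of iterated projections with the one-level statement — handled by the density-and-regularity remark, which ensures $(H^s)_x$ is genuinely computed from $\St_{H^s}(1)$ — and the elementary composition identity $L_{uv}=(L_u)_v$, neither of which presents any difficulty.
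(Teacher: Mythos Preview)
Your proof is correct and follows essentially the same induction as the paper: apply the inductive hypothesis at level $n$ to each of the two subtrees, then use Lemma \ref{lem:1projtoanyconjugateofH} at the root to glue the resulting conjugators together, concluding via $(H^s)_{xu}=((H^s)_x)_u$. You are simply more explicit than the paper about the composition identity $L_{uv}=(L_u)_v$ and about why Lemma \ref{lem:1projtoanyconjugateofH} may be read as prescribing $(H^s)_x$ rather than $\varphi_x(\St_{H^s}(1))$, both of which the paper uses tacitly.
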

\begin{proof}
	We induct on $n$. 
	The case $n=0$ is trivial and the case $n=1$ is Lemma \ref{lem:1projtoanyconjugateofH}. 
	So suppose the claim true for $n\geq 1$ and pick $\{g_v\}_{v\in X^{n+1}}\subset G$. 
	Write each $v\in X^{n+1}$ as $v=\boldsymbol{i}w$ with $\boldsymbol{i}\in X$ and $w\in X^n$. 
	For each $\boldsymbol{i}\in X$, the inductive hypothesis implies that there exists $t_{\boldsymbol{i}}\in G$ such that for each $w\in X^n$, $(H^{t_{\boldsymbol{i}}})_w=H^{g_{\boldsymbol{i}w}} $.
	Lemma \ref{lem:1projtoanyconjugateofH} yields some $s\in G$ such that 
	$(H^s)_{\boldsymbol{i}}=H^{t_{\boldsymbol{i}}}$
	for each $\boldsymbol{i}\in X$. 
	Thus for each $v\in X^{n+1}$ we obtain, 
	$	(H^s)_v =((H^s)_{\boldsymbol{i}})_w=(H^{t_{\boldsymbol{i}}})_w 
	= H^{g_{\boldsymbol{i}w}}=H^{g_v}$.
	%
\end{proof}

The last remaining ingredient in the proof of Theorem \ref{thm:allmaxareconjtoH} is showing that all maximal subgroups of infinite index project to some $H(q)$. 
For this, we first show in Lemma \ref{prop:DenseSubgroupsProjectToSomeHq} that for any proper and dense subgroup  $M<G$, there exists $v\in T$ and an odd $l>1$ such that the projection $M_v$ is equal to $H(l)$. 
This uses similar techniques to \cite{Pervova}, namely that  $\varphi_{\boldsymbol{1}^n}((baz)^{2^n})$ for $z\in G'$ reduces in length as $n$ grows. 
In the setting of \cite{Pervova}, where the groups are torsion, this length eventually reaches 1 and the generator $a$ is obtained. 
In our setting, the length might stabilise, for example if we started off with $(ab)^k$ (see Lemma \ref{lemma:PowersOf(ab)AreForever}). However, we show in Lemma \ref{lemma:ThetaMapConvergence} that this is essentially the only possibility. 
Thus any dense subgroup projects to $(ab)^k$ for some odd $k$ (Lemma \ref{lemma:HContainsPowerOf(ab)}). 
With a bit more work and carefully using the contraction properties of $G$,  we obtain in Proposition \ref{prop:DenseSubgroupsProjectToSomeHq} that any proper dense subgroup projects to some $H(q)$ for some odd $q$. 
It is then a small step in Lemma \ref{lem:maximalprojectstoHq} to show that if the proper dense subgroup  $M$ is maximal then this $q$ is prime. 
%
%
%

\begin{lem}\label{lemma:ClassesOfG'}
If $z\in G'$, then $\varphi_{\boldsymbol{0}}(z) \equiv \varphi_{\boldsymbol{1}}(z) \equiv y$ modulo $G'$, where $y\in B_1 \cup abB_1$.
\end{lem}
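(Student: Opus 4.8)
The plan is to analyse, for $\boldsymbol{i}\in\{\boldsymbol{0},\boldsymbol{1}\}$, the map $\theta_{\boldsymbol{i}}\colon G'\to G/G'$ given by $z\mapsto \varphi_{\boldsymbol{i}}(z)G'$. Since $G/\St_G(1)\cong C_2$ is abelian we have $G'\leq\St_G(1)$, so $\varphi_{\boldsymbol{0}}$ and $\varphi_{\boldsymbol{1}}$ are defined and are homomorphisms on $G'$; as $G$ is self-similar (Remark~\ref{rem:Contraction}) their values lie in $G$, so $\theta_{\boldsymbol{0}}$ and $\theta_{\boldsymbol{1}}$ are well-defined homomorphisms into the abelian group $G/G'\cong A\times B$ (Proposition~\ref{prop:AbelianisationGIsAB}). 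Because the target is abelian, $\theta_{\boldsymbol{i}}$ is determined by its values on a normal generating set of $G'$, and since $G$ is generated by the two abelian subgroups $\langle a\rangle$ and $B$, we have $G'=\langle [a,x]^g\mid x\in B,\ g\in G\rangle$. These generators behave well under conjugation: conjugation by an element of $\St_G(1)$ is trivial modulo $G'$, while from $a=\sigma(1,1)$ one reads off that $\psi(w^a)=(\varphi_{\boldsymbol{1}}(w),\varphi_{\boldsymbol{0}}(w))$ for $w\in\St_G(1)$, so conjugation by $a$ merely interchanges $\theta_{\boldsymbol{0}}$ and $\theta_{\boldsymbol{1}}$. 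Hence it suffices to compute $\theta_{\boldsymbol{0}}([a,x])$ and $\theta_{\boldsymbol{1}}([a,x])$ for $x\in B$.

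A short computation with the wreath recursion, using $a=\sigma(1,1)$ and $\psi(x)=(\omega(x),\rho(x))$, gives
\[
\psi([a,x]) = \bigl(\rho(x)\omega(x),\ \omega(x)\rho(x)\bigr),\qquad x\in B.
\]
Modulo $G'$ the two coordinates coincide, so $\theta_{\boldsymbol{0}}([a,x])=\theta_{\boldsymbol{1}}([a,x])=\omega(x)\rho(x)G'$ for every $x\in B$. Combined with the previous paragraph, this forces $\theta_{\boldsymbol{0}}=\theta_{\boldsymbol{1}}$ on all of $G'$, which is exactly the assertion $\varphi_{\boldsymbol{0}}(z)\equiv\varphi_{\boldsymbol{1}}(z)\pmod{G'}$ for all $z\in G'$; it also shows that the image of $\theta_{\boldsymbol{0}}=\theta_{\boldsymbol{1}}$ is the subgroup of $G/G'$ generated by the classes $\omega(x)\rho(x)G'$, $x\in B$.

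It remains to identify those classes. This is the one step that is not pure bookkeeping: a priori $\omega(x)\rho(x)$ only lies in $B\cup aB$, but its $A$-part is in fact determined by its $B$-part. Put $y=\rho(x)$, so $x=\rho^{-1}(y)$; then $\omega\circ\rho^{-1}\colon B\to A$ is a surjective homomorphism with kernel $\rho(\ker\omega)=B_1$ (Definition~\ref{def:SunicGroups}), so $\omega(x)=1$ exactly when $y\in B_1$ and $\omega(x)=a$ otherwise. Thus $\omega(x)\rho(x)=y\in B_1$ when $y\in B_1$, and $\omega(x)\rho(x)=ay$ when $y\in B\setminus B_1$; since $b\notin B_1$ by Corollary~\ref{cor:contain_dihedral}, we have $B\setminus B_1=bB_1$, so $ay\in abB_1$. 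Hence $\omega(x)\rho(x)\in B_1\cup abB_1$ for every $x\in B$. As $G/G'$ has exponent $2$, we have $(ab)^2\in G'$, so the image of $B_1\cup abB_1$ in $G/G'$ is a subgroup; it therefore contains the image of $\theta_{\boldsymbol{0}}=\theta_{\boldsymbol{1}}$, and for each $z\in G'$ we may pick $y\in B_1\cup abB_1$ with $\varphi_{\boldsymbol{0}}(z)\equiv\varphi_{\boldsymbol{1}}(z)\equiv y\pmod{G'}$, as required.

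I expect the main obstacle to be precisely the last step: recognising that in $\varphi_{\boldsymbol{1}}([a,x])\equiv\omega(x)\rho(x)$ the $A$-coordinate is not free but equals the image of the $B$-coordinate under $\omega\circ\rho^{-1}$, whose kernel is $B_1$. The verification of $\psi([a,x])=(\rho(x)\omega(x),\omega(x)\rho(x))$, of $G'=\langle[a,x]^g\mid x\in B,\ g\in G\rangle$, and of the conjugation behaviour are routine wreath-recursion manipulations.
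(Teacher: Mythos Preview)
Your proof is correct and follows essentially the same approach as the paper: both compute $\psi([a,x])=(\rho(x)\omega(x),\omega(x)\rho(x))$ on the normal generators of $G'$, observe that the two coordinates agree modulo $G'$ and lie in $B_1\cup abB_1$, and then use that $(B_1\cup abB_1)G'/G'$ is a subgroup to pass to all of $G'$. Your version is somewhat more explicit in packaging the argument via the homomorphisms $\theta_{\boldsymbol{i}}$ and in identifying the image through the substitution $y=\rho(x)$ and $\ker(\omega\circ\rho^{-1})=B_1$, whereas the paper does the equivalent case split on $x\in B_0$ versus $x\notin B_0$ directly.
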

\begin{proof}
As $G'$ is generated by conjugates of $[a,x]$ with $x\in B$ and  
\begin{equation*}
\psi([a,x]) = (\rho(x)\omega(x), \omega(x)\rho(x)),
\end{equation*}
we have $\varphi_{\boldsymbol{0}}([a,x])\equiv \varphi_{\boldsymbol{1}}([a,x])$ modulo $G'$.
 If $x\in B_0$, then $\varphi_{\boldsymbol{1}}([a,x]) = \rho(x) \in B_1$. If $x\notin B_0$, then $x'=bx\in B_0$ and $x=bx'$, so $\varphi_{\boldsymbol{1}}([a,x]) = \varphi_{\boldsymbol{1}}([a,bx']) = ab\rho(x') \in abB_1$.

Since conjugating $[a,x]$ by an element of $G$ conjugates and possibly permutes the projections $\varphi_{\boldsymbol{0}}([a,x])$ and $\varphi_{\boldsymbol{1}}([a,x])$, the result is true for the generators of $G'$. 
Hence, since $\left(B_1\cup abB_1\right)G'$ is a subgroup of $G/G'$, the result is also true for any $z\in G'$.
\end{proof}

%
\begin{defn}
	Define $\Theta\colon G' \rightarrow G'$ by $\Theta(z) = a\varphi_{\boldsymbol{0}}(z)a\varphi_{\boldsymbol{1}}(z)$.
	This is well-defined by the previous lemma.
\end{defn}
%
%
 
\begin{notation}
For $z\in G$, write $|z|$ for the $B$-length of $z$, that is, the minimal number of $B$-letters required to represent $z$ as a word in the alphabet $\{a\}\cup B\setminus\{1\}$.
\end{notation}

Note that, similarly to Remark \ref{rem:Contraction},   $|z|\geq |\varphi_{\boldsymbol{0}}(z)| + |\varphi_{\boldsymbol{1}}(z)|$ for any $z\in G'$. 
It follows that $|\Theta(z)| \leq |z|$.

\begin{lem}\label{lemma:ThetaMapConvergence}
Let $z\in G'$ be such that $|\Theta^n(z)|=|z|$ for all $n\in \NN$.
 If $|z|\geq 3$, then there exists $l\in \NN$, $x\in B$ such that
\begin{equation*}
z=axa(ba)^{2l}x.
\end{equation*}
Otherwise, either $z=1$ or $|z|=2$ and there exists $x\in B\setminus\{1\}$ such that $z=axax$ or $z=xaxa$.
\end{lem}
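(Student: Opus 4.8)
The plan is to follow a $B$-length--geodesic word for $z$ through the iteration of $\Theta$. Since for every $w\in G'$ one has $|w|\ge|\varphi_{\boldsymbol{0}}(w)|+|\varphi_{\boldsymbol{1}}(w)|\ge|\Theta(w)|$, the sequence $\bigl(|\Theta^{n}(z)|\bigr)_{n\in\NN}$ is non-increasing; as $G$ is finitely generated, the set $\{w\in G'\mid |w|\le|z|\}$ is finite, so the hypothesis is equivalent to saying that the entire forward $\Theta$-orbit of $z$ has constant $B$-length $k:=|z|$. In particular, for each $w$ in this orbit both inequalities above are equalities: $|\varphi_{\boldsymbol{0}}(w)|+|\varphi_{\boldsymbol{1}}(w)|=|w|$ and $|\Theta(w)|=|\varphi_{\boldsymbol{0}}(w)|+|\varphi_{\boldsymbol{1}}(w)|$.

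Next, for such a $w$ I would fix a geodesic word $w=a^{\delta_0}x_1ax_2a\cdots ax_ka^{\delta_1}$ with $x_i\in B\setminus\{1\}$ and $\delta_0,\delta_1\in\{0,1\}$, and compute $\psi(w)$ one letter at a time: each $a$ transposes the two coordinates, and each $x_i$ contributes $\omega(x_i)\in\{1,a\}$ to one coordinate and $\rho(x_i)\ne1$ to the other, the choice being governed by the parity of the number of $a$'s to its right. Thus $\varphi_{\boldsymbol{0}}(w)$ and $\varphi_{\boldsymbol{1}}(w)$ are built from the letters $\rho(x_1),\dots,\rho(x_k)$ distributed alternately between the two coordinates and separated there by the $A$-valued factors $\omega(x_i)$. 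Since $\rho(x_i)\ne1$ while $\omega(x_i)\in\{1,a\}$, the equality $|\varphi_{\boldsymbol{0}}(w)|+|\varphi_{\boldsymbol{1}}(w)|=k$ forces $\omega(x_i)=a$ for every \emph{interior} index $2\le i\le k-1$ (otherwise two consecutive $\rho(x_i)$'s landing in the same coordinate would fuse). A case analysis on $(\delta_0,\delta_1)$, combined with the second equality $|\Theta(w)|=|\varphi_{\boldsymbol{0}}(w)|+|\varphi_{\boldsymbol{1}}(w)|$, then shows that a geodesic word for $\Theta(w)$ is obtained from that of $w$ by applying $\rho$ to each letter and re-ordering the letters according to an explicit rearrangement depending only on $k$ and on $(\delta_0,\delta_1)$.

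Iterating this description and tracking the orbit of each letter, one sees that every $x_i$ satisfies $\omega(\rho^{n}(x_i))=a$ for all $n$ at which the corresponding letter (then equal to $\rho^{n}(x_i)$) occupies an interior position of $\Theta^{n}(z)$; in the relevant cases this holds for all $n\ge N_i$. At this point I would invoke the following consequence of faithfulness: $b$ is the only $y\in B$ whose whole $\rho$-orbit avoids $\ker\omega$, i.e.\ with $\omega(\rho^{n}(y))=a$ for all $n\ge0$. Indeed, writing $B$ additively and using $\rho(b)=b$, $\omega(b)=a$ (Corollary \ref{cor:contain_dihedral}), one has $\omega(\rho^{n}(y+b))=\omega(\rho^{n}(y))+\omega(\rho^{n}(b))=a+a=0$ for all $n$, so the $\rho$-orbit of $y+b$ lies in $\ker\omega$, whence $y+b=0$ by Proposition \ref{prop:SunicProp2}. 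Applied to $\rho^{N_i}(x_i)$ this forces $x_i=b$ for every interior letter of $z$; the remaining (endpoint) letters are pinned down by the endpoint constraint from the $(\delta_0,\delta_1)$-analysis, forcing them equal to a single $x\in B$, together with $z\in G'$ (which gives $x_1\cdots x_k=1$ in $B$) and $z\in\St_G(1)$ (which fixes $\delta_0,\delta_1$ and the parity of $k$). This yields $z=axa(ba)^{2l}x$ when $k\ge3$, and $z=1$ or $z=axax$ or $z=xaxa$ with $x\ne1$ when $k\le2$. Finally, a direct computation $\Theta\bigl(axa(ba)^{2l}x\bigr)=a\,\rho(x)\,a\,(ba)^{2l}\,\rho(x)$ (using $\rho(b)=b$) confirms that all these elements genuinely have $\Theta$-orbit of constant $B$-length, so the list is exhaustive.

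The main obstacle is the precise control of the $B$-length under $\Theta$: the obvious word $a\,\varphi_{\boldsymbol{0}}(w)\,a\,\varphi_{\boldsymbol{1}}(w)$ for $\Theta(w)$ need not be geodesic, and can lose $B$-length through \emph{non-local} cancellations rather than only by fusion of adjacent letters. Determining exactly when no length is lost is what forces the case analysis on whether the geodesic words begin and/or end with $a$, and it is also why the hypothesis ``$|\Theta^{n}(z)|=|z|$ for all $n$'' cannot be weakened to finitely many $n$: there are $z\in G'$ whose $B$-length stays equal to $|z|$ for several steps before suddenly dropping.
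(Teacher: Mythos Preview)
Your proposal is correct and follows essentially the same approach as the paper's proof. Both arguments write $z$ as a $B$-geodesic alternating word, use the hypothesis $|\Theta^n(z)|=|z|$ to force the interior letters to satisfy $\omega(\rho^n(x_i))=a$ for all $n$, invoke the faithfulness criterion (Proposition~\ref{prop:SunicProp2}) to identify such letters with $b$, and handle the endpoint letters separately via the junction constraints. The paper carries out the four cases $(\delta_0,\delta_1)\in\{0,1\}^2$ explicitly---showing that the two odd-$k$ forms lead to the contradiction $x_k=1$ (resp.\ $x_1=1$) because the induced permutation of letter-positions fixes an endpoint where the constraint is $\omega=1$ rather than $\omega=a$, and treating the two even-$k$ forms separately (one of which can transition into the other)---whereas you compress all of this into ``a case analysis on $(\delta_0,\delta_1)$'' and the phrase ``in the relevant cases''. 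Your explicit derivation of the key fact that $b$ is the unique $y\in B$ with $\omega(\rho^n(y))=a$ for all $n\ge0$ is a nice addition; the paper uses this without comment when it writes ``This means that $x_i=b$''. Finally, the concern you raise in your last paragraph about non-local cancellation is not actually an obstacle in this direction of the argument: since you already know $|\Theta(w)|=k$ and have exhibited a word of $B$-length $k$ for $\Theta(w)$, that word is automatically $B$-geodesic, so no cancellation---local or otherwise---can occur.
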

\begin{proof}
%
If $|z|=0$ then $z=1$, because $z\in G'$. 
Proposition \ref{prop:AbelianisationGIsAB} implies that $|z|=1$ cannot hold and that if $|z|=2$ there must exist $x\in B\setminus\{1\}$ with $z=axax$ or $z=xaxa$.

Assume that $|z| = k$ with $k\geq 3$. 
Then there exist $x_1,\dots x_k\in B\setminus\{1\}$ such that $z$ can be written in one of four forms:
\begin{center}
\begin{tabular}{lcl}
(1) $x_1a\dots ax_k$ & or &(2) $ax_1\dots x_ka$ if $k$ is odd, \\
(3) $ax_1\dots ax_k$ & or & (4) $x_1a\dots x_ka$ if $k$ is even.\\
\end{tabular}
\end{center}

If $z$ is of form (1) then 
$$\psi(z) = (\omega(x_1)\rho(x_2)\dots \omega(x_k), \rho(x_1)\omega(x_2)\dots \rho(x_k))$$
so
$$\Theta(z) = a\omega(x_1)\rho(x_2)\dots\omega(x_k)a\rho(x_1)\omega(x_2)\dots\rho(x_k).$$
Since $|\Theta(z)|=|z|$, no cancellation occurs in the expression for $\Theta(z)$, except possibly the term $a\omega(x_1)$. 
This means that $\omega(x_k)=1$ and $\omega(x_i)=a$ for $1<i<k$. 
But then $a\omega(x_1)=1$ because $\Theta(z)\in \St_G(1)$ and so $a$ must occur an even number of times in the expression for $\Theta(z)$.
Hence $\Theta(z)$ is of the same form as $z$, with last letter $\rho(x_k)$. 
Then, since $|\Theta^{n}(z)|=|z|$ for all $n$, by induction we obtain that $\rho^n(x_k)\in\ker\omega$ for all $n$, which implies that $x_k$ is trivial. 
So form (1) cannot occur.

If $z$ is of form (2), noticing that this is a conjugate of form (1) by $a$, we obtain that 
$$\Theta(z)=a\rho(x_1)\omega(x_2)\dots\rho(x_k)a\omega(x_1)\rho(x_2)\dots\omega(x_k).$$
Similar arguments as in case (1) show that $x_1$ is trivial, another contradiction. 

Form (3) yields 
$$\Theta(z)= a\rho(x_1)\omega(x_2)\dots \omega(x_k)a \omega(x_1)\rho(x_2)\dots \rho(x_k).$$
Again, $ |\Theta(z)| = |z|$ implies that there is no cancellation and therefore $\omega(x_k)=\omega(x_1)$ and 
$\Theta(z)$ is of the same form as $z$. 
Since $|\Theta^n(z)|=|z|$ for all $n\in \NN$, we obtain by induction that $\omega(\rho^n(x_i))=a$ for $1<i<k$ and $\omega(\rho^n(x_1))=\omega(\rho^n(x_k))$ for all $n\in \NN$. 
This means that $x_i=b$ for $1<i<k$ and $x_1=x_k=x\in B\setminus\{1\}$. 
So $z=axa(ba)^{2l}x$ where $2l=k-2$, for some $x\in B\setminus\{1\}$.

If $z$ is of form (4) then 
\[\Theta(z) = a\omega(x_1)\rho(x_2)\dots \rho(x_k)a\rho(x_1)\omega(x_2)\dots\omega(x_k).\]
To avoid length reduction and cancellation we must have $\omega(x_i)=a$ for $1<i<k$
and $\Theta(z)\in \St_G(1)$ implies that $\omega(x_1)=\omega(x_k)$. 
If $\omega(x_1)=\omega(x_k)=1$ then $\Theta(z)$ is of form (3). 
The same argument as in the case (3) then implies that $\rho(x_1)=b$ and thus $\omega(x_1)=a$, a contradiction. 
So $\omega(x_1)=\omega(x_k)=a$ and therefore $\Theta(z)$ has the same form as $z$. 
Repeating the argument for $\Theta(z)$ instead of $z$, we obtain that $\omega(\rho(x_2))=\omega(\rho(x_{k-1}))=a$
and $\omega(\rho(x_i))=a$ also for all the other $i$. 
Hence, by induction, $\omega(\rho^n(x_i))=a$ for $1\leq i\leq k$ and all $n\in \NN$. 
In other words, $z=(ba)^{2l}$ where $k=2l$. 
\end{proof}

\begin{lem}\label{lemma:HContainsPowerOf(ab)}
Let $M\leq G$ be a dense subgroup.
 There exists an odd $k\in \ZZ$ and a vertex $v\in T$ such that the projection $M_v$ contains $(ab)^k$.
\end{lem}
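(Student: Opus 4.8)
The plan is to locate an honest odd power of $ab$ inside a deep projection of $M$ by running a length‑reduction along the ray $\tilde{\boldsymbol 1}$, in the spirit of \cite{Pervova}, with the map $\Theta$ playing the role of the descent step. Since $M$ is dense and $G'$ contains a level stabiliser (\cite[Lemma~9]{Sunic}), one has $MG'=G$, so $M$ meets the coset $baG'$; I would fix $g_0=baz_0\in M$ with $z_0\in G'$. A short wreath‑recursion computation shows that for every $z\in G'$ the element $baz$ lies outside $\St_G(1)$ and satisfies $\varphi_{\boldsymbol 1}\big((baz)^2\big)=ba\,\Theta(z)$. Hence, putting $g_{n+1}:=\varphi_{\boldsymbol 1}(g_n^2)$, one gets $g_n=ba\,\Theta^n(z_0)\in M_{\boldsymbol 1^n}$ for all $n$.

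Now $|\Theta^n(z_0)|$ is a non‑increasing sequence of non‑negative integers, hence eventually constant, so after replacing $g_0$ by $g_N$ and $M$ by the dense subgroup $M_{\boldsymbol 1^N}$ (Lemma~\ref{lem:MdenseActsLikeG}) for suitable $N$, I may assume $|\Theta^n(z_0)|=|z_0|$ for all $n$. Lemma~\ref{lemma:ThetaMapConvergence} then restricts $z_0$ to a short list of shapes. If $z_0=1$ we are done at once, since $g_0=ba=(ab)^{-1}\in M_v$. If $z_0$ is one of $axa(ba)^{2l}x$, $axax$, $xaxa$ with $x=b$, a one‑line simplification gives $g_0=(ab)^{2l+1}$, $g_0=ab$, or $g_0=(ab)^{-3}$ respectively; and (by the last paragraph of the proof of Lemma~\ref{lemma:ThetaMapConvergence}) the shape $z_0=xaxa$ with $x\neq b$ passes, after one more application of $\Theta$, into the shape $z_0=axax$. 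So the remaining case is $z_0=axa(ba)^{2l}x=(ax)(ab)^{2l}(ax)$ with $x\in B\setminus\{1,b\}$ and $l\ge 0$, and there, using $ax=(ab)(bx)$ and $b^2=1$,
\[
g_0=baz_0=bx(ab)^{2l}ax=w(ab)^{2l+1}w,\qquad w:=bx\in B\setminus\{1\}.
\]

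It remains to convert $w(ab)^{2l+1}w\in M_v$ (with $w\in B$) into a genuine odd power. Since $M_v$ is also dense it meets $wG'$; picking $h=wz\in M_v$ with $z\in G'$ and conjugating, $h^{-1}\big(w(ab)^{2l+1}w\big)h=z^{-1}(ab)^{2l+1}z\in M_v$, so the conjugating element now lies in $G'$. A further wreath‑recursion computation gives, for any $z\in\St_G(1)$,
\[
\varphi_{\boldsymbol 1}\big((z^{-1}(ab)^{2l+1}z)^2\big)=z_{\boldsymbol 1}^{-1}(ab)^{2l+1}z_{\boldsymbol 1},\qquad
\varphi_{\boldsymbol 0}\big((z^{-1}(ab)^{2l+1}z)^2\big)=z_{\boldsymbol 0}^{-1}(ab)^{-(2l+1)}z_{\boldsymbol 0}.
\]
Replacing a conjugator lying outside $\St_G(1)$ by its $\St_G(1)$‑part (which merely inverts the power) and always retaining the projection whose $B$‑length is smaller, the inequality $|z|\ge|z_{\boldsymbol 0}|+|z_{\boldsymbol 1}|$ forces the $B$‑length of the conjugator to decrease; once it reaches $0$ the conjugator is trivial, leaving $(ab)^{\pm(2l+1)}$ in some projection $M_{v'}$, and $2l+1$ is odd.

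The hard part will be the bookkeeping in the last two paragraphs: $\Theta$ alone does not drive the length to zero — it genuinely stabilises on powers of $ab$ — so one must pin down the stable shapes precisely via Lemma~\ref{lemma:ThetaMapConvergence} and then run the separate descent above, first trading the leftover $B$‑element conjugator for a $G'$‑conjugator using density, and then shrinking it by projections while keeping track of when a projected conjugator leaves $\St_G(1)$ (which is harmless up to inverting the exponent).
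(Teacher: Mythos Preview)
Your argument is correct but diverges from the paper's in the second half. Both proofs begin identically: use density to find $baz\in M$ with $z\in G'$, iterate $\varphi_{\boldsymbol 1}((baz)^2)=ba\Theta(z)$ along the ray $\boldsymbol 1^n$, and invoke Lemma~\ref{lemma:ThetaMapConvergence} once the $B$-length stabilises. The difference is in how the stable shapes are finished off. The paper simply computes one further projection: for $g_0=ba\cdot axa(ba)^{2l}x$ one has $g_0^2=x(ba)^{4l+2}x$, whence $\varphi_{\boldsymbol 0}(g_0^2)=\omega(x)(ab)^{2l+1}\omega(x)\in\{(ab)^{2l+1},(ba)^{2l+1}\}$; and for $g_0=ba\cdot xaxa$ with $x\ne b$ one iterates $\varphi_{\boldsymbol 1}((baxaxa)^2)=ba\rho(x)a\rho(x)a$ until $\rho^i(x)\in\ker\omega$, then a single $\varphi_{\boldsymbol 0}$ yields $ab$. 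So the paper never invokes density a second time and never runs a second descent.

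Your route---rewriting $g_0=w(ab)^{2l+1}w$, using density of $M_v$ to trade the $B$-conjugator $w$ for a $G'$-conjugator, and then shrinking that conjugator by a separate halving descent via $|z|\ge|z_{\boldsymbol 0}|+|z_{\boldsymbol 1}|$---is valid but more elaborate. Two small fixes: in the $xaxa$ case you may need \emph{finitely many} further applications of $\Theta$ (not just one) before landing in the $axax$ shape, since $\Theta(xaxa)=\rho(x)a\rho(x)a$ when $\omega(x)=a$; the switch happens only once some $\rho^i(x)\in\ker\omega$, which is guaranteed for $x\ne b$ because $\omega(\rho^i(xb))=1$ for all $i$ would force $xb=1$. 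Also, your pointer to ``the last paragraph'' of Lemma~\ref{lemma:ThetaMapConvergence} is slightly off, as that paragraph treats form~(4) with $|z|\ge 4$ rather than the $|z|=2$ case. The advantage of the paper's approach is brevity and no second appeal to density; the advantage of yours is that it is more systematic and would adapt to other stable shapes without ad~hoc computations.
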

\begin{proof}
Since $M$ is dense, $MG'=G$ because $G'$ is of finite index in $G$, so there exists $z\in G'$ such that $baz\in M$.
 We have
$$\varphi_{\boldsymbol{1}}((baz)^2) = b\varphi_{\boldsymbol{0}}(z)a\varphi_{\boldsymbol{1}}(z) = ba\Theta(z),$$
so $\varphi_{\boldsymbol{1}^n}((baz)^{2^n})  = ba\Theta^n(z)$ 
for all $n\in \NN$. 
Since $\left\{|\Theta^n(z)|\right\}_{n\in \NN}$ is a decreasing sequence, it must eventually become constant.
 Hence, it follows from Lemma \ref{lemma:ThetaMapConvergence} that there exist $l\in \NN$, $x\in B$ and a vertex $v\in T$ such that the projection $M_v$ contains either $ba(axa(ba)^{2l}x)$ or $ba(xaxa)$.
 In the former case,
\begin{align*}
\varphi_{\boldsymbol{0}}((baaxa(ba)^{2l}x)^2) &= \varphi_{\boldsymbol{0}}(x(ba)^{4l+2}x)\\
&= \omega(x)(ab)^{2l+1}\omega(x)
=\begin{cases}
(ba)^{2l+1} & \text{if } \omega(x)=a \\
   (ab)^{2l+1} & \text{if } \omega(x) =1.
\end{cases}
\end{align*}

If instead $ba(xaxa)\in M_v$, then either $x=b$, in which case we obtain the desired result, or $x\ne b$.
 In the latter case, we can assume without loss of generality that $\omega(x)=1$.
  Indeed, if $\omega(x)=a$, then 
$\varphi_{\boldsymbol{1}}((baxaxa)^2) = ba\rho(x)a\rho(x)a$
so we can repeat this until $\rho^i(x)\in\ker\omega$, at which point $\varphi_{\boldsymbol{0}}((ba\rho^i(x)a\rho^i(x)a)^2) = ab.$
\end{proof}

\begin{lem}\label{lemma:HContainsElementOfB}
Let $M\leq G$ be a dense subgroup.
 Then, there exist $\beta\in B\setminus\{1\}$ and a vertex $v\in T$ such that $\beta\in M_v$.
\end{lem}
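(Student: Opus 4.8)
The plan is to combine the reduction already furnished by Lemma~\ref{lemma:HContainsPowerOf(ab)} with a length-reduction argument, powered by the contraction estimate of Remark~\ref{rem:Contraction} and bookkept modulo $G'$ via Lemma~\ref{lemma:ClassesOfG'}. First, by Lemma~\ref{lemma:HContainsPowerOf(ab)} there are a vertex $v_0$ and an odd $k$ with $(ab)^k\in M_{v_0}$, and by Lemma~\ref{lem:MdenseActsLikeG} the projection $M_{v_0}$ is again dense; replacing $M$ by $M_{v_0}$, we may assume $(ab)^k\in M$ with $k$ odd. Since $M$ is dense, $MG'=G$, so there is $z\in G'$ with $\beta_0 z\in M$ for a prescribed $\beta_0\in B$, and we take $\beta_0\in B_0\setminus\{1\}$, which is possible because $m\ge 2$ gives $|B_0|=2^{m-1}\ge 2$. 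If $z=1$ then $\beta_0\in M$ and we are done, so assume $z\ne 1$; by Proposition~\ref{prop:AbelianisationGIsAB}, a non-trivial element of $G'$ has length at least $4$ in the generating set $\{a\}\cup B\setminus\{1\}$ (lengths $0,1,2,3$ are all excluded, since they force either the identity, an element with non-trivial image in $A$, or a single $B$-letter).

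Now I would run the following reduction on data of the form $\beta z$ lying in some projection $M_v$ of $M$, with $\beta\in B\setminus\{1\}$ and $z\in G'$. Using $\psi(\beta)=(\omega(\beta),\rho(\beta))$, examine $\varphi_{\boldsymbol{0}}(\beta z)$ and $\varphi_{\boldsymbol{1}}(\beta z)$, which lie in $(M_v)_{\boldsymbol{0}}$ and $(M_v)_{\boldsymbol{1}}$; Lemma~\ref{lemma:ClassesOfG'} controls their classes modulo $G'$, which are equal and lie in $B_1\cup abB_1$. If the common class is trivial one recurses on a bare element of $G'$ inside a projection of $M$; if it is a non-trivial element of $abB_1$ one first multiplies by $(ab)^k$ and squares to return to $\St_G(1)$ and to the class $B_1$; and if it is a non-trivial element of $B_1$ one rewrites the projection as $\beta'z'$ with $\beta'\in B\setminus\{1\}$ and $z'\in G'$. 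The contraction estimate of Remark~\ref{rem:Contraction} shows that each such step can be arranged to strictly decrease the total length of the $G'$-part once that length exceeds $3$. Since a non-trivial element of $G'$ has length at least $4$, the process cannot descend indefinitely, so at some stage a non-trivial element of $B$ must be exposed in a projection of $M$. Concretely, the running element bottoms out either as $\beta z$ with $z$ forced to be trivial — so $\beta\in B\setminus\{1\}$ lies in that projection — or as a short element which the modulo-$G'$ data identifies as $\beta^a$ for some $\beta\in B\setminus\{1\}$; in the latter case $\varphi_{\boldsymbol{0}}(\beta^a)=\rho(\beta)\in B\setminus\{1\}$ lies in the corresponding deeper projection, which again suffices.

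The delicate point, and the reason the bookkeeping above is needed, is that the raw projection and squaring operations are not length-monotone on the nose and that the residual element could a priori degenerate: it could collapse to the identity, or to a pure power of $ab$ (which — unlike in the torsion setting, where Lemma~\ref{lemma:ThetaMapConvergence} is the relevant analogue — is a genuine possibility here, as $ab$ has infinite order for a non-torsion \v{S}uni\'{c} group). Overcoming this is exactly where the choice $\beta_0\in B_0$ and the modulo-$G'$ tracking interact: the class of the running element in $G/G'\cong A\times B$ never acquires a trivial $B$-component prematurely, so when the length finally bottoms out below the threshold $3$ the residual element is forced to carry an honest $B$-letter rather than reducing to a power of $ab$ or to the identity. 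Where the surviving $B$-letter appears inside a conjugate $\beta^a$ rather than in $B$ itself, one further application of $\varphi_{\boldsymbol{0}}$ (together with the fact that $\rho$ is an automorphism, so $\rho(\beta)\neq 1$) delivers the required element of $B\setminus\{1\}$ in a projection of $M$.
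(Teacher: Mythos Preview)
Your overall strategy---track the class of a running element modulo $G'$ while using contraction to shrink the $G'$-part until a bare $B$-letter is exposed---is the same as the paper's. However, the execution has a genuine gap.

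The problematic step is your handling of the $abB_1$ case by ``multiplying by $(ab)^k$''. The element $(ab)^k$ has word length $2k-1$, and $k$ is not under your control (it was produced by Lemma~\ref{lemma:HContainsPowerOf(ab)} and could be arbitrarily large). So passing from $g$ to $g(ab)^k$ inflates the length by roughly $2k$, and the single projection that follows only halves it; you have not shown that the net effect of a full cycle is a strict decrease. Without this, your induction on length does not terminate. The paper does \emph{not} invoke $(ab)^k$ here at all: when the projection falls into the $abB_1$ coset (this is the paper's case (2)(ii), the only delicate one), it instead iterates the specific map $g\mapsto\varphi_{\boldsymbol{0}}(g^2)$ on the element $aby'z_2$, observing that $l(\varphi_{\boldsymbol{0}}(g^2))\le l(g)$ and that after finitely many such steps (the first $k$ with $\rho^k(y')\notin B_1$) the element re-enters the $B$-coset form $x'z'$. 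This controls length without any external multiplication.

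There are also two smaller slips. First, Lemma~\ref{lemma:ClassesOfG'} says $\varphi_{\boldsymbol{0}}(z)\equiv\varphi_{\boldsymbol{1}}(z)\pmod{G'}$ for $z\in G'$; it does \emph{not} say the two projections of $\beta z$ are congruent---they differ by $\omega(\beta)^{-1}\rho(\beta)$. Second, ``recursing on a bare element of $G'$'' when the common class is trivial is the wrong move: a bare element of $G'$ can be a power of $(ab)^2$, whose projections remain powers of $ab$ forever and never yield a $B$-letter. What actually saves you in that situation is that the \emph{other} projection, $\varphi_{\boldsymbol{1}}(\beta z)\equiv\rho(\beta)\pmod{G'}$, still carries a non-trivial $B$-letter; the paper's case analysis makes this choice explicit. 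Your final paragraph asserts that the $B$-component never vanishes prematurely, but this is precisely the content of the four-case analysis in the paper's proof, and it needs to be carried out rather than declared.
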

\begin{proof}
We will show that for all $x\in B\setminus \{1\}$ and all $z\in G'$, there exist some vertex $v\in T$ and some element $\beta\in B\setminus \{1\}$ such that $\beta\in \langle xz \rangle_v$.
 The result will then follow, since by the density of $M$, for any $x\in B\setminus\{1\}$, there exists some $z\in G'$ such that $xz\in M$.
 Recall from Remark \ref{rem:Contraction} that $l(\varphi_v(g))\leq (l(g)+1)/2$ for $g\in\St_G(1), v\in X$ where $l$ is the usual word metric.

For $x\in B\setminus\{1\}$ and $z\in G'$, we have
\begin{equation*}
\psi(xz) = (\omega(x)\varphi_{\boldsymbol{0}}(z), \rho(x)\varphi_{\boldsymbol{1}}(z)).
\end{equation*}
By Lemma \ref{lemma:ClassesOfG'}, there exists $y\in B_1$ such that $\varphi_{\boldsymbol{0}}(z)\equiv_{G'} \varphi_{\boldsymbol{1}}(z)$ are both congruent modulo $G'$ to  $y$ or $aby$.
This gives four cases:

\begin{center}
\begin{tabular}{lr}
	(1)(i) $x\notin B_0$, $\varphi_{\boldsymbol{0}}(z)\equiv_{G'} y$  & (1)(ii) $x\notin B_0$, $\varphi_{\boldsymbol{0}}(z)\equiv_{G'} aby$\\
	(2)(i) $x\in B_0$, $\varphi_{\boldsymbol{0}}(z)\equiv_{G'} y$ & (2)(ii) $x\in B_0$, $\varphi_{\boldsymbol{0}}(z)\equiv_{G'} aby$
\end{tabular}
\end{center}
For case (1)(i), $\omega(x)=a$ and $\rho(x)\notin B_1$, so $x':=\rho(x)y\neq 1$ since $y\in B_1$.
Thus there exists $z'\in G'$ such that $\varphi_{\boldsymbol{1}}(xz)=x'z'$ and $l(x'z')=l(\varphi_{\boldsymbol{1}}(xz))\leq \frac{l(xz)+1}{2}$.

For case (1)(ii), note that $b\notin B_1$ because $b=(a,b)$ so $by\neq 1$ and there exists $z'\in G'$ such that
$\varphi_{\boldsymbol{0}}(xz)=x'z'$ 
where $x'=\omega(x)aby=by\in B\setminus\{1\}$ and $l(x'z')=l(\varphi_{\boldsymbol{1}}(xz))\leq \frac{l(xz)+1}{2}$.

In case (2)(i),  $\varphi_{\boldsymbol{0}}(xz)=yz'$ and 
$\varphi_{\boldsymbol{1}}(xz)=\rho(x)yz''$ for some $z', z''\in G'$.
If $y\neq 1$, take the former; the latter otherwise.
Either way, $l(\varphi_v(xz))\leq\frac{l(xz)+1}{2}$ for $v\in X$.

In case (2)(ii), $\varphi_{\boldsymbol{0}}(xz) \equiv_{G'} aby$ and $\varphi_{\boldsymbol{1}}(xz) \equiv_{G'} aby\rho(x)$. If $y\ne 1$, take the former; the latter otherwise. In both cases, we have $\varphi_w(xz) = aby'z_2$ for some $w\in X$, $y'\in B_1\setminus \{1\}$ and $z_2\in G'$. 
If $k$ is the smallest integer such that $\rho^k(y')\notin B_1$, then $\varphi_{\boldsymbol{0}^k}(\varphi_{w}(xz)^{2^k})= b\rho^k(y')z'$
for some $z'\in G'$, because
\[\varphi_{\boldsymbol{0}}((aby'z_2)^2)= b\rho(y')\varphi_{\boldsymbol{1}}(z_2)a\omega(y')\varphi_{\boldsymbol{0}}(z_2)=a\omega(y')b\rho(y')z_3\]
for some $z_3 \in G'$ (noting that $\varphi_{\boldsymbol{1}}(z_2)\varphi_{\boldsymbol{0}}(z_2)\in  G'$).
If $\rho(y')\notin B_1$, then $y'\notin B_0$, which means that $\omega(y')=a$. Thus, $\varphi_{\boldsymbol{0}}((aby'z_2)^2) = b\rho(y')z_3$.
Otherwise, we have $\varphi_{\boldsymbol{0}}((aby'z_2)^2)=ab\rho(y')z_3$ with $\rho(y')\in B_1\setminus \{1\}$ and we can repeat this process.
Thus, eventually, we will obtain $\varphi_{\boldsymbol{0}^k}(\varphi_{w}(xz)^{2^k})= x'z'$, where $x'=b\rho^k(y')\in B\setminus\{1\}$.

Notice that $l(\varphi_{\boldsymbol{0}}(g^2)) \leq \frac{2l(g) +1}{2}\leq l(g)$ for all $g\in  G$ (since $l$ takes integer values). Thus, we get by induction that $l(\varphi_{\boldsymbol{0}^k}(g^{2^k})) \leq l(g)$. Therefore, $l(x'z')\leq l(\varphi_w(xz)) \leq \frac{l(xz)+1}{2}$.

Thus, in all cases, there is a vertex $v\in T$ and elements $x'\in B\setminus\{1\}$, 
$z'\in G'$ such that $x'z'$ is in the projection  $\langle xz \rangle_v$, with $l(x'z')\leq \frac{l(xz)+1}{2}$. 
By repeating this process as necessary, we can assume that $l(x'z')\leq 1$, which is equivalent to saying that $x'z'\in B$. 
Since $x'\ne 1$, we must have that $x'z'=x'\ne 1$ by Proposition \ref{prop:AbelianisationGIsAB}, so $x'$ is our desired $\beta$. 
\end{proof}

\begin{lem}\label{lemma:PowersOf(ab)AreForever}
Let $M\leq  G$ be a subgroup that contains $(ab)^k$ for some $k\in \ZZ$. 
Then, $(ab)^k\in M_v$ for all $v\in T$.
\end{lem}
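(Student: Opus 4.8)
The statement to prove is Lemma \ref{lemma:PowersOf(ab)AreForever}: if $M \le G$ contains $(ab)^k$, then $(ab)^k \in M_v$ for every vertex $v \in T$. The plan is to proceed by induction on the level of $v$, and since $G$ acts level-transitively it suffices to handle $v = \boldsymbol{0}$ and $v = \boldsymbol{1}$ at each step; that is, it suffices to show that $(ab)^k \in M_{\boldsymbol{0}}$ and $(ab)^k \in M_{\boldsymbol{1}}$ whenever $(ab)^k \in M$, and then iterate along any downward path. First I would note that $(ab)^k$ stabilises the first level (it is a product of an even number of $a$'s, since $b \in \St(1)$), so $(ab)^k \in \St_M(1)$ and its two projections $\varphi_{\boldsymbol{0}}((ab)^k)$ and $\varphi_{\boldsymbol{1}}((ab)^k)$ both make sense and lie in $M_{\boldsymbol{0}}$, $M_{\boldsymbol{1}}$ respectively.

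The key computation is to identify these projections. Using $\psi(a) = \sigma(1,1)$, $\psi(b) = (a,b)$ (recall $b = (a,b)$ by Corollary \ref{cor:contain_dihedral}), one computes $\psi((ab)^2) = \psi(abab) = ((ba),(ab))$, hence for even powers $\psi((ab)^{2j}) = ((ba)^j, (ab)^j)$, and for odd powers $(ab)^{2j+1} \notin \St(1)$. So when $k = 2j$ is even, $\varphi_{\boldsymbol{0}}((ab)^{2j}) = (ba)^j$ and $\varphi_{\boldsymbol{1}}((ab)^{2j}) = (ab)^j$; when $k = 2j+1$ is odd, I would instead pass to $(ab)^{2k} \in \St_M(1)$, giving projections $(ba)^k$ and $(ab)^k$. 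In either case, the projections of the relevant power of $(ab)^k$ are conjugates/inverses of powers of $(ab)$ — and since $(ba)^j = (ab)^{-j} \cdot (ab)^{2j} \cdot (ab)^{-j}$... more simply, $(ba) = (ab)^{-1}$ is not right; rather $(ba)^j = b(ab)^j b \cdot$(adjustment), but the cleanest observation is that $\langle (ab)^k \rangle$ projects into $\langle (ab)^{k'} \rangle$ for some $k'$ with the same parity structure. I would carefully record that $\varphi_{\boldsymbol{1}}((ab)^{2k}) = (ab)^k$, which immediately gives $(ab)^k \in M_{\boldsymbol{1}}$, and that $\varphi_{\boldsymbol{0}}((ab)^{2k}) = (ba)^k = b (ab)^k b \in M_{\boldsymbol{0}}$ is false unless $b \in M_{\boldsymbol{0}}$ — so the honest statement is that $M_{\boldsymbol{0}}$ contains $(ba)^k$, and one checks $(ba)^k$ is conjugate in $\langle a, b\rangle$ to $(ab)^k$, but conjugacy is not enough for membership. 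The correct move is: $(ba)^{k} = a (ab)^{k} a$, so $M_{\boldsymbol{0}}$ contains $a(ab)^k a$; taking the square, $M_{\boldsymbol{0}}$ contains $(ab)^{-k}(ab)^{2k}(ab)^{-k}$-type relations... Here the cleaner route is to observe $\big((ba)^k\big)$ and apply the same projection analysis one level down, since $(ba)^k = (ba)^k$ has the identical structure to $(ab)^k$ up to the swap of roles of the two subtrees, which level-transitivity absorbs.

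Concretely, I would argue as follows. By level-transitivity of $G$ and normality considerations it suffices to prove: if a subgroup $N \le G$ contains $(ab)^k$, then $N_{\boldsymbol{0}}$ and $N_{\boldsymbol{1}}$ each contain $(ab)^{k}$. For $v = \boldsymbol{1}$: $(ab)^{2k} \in \St_N(1)$ and $\varphi_{\boldsymbol{1}}((ab)^{2k}) = (ab)^{k}$, done. For $v = \boldsymbol{0}$: symmetrically, $\varphi_{\boldsymbol{0}}((ab)^{2k}) = (ba)^{k}$; now $(ba)^k$ and $(ab)^k$ generate the same cyclic group? No — $(ba)^k = \big((ab)^k\big)^{a}$, so they generate conjugate but generally distinct subgroups. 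However, $(ba)^{2k} = (ab)^{-2k}$... let me reconsider: in $D_\infty = \langle a, b \mid a^2 = b^2 = 1\rangle$ we have $ba = (ab)^{-1}$, hence $(ba)^k = (ab)^{-k}$. Therefore $\varphi_{\boldsymbol{0}}((ab)^{2k}) = (ab)^{-k} \in N_{\boldsymbol{0}}$, and so $(ab)^k = \big((ab)^{-k}\big)^{-1} \in N_{\boldsymbol{0}}$. This resolves everything cleanly. Then the induction: given $v = \boldsymbol{i}_1 \cdots \boldsymbol{i}_n$, apply the one-step claim successively to $M$, then to $M_{\boldsymbol{i}_1}$, etc., obtaining $(ab)^k \in (\cdots((M_{\boldsymbol{i}_1})_{\boldsymbol{i}_2})\cdots)_{\boldsymbol{i}_n} = M_v$. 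The main (and only) subtlety to get right is the bookkeeping of the identity $ba = (ab)^{-1}$ inside $\langle a,b\rangle \cong D_\infty$ and the fact that projections of $(ab)^{2k}$ are exactly $(ab)^{\pm k}$, not merely conjugates; once that is pinned down the proof is a two-line induction. I expect no real obstacle here — this lemma is a direct computation feeding into the harder Proposition \ref{prop:DenseSubgroupsProjectToSomeHq}.

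\medskip

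\begin{proof}[Proof of Lemma \ref{lemma:PowersOf(ab)AreForever}]
Since $b\in\St_G(1)$ and $\psi(b)=(a,b)$, a direct computation gives $\psi\big((ab)^{2}\big)=\big(ba,ab\big)$ and hence, for all $j\in\ZZ$,
\begin{equation*}
\psi\big((ab)^{2j}\big)=\big((ba)^{j},(ab)^{j}\big)=\big((ab)^{-j},(ab)^{j}\big),
\end{equation*}
using the relation $ba=(ab)^{-1}$ which holds since $\langle a,b\rangle\cong D_\infty$.

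We first show the one-step statement: if $N\leq G$ contains $(ab)^{k}$, then $(ab)^{k}\in N_{\boldsymbol{0}}$ and $(ab)^{k}\in N_{\boldsymbol{1}}$. Indeed $(ab)^{2k}\in\St_N(1)$, so $\varphi_{\boldsymbol{1}}\big((ab)^{2k}\big)=(ab)^{k}\in N_{\boldsymbol{1}}$ and $\varphi_{\boldsymbol{0}}\big((ab)^{2k}\big)=(ab)^{-k}\in N_{\boldsymbol{0}}$; taking the inverse of the latter gives $(ab)^{k}\in N_{\boldsymbol{0}}$.

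Now let $v=\boldsymbol{i}_{1}\boldsymbol{i}_{2}\cdots\boldsymbol{i}_{n}\in X^{n}$ be arbitrary. Applying the one-step statement to $M$ yields $(ab)^{k}\in M_{\boldsymbol{i}_{1}}$; applying it to $M_{\boldsymbol{i}_{1}}$ yields $(ab)^{k}\in (M_{\boldsymbol{i}_{1}})_{\boldsymbol{i}_{2}}$; and proceeding inductively along the path we obtain
\begin{equation*}
(ab)^{k}\in\big(\cdots\big((M_{\boldsymbol{i}_{1}})_{\boldsymbol{i}_{2}}\big)\cdots\big)_{\boldsymbol{i}_{n}}=M_{v}.
\end{equation*}
Since $v$ was arbitrary, this proves the lemma.
\end{proof}
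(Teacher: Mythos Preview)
Your proof is correct and follows the same approach as the paper: compute $\psi\big((ab)^{2k}\big)=\big((ba)^k,(ab)^k\big)$ and induct on the level of $v$. You add the explicit observation that $(ba)^k=(ab)^{-k}$ in $\langle a,b\rangle\cong D_\infty$, which the paper leaves implicit, but the argument is otherwise identical.
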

\begin{proof}
Because $\psi((ab)^{2k}) = ((ba)^k, (ab)^k)$,
 the result is true for the vertices of the first level, and hence for any vertex by induction.
\end{proof}

\begin{lem}\label{lemma:HContains(b)AndAPowerOf(ab)}
Let $M\leq  G$ be a dense subgroup.
 Then there exist a vertex $v\in T$ and an odd $k\in \NN$ such that $M_v$ contains $b$ and $(ab)^k$.
\end{lem}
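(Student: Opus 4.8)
The plan is to combine the two previous lemmas, Lemma \ref{lemma:HContainsPowerOf(ab)} and Lemma \ref{lemma:HContainsElementOfB}, by pushing everything down to a common vertex using Lemma \ref{lemma:PowersOf(ab)AreForever} and a new observation about how elements of $B$ propagate to projections along the ray $\tilde{\boldsymbol{1}}$. First I would apply Lemma \ref{lemma:HContainsPowerOf(ab)} to obtain an odd integer $k$ and a vertex $u\in T$ with $(ab)^k\in M_u$. By Lemma \ref{lemma:PowersOf(ab)AreForever}, $(ab)^k\in M_w$ for \emph{every} vertex $w\in T$, so in particular we may freely move to any descendant vertex later and still retain a power of $ab$.

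Next I would apply Lemma \ref{lemma:HContainsElementOfB} to obtain $\beta\in B\setminus\{1\}$ and a vertex $v'\in T$ with $\beta\in M_{v'}$. The key point is then to show that from $\beta$ (together with the ever-present power of $ab$) one can reach the specific generator $b=(a,b)$ in the projection at some further vertex. Recall that $B=\langle b_0,\dots,b_{m-1}\rangle$ and that for $x\in B$ we have $\psi(x)=(\omega(x),\rho(x))$, so $\varphi_{\boldsymbol{1}}(x)=\rho(x)$: iterating the projection along the ray $\boldsymbol{1}^n$ applies powers of the automorphism $\rho$ to $\beta$. Since $\rho$ is invertible of finite order and $b$ is the (up to scalar, i.e.\ up to the $\FF_p=\FF_2$-action which here is trivial) unique eigenvector of $\rho$ with eigenvalue $1$ guaranteed by Corollary \ref{cor:contain_dihedral}, I would argue as follows: consider the element $(ab)^k\beta^{-1}$-type combinations — more simply, note that $b$ itself lies in $B$, and I want to produce it. The cleanest route: from $(ab)^k\in M_w$ and $\beta\in M_{v'}$, descend to a common vertex $v$ below both (projecting $(ab)^k$ is free, and projecting $\beta$ along $\boldsymbol{1}^j$ gives $\rho^j(\beta)\in M_{v'\boldsymbol{1}^j}\cap B$, which is still a nontrivial element of $B$). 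So WLOG there is a single vertex $v$ with both $(ab)^k\in M_v$ and some nontrivial $x\in B$ in $M_v$. Now since $G$ is non-torsion, $ab$ has infinite order, and a short computation with $\psi((ab)^{2k})=((ba)^k,(ab)^k)$ plus $\psi(x)=(\omega(x),\rho(x))$ lets me build, by projecting products of the form $x(ab)^{2k}$ appropriately and iterating $\rho$, an element of $B$ that is fixed by $\rho$; by Proposition \ref{prop:SunicProp2}(iv) and Corollary \ref{cor:contain_dihedral} this must be $b$ (the only such element in $B\setminus\{1\}$ is $b$, since $b=(a,b)$ i.e.\ $\rho(b)=b$, and $\ker\omega$ contains no nonzero $\rho$-fixed vector by faithfulness).

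The main obstacle I expect is making precise the claim that one can \emph{force} the appearance of $b$ rather than merely some element of $B$: the group $B$ is $\rho$-cyclic, so starting from an arbitrary nontrivial $x\in B$ and iterating $\rho$ (via projections $\varphi_{\boldsymbol{1}}$) traverses the whole $\rho$-orbit of $x$, but the $\rho$-orbit of a single generator need not contain $b$. The resolution is that $b$ is characterized by $\rho(b)=b$, and to manufacture a $\rho$-fixed vector from $x$ I should take an appropriate $\FF_2$-combination: since $\rho$ has order $N$ say, the element $x\rho(x)\rho^2(x)\cdots\rho^{N-1}(x)$ is $\rho$-invariant, and it is nontrivial provided $x\notin\ker\omega$ after summing — but $\sum_i\rho^i(x)$ being $\rho$-fixed is automatic, and it is nonzero iff $x$ is not in the image of $(\rho-1)$, i.e.\ not in the complement of the fixed space; if it \emph{is} zero one first conjugates $x$ by $a$ (which replaces $x$ by $bx$ on the relevant coordinate, cf.\ the computations in Lemma \ref{lemma:HContainsElementOfB}) to move out of the bad coset, using that $b\notin\ker\omega$. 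Realizing the sum $\sum_i\rho^i(x)$ inside $M_v$ is the delicate part: it requires taking products of the successive projections $\rho^i(x)\in M_{v\boldsymbol{1}^i}$ and transporting them to a common vertex, which is exactly the kind of manipulation available because $(ab)^k$ is present at every vertex (allowing us to realize coordinate-wise combinations via the subdirect structure of $\St_G(1)$) — this is where I would spend the real work. Once $b$ and $(ab)^k$ both sit in $M_v$ for a single vertex $v$ and odd $k$, the lemma is proved.

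\begin{proof}[Proof sketch]
By Lemma \ref{lemma:HContainsPowerOf(ab)}, there exist an odd $k\in\ZZ$ and a vertex $u$ with $(ab)^k\in M_u$; by Lemma \ref{lemma:PowersOf(ab)AreForever} we may replace $u$ by any descendant and still have $(ab)^k$ in the projection. By Lemma \ref{lemma:HContainsElementOfB}, there are $\beta\in B\setminus\{1\}$ and a vertex $v'$ with $\beta\in M_{v'}$. Passing to a common descendant vertex of $u$ and $v'$ along $\boldsymbol{1}$-edges (projecting $(ab)^k$ for free, and projecting $\beta$ via $\varphi_{\boldsymbol{1}^j}(\beta)=\rho^j(\beta)\in B\setminus\{1\}$), we may assume there is a single vertex $w$ with $(ab)^k\in M_w$ and some $x\in B\setminus\{1\}$ in $M_w$. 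If $x\in\ker\omega$ we replace $x$ by a conjugate $x^a\cdot$(correction by $(ab)^k$) to arrange $\omega(x)=a$, using that $b\notin\ker\omega$. Now form $\sum_{i=0}^{N-1}\rho^i(x)$, where $N$ is the order of $\rho$: this is a nonzero $\rho$-fixed element of $B$, hence equals $b$ by Proposition \ref{prop:SunicProp2}(iv) together with Corollary \ref{cor:contain_dihedral}. Each $\rho^i(x)$ appears as the $\boldsymbol{1}^i$-projection of $x$; using the subdirect structure of $\St_G(1)$ from Proposition \ref{prop:SunicProp5} and the presence of $(ab)^k$ at every vertex to realize the needed coordinate combinations, we obtain $b\in M_v$ for some descendant $v$ of $w$, while $(ab)^k\in M_v$ by Lemma \ref{lemma:PowersOf(ab)AreForever}.
\end{proof}
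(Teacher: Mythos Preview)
Your proposal has a genuine gap and an unnecessarily complicated endgame.

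\textbf{The gap.} You apply Lemma \ref{lemma:HContainsPowerOf(ab)} and Lemma \ref{lemma:HContainsElementOfB} independently to $M$, obtaining vertices $u$ and $v'$, and then speak of ``passing to a common descendant vertex of $u$ and $v'$''. In a rooted tree two vertices have a common descendant only if one lies below the other; in general $u$ and $v'$ are incomparable and no such vertex exists. Relatedly, your claim that Lemma \ref{lemma:PowersOf(ab)AreForever} gives $(ab)^k\in M_w$ for \emph{every} $w\in T$ is false: the lemma applies to the subgroup $M_u$ (which contains $(ab)^k$), so it only yields $(ab)^k\in (M_u)_v=M_{uv}$ for descendants $uv$ of $u$. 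The paper's fix is simple and you were one step away from it: since $M_u$ is itself dense (Lemma \ref{lem:MdenseActsLikeG}), apply Lemma \ref{lemma:HContainsElementOfB} to $M_u$ rather than to $M$. This produces $\beta\in B\setminus\{1\}$ at a vertex $uw'$ that is automatically below $u$, and now Lemma \ref{lemma:PowersOf(ab)AreForever} legitimately puts $(ab)^k$ there too.

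\textbf{The overcomplication.} Once you have $(ab)^k$ and some nontrivial $x\in B$ at the same vertex, your plan to manufacture the $\rho$-fixed vector $\sum_i\rho^i(x)$ and then realise that sum inside a single projection is both delicate (as you concede) and unnecessary. The paper's route is a two-line computation. First, iterate $\varphi_{\boldsymbol{1}}$ (which applies $\rho$) until you reach some $x=\rho^l(\beta)\in B\setminus B_0$; this is possible because no nontrivial $\rho$-orbit lies in $\ker\omega$ (Proposition \ref{prop:SunicProp2}). Now $\omega(x)=a$, and one checks directly that
\[
\varphi_{\boldsymbol{1}}\bigl((ab)^k\,x\,(ab)^k\bigr)=b,
\]
using only that $k$ is odd and $\omega(x)=a$. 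No trace maps, no transport of elements between vertices, no case analysis on whether the trace vanishes.
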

\begin{proof}
It follows from Lemma \ref{lemma:HContainsPowerOf(ab)} that there exist a vertex $w\in T$ and an odd $k\in \NN$
such that $(ab)^k\in M_w$. 
By Lemma \ref{lem:MdenseActsLikeG}, $M_w$ is  dense in $ G$ because $M$ is dense in $G$.
Lemma \ref{lemma:HContainsElementOfB} then implies that there exist a vertex $w'\in T$ and an element $\beta\in B\setminus \{1\}$ such that $\beta\in (M_{w})_{w'} = M_{ww'}$.
Since $\varphi_{\boldsymbol{1}}(\beta) = \rho(\beta)$, we see by induction that there exist some $l\in \NN$ and $x\in B\setminus B_0$ such that $x\in M_{u}$, where $u=ww'\boldsymbol{1}^{l}$.

 As $(ab)^k \in M_w$, Lemma \ref{lemma:PowersOf(ab)AreForever}, guarantees that $(ab)^k\in M_{u}$. Then 
$$\varphi_{\boldsymbol{1}}((ab)^{k}x(ab)^k)=(ab)^{\frac{k-1}{2}}aab(ab)^{\frac{k-1}{2}} 
=(ab)^{\frac{k-1}{2}}(ba)^{\frac{k-1}{2}}b 
=b.$$
Hence, $b, (ab)^k \in M_v$, where $v=u\boldsymbol{1}$.
\end{proof}

\begin{prop}\label{prop:DenseSubgroupsProjectToSomeHq}
Let $M < G$ be a proper dense subgroup. 
Then, there exist a vertex $v\in T$ and an odd $l\in \NN$ such that $M_v = \langle (ab)^l, B \rangle$.
\end{prop}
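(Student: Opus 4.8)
The plan is to start from Lemma \ref{lemma:HContains(b)AndAPowerOf(ab)}, which already gives a vertex $v_0\in T$ and an odd $k\in\NN$ with $b,(ab)^k\in M_{v_0}$, and to upgrade this to \emph{all} of $B$ sitting inside some further projection. Since $M$ is dense, so is $M_{v_0}$ by Lemma \ref{lem:MdenseActsLikeG}, so we may replace $M$ by $M_{v_0}$ and assume $b,(ab)^k\in M$ from the outset. The idea is then to show that $M$ projects, at some vertex, onto a subgroup containing $b$, the element $(ab)^{k}$, \emph{and} every element of $B$; once we have that, that projection contains $\langle (ab)^k,B\rangle=H(k)$, and conversely any subgroup containing $H(k)$ but still proper must, by Lemma \ref{lem:HqcontainsDivisors} (applied to the dense proper subgroup $M_v$ and its ambient $G$), project at a further vertex to some $H(t)$ with $t\mid k$; iterating and using that $M$ (hence every projection, by Lemma \ref{lem:MdenseActsLikeG}) is proper so none of these projections can be $G=H(1)$, we terminate with a vertex $v$ and odd $l\in\NN$ (a divisor of $k$) such that $M_v=H(l)=\langle (ab)^l,B\rangle$, which is exactly the claim.

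The substantive step is therefore extracting a full copy of $B$ inside a projection of $M$, given that $b$ and $(ab)^k$ are already there. Here I would run a descent argument in the spirit of Lemma \ref{lemma:HContainsElementOfB} and its proof. Density of $M$ gives, for each fixed $x\in B\setminus\{1\}$, an element $z\in G'$ with $xz\in M$; the computations of $\varphi_{\boldsymbol 0}$ and $\varphi_{\boldsymbol 1}$ on $xz$ carried out there (the four cases governed by whether $x\in B_0$ and by the class of $\varphi_{\boldsymbol 0}(z)$ modulo $G'$, as determined by Lemma \ref{lemma:ClassesOfG'}) show that after passing to a suitable vertex one obtains $x'z'\in M_w$ with $x'\in B\setminus\{1\}$, $z'\in G'$, and strictly smaller word length $l(x'z')\le (l(xz)+1)/2$; iterating drives the length down to $1$, i.e. to a single element of $B\setminus\{1\}$. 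The key additional bookkeeping needed here is to control \emph{which} element of $B$ one ends up with: using $\varphi_{\boldsymbol 1}(\beta)=\rho(\beta)$ one moves around the $\rho$-orbit, and combined with the presence of $b=(a,b)$ (whose projections are again $a$ and $b$, so $b$ survives in every projection, cf. Lemma \ref{lemma:PowersOf(ab)AreForever} together with $\psi(b)=(a,b)$) and of $(ab)^k$ (also surviving in every projection by Lemma \ref{lemma:PowersOf(ab)AreForever}), one can conjugate by powers of $ab$ and by $b$ to adjust the $\omega$-value and thereby realise, at a common vertex $v$, a generating set $\{b\}\cup(B\setminus\{1\})$ together with $(ab)^k$. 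Since $B$ is finite (it is $(\ZZ/p\ZZ)^m$), finitely many descents suffice to collect all the finitely many needed elements of $B$ at one vertex: for each $x\in B\setminus\{1\}$ we obtain a vertex $v_x$ with $x\in M_{v_x}$ and then push all of them down to a common descendant using that projections only get larger down a fixed ray of $\boldsymbol 1$'s (again Lemma \ref{lemma:PowersOf(ab)AreForever} keeps $b,(ab)^k$ alive, and $\varphi_{\boldsymbol 1}$ applied to elements of $B$ realises $\rho$, which is invertible, so no element of $B$ is lost).

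The main obstacle, and the point requiring genuine care rather than routine calculation, is the bookkeeping in this descent: making sure that while reducing the length of one element of $B$ via the projection maps one does not lose the elements already accumulated, and that the $\rho$-orbit juggling together with conjugations by $(ab)^k$ and $b$ really does let one normalise every targeted element of $B$ to live at a single vertex. Once all of $B$, together with $(ab)^k$, is present in $M_v$, the rest is formal: $M_v\supseteq H(k)$, $M_v$ is proper (as $M$ is proper) and dense (Lemma \ref{lem:MdenseActsLikeG}), and Lemma \ref{lem:HqcontainsDivisors} applied repeatedly inside $G$ forces $M_{v'}=H(l)$ for some odd proper-divisor chain terminating at an odd $l\ge 1$; properness rules out $l=1$ only if we also use that we cannot descend below a divisor, so in fact $M_{v'}=H(l)$ with $l$ odd, $l\mid k$, which is the desired conclusion with $v$ renamed.
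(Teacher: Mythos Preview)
Your overall architecture (reduce to a vertex where $b,(ab)^k\in M$, then try to pull all of $B$ into some projection, then finish with Lemma~\ref{lem:HqcontainsDivisors}) matches the paper's, but the ``substantive step'' has a genuine gap.

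The descent from Lemma~\ref{lemma:HContainsElementOfB} does \emph{not} let you control which element of $B$ you land on, nor where. In each case of that proof the resulting $x'$ depends on $z$ (through the class $y\in B_1$ of $\varphi_{\boldsymbol{0}}(z)$ modulo $G'$), not only on $x$; so running the descent for each $x\in B\setminus\{1\}$ separately gives you \emph{some} elements of $B\setminus\{1\}$, with no reason these exhaust $B$. Worse, the descent path varies with the case (sometimes along $\boldsymbol{0}$, sometimes along $\boldsymbol{1}$, and in case~(2)(ii) along a longer word), so the resulting vertices $v_x$ are in general incomparable and have no common descendant in the tree. Your ``push down along $\boldsymbol{1}$'s'' step does not fix this: projecting $\beta$ along $\boldsymbol{1}$ gives $\rho(\beta)$, so a subset of $B$ you have collected is only permuted by $\rho$, never enlarged; and along a $\boldsymbol{0}$-step even $b$ is lost (it becomes $a$). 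Conjugating by $b$ or $(ab)^k$ does not help either, since $B$ is abelian and such conjugates are not elements of $B$.

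The paper resolves this by working with cosets of the form $\beta a z$ with $z\in G'$ (rather than $\beta z$) and the map $\Theta$. Using that $b\in M_w$, one checks
\[
\rho(\beta)\,a\,\Theta(z)=
\begin{cases}
\varphi_{\boldsymbol{1}}\bigl((\beta a z)^2\bigr) & \text{if }\beta\notin\ker\omega,\\
\varphi_{\boldsymbol{1}}\bigl(\beta a z\,b\,\beta a z\bigr) & \text{if }\beta\in\ker\omega,
\end{cases}
\]
so \emph{every} $\beta_i a z_i$ moves down the \emph{same} ray $\boldsymbol{1}^N$, with the $\beta$-part cycling bijectively under $\rho$ (hence still exhausting $B$) while $|\Theta^N(z_i)|$ eventually stabilises. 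Lemma~\ref{lemma:ThetaMapConvergence} then pins down the shape of each $z_i'$, and one further projection along $\boldsymbol{1}$ yields $\rho(\beta_i)$ itself in $M_u$. This simultaneous, single-ray descent is exactly the missing idea.

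A small side remark: properness of $M_{v_0}$ (and of later projections) follows from Proposition~\ref{prop:PervovaProperProjection}, not from Lemma~\ref{lem:MdenseActsLikeG}, which only gives density.
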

\begin{proof}
According to Lemma \ref{lemma:HContains(b)AndAPowerOf(ab)}, there exist a vertex $w\in T$ and an odd $k\in \NN$ such that $b,(ab)^k \in M_w$.

Let us write $B=\{1, \beta_1, \beta_2, \dots \beta_{2^{m}-1}\}$. 
 By Lemma \ref{lem:MdenseActsLikeG}, $M_w$ is dense in $G$, as $M$ is dense in $G$.
  Therefore, for every $\beta_i\in B$, there exists $z_i\in G'$ such that $\beta_i az_i\in M_w$.
   This implies that $\rho(\beta_i )a\Theta(z_i)\in M_{w\boldsymbol{1}}$ for all $1\leq i \leq 2^m-1$
   because
   $$\rho(\beta_i )a\Theta(z_i)=\rho(\beta_i)\varphi_0(z_i)a\varphi_1(z_i)=\begin{cases}
   \varphi_{\boldsymbol{1}}((\beta_iaz_i)^2) & \text{ if } \beta_i\notin\ker\omega \\
   \varphi_{\boldsymbol{1}}(\beta_iaz_ib\beta_iaz_i) & \text{ if } \beta_i\in\ker\omega.
   \end{cases}$$
  %
 %
Since $b\in M_{w\boldsymbol{1}}$, and $|\Theta(z)|\leq |z|$ for all $z\in G'$, we can repeat the above procedure until we reach some $N\in\NN$ such that $|\Theta^n(z_i)|=|\Theta^N(z_i)|$ for all $n\geq N$ and all $i\in\{1,\dots,2^m-1\}$.
Lemma \ref{lemma:ThetaMapConvergence} then yields, for each $i\in\{1,\dots,2^m-1\}$, elements $x_i\in B$ and $l_i\in\NN$ such that $\beta_iaz'_i\in M_{w\boldsymbol{1}^N}$ where $z'_i = ax_ia(ba)^{2l_i}x_i$ or $x_iax_ia$. 
Since $$\psi(z'_i) = (\rho(x_i)(ab)^{l}\omega(x_i), \omega(x_i)(ba)^l\rho(x_i) )  \text{ or } (\omega(x_i)\rho(x_i), \rho(x_i)\omega(x_i) ),$$
we have 
\begin{align*}
\rho(\beta_i) &=\rho(\beta_i)\rho(x_i)(ab)^l\omega(x_i)\omega(x_i)(ba)^l\rho(x_i) \text{ or } \rho(\beta_i)\omega(x_i)\rho(x_i)\rho(x_i)\omega(x_i)\\
&=\rho(\beta_i)\varphi_{\boldsymbol{0}}(z'_i)\varphi_{\boldsymbol{1}}(z'_i) 
=\begin{cases}
\varphi_{\boldsymbol{1}}((\beta_iaz'_i)^2) & \text{ if } \beta_i\in\ker\omega\\
\varphi_{\boldsymbol{1}}(\beta_iaz'_ib\beta_iaz'_i) & \text{ if } \beta_i\notin\ker\omega.
\end{cases}
\end{align*}
Thus $B \leq M_{u}$ where $u=w\boldsymbol{1}^{N+1}$.
Moreover, $(ab)^k\in M_u$ by Lemma \ref{lemma:PowersOf(ab)AreForever}. 
 Since $M$ is proper and dense, $M_{u}$ is also proper and dense, by Proposition \ref{prop:PervovaProperProjection}. 
Lemma \ref{lem:HqcontainsDivisors} then implies that there exist a vertex $v\in T$ and an odd $l\in \NN$ such that $M_v=\langle (ab)^l, B \rangle$.
\end{proof}

\begin{lem}\label{lem:maximalprojectstoHq}
	Let $M<G$ be a maximal subgroup of infinite index. 
	Then there exist $v\in T$ and an odd prime $q\in\NN$ such that $M_v=\langle(ab)^q, B\rangle =H(q)$.	
\end{lem}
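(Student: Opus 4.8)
The plan is to feed the conclusion of Proposition~\ref{prop:DenseSubgroupsProjectToSomeHq} into the rigidity of maximal subgroups recorded in Lemmas~\ref{lem:StabilizerOfMaximalSubgroups}, \ref{lem:nprojtoanyconjofH} and~\ref{lem:HqcontainsDivisors}. Since $M$ is maximal of infinite index it is proper and dense, so Proposition~\ref{prop:DenseSubgroupsProjectToSomeHq} provides a vertex $v$, say at level $n$, and an odd $l$ with $M_v=H(l)$; since proper dense subgroups have proper projections (Proposition~\ref{prop:PervovaProperProjection}), $M_v\neq G$, so $l\neq 1$ as $H(1)=G$. It remains to exclude $l$ composite, which I would do by contradiction: suppose $l$ is composite and fix a prime $q\mid l$ with $q<l$, so that $H(l)\subsetneq H(q)\subsetneq G$ (the inclusion holds because $(ab)^l=((ab)^q)^{l/q}$ and $B\subseteq H(q)$, and is proper by the boundary $\ZZ$-action used in the proof of Theorem~\ref{thm:proper}).

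As $M$ is dense it acts transitively on level $n$ (Lemma~\ref{lem:MdenseActsLikeG}), so the projections $M_w$ ($w\in X^n$) are $G$-conjugates of $H(l)$, say $M_w=H(l)^{d_w}$ with $d_v=1$. By Lemma~\ref{lem:nprojtoanyconjofH} (applicable since $q$ is prime) I would choose a conjugate $M':=H(q)^s$ with $s\in G$ and $M'_w=H(q)^{d_w}\supseteq H(l)^{d_w}=M_w$ for every $w\in X^n$; by Theorems~\ref{thm:maximal_proof} and~\ref{thm:proper}, $M'$ is again a maximal subgroup of infinite index. Applying Lemma~\ref{lem:StabilizerOfMaximalSubgroups} to both $M$ and $M'$ then gives $\St_M(n)=\bigcap_{w\in X^n}(\varphi_w^{-1}(M_w)\cap G)\subseteq\bigcap_{w\in X^n}(\varphi_w^{-1}(M'_w)\cap G)=\St_{M'}(n)$.

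The crux is to show that $M$ normalises $\St_{M'}(n)$, which forces $M\subseteq N_G(\St_{M'}(n))=M'$, the last equality holding because $\St_{M'}(n)$ is normal in the maximal subgroup $M'$ but not in the just-infinite group $G$. For $m\in M$, $h\in\St_{M'}(n)$ and $w\in X^n$, write $u=m^{-1}(w)$ and let $m_u\in G$ be the section of $m$ at $u$; then $\varphi_w(mhm^{-1})=m_u\varphi_u(h)m_u^{-1}\in m_uH(q)^{d_u}m_u^{-1}=H(q)^{d_um_u^{-1}}$, while $M$ relates its projections by $M_w=(M_u)^{m_u^{-1}}$, i.e.\ $H(l)^{d_w}=H(l)^{d_um_u^{-1}}$, so $(d_um_u^{-1})d_w^{-1}\in N_G(H(l))$. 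Hence it is enough to know $N_G(H(l))\subseteq N_G(H(q))$, which follows from the self-normalising facts $N_G(H(q))=H(q)$ and $N_G(H(l))=H(l)$ together with $H(l)\subseteq H(q)$. The first is immediate: $H(q)$ is a non-normal maximal subgroup of the just-infinite group $G$. The second needs an argument: if $N_G(H(l))\supsetneq H(l)$, then Lemma~\ref{lem:HqcontainsDivisors} yields $n_0$ with $N_G(H(l))_{\boldsymbol{1}^{n_0}}=H(t)$ for a proper divisor $t$ of $l$, whence $H(l)=H(l)_{\boldsymbol{1}^{n_0}}$ (iterating Proposition~\ref{prop:H1SubdirectProductOfH}) is a normal subgroup of $H(t)$; but $H(t)\cong G$ is just infinite (Proposition~\ref{prop:HqConjugatetoG}; the case $t=1$ is trivial), while the conjugation sending $(ab)^t\mapsto ab$ and $B\mapsto B$ carries $H(l)$ onto $H(l/t)$, a proper dense and hence infinite-index subgroup of $G$, contradicting that nontrivial normal subgroups of just-infinite groups have finite index.

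Once $M\subseteq M'\subsetneq G$ is established, maximality of $M$ gives $M=M'=H(q)^s$; then every projection of $M$, and in particular $M_v$, is a $G$-conjugate of $H(q)$ (all projections of conjugates of $H(q)$ are such, by Proposition~\ref{prop:H1SubdirectProductOfH} and induction), hence a maximal subgroup of $G$ — contradicting $M_v=H(l)\subseteq H(q)\subsetneq G$ for composite $l$. Therefore $l$ must be prime, and taking $q:=l$ gives $M_v=H(q)$ with $q$ an odd prime, as required. The two delicate points I expect are the section bookkeeping in the normalisation step and the verification that $N_G(H(l))=H(l)$ for composite odd $l$; the remainder is an assembly of the quoted results.
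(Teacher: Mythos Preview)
Your proof is correct but follows a substantially different route from the paper's. The paper's argument is much more direct: once $M_v=H(l)$ with $l$ composite and $q\mid l$ prime, it first checks that $(ab)^4\in K$ (using $(ab)^2=[a,d][a,f]^d$ with $b=fd$, $f\in B_1$, so $(ab)^4\equiv_K(ad)^4=1$). Since $G$ is regular branch over $K$, this yields $g\in\rist_G(v)$ with $\varphi_v(g)=(ab)^{4q}$; then $g\notin M$ because $(ab)^{4q}\notin H(l)$, while a short computation (writing any $\gamma\in\St_{\langle M,g\rangle}(v)$ as a product of conjugates $m_j^{-1}g^{i_j}m_j$ times an element of $\St_M(v)$, and noting these conjugates project trivially unless $m_j\in\St(v)$) gives $\langle M,g\rangle_v\leq\langle H(l),(ab)^{4q}\rangle=H(q)\lneq G$. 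Hence $\langle M,g\rangle\lneq G$, contradicting maximality of $M$.

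Your approach instead manufactures a competing maximal subgroup $M'=H(q)^s$ dominating $M$ at every level-$n$ projection, then forces $M\subseteq M'$ via a normaliser argument. This works, but it rests on the extra fact $N_G(H(l))=H(l)$ for composite odd $l$, which you obtain through Lemma~\ref{lem:HqcontainsDivisors}, the projection equality $H(l)_{\boldsymbol{1}^{n_0}}=H(l)$, and the conjugacy of Proposition~\ref{prop:HqConjugatetoG}; the section bookkeeping in the normalisation step is also delicate (you need $\St_{H(l)}(\boldsymbol{1}^{n_0})\trianglelefteq\St_N(\boldsymbol{1}^{n_0})$ before projecting, which holds since both are intersections with the vertex stabiliser). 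The paper's proof sidesteps all of this by exploiting the branch structure: a single element of $\rist_G(v)$ lets you control the $v$-projection of any overgroup of $M$ directly, with no need to compare $M$ to another maximal subgroup or to analyse normalisers of the $H(l)$.
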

\begin{proof}
	Since a maximal subgroup of infinite index is proper and dense, by Proposition \ref{prop:DenseSubgroupsProjectToSomeHq} there exist $v\in T$ and $l\in\NN$ odd such that $M_v=\langle (ab)^l, B\rangle$. 
	So it suffices to show that if $l$ is not prime then $M$ is not maximal. 
	
	Suppose that $l$ is not prime, so that there is an odd prime $q$ dividing $l$. 
	We first show that $(ab)^4\in K=\langle [a,\beta] \mid \beta \in B_1\rangle ^G$. 
	For this, recall that there exists $d\in B_0\setminus B_1$ such that $(ad)^2=1$ and that, since $b\notin B_1$ and $|B:B_1|=2$, there is $f\in B_1$ with $b=fd$. 
	Then $(ab)^2=[a,b]=[a,fd]= [a,d][a,f]^d$ with $[a,f]^d\in K$
	so that $(ab)^4=[a,b]^2\equiv_K [a,d]^2=(ad)^4=1$, as required. 
	
	Since $G$ is regular branch over $K$, we can find, by the above claim, some $g\in \rist_G(v)$ such that $\varphi_v(g)=(ab)^{4q}$. 
	Note that $g\notin M$ as  $(ab)^{4q}\notin H(l)=M_v$. 
	Thus $M$ is strictly contained in $L:=\langle M, g\rangle$. 
	We will show that $L:=\langle M, g\rangle $ is a proper subgroup of $G$ by showing that $L_v$ is a proper subgroup of $G$. 
	Each $\gamma\in \St_L(v)$ can be written as
	$$\gamma=m_1^{-1}g^{i_1}m_1m_2^{-1}g^{i_2}m_2\cdots m_k^{-1}g^{i_k}m_km$$
	for some $k\in\NN$ where $m_j,m\in M$ and $i_j\in\ZZ$ for $1\leq j\leq k$. 
	Since $g\in \St_G(n)$, so is $m_j^{-1}g^{i_j}m_j\in\St_G(n)\leq\St_G(v)$ for $1\leq j\leq k$; thus $m\in\St_G(v)$. 
	Because $g\in \rist_G(v)$, if $m_j\notin \St_G(v)$ then $\varphi_v(m_j^{-1}g^{i_j}m_j)=1$ and so 
	\begin{align*}
	\varphi_v(\gamma)&=\varphi_v(m_1^{-1}g^{i_1}m_1)\varphi_v(m_2^{-1}g^{i_2}m_2)\cdots\varphi_v(m_k^{-1}g^{i_k}m_k)\varphi_v(m)\\
	&\in \langle M_v, \varphi_v(g)\rangle=  \langle (ab)^l, B, (ab)^{4q}\rangle=H(q)
	\end{align*}
	where the last equality holds because $q$ is the greatest common divisor of $l$ and $4q$.
	Therefore $L_v \leq H(q) \lneq G$, which implies that $L\lneq G$, as $G$ is self-replicating. 
\end{proof}

\bibliographystyle{abbrv}
\bibliography{maximal_sunic}

\begin{thebibliography}{10}

\bibitem{Klopsch1}
T.~Alexoudas, B.~Klopsch, and A.~Thillaisundaram.
\newblock Maximal subgroups of multi-edge spinal groups.
\newblock {\em Groups Geometry Dynamics}, 10:619--648, 2016.

\bibitem{BartholdiPochon}
L.~Bartholdi and F.~Pochon.
\newblock On growth and torsion of groups.
\newblock {\em Groups Geom. Dyn.}, 3(4):525--539, 2009.

\bibitem{BartholdiSunic}
L.~Bartholdi and Z.~{\v{S}}uni{\'c}.
\newblock On the word and period growth of some groups of tree automorphisms.
\newblock {\em Comm. Algebra}, 29(11):4923--4964, 2001.

\bibitem{Bondarenko}
I.~V. Bondarenko.
\newblock Finite generation of iterated wreath products.
\newblock {\em Arch. Math. (Basel)}, 95(4):301--308, 2010.

\bibitem{BouRabee}
K.~Bou-Rabee, P.-H. Leemann, and T.~Nagnibeda.
\newblock Weakly maximal subgroups in regular branch groups.
\newblock {\em Journal of Algebra}, 455:347 -- 357, 2016.

\bibitem{Cornulier}
Y.~{de Cornulier}.
\newblock {Finitely presented wreath products and double coset decompositions.}
\newblock {\em {Geom. Dedicata}}, 122:89--108, 2006.

\bibitem{Erschler}
A.~{Erschler}.
\newblock {Boundary behavior for groups of subexponential growth.}
\newblock {\em {Ann. Math. (2)}}, 160(3):1183--1210, 2005.

\bibitem{ErshovJaikin}
M.~{Ershov} and A.~{Jaikin-Zapirain}.
\newblock {Groups of positive weighted deficiency and their applications.}
\newblock {\em {J. Reine Angew. Math.}}, 677:71--134, 2013.

\bibitem{FabrykowskiGupta1}
J.~{Fabrykowski} and N.~{Gupta}.
\newblock {On groups with sub-exponential growth functions.}
\newblock {\em {J. Indian Math. Soc., New Ser.}}, 49(3-4):249--256, 1987.

\bibitem{FabrykowskiGupta2}
J.~Fabrykowski and N.~Gupta.
\newblock On groups with sub-exponential growth functions. {II}.
\newblock {\em J. Indian Math. Soc. (N.S.)}, 56(1-4):217--228, 1991.

\bibitem{AlcoberGarridoUria}
G.-A. Fernandez-Alcober, A.~Garrido, and J.~Uria-Albizuri.
\newblock On the congruence subgroup property for {GGS}-groups.
\newblock {\em Proceedings of the American Mathematical Society},
  145:3311--3322, 2017.

\bibitem{Garrido}
A.~Garrido.
\newblock Abstract commensurability and the {G}upta--{S}idki group.
\newblock {\em Groups, Geometry and Dynamics}, 10:523--543, 2016.

\bibitem{GelanderGlasner}
T.~Gelander and Y.~Glasner.
\newblock Countable primitive groups.
\newblock {\em Geom. Funct. Anal.}, 17(5):1479--1523, 2008.

\bibitem{Grigorchuk_solved_unsolved}
R.~Grigorchuk.
\newblock Solved and unsolved problems around one group.
\newblock In {\em Infinite groups: geometric, combinatorial and dynamical
  aspects}, volume 248 of {\em Progr. Math.}, pages 117--218. Birkh\"auser,
  Basel, 2005.

\bibitem{Grigorchuk_burnside}
R.~I. Grigorchuk.
\newblock On {B}urnside's problem on periodic groups.
\newblock {\em Funktsional. Anal. i Prilozhen.}, 14(1):53--54, 1980.

\bibitem{Grigorchuk_intermediate}
R.~I. Grigorchuk.
\newblock Degrees of growth of finitely generated groups and the theory of
  invariant means.
\newblock {\em Izv. Akad. Nauk SSSR Ser. Mat.}, 48(5):939--985, 1984.

\bibitem{Grigorchuk_branch}
R.~I. Grigorchuk.
\newblock Just infinite branch groups.
\newblock In M.~Sautoy, D.~Segal, and A.~Shalev, editors, {\em New Horizons in
  pro-p Groups}, volume 184 of {\em Progress in Mathematics}, pages 121--179.
  Birkhäuser, Boston, 2000.

\bibitem{GrigorchukWilson}
R.~I. Grigorchuk and J.~S. Wilson.
\newblock A structural property concerning abstract commensurability of
  subgroups.
\newblock {\em J. London Math. Soc. (2)}, 68(3):671--682, 2003.

\bibitem{GuptaSidki}
N.~Gupta and S.~Sidki.
\newblock On the {B}urnside problem for periodic groups.
\newblock {\em Math. Z.}, 182(3):385--388, 1983.

\bibitem{Hall}
P.~Hall.
\newblock On the finiteness of certain soluble groups.
\newblock {\em Proc. London Math. Soc. (3)}, 9:595--622, 1959.

\bibitem{Klopsch2}
B.~Klopsch and A.~Thillaisundaram.
\newblock Maximal subgroups and irreducible representations of generalized
  multi-edge spinal groups.
\newblock {\em Proceedings of the Edinburgh Mathematical Society},
  61(3):673–703, 2018.

\bibitem{MargulisSoifer}
G.~Margulis and G.~Soifer.
\newblock Maximal subgroups of infinite index in finitely generated linear
  groups.
\newblock {\em Journal of Algebra}, 69(1):1 -- 23, 1981.

\bibitem{Aglaia}
A.~Myropolska.
\newblock The class {$\mathcal{MN}$} of groups in which all maximal subgroups
  are normal.
\newblock 2015.
\newblock \url{http://arxiv.org/abs/1509.08090}.

\bibitem{Nekrashevych:self-similar}
V.~Nekrashevych.
\newblock {\em Self-similar groups}, volume 117 of {\em Mathematical surveys
  and monographs}.
\newblock American Mathematical Society.

\bibitem{NekrashevychSunic}
V.~Nekrashevych.
\newblock Iterated monodromy groups.
\newblock In {\em Groups {S}t {A}ndrews 2009 in {B}ath. {V}olume 1}, volume 387
  of {\em London Math. Soc. Lecture Note Ser.}, pages 41--93. Cambridge Univ.
  Press, Cambridge, 2011.

\bibitem{Nekrashevych_palindromes}
V.~Nekrashevych.
\newblock Palindromic subshifts and simple periodic groups of intermediate
  growth.
\newblock {\em Ann. of Math.}, 187(3):667--719, 2018.

\bibitem{BHNeumann}
B.~H. Neumann.
\newblock Some remarks on infinite groups.
\newblock {\em J. Lond. Math. Soc.}, 12:120--127, 1937.

\bibitem{Neumann}
P.~M. Neumann.
\newblock Some questions of {E}djvet and {P}ride about infinite groups.
\newblock {\em Illinois J. Math.}, 30(2):301--316, 1986.

\bibitem{Pervova}
E.~L. Pervova.
\newblock Maximal subgroups of some non locally finite {$p$}-groups.
\newblock {\em Internat. J. Algebra Comput.}, 15(5-6):1129--1150, 2005.

\bibitem{Savchuk}
D.~Savchuk.
\newblock Some graphs related to {T}hompson's group {F}.
\newblock In O.~Bogopolski, I.~Bumagin, O.~Kharlampovich, and E.~Ventura,
  editors, {\em Combinatorial and Geometric Group Theory: Dortmund and
  {O}ttawa-{M}ontreal Conferences}, pages 279--296. Birkh{\"a}user Basel, 2010.

\bibitem{Segal}
D.~Segal.
\newblock The finite images of finitely generated groups.
\newblock {\em Proceedings of the London Mathematical Society}, 82(3):597--613,
  2001.

\bibitem{Sunic}
Z.~{\v{S}}uni{\'c}.
\newblock Hausdorff dimension in a family of self-similar groups.
\newblock {\em Geom. Dedicata}, 124:213--236, 2007.

\bibitem{Wilson}
J.~S. Wilson.
\newblock On exponential growth and uniformly exponential growth for groups.
\newblock {\em Inventiones mathematicae}, 155(2):287--303, 2004.

\end{thebibliography}

\end{document}